\theoremstyle{plain}
\newtheorem*{theorem*}{Theorem}
\newtheorem{theorem}{Theorem}[section]
\newtheorem{lem}[theorem]{Lemma}
\newtheorem{prop}[theorem]{Proposition}
\newtheorem{cor}[theorem]{Corollary}
\theoremstyle{definition}
\newtheorem{convention}[theorem]{Convention}
\newtheorem{defn}[theorem]{Definition}
\newtheorem{ex}[theorem]{Example}
\newtheorem{rmk}[theorem]{Remark}
\newtheorem{hypothesis}{Hypothesis}
\theoremstyle{remark}
\numberwithin{equation}{section}
\newcommand\mathcircled[1]{%
  \mathpalette\@mathcircled{#1}%
}
\newcommand\@mathcircled[2]{%
  \tikz[baseline=(math.base)] \node[draw,circle,inner sep=1pt,color=red] (math) {$\m@th#1#2$};%
}
\newcommand{\ascr}{\mathscr{A}}
\newcommand{\gscr}{\mathscr{G}}
\newcommand{\mscr}{\mathscr{M}}
\newcommand{\pscr}{\mathscr{P}}
\newcommand{\tscr}{\mathscr{T}}
\newcommand{\uscr}{\mathscr{U}}
\newcommand{\acal}{\mathcal{A}}
\newcommand{\bcal}{\mathcal{B}}
\newcommand{\ccal}{\mathcal{C}}
\newcommand{\fcal}{\mathcal{F}}
\newcommand{\jcal}{\mathcal{J}}
\newcommand{\scal}{\mathcal{S}}
\newcommand{\tcal}{\mathcal{T}}
\newcommand{\rfrak}{\mathfrak{R}}
\newcommand{\xfrak}{\mathfrak{X}}
\newcommand{\hbf}{\mathbf{H}}
\newcommand{\hnor}{\textnormal{H}}
\newcommand{\inor}{\textnormal{I}}
\newcommand{\lnor}{\textnormal{L}}
\newcommand{\et}{\textnormal{\'et}}
\newcommand{\loccit}{\textit{loc.cit.}}
\newcommand{\an}{\textnormal{an}}
\newcommand{\mot}{\textnormal{mot}}
\newcommand{\ct}{\textnormal{ct}}
\newcommand{\ext}{\textnormal{ext}}
\newcommand{\aff}{\textnormal{aff}}
\newcommand{\pure}{\textnormal{pure}}
\newcommand{\univ}{\textnormal{univ}}
\newcommand{\id}{\operatorname{id}}
\newcommand{\Gal}{\operatorname{Gal}}
\newcommand{\Ker}{\operatorname{Ker}}
\newcommand{\pr}{\operatorname{pr}}
\newcommand{\Hom}{\operatorname{Hom}}
\newcommand{\Ext}{\operatorname{Ext}}
\newcommand{\affsch}{\operatorname{AffSch}}
\newcommand{\Alg}{\operatorname{Alg}}
\newcommand{\res}{\operatorname{Res}}
\newcommand{\infl}{\operatorname{Infl}}
\newcommand{\av}{\operatorname{Av}}
\newcommand{\daet}{\mathbf{DA}^{\et}}
\newcommand{\daetct}{\mathbf{DA}^{\et}_{\textnormal{ct}}}
\newcommand{\derivedcat}{\mathbf{D}}
\newcommand{\derivedcatct}{\mathbf{D}_{\textnormal{ct}}}
\newcommand{\derivednori}{\mathbf{DN}}
\newcommand{\betti}{\operatorname{Bti}}
\renewcommand{\pr}{\operatorname{pr}}
\newcommand{\heart}{\ensuremath\heartsuit}
\newcommand{\gm}{\mathbb{G}}
\newcommand{\phnor}{{}^p\mathrm{H}}
\newcommand{\cthnor}{{}^{\ct}\mathrm{H}}
\newcommand{\rat}{\operatorname{rat}}
\newcommand{\var}{\operatorname{Var}}
\newcommand{\shv}{\mathrm{Sh}}
\newcommand{\perv}{\operatorname{Perv}}
\newcommand{\inters}{\mathbf{IC}}
\newcommand{\localsystem}{\operatorname{Loc}}
\newcommand{\sets}{\mathbf{Sets}}
\newcommand{\op}{\operatorname{op}}
\newcommand{\colim}{\operatorname{colim}}
\newcommand{\Spec}{\operatorname{Spec}}
\renewcommand{\Im}{\operatorname{Im}}
\DeclareSymbolFont{cyrletters}{OT2}{wncyr}{m}{n}
\DeclareMathSymbol{\Sha}{\mathalpha}{cyrletters}{"58}
\DeclareMathSymbol{\Be}{\mathalpha}{cyrletters}{"42}
\newcommand{\flag}{\mathcal{F}\ell}
\title[The Motivic Satake Equivalence using Perverse Nori Motives]{The Motivic Satake Equivalence using Perverse Nori Motives}  
\author[Khoa Bang. P]{Khoa Bang Pham}
\address{The Hong Kong Universtiy of Science and Technology, Clear Water Bay, Hong Kong}
\email{phamkb@ust.hk}
\thanks{}
\keywords{Motivic Satake Isomorphism, Geometric Representation Theory, Six Functors Formalism}
\subjclass[2020]{14B05, 14F42, 32S30}
\begin{document}           
\begin{abstract}
  In this article, we develop the theory of stratified perverse Nori motives to prove a refinement of the geometric Satake equivalence of Mirkov\'ic-Vilonen, for which we call the Nori motivic Satake equivalence, in contrast to the "Tate motivic" Satake equivalence of Richarz-Scholbach.
\end{abstract}
\maketitle                 


\section{Introduction}

\subsection{The (geometric) Satake equivalence}

In geometric representation theory and the geometric Langlands program, the philosophy of Langlands duality suggests that topological data associated with a split reductive group $G$ is encoded in the algebraic data of its Langlands dual group $G^{\vee}$, and vice versa. The Satake isomorphism \cite{satake-1963} (see also \cite{gross-1998}) establishes a foundational bridge, providing an isomorphism between the spherical Hecke algebra and the representation ring of the dual group. Through the sheaf-function correspondence, it becomes natural to expect a categorical enhancement of the Satake isomorphism, a result now known as the geometric Satake equivalence. The first complete proof of this equivalence was achieved in the work of Mirkovi\'c and Vilonen \cite{mirkovic+vilonen-2007}, using the Tannakian formalism and the theory of perverse sheaves. Their work is built upon ideas from several earlier sources, including \cite{lusztig-1983} \cite{ginzburg-2000}\cite{beilinson+drinfeld-unpublished}. Following this pioneering work, the geometric Satake equivalence has been intensively studied and generalized. Subsequent developments include the works \cite{richarz-2012} and \cite{zhu-2017} for $\ell$-adic sheaves (for further references, see also \cite{baumann+riche-2018}\cite{zhu-2016}), as well as the recent proof by Fargues and Scholze \cite{fargues+scholze-2024} using the theory of spatial diamonds. 

Currently, Richarz and Scholbach in \cite{richarz+scholbach-2021} provide a motivic refinement of the geometric Satake equivalence, called the motivic Satake equivalence. More precisely, (take base scheme to be a field $k$ for simplicity) the motivic Satake equivalence reads
\begin{equation*}
    \mathbf{MTM}(\lnor^+G \textbackslash \lnor G / \lnor^+G) \simeq \operatorname{Rep}_{\mathbb{Q}}( G^{\vee}_1 \rtimes \uscr^{\textnormal{Tate}}_{\mathbb{Q}})
\end{equation*}
where $\mathbf{MTM}$ denotes the category of mixed Tate motives, $\uscr_{\mathbb{Q}}^{\textnormal{Tate}}$ is the pro-algebraic unipotent group arising from extensions of mixed Tate motives, $G^{\vee}_1$ is the modified Langlands dual group (see for instance, \cite{zhu-2016} and references therein). In comparison to the geometric Satake equivalence of Mirkovi\'c and Vilonen, the work of Richarz and Scholbach takes mixed Tate motives as a model for perverse sheaves and hence the appearance of the factor $\uscr_{\mathbb{Q}}^{\textnormal{Tate}}$ is understandable. More precisely, their motivic Satake equivalence is built upon their previous work \cite{richarz+scholbach-2021} on intersection motives; again, using mixed Tate motives as a model for perverse sheaves. Let us first discuss Richarz and Scholbach's work in more depth, as it inspires us to write this manuscript. 

Intersection cohomology complexes play a central role in the theory of perverse sheaves, as they provide fundamental building blocks of the category. A motivic refinement of these complexes for moduli stacks of shtukas was introduced and studied in \cite{richarz+scholbach-2020}, independently of the standard conjectures. In their subsequent work \cite{richarz+scholbach-2021}, Richarz and Scholbach use these intersection motives to establish a motivic Satake equivalence as described above. Their approach models perverse sheaves using Tate motives (the subcategory generated by shifts of motives of the form $\mathds{1}(m)$), and requires both the Beilinson–Soulé conjecture and $\ell$-adic realizations. We identify several aspects of this Tate motivic framework that can be improved:
\begin{enumerate} 
\item (Restrictive geometric assumptions) The use of Tate motives requires stratifying (ind)-schemes into products of affine spaces, a condition that is highly restrictive even for ordinary schemes. While this suffices for certain applications, such as the motivic Satake equivalence, it limits the generality of the theory.
\item (A limited category of motives) Tate motives form a small part of the full category of motives. Accordingly, the motivic Satake equivalence in \cite{richarz+scholbach-2021} involves the motivic Galois group associated with Tate motives. It is natural to expect that a more comprehensive theory of motives would lead to a larger and more refined Galois group.
\item (Exclusion of non-trivial local systems) Since Tate motives are generated by the Tate objects $\mathds{1}(m)$, their simple objects are precisely these twists. These behave analogously to trivial local systems, thereby excluding non-trivial local systems from the framework. This leads to several issues: (1) it again results in the appearance of the Tate motivic Galois group rather than a full motivic Galois group; (2) it produces certain unnatural phenomena, such as smooth descent equivalences (see \cite[Lemma 2.10]{richarz+scholbach-2021}).
\item (Galois descent) The motivic Satake category defined in \cite[Section 6]{richarz+scholbach-2021} is independent of the base field. However, allowing non-trivial local systems should relate Satake categories via Galois actions associated with base field extensions, a phenomenon expected at the level of a full motivic theory but absent in the Tate motivic setting.
\end{enumerate} 

Furthermore, in \cite{richarz+scholbach-2021}, there is only the $\ell$-adic realization taken into account but not the Betti realization (see the new work \cite{cass-2025}). At a motivic level, we should expect both $\ell$-adic realizations (with $\ell$ varying) and Betti realization and these realizations are compatible. More importantly, in the context of the Satake equivalence, one should be able to "create" (in the language of \cite{richarz+scholbach-2020}) the motivic $t$-structure using either of realizations (though the resulting dual group can be different; just as topological fundamental groups and $\ell$-adic fundamental groups). 

In this article, we address these limitations using perverse Nori motives developed in \cite{florian+morel-2019}. This approach allows us to develop a theory of intersection motives that closely mirrors the classical theory of perverse sheaves, while remaining independent of the standard conjectures. Importantly, this framework contains Tate motives as a subcategory. As our main application, we construct an enhanced version of the motivic Satake equivalence from \cite{richarz+scholbach-2021}, which we call the Nori-motivic Satake equivalence. This provides a more general and geometrically natural framework than the Tate-motivic version. Since we work with all Nori motives, the resulting Satake category is expected to be as large as possible (at least within the category of étale motives $\daet(-,\mathbb{Q})$). Upon restriction to suitable subcategories, we recover corresponding variants of the Satake equivalence, including the Tate–Satake equivalence.

\subsection{Why perverse Nori motives?}

The theory of motives is a grand program envisioned by Grothendieck to encapsulate, within a single framework, the essential features shared by various cohomology theories developed by his school for smooth projective varieties over a field $k$, which are nowadays called Weil cohomology theories. Typical examples include $\ell$-adic cohomology, algebraic de Rham cohomology, and Betti cohomology. The notion of pure motives was introduced by Grothendieck, along with the expectation that there should exist a universal Weil cohomology theory reproducing all known properties of the existing ones. A natural candidate for the category of pure motives is the category of Chow motives introduced by Grothendieck. However, this approach immediately leads to the notorious standard conjectures, which remain unproven to this day. It is also natural to imagine that one can define motives for smooth but possibly non-projective varieties, thereby obtaining the notion of mixed motives $\mathrm{MM}(k)$, and then recover pure motives through other tools such as resolutions of singularities or semisimplifications. The existence of such a category was conjectured by Beilinson. Furthermore, one may expect the existence of a relative version $\mathrm{MM}(X)$, where $X$ varies over varieties over $k$. Instead of directly searching for such a category, Deligne proposes to first construct its derived version $\mathbf{D}^b(\mathrm{MM}(X))$ and then recover $\mathrm{MM}(X)$ via a motivic t-structure. This idea is made precise in \cite{beilinson-2012}. 

A suitable candidate for the derived category of mixed motives is the category $\mathbf{DM}(k)$ constructed by Voevodsky in \cite{voevodsky-2000}. Along with the construction of $\mathbf{DM}(k)$, Voevodsky and Morel develop the so-called $\mathbb{A}^1$-homotopy theory of schemes (inspired by techniques from algebraic topology) in \cite{voevodsky+morel-1999}, and build the motivic stable homotopy category $\mathbf{SH}(k)$, in which motivic cohomology, algebraic K-theory, and algebraic cobordism are representable. In the thesis of Ayoub \cite{ayoub-thesis-1}\cite{ayoub-thesis-2}, it is shown that the constructions of Voevodsky and Morel can be unified into a general construction $\mathbf{SH}_{\mathfrak{M}}^{\tau}(X)$, where $\mathfrak{M}$ is a sufficiently good model category and $\tau$ is a topology on smooth varieties over $k$. If $\mathfrak{M} = \mathbf{Ch}(\mathbb{Q})$ is the category of chain complexes of $\mathbb{Q}$-vector spaces and $\tau = \textnormal{\'et}$ is the \'etale topology, then the constructible part $\mathbf{SH}_{\mathfrak{M}}^{\tau}(X) = \mathbf{DA}^{\textnormal{\'et}}_{\textnormal{ct}}(X,\mathbb{Q})$ is expected to be the derived category $\mathbf{D}^b(\mathrm{MM}(X))$. Meanwhile, we note that the six operations (as well as vanishing cycles and nearby cycles) for $\daet(X,\mathbb{Q})$ are realized in \cite{ayoub-thesis-1}\cite{ayoub-thesis-2}\cite{cisinski+deglise-2019}. Nowadays, after numerous work, one knows how to define the category $\operatorname{MM}(X)$. In \cite{florian+morel-2019}, Ivorra and Morel defines the category $\mscr\perv(X)$ of \textit{motivic perverse sheaves} (also called \textit{perverse Nori motives} as $\mscr\perv(k)$ recovers the category of Nori motives in \cite{nori-2011}). In \cite{tubach-2025}, Tubach proves that under the standard conjectures for fields of characteristic zero, $\mscr\perv(X)$ is exactly the desired category of mixed motives $\operatorname{MM}(X)$ and there is an equivalence of categories
\begin{equation*}
    \daetct(X,\mathbb{Q}) \simeq \derivedcat^b(\mscr\perv(X))
\end{equation*}
and they carry two $t$-structures, the constructible $t$-structure and the motivic (or perverse motivic) $t$-structure. In other words, philosophically, relying on standard conjectures, one can still choose one of two models -  $\daetct(X,\mathbb{Q})$ or $\derivedcat^b(\mscr\perv(X))$ - to study motives. The advantage of the approach taken by Ivorra-Morel is that it automatically provide us the heart which evidently behaves like the category of perverse sheaves and hence is appropriate for many geometrical constructions already done in classical settings. In this manuscript, we follow the Ivorra, Morel's approach to study a form of the geometric Satake equivalence.

\subsection{Formulation of main results} The first main result is inspired by stratified mixed Tate motives in the sense of \cite{richarz+scholbach-2020}. Let $k$ be a field together with a complex embedding $\sigma \colon k \longhookrightarrow \mathbb{C}$. For a $k$-variety $X$, we denote by $\mscr\perv(X)$ the category of motivic perverse sheaves on $X$ constructed in \cite{florian+morel-2019} and we write $\derivednori^b(X) = \derivedcat^b(\mscr\perv(X))$ its bounded derived category. The extension of $\derivednori^b(X)$ to ind-schemes is reviewed in the first section. 

Let $G$ be a split reductive group over $k$ with a choice of a Borel subgroup and a maximal torus $T \leq B$. By an abuse of notation, we implicitly choose an integral model (a Chevalley group with root datum of $G$) of $G$ and still denote by $G$ this group. Let $\mathbf{f} \subset  X_*(T) \otimes \mathbb{R}$ be a facet in the standard apartment. Let $\pscr_{\mathbf{f}},\pscr_{\mathbf{f}'}$ and $\flag_{G,\mathbf{f}},\flag_{G,\mathbf{f}'}$ be associated parahoric subgroups (in the sense of Bruhat-Tits \cite{bruhat+tits-1984}\cite{bruhat+tits-1987}) and partial affine flag varieties, respectively. Let $\pscr_{\mathbf{f}'}$ act on $\flag_{G,\mathbf{f}}$ canonically. The following result is analogous to \cite[\textbf{Theorem C}]{richarz+scholbach-2020}. 
\begin{theorem} 
Assume that $\mathbf{f},\mathbf{f}'$ are contained in the closure of the standard alcove defined by $B$. the (affine) Bruhat decomposition 
\begin{equation*}
    \flag_{G,\mathbf{f}} = \coprod_{w \in W_{\mathbf{f}'}\setminus W_{\ext} / W_{\mathbf{f}}} \flag_G^{w}(\mathbf{f}',\mathbf{f})
\end{equation*}
is a Whitney-Nori stratification and this yields:
\begin{enumerate}
\item (Non-equivariant category) There is a well-defined category of stratified Nori motives $\derivednori^b(\flag_{G,\mathbf{f}},\pscr_{\mathbf{f}'})^{\sigma}$ (the second variable $\pscr_{\mathbf{f}'}$ indicates the stratification into $\pscr_{\mathbf{f}'}$-orbits and the superscript indices the dependence on the complex embedding $\sigma \colon k \longhookrightarrow \mathbb{C}$) with a motivic perverse $t$-structure whose heart is stratified perverse Nori motives $\mscr\perv(\flag_{G,\mathbf{f}},\pscr_{\mathbf{f}'})^{\sigma}$ generated by intersection motives of the form $\mathbf{IC}_w(L)$ with $w \in W_{\mathbf{f}'} \setminus W_{\ext} / W_{\mathbf{f}}$ and $L \in \mscr\localsystem(\flag_G^{w}(\mathbf{f}',\mathbf{f}))^{\sigma}$ a motivic local system on the Schubert variety.
\item (Equivariant category) There is a well-defined category of equivariant, stratified Nori motives $\derivednori^b_{\pscr_{\mathbf{f}'}}(\flag_{G,\mathbf{f}},\pscr_{\mathbf{f}'})^{\sigma}$ with a motivic perverse $t$-structure whose heart is equivariant, stratified perverse Nori motives $\mscr\perv_{\pscr_{\mathbf{f}'}}(\flag_{G,\mathbf{f}},\pscr_{\mathbf{f}'})^{\sigma}$ generated by equivariant intersection motives of the form $\mathbf{IC}_w^{\pscr_{\mathbf{f}'}}(L)$ with $w \in W_{\mathbf{f}'} \setminus W_{\ext} / W_{\mathbf{f}}$ and $L \in \mscr\localsystem_{\pscr_{\mathbf{f}'}}(\flag_G^{w}(\mathbf{f}',\mathbf{f}))^{\sigma}$ an equivariant motivic local system on the Schubert variety. Moreover, the forgetful functor
\begin{equation*}
    \mscr\perv_{\pscr_{\mathbf{f}'}}(\flag_{G,\mathbf{f}},\pscr_{\mathbf{f}'})^{\sigma} \longrightarrow \mscr\perv(\flag_{G,\mathbf{f}},\pscr_{\mathbf{f}'})^{\sigma}
\end{equation*}
is fully faithful and the essentially image is given by 
\begin{equation*}
    \left \{M \in \mscr\perv(\flag_{G,\mathbf{f}},\pscr_{\mathbf{f}'})^{\sigma} \mid a^*(M) \simeq p^*(M) \right \}, 
\end{equation*}
where $a,p \colon \pscr_{\mathbf{f}'} \times \flag_{G,\mathbf{f}} \longrightarrow \flag_{G,\mathbf{f}}$ denote the action and the projection, respectively. 
\item (Galois descent) Let $F/k$ be a Galois extension, there are canonical equivalences of categories 
\begin{equation*}
\begin{split} 
    \left(\mscr\perv(\flag_{G,\mathbf{f}} \otimes F, \pscr_{\mathbf{f}'} \otimes F)^{\sigma} \right)^{\Gal(F/k)} & \simeq \mscr\perv(\flag_{G,\mathbf{f}}, \pscr_{\mathbf{f}'})^{\sigma} \\ 
      \left(\mscr\perv_{\pscr_{\mathbf{f}'} \otimes F}(\flag_{G,\mathbf{f}} \otimes F, \pscr_{\mathbf{f}'} \otimes F)^{\sigma}\right)^{\Gal(F/k)} & \simeq \mscr\perv_{\pscr_{\mathbf{f}'}}(\flag_{G,\mathbf{f}}, \pscr_{\mathbf{f}'})^{\sigma}.
    \end{split} 
\end{equation*}
\item (Realizations and Complex Embeddings) Let $\hbf$ be either the complex derived category $\mathbf{D}^b_{\ct}((-)^{\an},\mathbb{Q})$ or the $\ell$-adic derived category $\mathbf{D}^b_{\ct}((-)_{\et},\mathbb{Q}_{\ell})$ (for any $\ell$ prime), there are Betti and $\ell$-adic realizations
\begin{equation*}
    \begin{split}
        \derivednori^b_{\pscr_{\mathbf{f}'}}(\flag_{G,\mathbf{f}},\pscr_{\mathbf{f}'})^{\sigma} \longrightarrow \hbf_{\pscr_{\mathbf{f}'}}(\flag_{G,\mathbf{f}},\pscr_{\mathbf{f}'})
    \end{split}
\end{equation*}
and either of them "creates" the perverse $t$-structure on the left. Moreover, if $\sigma' \colon k \longhookrightarrow \mathbb{C}$ is another complex embedding, there is an equivalence of categories 
\begin{equation*}
    \begin{split}
        \mscr\perv(\flag_{G,\mathbf{f}},\pscr_{\mathbf{f}'})^{\sigma} & \simeq    \mscr\perv(\flag_{G,\mathbf{f}},\pscr_{\mathbf{f}'})^{\sigma'} \\ 
           \mscr\perv_{\pscr_{\mathbf{f}'}}(\flag_{G,\mathbf{f}},\pscr_{\mathbf{f}'})^{\sigma}  & \simeq  \mscr\perv_{\pscr_{\mathbf{f}'}}(\flag_{G,\mathbf{f}},\pscr_{\mathbf{f}'})^{\sigma'} 
    \end{split}
\end{equation*}
In other words, the categories $\mscr\perv(\flag_{G,\mathbf{f}}, \pscr_{\mathbf{f}}),\mscr\perv_{\pscr_{\mathbf{f}'}}(\flag_{G,\mathbf{f}}, \pscr_{\mathbf{f}})$ are motivic. 
\end{enumerate} 
\end{theorem}
The case $\mathrm{Gr}_G = \flag_{G,0}$ the affine Grassmannian and $\lnor^+G  = \pscr_0$ the positive loop group is of our main interest. The following result summarizes the structure of the categories $\mscr\perv(\mathrm{Gr}_G,\lnor^+G)$ and $\mscr\perv_{\lnor^+G}(\mathrm{Gr}_G,\lnor^+G)$ (besides what are described above) and provides an enhancement of the Tate-motivic Satake equivalence. 
\begin{theorem}
Let $G$ be a connected, split reductive group over a field $k$. Let $\mathrm{Gr}_G$ be its associated affine Grassmannian equipped with the natural action of the positive loop group $\lnor^+G$. The following hold true:
\begin{enumerate} 
\item Let $n \in \mathbb{Z}$, then the category $\mscr\perv(\mathrm{Gr}_G,\lnor^+G,n)$ of motives pure of weight $n$ is semisimple. In particular, the category 
\begin{equation*} 
\mscr\perv(\mathrm{Gr}_G,\lnor^+G,\pure) = \bigoplus_{n \in \mathbb{Z}}\mscr\perv(\mathrm{Gr}_G,\lnor^+G,n)
\end{equation*} 
is semisimple. The same applied to equivariant motives, one has a decomposition 
\begin{equation*} 
\mscr\perv_{\lnor^+G}(\mathrm{Gr}_G,\lnor^+G,\pure) = \bigoplus_{n \in \mathbb{Z}}\mscr\perv_{\lnor^+G}(\mathrm{Gr}_G,\lnor^+G,n)
\end{equation*} 
into semisimple categories.
\item Let $\scal \subset \mscr\localsystem(k)$ be a Tannakian subcategory of motivic local systems, there is a category of $\scal$-motives $\mscr\perv_{\lnor^+G}^{\scal}(\mathrm{Gr}_G,\lnor^+G) \subset \mscr\perv_{\lnor^+G}(\mathrm{Gr}_G,\lnor^+G)$ (here $\scal$ stands for "small") generated by intersection motives of "small" motivic local systems (for definition, see {\color{blue}section 7.1}). There is a canonical convolution product
\begin{equation*}
    (- ) \star (-) \colon \mscr\perv_{\lnor^+G}^{\scal}(\mathrm{Gr}_G,\lnor^+G) \times \mscr\perv^{\scal}_{\lnor^+G}(\mathrm{Gr}_G,\lnor^+G) \longrightarrow \mscr\perv^{\scal}_{\lnor^+G}(\mathrm{Gr}_G,\lnor^+G)
\end{equation*}
and a fiber functor
\begin{equation*}
    \omega \colon \mscr\perv^{\scal}_{\lnor^+G}(\mathrm{Gr}_G,\lnor^+G) \longrightarrow \operatorname{Vect}_{\mathbb{Q}}^{\textnormal{fd}}
\end{equation*}
making $\mscr\perv_{\lnor^+G}^{\scal}(\mathrm{Gr}_G,\lnor^+G)$ a neutral Tannakian category whose dual group is $G^{\vee}_{\mathbb{Q}} \times \gscr^{\mot}_{\scal}(k)$ with $G^{\vee}_{\mathbb{Q}}$ the Langlands dual group and $\gscr^{\mot}_{\scal}(k)$ the dual group of $\scal$, i.e., there is an equivalence of Tannakian categories
\begin{equation*}
    \mscr\perv^{\scal}_{\lnor^+G}(\mathrm{Gr}_G,\lnor^+G) \simeq \operatorname{Rep}^{\textnormal{fd}}_{\mathbb{Q}}(G^{\vee}_{\mathbb{Q}} \times \gscr^{\mot}_{\scal}(k)). 
\end{equation*}
The construction is functorial in the following sense: 
\begin{itemize}
    \item If $\scal_1 \subset \scal_2 \subset \mscr\localsystem(k)$ are Tannakian subcategories, then there is a canonical unital monoidal functor
    \begin{equation*}
        \mscr\perv_{\lnor^+G}^{\scal_1}(\mathrm{Gr}_G,\lnor^+G) \longrightarrow \mscr\perv_{\lnor^+G}^{\scal_2}(\mathrm{Gr}_G,\lnor^+G). 
    \end{equation*}
    \item If $e \colon \Spec(F) \longrightarrow \Spec(k)$ is a field extension of subfields of $\mathbb{C}$, then there is a canonical unital monoidal functor 
\begin{equation*}
     \mscr\perv^{\scal}_{\lnor^+G_k}(\mathrm{Gr}_{G_k},\lnor^+G_k)  \longrightarrow \mscr\perv^{e^*(\scal)}_{\lnor^+G_F}(\mathrm{Gr}_{G_F},\lnor^+G_F).
\end{equation*}
If $e$ is an \'etale morphism, then there is a canonical equivalence of categories
\begin{equation*}
    \mscr\perv^{\scal}_{\lnor^+G_k}(\mathrm{Gr}_{G_k},\lnor^+G_k) \simeq \mscr\perv^{e^*(\scal)}_{\lnor^+G_F}(\mathrm{Gr}_{G_F},\lnor^+G_F)^{\Gal(F/k)}.
\end{equation*}
\end{itemize}
\end{enumerate} 
\end{theorem}
The category $\mscr\perv_{\lnor^+G}^{\scal}(\mathrm{Gr}_G,\lnor^+G)$ is the largest possible choice (despite of its name "small motives") in the following sense: any reasonable sub-theory of motives inside Nori motives $\mathrm{M}(-) \subset \mscr\localsystem(k)$ yields a corresponding theory of geometric Satake equivalence. For instance, by restricting to:
\begin{enumerate}
    \item If $\scal = \mscr\localsystem(k)$, one obtains the dual group as $G^{\vee}_{\mathbb{Q}} \times \gscr^{\mot}(k)$. 
    \item If $\scal$ is the category of pure Tate motives, one obtains the dual group as $G^{\vee}_{\mathbb{Q}} \times \mathbb{G}_{m,\mathbb{Q}}$. 
\end{enumerate}
In particular, one can safely say that the ordinary geometric Satake equivalence (using either analytic or $\ell$-adic perverse sheaves) is the "Artin part" (i.e., the $0$-dimensional part) of the full motivic Satake equivalence. 

\subsection{Structure of the manuscript}

In {\color{blue}section 2}, we recall fundamental properties of perverse Nori motives and prove some results that are well-known for ordinary perverse sheaves but are not written yet for perverse Nori motives. In {\color{blue}section 3}, we study (stratified) perverse Nori motives in depths. In {\color{blue}section 4}, we define the (derived) equivariant categories of perverse Nori motives for schemes and then extend the framework to ind-schemes. In {\color{blue}section 5}, we study the category of perverse Nori motives on partial affine flag varieties associated with a split reductive group. In {\color{blue}section 6}, we concentrate to the case of affine Grassmannians. In {\color{blue}section 7}, we study various Satake categories and determine the dual groups.

\subsection*{Acknowledgements} The author gratefully acknowledges the support of the Hong Kong RGC GRF grants 16304923 and 16301324. He would like to express his deepest gratitude to Sophie Morel for pointing out a serious mistake in an early version of the paper. He is also thankful to Jakob Scholbach and Timo Richarz for their valuable feedback on earlier drafts. Furthermore, he thanks Joseph Ayoub and Luca Terenzi for answering his naive questions on (motivic) local systems during the preparation of this work. Special thanks go to Le Nhat Hoang and Pham Quang Toan for inspiring him and sharing their thoughts on geometric representation theory long before this project began. He is also grateful to Thiago Landim and Swann Tubach for helpful discussions on Tate motives and Nori motives for algebraic stacks.

Finally, the author thanks Ho Phu Quoc for his constant support as a postdoctoral supervisor, as well as colleagues Yuguo Zhang, Wenwei Liu, and Yang Hu for their encouragement.

\section{Perverse Nori Motives}

In an unpublished work (see for instance \cite{nori-2011} and see also \cite{arapura-2013}\cite{florian-2017}\cite{huber-book}\cite{ayoub+luca-2015}\cite{handbookofktheory}), Nori defines a candidate for the category of mixed motives as envisioned by Grothendieck. This category is now called the category of \textit{Nori motives} and it underlies a Tannakian structure, leading to a mysterious pro-algebraic group, called \textit{motivic Galois group}. Among the subsequent work, Ivorra's work \cite{florian-2017} is based on perverse sheaves and in \cite{florian+morel-2019}, Ivorra and Morel study the derived category of perverse Nori motives and prove that they acquire a formalism of four operations $(f^*,f_*,f_!,f^!)$. In \cite{terenzi-2025}, Terenzi completes the picture by constructing two remaining operations $(\otimes,\underline{\Hom})$. We take their results for granted throughout this work. 
\subsection{Motivic Perverse Sheaves (d'apres Ivorra-Morel-Terenzi)} We recall the definition of perverse motivic sheaves. We assume that the reader is familiar with the construction of the category of \'etale motives $\daet(X,\mathbb{Q})$ (see for instance \cite{ayoub-2014}) and the construction of the Betti and $\ell$-adic (also called \'etale) realizations (see \cite{ayoub-2010}\cite{ayoub-2014}). 
\begin{defn}
Let $\sigma \colon k \longhookrightarrow \mathbb{C}$ be a field embedded in $\mathbb{C}$. Let $X$ be a $k$-variety. We define the \textit{category of perverse motivic sheaves} $\mscr\perv(X)^{\sigma}$ as the universal abelian factorization
\begin{equation*}
    \begin{tikzcd}[sep=large]
              \daetct(X,\Lambda) \arrow[r,"\betti^*_X"] \arrow[d,"\phnor^0_{\univ}",swap] & \derivedcat_{\ct}^b(X^{\an},\mathbb{Q}) \arrow[r,"\phnor^0"] & \perv(X^{\an}) \\
              \mscr\perv(X)^{\sigma} \arrow[rru,"\rat_X",swap] &  & 
    \end{tikzcd}
\end{equation*}
Alternatively, by \cite[Proposition 6.11]{florian+morel-2019}, one can use the $\ell$-adic realization to obtain $\mscr\perv(X)$. The resulting category (regardless of the realization functor used) is independent of the choice of the prime $\ell$ and the embedding $\sigma$ (see \cite[Proposition 6.11]{florian+morel-2019}) so we may safely denote by $\mscr\perv(X)$ this category. The category
\begin{equation*}
    \derivednori^b(X) \coloneqq \derivedcat^b(\mscr\perv(X))
\end{equation*}
is called the \textit{category of derived Nori motives} (or \textit{mixed Nori motives}). 
\end{defn}
We recollect here some theoretically important results (see \cite{florian+morel-2019}\cite{terenzi-2024}\cite{tubach-2025}) that we cite frequently thoughout this manuscript. 
\begin{theorem} Let $\sigma \colon k \longrightarrow \mathbb{C}$ be a field embedded in $\mathbb{C}$. The following statements hold true:
\begin{enumerate}
    \item \textnormal{(Ivorra and Morel)} The collection $\derivedcat^b(\mscr\perv(X))$ with $X$ quasi-projective $k$-varieties forms a stable homotopical $2$-functor in the sense of \cite{ayoub-thesis-1}. In particular, there exists a formalism of four operations $(f^*,f_*,f_!,f^!)$.
    \item \textnormal{(Terenzi)} The collection $\derivedcat^b(\mscr\perv(X))$ with $X$ quasi-projective $k$-varieties has a six-functors formalism $(f^*,f_*,f_!,f^!,\mathds{1},\otimes,\underline{\Hom})$ compatible with the Betti realization and $\ell$-adic realization
\begin{equation*}
\begin{split} 
    \derivednori^b(X) & \longrightarrow \derivedcatct^b(X^{\an},\mathbb{Q}) \\
    \derivednori^b(X) & \longrightarrow \derivedcatct^b(X_{\et},\mathbb{Q}_{\ell})
    \end{split}
\end{equation*}
at the level of derived categories.
    \item \textnormal{(Tubach)} The categories $\derivedcat^b(\mscr\perv(X))$ and its six operations admit $(\infty,1)$-categorical enhancements and $\derivedcat^b(\mscr\perv(X))$ is a $h$-hypersheaf. Moreover, there is a second $t$-structure on $\derivedcat^b(\mscr\perv(X))$, called the constructible $t$-structure and the canonical functor
    \begin{equation*}
        \derivedcat^b(\mscr\perv(X)) \longrightarrow \derivedcatct^b(X,\mathbb{Q})
    \end{equation*}
    is constructible $t$-exact, which induces a faithful, exact functor on hearts
    \begin{equation*}
        \mscr\shv_{\ct}(X) \longrightarrow \shv_{\ct}(X,\mathbb{Q}),
    \end{equation*}
    where $ \mscr\shv_{\ct}(X) = \derivedcat^b(\mscr\perv(X))^{\heart}$ is called the category of motivic constructible sheaves and $\shv_{\ct}(X)$ is the category of ordinary constructible sheaves. There is also a motivic Beilinson equivalence
    \begin{equation*}
       \derivedcat^b(\mscr\perv(X)) \simeq \derivedcat^b(\mscr\shv_{\ct}(X)).
    \end{equation*}
\end{enumerate}
For technical reasons, we sometimes have to work with the ind-completion.
\begin{enumerate}
    \item [(4)] \textnormal{(Tubach)} Let $\derivednori(-) = \operatorname{Ind}(\mathbf{D}^b(\mscr\perv(-)))$ be the ind-completion. There exists a pair of Nori realizations
    \begin{equation*}
        (\operatorname{Nri}^* \dashv \operatorname{Nri}_*) \colon \daet(X,\mathbb{Q}) \longrightarrow \derivednori(X) 
    \end{equation*}
    and there exists an equivalence of $\infty$-categories
    \begin{equation*}
        \derivednori(X) = \operatorname{Mod}_{\operatorname{Nri}}(\daet(X,\mathbb{Q}))
    \end{equation*}
    where $\operatorname{Nri}$ is the Nori algebra defined in \cite{tubach-2025}. Moreover, $\mathbf{DN}(-)$ is also a $h$-hypersheaf (on $k$-varieties) and acquires a formalism of six operations (see \cite[Proposition 3.1.2]{ruimy-2025}) and the Nori realization is an equivalence if standard conjectures for fields of characteristic zero hold.
\end{enumerate}
\end{theorem}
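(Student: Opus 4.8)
The statement collects results of \cite{florian+morel-2019}, \cite{terenzi-2024}, \cite{terenzi-2025} and \cite{tubach-2025}, so the proof amounts to organizing the appropriate citations together with an indication of the mechanisms at play. For part (1), the plan is to apply Nori's formalism of the universal abelian category attached to a (co)homological functor out of a triangulated category: the defining diagram realizes $\mscr\perv(X)$ as the universal factorization of $\phnor^0 \circ \betti^*_X$, and the four operations on $\daetct(X,\mathbb{Q})$ are transported to the $\mscr\perv(X)$'s by functoriality of this universal construction, using that the Betti (or $\ell$-adic) realization intertwines $(f^*,f_*,f_!,f^!)$ and that these operations are compatible with the perverse $t$-structure up to the usual shifts. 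The axioms of a stable homotopical $2$-functor in the sense of \cite{ayoub-thesis-1} (localization, base change, projection formula, homotopy invariance) are then inherited from the corresponding properties in $\daetct$ via the conservative realization.

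For part (2), I would invoke Terenzi's construction \cite{terenzi-2025} of the monoidal structure $(\mathds{1},\otimes,\underline{\Hom})$ on $\derivednori^b(X) = \derivedcat^b(\mscr\perv(X))$. The point is that $\otimes$ is not perverse $t$-exact and so cannot be transported naively on hearts; instead one works at the level of the derived category and builds $\otimes$ and $\underline{\Hom}$ by recollement, i.e. by induction on the strata of a stratification, mirroring the classical construction of the tensor product of perverse sheaves, and checks compatibility with the realizations stratum by stratum, where the comparison reduces to local systems. Combined with part (1), this upgrades the four operations to the full six-functor formalism compatibly with the Betti and $\ell$-adic realizations at the level of derived categories.

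For part (3), the plan is to appeal to Tubach \cite{tubach-2025}. The $(\infty,1)$-enhancement is obtained by presenting $\derivedcat^b(\mscr\perv(X))$ as a full subcategory of the $\infty$-categorical derived category of the Grothendieck abelian category $\mathrm{Ind}(\mscr\perv(X))$ and transporting the six operations as functors of stable $\infty$-categories; the $h$-hypersheaf property is deduced from the analogous descent statement for $\daetct(-,\mathbb{Q})$ together with the conservativity and $t$-exactness of the realization. The constructible $t$-structure is produced by gluing the tautological $t$-structures on the strata, and its heart $\mscr\shv_{\ct}(X)$ maps faithfully and exactly to ordinary constructible sheaves because the realization is constructible $t$-exact and conservative. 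Finally, the motivic Beilinson equivalence $\derivedcat^b(\mscr\perv(X)) \simeq \derivedcat^b(\mscr\shv_{\ct}(X))$ is the motivic incarnation of Beilinson's theorem identifying the bounded derived category of perverse sheaves with that of constructible sheaves, proved by the same strategy of realization functors between (filtered) derived categories.

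The main obstacle — and the reason these results are cited rather than reproved here — is the $\infty$-categorical enhancement in part (3): lifting the diagram-category constructions of $\mscr\perv(X)$ and of its six operations to homotopy-coherent functors of stable $\infty$-categories, while simultaneously producing the second $t$-structure, the $h$-descent, and the Beilinson equivalence, demands a substantial amount of $\infty$-categorical bookkeeping carried out in \cite{tubach-2025}. Parts (1)--(2) are comparatively formal once Nori's universal construction and the classical recollement formalism for perverse sheaves are available.
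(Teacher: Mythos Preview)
The paper does not give a proof of this theorem: it is stated as a recollection of results from \cite{florian+morel-2019}, \cite{terenzi-2024}, and \cite{tubach-2025}, with the attributions already built into the theorem statement and no further argument offered. Your proposal is therefore consistent with the paper's treatment, and in fact goes further by sketching the internal mechanisms (universal factorization, recollement for the tensor structure, Ind-completion for the $\infty$-enhancement, gluing for the constructible $t$-structure); the paper simply takes these results for granted.
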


\begin{rmk} \label{rmk: t-structures on ind-completions}
The categories $\mathbf{DN}(X)$ also admit canonical motivic perverse and constructible t-structures. Indeed, by \cite[Proposition 2.13]{antieau-2019}, for any stable $\infty$-category $\ccal$ with a t-structure, the ind-completion $\operatorname{Ind}(\ccal)$ has a t-structure $(\ccal^{\geq 0},\ccal^{\leq 0})$ given by $(\operatorname{Ind}(\ccal^{\geq 0}),\operatorname{Ind}(\ccal^{\leq 0}))$. The canonical functor $\derivednori^b(X) \longrightarrow \derivednori(X)$ is then t-exact for both perverse and constructible t-structures. The category $\derivednori^b(X)$ admits other intepretations inside $\mathbf{DN}(X)$ as follows (see \cite{ruimy-2025}):
\begin{enumerate}
    \item It is the full subcategory of $\derivednori(X)$ consisting of motives $M$ such that for any open subscheme $U \subset X$, there exists a finite stratification into locally closed subschemes $U = \coprod_{i \in I} U_i$ such that $M_{\mid U_i}$ is dualisable. 
    \item It is the full thick subcategory of $\derivednori(X)$ generated by motives of the form $f_!(\mathds{1}_Y)$ with $f \colon Y \longrightarrow X$ smooth. 
\end{enumerate}
\end{rmk}
\begin{rmk} \label{rmk: extensions to prestacks}
We sometimes have to deal with objects and functorialities that are not of finite type over $k$. The problem can be solved as follows. The categories $\derivednori^b(-)$ is a h-hypersheaf (see \cite[Proposition 3.1.5]{ruimy-2025}) and take values in $\infty\operatorname{Cat}^{\textnormal{st}}_{\textnormal{cont}}$, the $\infty$-category of stable, $\mathbb{Q}$-linear, $\infty$-categories with colimit preserving functors. Let us fix a sufficiently large regular cardinal $\kappa$ (in fact, we only need the case $\kappa = \mathbb{N}$), we have the following embeddings 
\begin{equation*}
    \var_k \longhookrightarrow \var_k^{\kappa} \longhookrightarrow \operatorname{Func}( \var_k^{\kappa}, \infty\operatorname{Grpd})
\end{equation*}
where the middle $\var_k^{\kappa}  = \operatorname{Pro}_{\kappa}( \var_k)$ is the category of $\kappa$-pro-objects and the right is the category of  $\infty$-presheaves $\operatorname{Func}(\var_k^{\kappa},\infty\operatorname{Grpd})$ for which we call $\kappa$-\textit{prestacks}, denoted $\operatorname{PreStk}_k^{\kappa}$. These categories are small and since $\infty\operatorname{Cat}^{\textnormal{st}}_{\textnormal{cont}}$ is bicomplete (see for instance \cite[Chapter I, Corollary 5.3.4]{gaitsgory-2017}) then by \cite[Theorem 5.1.3.6]{lurie-2009}, there exists a right Kan extension
\begin{equation*}
    \derivednori^b \colon  \operatorname{PreStk}_k^{\kappa} \longrightarrow \infty\operatorname{Cat}^{\textnormal{st}}_{\textnormal{cont}}.
\end{equation*}
By definition, for any morphism $f \colon X \longrightarrow Y$ in $\operatorname{PreStk}_k^{\kappa}$, there is a pullback $f^* \colon \derivednori^b(Y) \longrightarrow \derivednori^b(X)$ on compact objects. We will be particularly interested in the case of ind-varieties, pro-affine varieties and quotient stacks.
\end{rmk}
By these results, we have two collections of cohomological functors
\begin{equation*}
    \phnor^i \colon \derivednori^b(X) \longrightarrow \mscr\perv(X)  \ \ \ \ \text{and} \ \ \ \  \cthnor^i \colon \derivednori^b(X) \longrightarrow \mscr\shv_{\ct}(X) . 
\end{equation*}
The reader should not be confused when we call the first, the standard $t$-structure (according to the way it is defined) $\derivedcat^b(\mscr\perv(X))$ the \textit{motivic perverse $t$-structure} (or simply motivic $t$-structure) and the second $t$-structure the \textit{motivic constructible $t$-structure}. For the reader's convenience, we record here standard properties of perverse Nori motives. Most of the usual properties known for ordinary perverse sheaves still hold for perverse Nori motives and what is powerful in the Nori setting is that the Tannakian structure is richer.
\begin{enumerate}
    \item The category $\mscr\perv(X)$ is $\mathbb{Q}$-linear abelian category. \item If $f \colon X \longrightarrow Y$ is affine, quasi-affine, then there exist exact functors 
    \begin{equation*}
        f_!,f_* \colon \mscr\perv(X) \longrightarrow \mscr\perv(Y).
    \end{equation*}
    \item If $f \colon X \longrightarrow Y$ is smooth, then there exists an exact functor $f^{\dagger} = f^*[d_f] \colon \mscr\perv(Y) \longrightarrow \mscr\perv(X)$.
    \item There is a Verdier duality $\mathbb{D} \colon \mscr\perv(X) \longrightarrow \mscr\perv(X)^{\op}$ commuting with $f^{\dagger}$ for any smooth morphism $f$. 
    \item There exists a unique weight structure on each $\mscr\perv(X)$. Moreover, let $\mscr\perv(X,n)$ be the full subcategory of $\mscr\perv(X)$ containing motives pure of weight $n$, then $\mscr\perv(X,n)$ is semi-simple. In particular, the full subcategory of pure motives
    \begin{equation*}
        \mscr\perv(X,\pure) = \left \{A \in \mscr\perv(X) \mid A \ \textnormal{pure of some weight} \right \}
    \end{equation*}
    is a semi-simple category and it is the maximal semi-simple subcategory of $\mscr\perv(X)$ since any object admits a weight filtration.
    \item There is a canonical equivalence $\mscr\perv(\Spec(k)) \simeq \mathbf{HM}(k)$, with $\mathbf{HM}(k)$ the category of Nori motives and hence underlies a pro-algebraic group, namely, the motivic Galois group $\gscr^{\mot}(k)$. The category $\mscr\perv(\Spec(k),\pure)$ is a semi-simple category and equivalent to the category of Andr\'e motives (see for instance \cite{huber-book}). The dual group  $\gscr_{\pure}^{\mot}(k)$ is pro-reductive and there exists an exact sequence
    \begin{equation*}
        1 \longrightarrow \uscr^{\textnormal{mot}}(k) \longrightarrow \gscr^{\mot}(k) \longrightarrow \gscr_{\pure}^{\mot}(k) \longrightarrow 1
    \end{equation*}
    with $\uscr^{\textnormal{mot}}(k)$ pro-unipotent. By the weak Tannakian formalism of Ayoub (see \cite{ayoub-hopf1}\cite{ayoub-hopf2}), there is another motivic Galois group and it is isomorphic to $\gscr^{\mot}(k)$ by the work of Choudhury and Gallauer \cite{choudhury+gallauer-2017}. 
    \item As explained in \cite{florian+morel-2019}\cite{terenzi-2024}, there exists a theory of weights on perverse Nori motives and they share the same formal properties with ordinary $\ell$-adic perverse sheaves (see \cite{bbd}). Let $f \colon X \longrightarrow Y$ be a $k$-morphism. 
    \begin{itemize}
    \item The functors $f^*,f_!$ send $\derivednori^b(X,\leq w)$ to $\derivednori^b(Y,\leq w)$. 
    \item The functors $f_*,f^!$ send $\derivednori^b(Y,\geq w)$ to $\derivednori^b(X,\geq w)$. 
    \item The functor $(-) \otimes (-)$ sends $\derivednori^b(X,\leq w) \times \derivednori^b(X,\leq v)$ to $\derivednori^b(X,\leq w+v)$. \item The functor $\underline{\Hom}$ sends $\derivednori^b(X,\leq w) \times \derivednori^b(X,\leq v)$ to $\derivednori^b(X,\leq v-w)$. 
    \item Let $X,Y$ be $k$-varieties, then the box product
\begin{equation*}
    (-) \boxtimes (-) \colon \derivednori^b(X) \times \derivednori^b(Y) \longrightarrow \derivednori^b(X\times_k Y)
\end{equation*}
is weight-exact.
    \end{itemize}
   This weight structure is transversal to the canonical $t$-structure in the sense of \cite{bondarko-2012} and if $A, B \in \derivedcat^b(\mscr\perv(X))$ with $K$ is of weight $\leq w$ and $L$ is of weight $> w$, then 
    \begin{equation*}
        \Hom_{\derivedcat^b(\mscr\perv(X))}(A,B) = 0.
    \end{equation*}
    In particular, 
    \begin{equation*}
        \Ext^r_{\mscr\perv(X)}(A,B) = 0
    \end{equation*}
    if $A,B$ are pure of weight $i,j$ and $i < j +r$. 
    \item Let $j \colon U \longrightarrow X$ be a quasi-finite morphism between $k$-varieties. The intermediate extension functor $j_{!*} \colon \mscr\perv(U) \longrightarrow \mscr\perv(X)$ is defined by 
    \begin{equation*}
    j_{!*} \coloneqq \Im\left(\phnor^0(j_!) \longrightarrow \phnor^0(j_*) \right).
    \end{equation*}
    From \cite{florian+morel-2019}, if $j$ is an open immersion then $j_{!*}$ preserves weights and hence defines an exact functor $j_{!*} \colon \mscr\perv(U,\pure) \longrightarrow \mscr\perv(X,\pure)$. Moreover, (see for instance, \cite{kiehl+weissauer}) 
    \begin{equation*}
        \Hom_{\mscr\perv(U)}(A,B) = \Hom_{\mscr\perv(X)}(j_{!*}(A),j_{!*}(B))
    \end{equation*}
    for $A,B \in \mscr\perv(U)$. 
\end{enumerate}

\subsection{Motivic Local Systems} Recall from \cite{terenzi-2024} that there is a notion of \textit{motivic local system}: let $\sigma \colon k \longhookrightarrow \mathbb{C}$ be an embedding and let $X$ be a smooth, connected $k$-variety. The category $\mscr\operatorname{Loc}(X)^{\sigma}$ of motivic local systems is defined as the pullback
\begin{equation*}
    \begin{tikzcd}[sep=large]
             \mscr\operatorname{Loc}(X)^{\sigma} \arrow[d] \arrow[r] & \operatorname{Loc}(X)[-\dim(X)] \arrow[d] \\ 
             \mscr\perv(X) \arrow[r] & \perv(X).
    \end{tikzcd}
\end{equation*}
In other words, a motivic local system is a motivic perverse sheave mapped to a shifted local system under the Betti realization functor. The category $\mscr\localsystem(X)^{\sigma}$ does not depend on $\sigma$ thanks to \cite[Lemma 2.22]{jacobsen+terenzi-2025} so we may saftely denote by $\mscr\localsystem(X)$ this category. After a choice of a $k$-point $x \in X(k)$ then in \cite{terenzi-2024}, Terenzi shows that the category $\mscr\localsystem(X)$ is a neutral Tannakian category whose dual group  is the \textit{motivic fundamental group} $\gscr(X,x)$. If $X = \Spec(k)$, then $\gscr(X,x)$ is exactly the Nori motivic Galois group, which is isomorphic to the Ayoub motivic Galois group $\gscr^{\mot}(k)$ (see \cite{choudhury+gallauer-2017}).
\begin{prop} \label{prop: motivic local systems are independent of realizations} 
Let $X$ be a smooth $k$-variety. Let $\mscr\localsystem_{\ell}(X) \subset \mscr\perv(X)$ be the category of motivic local systems arising from the $\ell$-adic realization $\mscr\perv(X) \longrightarrow \perv_{\ell}(X,\mathbb{Q}_{\ell})$. Then $\mscr\localsystem_{\ell}(X) = \mscr\localsystem(X)$ under the equivalence $\mscr\perv_{\ell}(X) = \mscr\perv(X)$ in \cite[Proposition 6.11]{florian+morel-2019}. 
\end{prop}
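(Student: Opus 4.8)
The plan is to give a description of the full subcategory $\mscr\localsystem(X)\subset\mscr\perv(X)$ that is intrinsic to $\derivednori^b(X)$ — hence manifestly independent of $\sigma$ and of $\ell$ — and then to observe that the same description computes $\mscr\localsystem_\ell(X)$. The intrinsic condition I will use is \emph{dualizability} in the closed symmetric monoidal category $(\derivednori^b(X),\otimes,\underline{\Hom})$ of Terenzi: I claim that for $X$ smooth over $k$,
\begin{equation*}
\mscr\localsystem(X)=\{\,M\in\mscr\perv(X)\ :\ M\ \text{is dualizable in}\ \derivednori^b(X)\,\},
\end{equation*}
and likewise with $\mscr\localsystem_\ell(X)$ on the left, the right-hand side being literally unchanged. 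Since $\derivednori^b(X)$ with its monoidal structure is intrinsic (see \cite[Proposition 6.11]{florian+morel-2019} and \cite{terenzi-2025}), this immediately yields $\mscr\localsystem_\ell(X)=\mscr\localsystem(X)$ inside $\mscr\perv(X)=\mscr\perv_\ell(X)$.

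To prove the claimed description I would first record that the Betti realization $R\colon\derivednori^b(X)\to\derivedcatct^b(X^{\an},\mathbb{Q})$ is conservative. Indeed $R$ is perverse $t$-exact: any triangulated functor out of $\derivedcat^b(\mscr\perv(X))$ which sends the heart $\mscr\perv(X)$ into the perverse heart of the target is automatically $t$-exact for the two bounded $t$-structures generated by their hearts, and by construction $R$ restricts on hearts to $\rat_X$, which is faithful and exact; a perverse $t$-exact functor conservative on hearts is conservative, so $R$ is conservative, and the same argument applies to the $\ell$-adic realization $\derivednori^b(X)\to\derivedcatct^b(X_{\et},\mathbb{Q}_\ell)$. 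Second, I would invoke Terenzi's theorem that both realizations are compatible with $\otimes$ and $\underline{\Hom}$, hence with $N\mapsto N^\vee:=\underline{\Hom}(N,\mathds{1})$ and with the canonical comparison maps $N\otimes M^\vee\to\underline{\Hom}(M,N)$.

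With these in place the two inclusions are routine. If $M$ is dualizable in $\derivednori^b(X)$ then $R(M)$ is dualizable in $\derivedcatct^b(X^{\an},\mathbb{Q})$ (monoidal functors preserve duals); being also perverse and $X^{\an}$ smooth, $R(M)$ has locally constant cohomology, and the perversity inequalities then force this cohomology into the single degree $-\dim X$, so $R(M)$ is a shifted local system and $M\in\mscr\localsystem(X)$. Conversely, if $M\in\mscr\localsystem(X)$ then $R(M)$ is a shifted local system, hence dualizable, so for every $N\in\derivednori^b(X)$ the map $N\otimes M^\vee\to\underline{\Hom}(M,N)$ becomes, after applying the $\otimes/\underline{\Hom}$-compatible functor $R$, the isomorphism $R(N)\otimes R(M)^\vee\xrightarrow{\ \sim\ }\underline{\Hom}(R(M),R(N))$; by conservativity of $R$ the original map is an isomorphism for every $N$, so $M$ is dualizable. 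Replacing $R$ throughout by the $\ell$-adic realization — and using the classical fact that over a regular base the dualizable objects of $\derivedcatct^b(X_{\et},\mathbb{Q}_\ell)$ are exactly the complexes with lisse cohomology — gives the identical description of $\mscr\localsystem_\ell(X)$, and the proposition follows.

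The step I expect to require the most care is the conservativity of the realizations on all of $\derivednori^b(X)$ (not merely on the hearts): this rests on the realization being perverse $t$-exact, which in turn depends on the fact that, as constructed in \cite{florian+morel-2019}, it restricts to $\rat_X$ on hearts and that the motivic perverse $t$-structure on $\derivedcat^b(\mscr\perv(X))$ is the tautological one. Everything else is either a formal feature of dualizable objects, the compatibility of the six operations with realizations \cite{terenzi-2025}, or the standard identification of dualizable constructible complexes with lisse complexes over a smooth base. An alternative, less conceptual route would avoid dualizability altogether: both realizations factor through $\daet(X_{\mathbb{C}},\mathbb{Q})$, and under Artin's comparison $\derivedcatct^b(X^{\an},\mathbb{Q}_\ell)\simeq\derivedcatct^b(X_{\mathbb{C},\et},\mathbb{Q}_\ell)$ a perverse sheaf is a shifted local system iff its $\ell$-adic counterpart is a shifted lisse sheaf; combined with the insensitivity of lisseness to the extension $\overline{k}\hookrightarrow\mathbb{C}$ this gives the statement directly, at the cost of invoking the compatibility of the Betti and $\ell$-adic realizations with Artin comparison.
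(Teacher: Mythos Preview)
Your proposal is correct and follows essentially the same route as the paper: both arguments characterize (shifted) local systems in the target categories as the strongly dualizable objects and then use that the Betti and $\ell$-adic realizations, being conservative and compatible with $\otimes$ and $\underline{\Hom}$, preserve and reflect dualizability. You supply more detail on why conservativity plus monoidal compatibility yields reflection of dualizability (via the map $N\otimes M^\vee\to\underline{\Hom}(M,N)$), whereas the paper simply asserts this and cites \cite{ayoub-2025,ayoub-hopf2} for the identification of local systems with dualizable objects.
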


\begin{proof}
  First we note that an ordinary (both analytic and $\ell$-adic) local system is nothing but a strongly dualisable object. Indeed, for analytic local systems, this is due to \cite[Lemma 1.2.9]{ayoub-2025} (see \cite[Lemme 2.45]{ayoub-hopf2}). Regarding $\ell$-adic local system, one can copies the proof of $\textnormal{(i)} \Leftrightarrow \textnormal{(ii)}$ in \cite[Lemma 1.2.9]{ayoub-2025} for each $\shv(X_{\et},\mathbb{Z}/\ell^n\mathbb{Z})$ and then pass to limit. The Betti realization and \'etale realization commutes with six operations and hence preserve and reflect dual objects, thus the results follow from \cite[Proposition 6.11]{florian+morel-2019}.
\end{proof}

\begin{rmk}
\begin{enumerate} 
\item If $X^{\an}$ is simply connected, hence both the topological and the \'etale fundamental groups are trivial and the result becomes trivial. 
\item We note that although the category of motivic local systems is independent of the choice of the realization functor, the dual groups arising from the $\ell$-adic realization is informally larger than the one arising from the Betti realization. The reason is the category of $\ell$-adic local systems is "larger" then the category of analytic local systems (unless in some special cases like being simply connected). For instance, consider the case $X = \mathbb{G}_{m,k}$, then $\pi_1(X^{\an}) = \mathbb{Z}, \pi_1^{\et}(X)=\hat{\mathbb{Z}}$, a rank one $\ell$-adic local system with $\overline{\mathbb{Q}}_{\ell} = \mathbb{C}$-coefficients amounts to a choice $a \in \mathbb{C}$ and it is induced from an analytic one if and only if $a \in \overline{\mathbb{Z}}_{\ell}^{\times}$, namely, the topological monodromy can be lifted to the \'etale monodromy. 
\end{enumerate} 
\end{rmk}

\begin{prop} \label{prop: perverse t-structures of trivial stratification}
Let $X$ be a smooth, connected $k$-variety. Consider 
\begin{align*}
    \derivednori^b_{\mscr\localsystem(X)}(X) & = \left \{M \in \derivednori^b(X) \mid \cthnor^i(M) \in \mscr\localsystem(X) \right \} \\ 
    & =  \left \{M \in \derivednori^b(X) \mid \phnor^i(M) \in \mscr\localsystem_p(X) \right \}
\end{align*}
then the perverse and constructible t-structures on $\derivednori^b(X)$ restrict to $\derivednori^b_{\mscr\localsystem(X)}(X)$ so that 
\begin{equation*}
    \begin{split}
         {}^p\derivednori^b_{\mscr\localsystem(X)}(X)^{\leq 0} & =  {}^{\ct}\derivednori^b_{\mscr\localsystem(X)}(X)^{\leq -\dim(X)} \\ 
           {}^p\derivednori^b_{\mscr\localsystem(X)}(X)^{\geq 0} & =  {}^{\ct}\derivednori^b_{\mscr\localsystem(X)}(X)^{\geq -\dim(X)}
    \end{split}
\end{equation*}
Their hearts are $\mscr\localsystem_p(X)$ and $\mscr\localsystem(X)$, respectively. Moreover, the category $\derivednori^b_{\mscr\localsystem(X)}(X)$ is stable Verdier duality and Tate twists. 
\end{prop}

\begin{proof}
  The restriction of the motivic perverse $t$-structure on $\derivednori^b(X)$ is a well-defined $t$-structure on $\derivednori^b_{\mscr\localsystem(X)}(X)$ since $\mscr\localsystem_p(X) \subset \mscr\perv(X)$ is a Serre subcategory. Clearly, $\derivednori^b_{\mscr\localsystem(X)}(X)$ is stable under Tate twists and for Verdier duality, we note that $\mathbb{D}(M) \in \derivednori^b_{\mscr\localsystem(X)}(X)$ if and only if $\phnor^i(\mathbb{D}(M)) \simeq \mathbb{D}(\phnor^i(M)) \in \mscr\localsystem_p(X)$ (because $\mathbb{D}$ is perverse $t$-exact) so it suffices to show that the Verdier duality $\mathbb{D}(L)$ of a shifted motivic local system $L$ is still a shifted motivic local system. However, $\rat_X(\mathbb{D}(L)) \simeq \mathbb{D}(\rat_X(L)) \simeq \rat_X(L)^{\vee}(\dim(X))[2\dim(X)]$ thanks to \cite[Remark 2.8.3]{achar-book} and we win. 
\end{proof}

\begin{lem} \label{lem: weight filtrations of motivic local systems}
Let $X$ be a smooth $k$-variety, then any object $M \subset \mscr\localsystem(X)[\dim(X)]$ admits a weight filtration whose graded pieces are in $\mscr\localsystem(X)[\dim(X)] \cap \mscr\perv(X,n)$ for $n \in \mathbb{Z}$.
\end{lem}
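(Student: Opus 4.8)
The plan is to reduce the statement to the already-established existence of weight filtrations on objects of $\mscr\perv(X)$ (item (5) in the list of standard properties of perverse Nori motives), and then to check that the graded pieces, which a priori only lie in $\mscr\perv(X,n)$, are in fact (shifted) motivic local systems. The first observation is that $\mscr\localsystem(X)[\dim(X)]$ is a full subcategory of $\mscr\perv(X)$ closed under subobjects and quotients: this follows from the pullback-square definition of $\mscr\localsystem(X)$ together with the fact that on the Betti side the shifted local systems $\localsystem(X^{\an})[-\dim X]$ form a Serre subcategory of $\perv(X^{\an})$ (a perverse sheaf on a smooth connected $X^{\an}$ all of whose cohomology sheaves in degrees $\neq -\dim X$ vanish — equivalently, which is lisse — is stable under sub and quotient in $\perv(X^{\an})$, since the inclusion of $\localsystem[-\dim X]$ is closed under the relevant truncations). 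Because the Betti realization $\rat_X$ is faithful and exact, an object $M \in \mscr\perv(X)$ lies in $\mscr\localsystem(X)[\dim X]$ if and only if $\rat_X(M)$ is lisse, and this condition passes to subquotients in $\mscr\perv(X)$.

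Granting this, the argument runs as follows. Let $M \in \mscr\localsystem(X)[\dim X]$. By the existence of the weight filtration on $\mscr\perv(X)$ (item (5)), there is a finite filtration $0 = W_{<a}M \subset \cdots \subset W_b M = M$ with $\gr^W_n M \in \mscr\perv(X,n)$ for each $n$. Each $W_n M$ is a subobject of $M$ in $\mscr\perv(X)$, hence by the previous paragraph $W_n M \in \mscr\localsystem(X)[\dim X]$; and then $\gr^W_n M = W_n M / W_{n-1} M$ is a quotient of an object of $\mscr\localsystem(X)[\dim X]$ by a subobject, hence again lies in $\mscr\localsystem(X)[\dim X]$. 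Therefore $\gr^W_n M \in \mscr\localsystem(X)[\dim X] \cap \mscr\perv(X,n)$, which is exactly the assertion. One should also record that the filtration is functorial and compatible with realizations — this is automatic since the weight filtration on $\mscr\perv(X)$ already enjoys these properties.

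The only genuine point requiring care, and the step I expect to be the main obstacle, is the stability claim: that a subobject or quotient (in $\mscr\perv(X)$) of a shifted motivic local system is again one. The subtlety is that "being a shifted local system" is a condition imposed via the Betti realization, so one must know precisely that $\localsystem(X^{\an})[-\dim X] \hookrightarrow \perv(X^{\an})$ is a Serre subcategory — for $X$ smooth but possibly disconnected one works component by component, and if $X$ is not equidimensional one should fix the convention (either restrict to equidimensional $X$, or grade $\dim$ by connected component, matching the convention in \cite{terenzi-2024}) so that the shift is uniform. Once that topological fact is in hand, faithful exactness of $\rat_X$ transports it to $\mscr\perv(X)$ and the rest is formal. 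If one prefers to avoid the Betti side, the same stability can be extracted from the characterization of lisse objects as those $N \in \mscr\perv(X)$ with $N$ and $\mathbb{D}N$ both concentrated in a single motivic-constructible cohomological degree, using that $\mathbb{D}$ is exact on $\mscr\perv(X)$ and $\cthnor^i$ are cohomological; either route closes the argument.
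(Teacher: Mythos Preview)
Your proof is correct and follows essentially the same approach as the paper: take the weight filtration that already exists on $\mscr\perv(X)$, then use that shifted motivic local systems form a Serre subcategory of $\mscr\perv(X)$ (deduced from the analytic statement via faithful exactness of $\rat_X$) to conclude that all pieces of the filtration remain in $\mscr\localsystem(X)[\dim X]$. The paper's proof is terser---it cites \cite[Proposition 6.17]{florian+morel-2019} for the weight filtration and \cite[Proposition 3.4.1]{achar-book} for the analytic Serre-subcategory fact, then says the motivic case ``follows immediately''---but your more explicit unpacking of that last step (and your remark on the equidimensionality convention) is exactly the content behind that phrase.
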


\begin{proof}
  Any object $M$ in $\mscr\perv(X)$ admits a weight filtration by \cite[Proposition 6.17]{florian+morel-2019} and if $M \subset \mscr\localsystem(X)[\dim(X)]$ and all terms in the filtration and their graded pieces are also in $\mscr\localsystem(X)[\dim(X)]$ because shifted local systems is a Serre subcategory of $\mscr\perv(X)$ (for instance, this holds for analytic perverse sheaves \cite[Proposition 3.4.1]{achar-book} and the motivic case follows immediately). 
\end{proof}

\begin{defn}
  Let $X$ be a smooth, connected $k$-variety. The \textit{category of Tate motives} $\mscr\operatorname{Tate}(X) \subset \mscr\localsystem(X)$ is the smallest full abelian subcategory containing all $\mathds{1}(n)$ and stable under extensions and direct summands. Alternatively, one can define $\mscr\operatorname{Tate}_p(X) \subset \mscr\perv(X)$ with the same properties.
\end{defn}

\begin{lem} \label{lem: pure Tate motives} 
A motive $M \in \mscr\localsystem(X)$ is a Tate motive if and only if its graded pieces are of the form $\mathds{1}(n)^{\oplus m}$. In particular, pure Tate motives (Tate motives that are pure) are of the form $\mathds{1}(n)^{\oplus m}$ (weight $-2n$). 
\end{lem}

\begin{proof}
  Let $M \in \mscr\localsystem(X)$ be a motive so that its graded pieces are of the form $\mathds{1}(n)^{\oplus m}$ then it belongs to $\mscr\operatorname{Tate}(X)$ for trivial reason. Conversely, if $M \in \mscr\textnormal{Tate}(X)$, then $M$ admits a filtration whose graded pieces are subquotients of $\mathds{1}(n)$ and by modifying the filtration, we see that $M$ can be obtained as a direct summand of iterated extensions of $\mathds{1}(n)$. The functors $\operatorname{gr}^W_n$ are exact so we see that graded pieces are direct summands of those $\mathds{1}_X(n)^{\oplus m}$, hence if we can show that $\mathds{1}(n)$ has no nontrivial direct summand then we win. The object $\mathds{1}_X(n)[\dim(X)]$ is a shifted motivic local systems $$ \Hom_{\mscr\perv(X)}(\mathds{1}_X(n)[\dim(X)],\mathds{1}_X(n)[\dim(X)]) =   \Hom_{\mscr\localsystem(X)}(\mathds{1}_X,\mathds{1}_X) = \mathbb{Q}$$ since $\mscr\localsystem(X)$ is a neutral Tannakian category as showed in \cite{terenzi-2024}. This proves that $\mathds{1}_X(n)$ has no proper direct summand.
\end{proof}

\begin{defn}
  The \textit{category of derived Tate motives} $\mathbf{DT}^b(X)$ is the smallest stable sub-$\infty$-category of $\derivednori^b(X)$ containing all motives of the form $\mathds{1}_X(n)$ and stable under direct summands. We note that there is a fully faithful embedding
  \begin{equation*}
      \mscr\operatorname{Tate}_p(X) \longhookrightarrow \mathbf{DT}^b(X)
  \end{equation*}
  obtained by restricting $\mscr\perv(X) \longhookrightarrow \mathbf{DN}^b(X)$. 
\end{defn}


\begin{lem} \label{lem: Tate cohomological functors on schemes}
The composition 
\begin{equation*}
    \mathbf{DT}^b(X) \longhookrightarrow \derivednori^b(X) \overset{\phnor^n}{\longrightarrow} \mscr\perv(X) 
\end{equation*}
takes values in $\mscr\textnormal{Tate}_p(X)$ for any $n \in \mathbb{Z}$. 
\end{lem}

\begin{proof}
  Note that $\phnor^n(\mathds{1}(n)[\dim(X)]) = \mathds{1}(n)[\dim(X)]$ if $n= 0$ and zero otherwise. Now if $M$ is obtained by successive extensions of $\mathds{1}(n)$, then we see that $\phnor^n(M)$ is also obtained by successive extensions of shifted Tate motives, hence itself a shifted Tate motive. 
\end{proof}

\begin{rmk}
It is however not clear if there exists a $t$-structure on $\mathbf{DT}^b(X)$ whose heart is $\mscr\operatorname{Tate}_p(X)$. We expect that at least over number fields, with the help of the Beilinson-Soul\'e vanishing conjecture, thí is the case. 
\end{rmk}
\subsection{Smooth Descent}
We recall the following result, which is proven in \cite{bbd}, and whose proof for perverse Nori motives is not written yet in \cite{florian+morel-2019}. The materials in this subsection will be useful when we discuss equivariant perverse Nori motives. 
\begin{prop} \label{prop: pullback of smooth morphism is fully faithful}
Let $X,Y$ be $k$-varieties. Let $f \colon X \longrightarrow Y$ be a smooth surjective morphism. The canonical sequence
\begin{equation*}
 0 \longrightarrow \Hom_{\mscr\perv(Y)}(M,N) \longrightarrow \Hom_{\mscr\perv(X)}(f^{\dagger}(M),f^{\dagger}(N))   \longrightarrow \Hom_{\mscr\perv(X \times_Y X)}((f')^{\dagger}(M),(f')^{\dagger}(N))
\end{equation*}
is exact. Moreover, if $f$ has geometrically connected fibers, then $f^{\dagger}$ is fully faithful. 
\end{prop}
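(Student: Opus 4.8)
\emph{Plan of proof.} The idea is to establish the exact sequence first, using $h$-hyperdescent for $\derivednori^b$, and then to deduce full faithfulness from it by reducing the descent condition, \emph{through the faithful Betti realization}, to the classical statement for perverse sheaves. Throughout write $d=d_f$, let $f'\colon X\times_Y X\to Y$ be the structural morphism and $p_1,p_2\colon X\times_Y X\to X$ the two projections, with the canonical identifications $p_i^\dagger f^\dagger\simeq(f')^\dagger$. For the exact sequence: a smooth surjective morphism is an $h$-cover (étale-locally on $Y$ it admits a section by EGA IV, 17.16.3; an étale surjection is an $h$-cover; and any morphism through which an $h$-cover factors is one), so the Čech nerve of $f$, with $p$-th term $X_p=X\times_Y\cdots\times_Y X$ ($p+1$ factors) and $f_p\colon X_p\to Y$, is an $h$-hypercover of $Y$; since $\derivednori^b$ is an $h$-hypersheaf (Tubach), descent identifies $N\in\mscr\perv(Y)$ with the totalization of the cosimplicial object $p\mapsto(f_p)_*f_p^*N$ in $\derivednori^b(Y)$. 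Feeding this into $\Hom_{\derivednori^b(Y)}(M,-)$, together with the adjunction $(f^*,f_*)$, produces a descent spectral sequence $E_1^{p,q}=\Hom_{\derivednori^b(X_p)}(f_p^*M,f_p^*N[q])\Rightarrow\Hom_{\derivednori^b(Y)}(M,N[p+q])$. Since $f_p$ is smooth, $f_p^\dagger$ is $t$-exact, so $f_p^*M$ and $f_p^*N$ are common shifts of perverse objects and $E_1^{p,q}\cong\Ext^q_{\mscr\perv(X_p)}(f_p^\dagger M,f_p^\dagger N)$, which vanishes for $q<0$ because $\mscr\perv(X_p)$ is abelian. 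Hence total degree $0$ receives only $E_2^{0,0}=E_\infty^{0,0}=\ker\big(d_1\colon E_1^{0,0}\to E_1^{1,0}\big)$, and as $d_1=p_1^\dagger-p_2^\dagger$ this identifies $\Hom_{\mscr\perv(Y)}(M,N)$ with $\ker\big(p_1^\dagger-p_2^\dagger\colon\Hom_{\mscr\perv(X)}(f^\dagger M,f^\dagger N)\to\Hom_{\mscr\perv(X\times_Y X)}((f')^\dagger M,(f')^\dagger N)\big)$ — precisely the asserted exactness, with injectivity of $\Hom_{\mscr\perv(Y)}(M,N)\hookrightarrow\Hom_{\mscr\perv(X)}(f^\dagger M,f^\dagger N)$ included. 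Note this step uses only that $f$ is smooth and surjective.

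For the ``moreover'', faithfulness is exactly the injectivity just proved. For fullness, by the exact sequence it suffices to show that $p_1^\dagger(g)=p_2^\dagger(g)$ for every $g\in\Hom_{\mscr\perv(X)}(f^\dagger M,f^\dagger N)$. Now the Betti realization restricts on perverse hearts to the \emph{faithful} exact functor $\rat_{X\times_Y X}\colon\mscr\perv(X\times_Y X)\to\perv((X\times_Y X)^{\an})$, and it intertwines $p_i^\dagger$ with $(p_i^{\an})^\dagger$; so this equality of morphisms may be checked after realization, i.e.\ it is enough that $(p_1^{\an})^\dagger(\rat\,g)=(p_2^{\an})^\dagger(\rat\,g)$ inside $\Hom_{\perv(X^{\an})}((f^{\an})^\dagger\rat M,(f^{\an})^\dagger\rat N)$. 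Since $f$ has geometrically connected fibres, $f^{\an}$ is a surjective submersion with connected fibres (the analytification of a smooth geometrically connected $\mathbb{C}$-variety is connected), so $(f^{\an})^\dagger\colon\perv(Y^{\an})\to\perv(X^{\an})$ is fully faithful by \cite[4.2.5]{bbd}; in particular it is full, so the two pullbacks $(p_1^{\an})^\dagger$ and $(p_2^{\an})^\dagger$ agree on every morphism in its essential image, which by fullness is all of the Hom-group above. Hence $p_1^\dagger(g)=p_2^\dagger(g)$, and fullness follows.

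The step I expect to be the genuine obstacle is the one non-formal input — connectedness of the fibres — which is unavoidable and for which there is no purely motivic ``stalkwise'' substitute; the plan sidesteps this by the observation that the descent condition $p_1^\dagger g=p_2^\dagger g$ is an \emph{equation} between morphisms and hence survives pullback along the faithful realization, so it can be imported verbatim from \cite[4.2.5]{bbd}. The remaining points require only care, not ideas: that a smooth surjective morphism really is an $h$-cover so that Tubach's hyperdescent for $\derivednori^b$ applies to its Čech nerve, and that the Betti realization is compatible with $p_i^*$ and with shifts (part of Terenzi's six-functor compatibility). One could alternatively avoid realization by analysing $f_*f^*N$ directly — showing, via the Artin-type bound $\phnor^{<0}(f_*f^*N)=0$ (formal from right $t$-exactness of $f^\dagger$) together with the isomorphism $N\xrightarrow{\ \sim\ }\phnor^0(f_*f^*N)$, that the cone of the unit $N\to f_*f^*N$ lies in motivic perverse degrees $\ge1$ — but the latter isomorphism again ultimately rests on \cite[4.2.5]{bbd}, so the route above is cleaner.
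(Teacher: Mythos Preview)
Your proof is correct, but it takes a different route from the paper's. The paper argues more directly via the adjunction: it observes that $f^{\dagger}$ has a right adjoint ${}^p f_{\dagger}=\phnor^0\circ f_*[-d_f]$ on perverse hearts, rewrites the claimed exact sequence of Hom-groups as the exactness of the object-level sequence $0\to N\to{}^p f_{\dagger}f^{\dagger}N\to{}^p f'_{\dagger}(f')^{\dagger}N$, and then checks this (and, in the connected-fibre case, the isomorphism $N\simeq{}^p f_{\dagger}f^{\dagger}N$) by applying $\rat_Y$ and invoking the classical statement, using that $\rat_Y$ is exact, faithful, and conservative. Your approach instead establishes the Hom-exact sequence via $h$-hyperdescent (Tubach) and the associated descent spectral sequence, and then for fullness checks the equation $p_1^{\dagger}g=p_2^{\dagger}g$ morphism-by-morphism through the faithful realization. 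Both arguments ultimately import the non-formal input (geometric connectedness of fibres) from \cite[4.2.5]{bbd} via realization; the paper's version is lighter in that it avoids the hyperdescent machinery, while yours has the conceptual advantage of exhibiting the exact sequence as the low-degree piece of a genuine descent spectral sequence, which would also give information in higher Ext-degrees if needed.
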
 

\begin{proof}
The morphism $f_{\dagger} = f_*[-d_f] \colon \derivedcat^b_{\ct}(X,\mathbb{Q}) \longrightarrow \derivedcat^b_{\ct}(Y,\mathbb{Q})$ is left exact since its left adjoint $f^{\dagger} = f^*[d_f]$ is exact. The compatibility of realization functor as well as its conservativity (on triangulated categories) and exactness (on abelian categories) of $\rat$ functors show that $f_*[-d_f] \colon \derivedcat^b(\mscr\perv(X)) \longrightarrow \derivedcat^b(\mscr\perv(Y))$ is left exact as well. They induce left adjoints of $f^{\dagger},f^{\dagger}$
\begin{equation*}
    \begin{split}
      {}^pf_{\dagger} =  \phnor^0 \circ f_{\dagger} \colon \mscr\perv(X) & \longrightarrow \mscr\perv(Y) \\ 
       {}^pf_{\dagger} = \phnor^0 \circ f_{\dagger} \colon \perv(X) & \longrightarrow \perv(Y).
    \end{split}
\end{equation*}
The exactness of the original sequence is equivalent to the exactness of
\begin{equation*}
    0 \longrightarrow N \longrightarrow {}^p f_{ \dagger}f^{\dagger}(N)\longrightarrow {}^p f_{ \dagger}'f^{'\dagger}(N)
\end{equation*}
Since $\rat_Y$ is faithful, exact functor, it reflects exact sequences so one can apply $\rat_Y$ to the sequence above, the result then follows from the corresponding statement for ordinary perverse sheaves. If moreover $f$ has geometrically connected fibers, then $f^{\dagger}$ is fully faithful if and only if $N \longrightarrow {}^pf_{ \dagger}f^{\dagger}(N)$ is an isomorphism for any $N \in \mscr\perv(Y)$. This again can be checked after applying $\rat_Y$ and using its conservativity. 
\end{proof} 

\begin{rmk}
For stratified mixed Tate motives, this is proven in \cite[Lemma 3.2.12]{richarz+scholbach-2020} with the help of the Beilinson-Soul\'e conjecture.
\end{rmk}

Next we need smooth descent for perverse Nori motives. 
\begin{defn} 
Let $f \colon X \longrightarrow Y$ be a smooth surjective morphism. Let $M \in \mscr\perv(X)$ be a perverse Nori motive. A \textit{descent datum} $(M,\phi)$ for $M$ is an isomorphism
\begin{equation*}
    \phi \colon \pr_1^{\dagger}(M) \simeq \pr_2^{\dagger}(M) 
\end{equation*}
in $\mscr\perv(X \times_Y X)$ such that the diagram 
\begin{equation*}
    \begin{tikzcd}
             \pr_1^{\dagger}(M) \arrow[rr] \arrow[rd] & & \pr_3^{\dagger}(M) \arrow[ld] \\ 
             &\pr_2^{\dagger}(M)
    \end{tikzcd}
\end{equation*}
is commutative. We can turn descent data into a category by declaring objects to be descent data and a morphism $(M,\phi) \longrightarrow (N,\phi)$ is a morphism $q \colon M \longrightarrow N$ in $\perv(X)$ making the obvious diagram commutative. 
\end{defn}
\begin{prop} \label{prop: smooth descent}
Let $f \colon X \longrightarrow Y$ be a smooth surjective morphism. For any $N \in \perv(Y)$, the motivic perverse sheaf $f^{\dagger}(N)$ admits a canonical descent datum and 
\begin{equation*} 
f^{\dagger} \colon \mscr\perv(Y) \longrightarrow \operatorname{Desc}_{\mscr\perv}(f)
\end{equation*} 
gives rise to an equivalence of categories.
\end{prop}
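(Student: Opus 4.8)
The plan is to establish three things in turn: that $f^{\dagger}$ genuinely lands in $\operatorname{Desc}_{\mscr\perv}(f)$ via a functorial descent datum, that it is fully faithful as a functor into $\operatorname{Desc}_{\mscr\perv}(f)$, and that it is essentially surjective; only the last point carries real content. The canonical descent datum comes for free: since $f\circ\pr_{1}=f\circ\pr_{2}\colon X\times_{Y}X\to Y$ and $\pr_{1},\pr_{2}$ are base changes of $f$ hence of the same relative dimension, pseudo-functoriality of $(-)^{*}$ in the four-operations formalism of \cite{florian+morel-2019} supplies a canonical isomorphism $\phi_{\mathrm{can}}\colon\pr_{1}^{\dagger}f^{\dagger}(N)\xrightarrow{\ \sim\ }(f\pr_{1})^{\dagger}(N)=(f\pr_{2})^{\dagger}(N)\xleftarrow{\ \sim\ }\pr_{2}^{\dagger}f^{\dagger}(N)$, and the analogous identifications over $X\times_{Y}X\times_{Y}X$ make the cocycle triangle commute; so $(f^{\dagger}(N),\phi_{\mathrm{can}})$ is a descent datum, functorial in $N$. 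Full faithfulness is then immediate from Proposition~\ref{prop: pullback of smooth morphism is fully faithful}: reading its last arrow as $\pr_{1}^{\dagger}(-)-\pr_{2}^{\dagger}(-)$ through $\phi_{\mathrm{can}}$, exactness says that $\Hom_{\mscr\perv(Y)}(M,N)$ is the equalizer, i.e.\ the set of morphisms $f^{\dagger}M\to f^{\dagger}N$ compatible with the canonical descent data, which is $\Hom_{\operatorname{Desc}_{\mscr\perv}(f)}$ by definition.

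For essential surjectivity I would pass to the $(\infty,1)$-categorical enhancement of $\derivednori^{b}(-)$ and use Tubach's theorem \cite{tubach-2025} that it is an $h$-hypersheaf, hence a hypersheaf for the coarser smooth topology. For the smooth surjective cover $f$ with \v{C}ech nerve $\check{C}(f)_{\bullet}$ (whose terms are again $k$-varieties), hyperdescent gives an equivalence of $\infty$-categories $\derivednori^{b}(Y)\xrightarrow{\ \sim\ }\lim_{\Delta}\derivednori^{b}(\check{C}(f)_{\bullet})$ under which restriction to $\check{C}(f)_{0}=X$ corresponds to $f^{\dagger}$. Transporting the motivic perverse $t$-structure of $\derivednori^{b}(Y)$ along this equivalence, and using that $f^{\dagger}$ is conservative and perverse $t$-exact, an object of the limit lies in the heart $\mscr\perv(Y)$ precisely when its restriction to $X$ is perverse.

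It then remains to recognize this heart locus as $\operatorname{Desc}_{\mscr\perv}(f)$. An object of $\lim_{\Delta}\derivednori^{b}(\check{C}(f)_{\bullet})$ whose restriction to $X$ is some $\xi_{0}\in\mscr\perv(X)$ is, by the coface relations, determined up to canonical isomorphism by $\xi_{0}$; the only genuine extra datum is an isomorphism between the two coface pullbacks of $\xi_{0}$ to $\check{C}(f)_{1}=X\times_{Y}X$, and since these equal $\pr_{1}^{\dagger}\xi_{0}$ and $\pr_{2}^{\dagger}\xi_{0}$ up to one and the same shift — and shifted smooth pullbacks are perverse $t$-exact — it amounts to an isomorphism $\phi\colon\pr_{1}^{\dagger}\xi_{0}\xrightarrow{\ \sim\ }\pr_{2}^{\dagger}\xi_{0}$ in $\mscr\perv(X\times_{Y}X)$ satisfying a cocycle identity in $\mscr\perv(X\times_{Y}X\times_{Y}X)$. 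All higher coherences in the homotopy limit are forced, because for $A,B$ in a heart $\mscr\perv(\check{C}(f)_{n})=\derivedcat^{b}(\mscr\perv(\check{C}(f)_{n}))^{\heart}$ one has $\pi_{k}\operatorname{Map}(A,B)=\Ext^{-k}_{\mscr\perv}(A,B)=0$ for all $k\ge1$. Thus the heart locus is exactly $\operatorname{Desc}_{\mscr\perv}(f)$, and the resulting equivalence $\operatorname{Desc}_{\mscr\perv}(f)\simeq\mscr\perv(Y)$ is quasi-inverse to $f^{\dagger}$.

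The main obstacle is entirely in this essential-surjectivity argument, and within it the delicate step is that a bare $1$-categorical descent datum — one isomorphism plus the cocycle identity — actually promotes to an object of the homotopy-coherent limit without further choices; this is precisely where the vanishing of negative $\Ext$-groups in the heart is indispensable, as it collapses the $\infty$-categorical descent problem to the classical one. One must also be mildly careful matching shifts along the \v{C}ech nerve so that the levels $\ge1$ carry perverse data, but the (non-smooth) codegeneracy maps play no role. If one wishes to avoid $\infty$-categorical language, one can instead imitate \cite{bbd}: descend the object first in the derived category by faithfully flat descent — again supplied by Tubach's hyperdescent — and then check perversity of the descended object using that $f^{\dagger}$ is conservative, perverse $t$-exact, and, over connected fibers, fully faithful (Proposition~\ref{prop: pullback of smooth morphism is fully faithful}).
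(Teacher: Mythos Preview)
Your proof is correct. The canonical descent datum and full faithfulness arguments are the same as the paper's: both read full faithfulness off the exact sequence of Proposition~\ref{prop: pullback of smooth morphism is fully faithful}, the paper phrasing it as a five-lemma comparison between that sequence and the tautological exact sequence computing $\Hom_{\operatorname{Desc}_{\mscr\perv}(f)}$.

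For essential surjectivity the two diverge. The paper does not argue at all: it writes ``the proof is as same as the proof of \cite[Theorem 3.7.4]{achar-book}'', i.e.\ it imports the classical abelian-categorical construction for ordinary perverse sheaves (build the candidate descended object from ${}^{p}f_{\dagger}$ and an image construction, then verify it works; every step carries over since one has the faithful exact functor $\rat$ and Proposition~\ref{prop: pullback of smooth morphism is fully faithful}). You instead invoke Tubach's $h$-hyperdescent to identify $\derivednori^{b}(Y)$ with the totalization of the \v{C}ech nerve, then argue that the heart of this limit is the $1$-categorical descent category because negative $\Ext$'s vanish in the heart. This is a genuinely different route: it is more conceptual and would transport verbatim to any coefficient system with hyperdescent, at the cost of importing $\infty$-categorical machinery and the shift/codegeneracy bookkeeping you flag (note that the hyperdescent equivalence is along $f^{*}$, not $f^{\dagger}$, so the ``heart'' at level $n$ is $\mscr\perv(\check{C}(f)_{n})[-n d_{f}]$; your observation that codegeneracies are forced by the simplicial identities $s_{j}^{*}d_{j}^{*}=\mathrm{id}$ is what makes this harmless). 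The paper's route stays entirely inside the abelian four-functor formalism already in hand, which keeps dependencies minimal; your route makes the descent statement a formal consequence of a structural theorem rather than a hands-on construction.
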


\begin{proof}
Thanks to the functoriality of the functors of type $(-)^{\dagger}$, the descent datum of $f^{\dagger}(N)$ is simply the isomorphism 
\begin{equation*}
    \pr_1^{\dagger}f^{\dagger}(N) \simeq (f')^{\dagger}(N) \simeq \pr_2^{\dagger}f^{\dagger}(N).
\end{equation*}
The functoriality again ensures that a morphism $M \longrightarrow N$ in $\mscr\perv(Y)$ produces a morphism in $\operatorname{Desc}_{\mscr\perv}(f)$. Let us first prove that $f^{\dagger} \colon \mscr\perv(Y) \longrightarrow \operatorname{Desc}_{\mscr\perv}(f)$ is fully faithful. There is an obvious commutative diagram 
\begin{equation*}
    \begin{tikzcd}[row sep = large, column sep = scriptsize]
             0 \arrow[r] & \Hom_{\mscr\perv(X)}(M,N) \arrow[r,"f^{\dagger}"]\arrow[d] & \Hom_{\mscr\perv(Y)}(f^{\dagger}(M),f^{\dagger}(N)) \arrow[d,equal]  \arrow[r] & \Hom_{\mscr\perv(X \times_Y X)}((f')^{\dagger}(M),(f')^{\dagger}(N)) \arrow[d,equal] \\ 
             0 \arrow[r] & \Hom_{\operatorname{Desc}_{\mscr\perv}(f)}(f^{\dagger}(M),f^{\dagger}(N)) \arrow[r] &  \Hom_{\mscr\perv(Y)}(f^{\dagger}(M),f^{\dagger}(N)) \arrow[r] & \Hom_{\mscr\perv(X \times_Y X)}(\pr_1^{\dagger}f^{\dagger}(M),\pr_2^{\dagger}f^{\dagger}(N)).
    \end{tikzcd}
\end{equation*}
The top row is exact by \ref{prop: pullback of smooth morphism is fully faithful} while the exactness of the bottom row is simply the definition of descent data. Hence, by the five lemma the most-left vertical arrow is an isomorphism and we win. It remains to prove that $f^{\dagger}$ is essentially surjective, the proof is as same as the proof of \cite[Theorem 3.7.4]{achar-book}. 
\end{proof}

\section{Derived Nori Motives on Ind-Schemes}

\subsection{Reminder on ind-schemes and their stratifications} For an introduction on ind-schemes, we refer to \cite{richarz-2019}. Let us briefly review terminologies used in this article. By an \textit{ind-scheme}, we mean an \'etale sheaves $X \colon (\affsch_k)^{\op} \longrightarrow \sets$ that admits a presentation $X = \colim_{i \in I} X_i$ as a filtered colimit of schemes along closed immersions $\iota_{i \to j} \colon X_i \longrightarrow X_j$ if $i \leq j$. We call such an ind-scheme an \textit{ind-variety} (over a field $k$) if $X_i$'s can be taken to be quasi-projective varieties over $k$. 

The central objects of the section are stratified ind-varieties, which are already in \cite{richarz+scholbach-2020} plus some additional hypotheses. An ind-variety $X$ is \textit{stratified} if there exists a morphism of ind-varieties 
\begin{equation*}
    \iota \colon X^+  = \coprod_{w \in W}X_w \longrightarrow X
\end{equation*}
such that $\iota$ is bijective on the underlying sets, each stratum $X_w$ is a quasi-projective $k$-variety, each restriction $\iota_{\mid X_w}$ is representable by a quasi-compact immersion and each $\overline{\iota(X_w)}$ is union of other strata. If $k \longhookrightarrow \mathbb{C}$, then each stratification of $X$ gives a stratification of the complex variety $X^{\an}$ in the usual sense. A morphism of ind-varieties $(X,X^+),(Y,Y^+)$ is a commutative diagram
\begin{equation*}
    \begin{tikzcd}[sep=large]
             X^+ \arrow[r,"\iota_X"] \arrow[d,"f^+",swap] & X \arrow[d,"f"] \\ 
             Y^+ \arrow[r,"\iota_Y"] & Y
    \end{tikzcd}
\end{equation*}
where $f$ is schematic of finite type, and $f^+$ maps each stratum to a union of strata (this is weaker than requiring mapping each stratum to a stratum as in \cite{richarz+scholbach-2020}). If $X \longrightarrow Y$ is a morphism of ind-varieties and $Y$ is stratified then $X$ can be endowed with the inverse stratification.

\subsection{General properties of derived Nori motives on ind-schemes}
Let $X = \colim_{i \in I}X_i$ be an ind-variety and we view it as an object in $\operatorname{PreStk}_k^{\kappa}$. By remark \ref{rmk: extensions to prestacks}, we have right Kan extensions
\begin{equation*}
    \derivednori^b(X) = \colim_{i \in I} \ \derivednori^b(X_i).
\end{equation*}
Moreover, the six-functors formalism extend to ind-varieties in the following sense.
\begin{prop}[Richarz, Scholbach] \label{prop: six operations of ind-schemes}
Let $X,Y$ be ind-varieties and $f \colon X \longrightarrow Y$ be a morphism of ind-varieties. 
\begin{enumerate}
    \item The functor $f_*$ is well-defined. The functor $f^*$ is well-defined (and left adjoint to $f_*$) if there are presentations $X = \colim_{i \in I} X_i, Y = \colim_{i \in I} Y_i$ and $f$ is a colimit of morphisms $f_i \colon X_i \longrightarrow Y_i$. 
      \item The category $\derivednori^b(X)$ is closed monoidal, stable $\infty$-categories. If there exists a presentation $X = \colim_{i \in I}X_i$ with $X_i$ being quasi-compact, then $a^*(\mathds{1}_S)$ is a monoidal unit for $\derivednori^b(X)$ (for a counterexample in non-quasi-compact cases, see \cite[Example 2.4.3]{richarz+scholbach-2020}). 
     \item If $f$ is schematic smooth, then $f_{\#}$ exists and is left adjoint to $f^*$. 
    \item The adjunction $(f_! \dashv f^!)$ is well-defined. 
    \item Localization sequences are well-defined for schematic immersions.  
    \item Proper and Smooth base change are satisfied whenever the involved operations are well-defined. 
    \end{enumerate}
\end{prop}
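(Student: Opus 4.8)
The plan is to recognise this as a formal consequence of the general ind-scheme extension of a coefficient system, carried out in \cite{richarz+scholbach-2020}; no new geometric input is needed beyond the scheme-level six-functor formalism packaged into $\hbf$. Concretely, for an ind-variety $X = \colim_{i \in I} X_i$ presented along closed immersions $\iota_{ij} \colon X_i \hookrightarrow X_j$, I would \emph{define}
\[
  \hbf(X) \;:=\; \colim_{i \in I}\,\hbf(X_i),
\]
the colimit being formed in $\operatorname{Pr}^{\lnor}$ along the functors $\iota_{ij,*} = \iota_{ij,!}$ (these agree and are fully faithful since $\iota_{ij}$ is a closed immersion), equivalently the limit in $\operatorname{Pr}^{\rnor}$ along their right adjoints $\iota_{ij}^{!}$; an object of $\hbf(X)$ is then a compatible system $(M_i)_i$ with $M_i \in \hbf(X_i)$. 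The first routine check is independence of the presentation: any two presentations of $X$ admit a common refinement, and passing to a cofinal subsystem does not change the (co)limit, so this is a cofinality argument. The only genuinely non-formal ingredient is that $\hbf$ really is a coefficient system in the required sense — closed symmetric monoidal, stable, presentable, with the four (hence six) operations, their adjunctions, localization triangles and proper/smooth base change on quasi-projective $k$-varieties — which for $\hbf = \derivednori^b$ is supplied by the structural results recalled above (Ivorra--Morel, Terenzi, Tubach). Everything else is transport of structure along (co)limits.

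For (1)--(6) I would assemble each operation levelwise. Push-forward $f_* = \lim_i f_{i,*}$ is defined for an arbitrary morphism because push-forwards are computed in $\operatorname{Pr}^{\rnor}$; the pull-back $f^*$ requires the stated hypothesis that $f$ be a colimit of maps $f_i \colon X_i \to Y_i$ over a common index set, so that the $f_i^*$ are compatible with the transition functors and glue to a left adjoint of $f_*$. If $f$ is schematic and smooth, the maps $f_i$ are smooth and the scheme-level adjunctions $f_{i,\#} \dashv f_i^*$ assemble, giving (3). For (4), a schematic finite-type $f \colon X \to Y$ with $Y = \colim_j Y_j$ is recovered from $f_j \colon X \times_Y Y_j \to Y_j$, which are schematic of finite type, so the adjunctions $(f_{j,!} \dashv f_j^!)$ exist on schemes; proper base change at finite level makes them compatible with the transition functors, and one sets $f_! := \colim_j f_{j,!}$ and $f^! := \lim_j f_j^!$. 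Statements (5) and (6) are then immediate: reducing along the presentations, the localization triangles for schematic immersions and the base-change isomorphisms are exactly the scheme-level ones, using that the transition functors $\iota_{ij}^!$ (resp. $\iota_{ij,*}$) commute with the relevant operations by base change.

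For (2), the tensor product and internal hom are computed levelwise — the projection formula for $\iota_{ij,*}$ guarantees compatibility with the transition functors — so $\hbf(X)$ is closed monoidal and stable. The subtle point, and the part I expect to be the real obstacle, is the monoidal unit: the system $(a_i^*\mathds 1_S)_i$ of local units is \emph{not} automatically an object of $\hbf(X)$, since a closed immersion satisfies $\iota_{ij}^{!}\mathds 1_{X_j} \neq \mathds 1_{X_i}$ in general, and indeed \cite[Example~2.4.3]{richarz+scholbach-2020} exhibits an ind-scheme where the unit of $\hbf(X)$ fails to be $a^*\mathds 1_S$. Under the hypothesis that the $X_i$ are quasi-compact one can control this — the colimit becomes a filtered colimit of compactly generated categories along functors preserving compact generators, the local units glue to a well-defined object of $\hbf(X)$, and a direct computation identifies it with the monoidal unit. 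So the bulk of the argument is bookkeeping around (co)limits of presentable $\infty$-categories; the one place demanding genuine care is precisely this quasi-compactness hypothesis, which is why it must appear in the statement of (2).
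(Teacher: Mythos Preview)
Your proposal is correct and is in fact far more detailed than what the paper does: the paper's entire proof consists of the single sentence ``This is \cite[Theorem 2.4.2]{richarz+scholbach-2020}.'' What you have written is a faithful sketch of the argument that one would find in that reference, so in substance you and the paper agree; you have simply reproduced the content of the citation rather than deferring to it.
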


\begin{proof}
This is \cite[Theorem 2.4.2]{richarz+scholbach-2020}.
\end{proof}
\begin{lem} \label{lem: motivic perverse and constructible sheaves on ind-schemes} 
Let $X = \colim_{i \in I}X_i$ be an ind-variety. The category $\derivednori^b(X)$ carries a perverse $t$-structure whose heart is the noetherian and artinian category $\mscr\perv(X) =  \colim_{i \in I} \mscr\perv(X_i)$ and a constructible $t$-structure whose heart is the category $\mscr\shv_{\ct}(X) = \colim_{i \in I}\mscr\shv_{\ct}(X_i)$. There are obvious induced functors 
\begin{equation*}
    \begin{split}
          \rat_X \colon \derivednori^b(X) & \longrightarrow \derivedcatct^b(X^{\an},\mathbb{Q}) \\ 
        \rat_X \colon \mscr\perv(X) & \longrightarrow \perv(X^{\an},\mathbb{Q}) \\ 
        \rat_X \colon \mscr\shv_{\ct}(X) & \longrightarrow \shv_{\ct}(X^{\an},\mathbb{Q})
    \end{split}
\end{equation*}
and similarly for $\derivedcatct^b(X_{\et},\mathbb{Q}_{\ell})$, $\perv_{\ell}(X,\mathbb{Q}),\shv_{\ct}(X_{\et},\mathbb{Q}_{\ell})$.
\end{lem}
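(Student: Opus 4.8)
The plan is to obtain both $t$-structures, their hearts, and the realizations by passing to the filtered colimit along the structural closed immersions. Recall that in the ind-scheme formalism of Proposition~\ref{prop: six operations of ind-schemes} one has $\derivednori^b(X) = \colim_{i \in I} \derivednori^b(X_i)$, the transition functors being the pushforwards $\iota_{i\to j,*}$; since each $\iota_{i \to j}$ is a closed immersion, $\iota_{i\to j}^*\iota_{i\to j,*} \simeq \id$, so these transition functors are fully faithful, and (by Tubach's $(\infty,1)$-categorical enhancement) the colimit is taken in stable $\infty$-categories, which makes the bookkeeping clean. The one substantive input I would establish is that the transition functors are $t$-exact for both the perverse motivic and the constructible $t$-structure on each $\derivednori^b(X_i)$.

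For the perverse $t$-structure this is immediate: for a closed immersion $\iota$ one has $\iota_* = \iota_!$, and $\iota_*$ is left $t$-exact while $\iota_!$ is right $t$-exact, so $\iota_{i\to j,*}$ is perverse $t$-exact. For the constructible $t$-structure, on hearts $\iota_{i\to j,*}$ sends a motivic constructible sheaf to its extension by zero, which is again constructible; alternatively one notes that $\iota_{i\to j,*}$ commutes with the realization and the realization detects the constructible $t$-structure by the theorem of Tubach recalled above. Once $t$-exactness is in hand, the standard fact that a filtered colimit of stable $\infty$-categories with $t$-structures along $t$-exact fully faithful functors carries a canonical $t$-structure, with connective/coconnective parts and heart computed as the corresponding colimits, produces the two $t$-structures on $\derivednori^b(X)$ with hearts $\mscr\perv(X) = \colim_i \mscr\perv(X_i)$ and $\mscr\shv_{\ct}(X) = \colim_i \mscr\shv_{\ct}(X_i)$; these are abelian since a filtered colimit of abelian categories along exact functors is abelian.

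For noetherianity and artinianity I would argue in two steps. First, the essential image of $\iota_{i\to j,*}$ on hearts is the full subcategory of objects supported on $X_i$, which is a Serre subcategory; hence every subobject (and quotient) of $\iota_{i\to j,*}(M)$ is again of the form $\iota_{i\to j,*}(M')$, so any ascending or descending chain of subobjects of a fixed object of $\mscr\perv(X)$ already lives in a single $\mscr\perv(X_{i})$. Second, each $\mscr\perv(X_i)$ is noetherian and artinian: the realization $\rat_{X_i}\colon \mscr\perv(X_i) \to \perv(X_i^{\an})$ is faithful and exact, hence carries a strictly increasing (resp. decreasing) chain of subobjects to a strictly increasing (resp. decreasing) chain, and $\perv(X_i^{\an})$ is noetherian and artinian; the same argument with $\shv_{\ct}(X_i^{\an})$ gives it for $\mscr\shv_{\ct}(X_i)$. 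Finally, the realization functors assemble: $\rat_{X_i}$ commutes with pushforward along closed immersions (being compatible with all six operations), so the $\rat_{X_i}$ are compatible with the transition functors on source and target and pass to the colimit $\rat_X\colon \derivednori^b(X) \to \derivedcatct^b(X^{\an},\mathbb{Q}) := \colim_i \derivedcatct^b(X_i^{\an},\mathbb{Q})$; restricting to hearts and using that $\rat_{X_i}$ is $t$-exact for both $t$-structures gives the three displayed functors, which are faithful and exact as filtered colimits of faithful exact functors. The $\ell$-adic variant is verbatim the same, replacing the Betti realization by the $\ell$-adic one and $X^{\an}$ by $X_{\et}$. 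The only place needing genuine care is the $t$-exactness of the transition functors together with the Serre-subcategory description of their essential image; the remainder is formal colimit manipulation.
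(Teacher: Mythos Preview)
Your proposal is correct and follows essentially the same approach as the paper: the key point in both is the $t$-exactness of the transition functors $\iota_{i\to j,*}$ for the perverse and constructible $t$-structures, after which the colimit $t$-structure is formal. The paper's proof is a single sentence recording this $t$-exactness (phrased as ``pushforwards of proper morphisms are $t$-exact for both structures''), while you spell out the justification, the noetherian/artinian argument via the Serre-subcategory description of the essential image, and the assembly of the realization functors---all of which the paper leaves implicit.
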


\begin{proof} Both follows from the fact that an exact triangle $M_i \longrightarrow M_j \longrightarrow M_k \longrightarrow +1$ in $\derivednori^b(X)$ is defined to be a triangle $(\iota_{i \to h})_*M_i \longrightarrow (\iota_{i \to h})_*M_j \longrightarrow (\iota_{i \to h})_*M_k \longrightarrow 1$ for some $h \geq i,j,k$ and pushforwards of proper morphisms are $t$-exact for both structures. 
\end{proof}
\begin{rmk}
In the case of perverse sheaves, there is an alternative description using the universal abelian factorization as in the case of ordinary schemes, this is \cite[Proposition 6.1]{florian+morel-2019} (see also, \cite[Definition 5.2.1]{neeman}).
\end{rmk}

\begin{lem}
Let $X = \colim_{i \in I}X_i$ be an ind-variety. The category $\derivednori^b(X)$ carries a weight structure that is transversal to the perverse $t$-structure. Moreover, six functors preserve weights as in the case of schemes. 
\end{lem}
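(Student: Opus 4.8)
The plan is to reduce the statement to the corresponding facts for schemes, which are recorded in Section~2 (the bulleted list of standard properties of perverse Nori motives, items (5) and (7)), by writing everything in terms of the presentation $X = \colim_{i \in I} X_i$ along closed immersions. First I would construct the weight structure on $\derivednori^b(X)$. By Lemma~\ref{lem: motivic perverse and constructible sheaves on ind-schemes} we have $\mscr\perv(X) = \colim_{i \in I}\mscr\perv(X_i)$ and, more to the point, every object and every morphism of $\derivednori^b(X)$ lives in $\derivednori^b(X_i)$ for some $i$ (via $(\iota_i)_*$, which is fully faithful since $\iota_i$ is a closed immersion, hence proper and a monomorphism). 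Define $\derivednori^b_{\leq w}(X)$ (resp. $\derivednori^b_{\geq w}(X)$) to be the objects that, after descending to some $X_i$, lie in $\derivednori^b_{\leq w}(X_i)$ (resp. $\derivednori^b_{\geq w}(X_i)$); one must check this is independent of $i$, which follows because for a closed immersion $\iota \colon X_i \hookrightarrow X_j$ the functor $\iota_* = \iota_!$ is weight-exact (item (7) of the Section~2 list, applied to $f^* , f_!$ and $f_* , f^!$ simultaneously since $\iota$ is proper) and conservative, so membership in $\derivednori^b_{\leq w}$ or $\derivednori^b_{\geq w}$ is detected after pushforward. The weight decompositions and the orthogonality $\Hom(K,L)=0$ for $K$ of weight $\leq w$ and $L$ of weight $>w$ then follow termwise from the scheme case, using that any pair $K,L$ is realized on a common $X_i$.

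Next I would verify transversality to the perverse $t$-structure, in the sense of Bondarko \cite{bondarko-2012}: concretely, that for a fixed $n$ the category $\mscr\perv(X,n)$ of perverse Nori motives pure of weight $n$ sits inside $\mscr\perv(X)$ and that the perverse truncations of a weight-bounded object are again weight-bounded with controlled weights. Again this is a property of objects and morphisms, each of which is pulled back from some $X_i$ via the $t$-exact (for both $t$-structures) and weight-exact closed immersion $(\iota_i)_*$; so transversality on $X$ is inherited from transversality on each $X_i$, which is item (7) of the Section~2 list. Here one uses that $(\iota_i)_*$, being $t$-exact for the perverse $t$-structure and weight-exact, both preserves and reflects the relevant filtered pieces.

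Finally, the assertion that the six functors preserve weights on ind-schemes: for a morphism $f \colon X \to Y$ of ind-schemes one writes $f$ (when the relevant operation is defined, per Proposition~\ref{prop: six operations of ind-schemes}) as a colimit of morphisms $f_i \colon X_i \to Y_i$ of varieties, and the weight estimates for $f^*, f_!$ (raising at most), $f_*, f^!$ (lowering at most), $\otimes$ and $\underline{\Hom}$, and $\boxtimes$ are then read off from the scheme-level statements in item (7) applied to the $f_i$, compatibly in $i$. The main obstacle I anticipate is the bookkeeping at the level of presentations: one must make sure that the weight structure just defined does not depend on the chosen ind-presentation (different cofinal systems of closed subschemes must give the same $\derivednori^b_{\leq w}(X)$ and $\derivednori^b_{\geq w}(X)$), and that "$f$ is a colimit of $f_i$" can be arranged compatibly with the presentations witnessing the weight bounds — this is exactly the kind of compatibility that Proposition~\ref{prop: six operations of ind-schemes} is set up to handle, so in the end it is routine but somewhat tedious rather than deep. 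No standard conjectures are needed, since everything rests on the unconditional weight formalism of \cite{florian+morel-2019}\cite{terenzi-2024} for schemes.
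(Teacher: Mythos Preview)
Your proposal is correct and follows essentially the same approach as the paper: define the weight structure by declaring $M$ to be of weight $\leq n$ (resp.\ $\geq n$) if it arises from such an object in some $\derivednori^b(X_i)$, use weight-exactness of closed immersions to show this is well-defined, and then inherit transversality and the six-functor weight estimates from the scheme case. Your version is more explicit about the bookkeeping (independence of presentation, compatibility of colimit presentations of $f$), which the paper simply absorbs into the phrase ``induced by the case of schemes.''
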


\begin{proof}
Since $\derivednori^b(X) = \colim_{i \in I}\derivednori^b(X_i)$ where transitions are closed immersions hence weight-exact, one simply sets $M \in \derivednori^b(X)$ of weight $\leq n$ (resp, $\geq n$) if it comes from an object of weight $\leq n$ (resp, $\geq n)$ in some $\derivednori^b(X_i)$. The transversality and weight preservations are induced by the case of schemes.  
\end{proof}

\begin{rmk}
From the lemma above, we see that any object in $\mscr\perv(X)$ carries a weight filtration (induced from a filtration in some $\mscr\perv(X_i)$) but unless $X$ is a scheme, the category of objects pure of weight $n$ is no longer semisimple. The upcoming sections are devoted to showing semisimplicity appears for stratified motives on affine Grassmannians. 
\end{rmk}

\begin{lem}
Let $F/k$ be a Galois extension (not necessarily finite). Let $X$ be an ind-variety over $k$, there are canonical equivalences 
\begin{equation*}
\begin{split} 
    \mscr\perv(X_F)^{\Gal(F/k)} \simeq \mscr\perv(X_k)  \\ 
    \mscr\shv_{\ct}(X_F)^{\Gal(F/k)} \simeq \mscr\shv_{\ct}(X_k).
    \end{split} 
\end{equation*}
\end{lem}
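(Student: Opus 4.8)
The plan is to peel off two reductions and then settle the essential case using the smooth-descent result already established for perverse Nori motives. First I would reduce to a \emph{finite} Galois extension: writing $k'$ as the filtered union of its finite Galois subextensions $k''/k$, continuity of constructible \'etale motives in the base — which by Tubach's results propagates to $\derivednori^b$, to the motivic perverse and constructible $t$-structures (the transition maps being finite-\'etale pullbacks, hence $t$-exact for both structures), and hence to their hearts — gives $\mscr\perv(X_{k'}) \simeq \colim_{k''}\mscr\perv(X_{k''})$ and $\mscr\shv_{\ct}(X_{k'}) \simeq \colim_{k''}\mscr\shv_{\ct}(X_{k''})$, with transition functors the pullbacks and with $\Gal(k'/k)$ acting on the $k''$-term through its finite quotient $\Gal(k''/k)$. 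Since the data of a $\Gal(k'/k)$-equivariant structure on an object that already lives at finite level is finitely presented, it descends to a $\Gal(k''/k)$-equivariant structure at some finite level, so $\mscr\perv(X_{k'})^{\Gal(k'/k)} \simeq \colim_{k''}\mscr\perv(X_{k''})^{\Gal(k''/k)}$ (and likewise for $\mscr\shv_{\ct}$); once the finite case is proved these transition functors become the identity of $\mscr\perv(X_k)$ by functoriality of pullback, so the colimit collapses. Secondly, for $k'/k$ finite Galois with group $G$, I would reduce to the case where $X$ is a quasi-projective variety: writing $X = \colim_i X_i$ along $k$-rational closed immersions one has $\mscr\perv(X_{k'}) = \colim_i \mscr\perv((X_i)_{k'})$ (Lemma~\ref{lem: motivic perverse and constructible sheaves on ind-schemes}) with a levelwise, transition-compatible $G$-action, and since $G$ is finite and everything $\mathbb{Q}$-linear its invariants are the image of the averaging idempotent $\tfrac1{|G|}\sum_{g\in G} g$ and therefore commute with the filtered colimit.

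For $X$ a quasi-projective $k$-variety and $k'/k$ finite Galois with group $G$, the morphism $\pi\colon X_{k'}\to X_k$ is finite \'etale — in particular smooth and surjective, with $d_\pi=0$ so $\pi^{\dagger}=\pi^{*}$ — and, since $k'/k$ is Galois, $k'\otimes_k k'\cong\prod_{g\in G}k'$, whence $X_{k'}\times_{X_k}X_{k'}\cong\coprod_{g\in G}X_{k'}$ with the two projections being, on the $g$-summand, the identity and the $k$-automorphism $\sigma_g$ of $X_{k'}$. Unwinding the definitions, a descent datum for $\pi$ on $M\in\mscr\perv(X_{k'})$ is precisely a family of isomorphisms $\sigma_g^{\dagger}(M)\simeq M$ satisfying the cocycle identity, i.e.\ a $\Gal(k'/k)$-equivariant structure on $M$; hence $\operatorname{Desc}_{\mscr\perv}(\pi)\simeq\mscr\perv(X_{k'})^{\Gal(k'/k)}$, and Proposition~\ref{prop: smooth descent} furnishes the equivalence $\pi^{*}\colon\mscr\perv(X_k)\xrightarrow{\ \sim\ }\mscr\perv(X_{k'})^{\Gal(k'/k)}$. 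The $\mscr\shv_{\ct}$-statement follows by the same argument with the constructible $t$-structure in place of the perverse one — the proof of Proposition~\ref{prop: smooth descent} carries over verbatim, since for $\pi$ finite \'etale $\pi^{*}=\pi^{\dagger}$ is also constructible $t$-exact with exact adjoint $\pi_{*}$ — or, alternatively, both statements can be obtained uniformly from the $h$-hyperdescent of $\derivednori^b$ along the \'etale cover $\Spec k'\to\Spec k$ (whose \v{C}ech nerve is the bar construction on $G$), descending both $t$-structures and passing to hearts.

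The hard part will be the continuity and finite-presentation bookkeeping of the first reduction: one must check that the motivic perverse and constructible $t$-structures are continuous in the base field and that an equivariant structure defined over $k'$ is pulled back from a finite level — routine in spirit, but deserving care for a profinite $\Gal(k'/k)$. The only other subtle point arises if one prefers the $h$-descent route in the last step, namely identifying the heart of $\derivednori^b(X_{k'})^{hG}$ with the ordinary equivariant category $\mscr\perv(X_{k'})^{G}$; this uses $\mathbb{Q}$-linearity essentially (higher group cohomology would otherwise intervene), and is avoided altogether by invoking Proposition~\ref{prop: smooth descent} directly.
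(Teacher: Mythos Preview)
Your proof is correct, but it takes a different route from the paper's own argument. The paper's proof is very brief: for the scheme case it simply cites \cite[Lemma 2.13]{jacobsen+terenzi-2025} (which already handles arbitrary, not necessarily finite, Galois extensions), and then for an ind-variety $X = \colim_i X_i$ it reduces to showing $\colim_i \mscr\perv(X_{i,k'})^{\Gal(k'/k)} \simeq \mscr\perv(X_{k'})^{\Gal(k'/k)}$, i.e.\ that the Galois invariants commute with the filtered colimit over closed immersions.

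Your argument is more self-contained: rather than invoking the external reference, you reprove the scheme case from the paper's own smooth-descent result (Proposition~\ref{prop: smooth descent}), at the cost of an extra reduction step to finite Galois extensions via continuity of $\derivednori^b$ in the base field. This buys you independence from \cite{jacobsen+terenzi-2025} and makes the mechanism (Galois descent $=$ smooth descent along the finite \'etale cover $X_{k'}\to X_k$) completely explicit, which is pedagogically valuable. The paper's approach is shorter precisely because the cited lemma absorbs both the continuity-in-$k$ step and the descent step; your order of reductions (first to finite $k'/k$, then to schemes) is the opposite of the paper's (first to schemes, then cite), but both are valid. The one point you flag as needing care --- continuity of the perverse and constructible $t$-structures along the tower of finite subextensions, and the finite-presentation of an equivariant datum --- is indeed the substantive content hidden inside the cited lemma, so you have correctly identified where the actual work lies.
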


\begin{proof}
   Regarding perverse sheaves, if $X$ is a scheme, then this is \cite[Lemma 2.13]{jacobsen+terenzi-2025}, the case of ind-scheme then follows. 
\end{proof}

\subsection{Stratified derived Nori Motives}
Follow \cite{richarz+scholbach-2020}\cite{richarz+scholbach-2021}, here we study the category of stratified derived Nori motives. The theory of perverse Nori motives is sufficiently good so that we may omit technicalities on Beilinson-Soul\'e conjecture, cellularities and stratifications as in \textit{loc.cit.} Now suppose that $X$ is a stratified ind-variety, we would like to define the category of stratified perverse Nori motives $\mscr\perv(X,X^+)$ as a motivic upgrade of $\perv(X,X^+)$. 
\begin{defn}
Let $\sigma \colon k \longhookrightarrow \mathbb{C}$ be a complex embedding, the category of stratified derived Nori motives $\derivednori^b(X,X^+)$ is defined as the full subcategory of $\derivednori^b(X)$ whose constructible cohomology sheaves are motivic local systems, namely,
  \begin{equation*}
    \derivednori^b(X,X^+)^{\sigma} = \left \{M \in \derivednori^b(X) \mid \cthnor^n(\iota^*_w(M)) \in \mscr\operatorname{Loc}(X_w)^{\sigma} \right \}.
\end{equation*}
Clearly, $\derivednori^b(X,X^+)^{\sigma}$ does not depend on $\sigma$ since both $\derivednori^b(X),\mscr\localsystem(X_w)$ do not depend on $\sigma$.
\end{defn}

\begin{rmk}
Clearly, there are induced realizations
\begin{equation*}
    \begin{split}
        \betti_{X,X^+}^* \colon \derivednori^b(X,X^+) & \longrightarrow \derivedcatct^b(X^{\an},X^{+,\an},\mathbb{Q}) \\ 
        \rfrak^{\et}_{X,X^+} \colon \derivednori^b(X,X^+) & \longrightarrow \derivedcatct^b(X_{\et},X^+_{\et},\mathbb{Q}_{\ell}) 
    \end{split}
\end{equation*}
for all primes $\ell$. Under the standard conjecture, we have an equivalence $\derivednori^b(X) \simeq \daetct(X,\mathbb{Q})$ (see \cite{tubach-2025}) and the restriction of $\derivednori^b(X,X^+)  \longrightarrow \derivedcatct^b(X_{\et},X^+_{\et},\mathbb{Q}_{\ell})$ to the full subcategory generated by Tate motives is precisely the realization functor in \cite[Lemma 3.2.8]{richarz+scholbach-2020}.
\end{rmk}

\begin{rmk} \label{rmk: alternative description of stratified t-structure}
We also have an alternative description, avoiding the constructible $t$-structure, using only the perverse $t$-structure: indeed, in the description above, one may simply replace $\cthnor^n$ everywhere with $\phnor^n[-\dim(X_w)]$; since the strata are smooth, this causes no problem because perverse $t$-structures on smooth varieties are shifts by dimensions of constructible $t$-structures.
\end{rmk}

Let us show that the category $\derivednori^b(X,X^+)$ can be described in terms of generators in the style of \cite{richarz+scholbach-2020}\cite{richarz+scholbach-2021}. However we have to introduce the notion of a \textit{Whitney-Nori stratification} as a replacement of a \textit{Whitney-Tate stratification} in \cite[Definition and Lemma 3.1.11]{richarz+scholbach-2020}.
\begin{defn}
  The stratification $\iota \colon X^+ = \coprod_{w \in W} X_w \longrightarrow X$ is called \textit{Whitney-Nori} if $\iota_w^*\iota_{s,*}(M) \in \derivednori^b_{\mscr\localsystem(X_w)}(X_w)$ for all $w,s \in W,M \in \derivednori^b_{\mscr\localsystem(X_s)}(X_s)$ (by duality, this is equivalent to requiring  $\iota_w^!\iota_{s,!}(M) \in \derivednori^b_{\mscr\localsystem(X_w)}(X_w)$ for all $w,s \in W,M \in \derivednori^b_{\mscr\localsystem(X_s)}(X_s)$).
\end{defn}

\begin{rmk} \label{rmk: testing Whitney-Nori on Betti realization} 
Since the realization $\rat \colon \derivednori^b(X) \longrightarrow \derivedcat^b_{\ct}(X^{\an},\mathbb{Q})$ is exact and commutes with six operations, a stratification $\iota \colon X^+ = \coprod_{w \in W}X_w \longrightarrow X$ is Whitney-Nori if and only if the same holds after replacing $\derivednori^b(-)$ by $\derivedcat^b_{\ct}((-)^{\an},\mathbb{Q})$ everywhere.
\end{rmk}

\begin{prop}  \label{prop: equivalent definitions of stratified motives} 
   Let $\iota \colon X^+ = \coprod_{w \in W} X_w \longrightarrow X$ be a Whitney-Nori stratification. The following categories are equivalent to $\derivednori^b(X,X^+)$:
  \begin{enumerate}
    \item $\tcal_1 = \left <\iota_{w,*}(M_w) \mid w \in W, M_w \in \derivednori^b_{\mscr\localsystem(X_w)}(X_w) \right >$.
    \item $\tcal_2 = \left <\iota_{w,!}(M_w) \mid w \in W, M_w \in \derivednori^b_{\mscr\localsystem(X_w)}(X_w) \right >$. 
    \item $\tcal_3 = \left \{M \in \derivednori^b(X) \mid \iota_w^!(M) \in \derivednori^b_{\mscr\localsystem(X_w)}(X_w) \right \}$. 
    \item $\tcal_4 = \left \{M \in \derivednori^b(X) \mid \iota^*_w(M) \in \derivednori^b_{\mscr\localsystem(X_w)}(X_w) \right \}$.
\end{enumerate}
Moreover, if these categories coincide then $\iota$ is a Whitney-Nori stratification. 
\end{prop} 

\begin{proof}
Let us first consider the case when $X$ is a scheme, hence $W$ is necessarily finite. Let us argue by induction on the number of strata. Clearly, $(4)$ is just a different way to write $\derivednori^b(X,X^+)$. Let $\iota_w \colon X_w \longhookrightarrow X$ be the inclusion of the open stratum and $i \colon Z \longhookrightarrow X$ its complement. Clearly, $Z$ is stratified by $\coprod_{s \in W \setminus \left\{w \right \}} X_w$. There is a localization sequence
\begin{equation*}
    \iota_{w!}\iota_w^*(M) \longrightarrow M \longrightarrow i_!i^*(M) \longrightarrow +1.
\end{equation*}
Clearly, the first term belongs to $\tcal_2$. By definition, $i^*(M)$ is in $\left <\iota_{s!}(M_s) \mid s \neq w, M_s \in \derivednori^b_{\mscr\localsystem(X_s)}(X_s) \right >$ and hence $i_!i^*(M)$ is in $\tcal_2$. Dually, one can show that $\tcal_1 = \tcal_3$. Under the hypothesis of a Whitney-Nori stratification, we can apply Verdier duality to see that $(1)$ is equivalent to $(2)$. Let us come back to the case when $X$ is an ind-variety. We can write $X = \colim_{w \in W} \overline{X}_w$ and $\overline{X}_w$ is stratified by $\coprod_{s \leq w} X_s$. Let $M \in \derivednori^b(X,X^+) \subset \derivednori^b(X) = \colim_{w \in W}\derivednori^b(\overline{X}_w)$ and write $M = \colim_{w \in W}\iota_{\leq w,*}(M_w)$ with $\iota_{\leq w} \colon \overline{X}_w \longhookrightarrow X$ the inclusion and $M_w \in \derivednori^b(\overline{X}_w)$. In fact, we can choose a representative $M  = \iota_{\leq w,*}(M_w)$. It is then necessary that $M_w \in \derivednori^b(\overline{X}_w,\overline{X}_w^+)$. The result then follows from the case of schemes. 
\end{proof}

\begin{ex}   \label{ex: Bruhat decomposition is Whitney-Nori} 
Let $G$ be a connected, reductive group over $k$ with $T \subset B \subset P \subset G$ a choice of maximal torus, Borel, parabolic, respectively. The stratification into $B$-orbits of $G/P$ is then Whitney-Nori. 
Indeed, this is \cite[Proposition 4.10]{soergel+wendt-2018} with some new ingredients at the beginning. For $w \in W = N_G(T)/T$ (the Weyl group), let $\bcal_w = B\dot{w}B/B \simeq \mathbb{A}^{\ell(w)}_k$ be the associated Bruhat cell, where $\ell(w) \in \mathbb{Z}_{>0}$ is the length of $w$ in the Weyl group. By remark \ref{rmk: testing Whitney-Nori on Betti realization}, it suffices to check this on $\derivedcat^b_{\ct}((-)^{\an},\mathbb{Q})$, namely we have to verify that
\begin{equation*}
    \iota_w^*\iota_{s,*}(M) \in \derivedcat^b_{\localsystem(\bcal_w)}(\bcal_w,\mathbb{Q}) \ \forall \ w,s \in W, M \in \derivedcat^b_{\localsystem(\bcal_s)}(\bcal_s,\mathbb{Q}).
\end{equation*}
Since $M$ is bounded and $\localsystem(\bcal_w)[\dim(\bcal_w)]$ is a Serre subcategory of $\perv(\bcal_w)$, we can truncate and assume that $M$ itself is a shifted local system without loss of generality. However, since $\bcal_s \simeq \mathbb{A}_s^{\ell(s)}$ is simply connected, $M$ is a power of the trivial local system, i.e., $M \simeq \mathds{1}^{\oplus m}$ and so we can assume that $M = \mathds{1}$. Now one can proceed as in \cite[Proposition 4.10]{soergel+wendt-2018}. 
\end{ex}

\begin{cor} \label{lem: Verdier duality on stratified motives}
Let $\iota \colon X^+ = \coprod_{w \in W} X_w \longrightarrow X$ be a Whitney-Nori stratification, then the category $\derivednori^b(X,X^+)$ is stable under Verdier duality and Tate twists.
\end{cor}

\begin{proof}
   Being stable under Tate twists is trivial. Regard Verdier duality, by proposition \ref{prop: equivalent definitions of stratified motives}, $\derivednori^b(X,X^+) = \tcal_1 = \tcal_2$ and Verdier duality just exchanges $\tcal_1$ and $\tcal_2$. 
\end{proof}

\subsection{Stratified perverse Nori motives}
Now we want to define the category of stratified, perverse Nori motives $\mscr\perv(X,X^+)$. Ideally, we should define the \textit{category of stratified motivic perverse sheaves} as $\mscr\perv(X,X^+) = \rat_X^{-1}(\perv(X^{\an},X^{+,\an}))$ and the functor $\rat_X$ restricts to a faithful, exact functor
\begin{equation*}
    \rat_X \colon \mscr\perv(X,X^+) \longrightarrow \perv(X^{\an},X^{+,\an}).
\end{equation*}
 However, as in the usual setting $\mscr\perv(X,X^+)$ should be the heart of some $t$-structure on $\derivednori^b(X,X^+)$ but it is not clear that whether one can pullback the $t$-structure on $\derivedcatct^b(X^{\an},X^{+,\an},\mathbb{Q}) $ (or $\derivedcatct^b(X_{\et},X^+_{\et},\mathbb{Q}_{\ell})$) to $\derivednori^b(X,X^+)$ since they are not faithful (and the conservativity conjecture says that we only expect the Betti realization to be conservative). Fortunately, we can process like the ordinary constructible derived categories and give the second definition of $\derivednori^b(X,X^+)$, which are to us easier to work with than the definition above given in terms of generators. 
\begin{prop} \label{prop: cohomological interpretation of stratified motives} 
Let $\iota \colon X^+ = \coprod_{w \in W} X_w \longrightarrow X$ be a Whitney-Nori stratification, then there is a $t$-structure on $\derivednori^b(X,X^+)$ with 
  \begin{align*}
            \derivednori^b(X,X^+)^{\leq 0} & = \left \{M \in \derivednori^b(X,X^+) \mid \cthnor^n(\iota^*_w(M)) = 0 \ \forall \ w \in W, n > -\dim(X_w) \right \}\\ 
            \derivednori^b(X,X^+)^{\geq 0} & = \left \{M \in \derivednori^b(X,X^+)  \mid \cthnor^n(\iota^!_w(M)) = 0 \ \forall \ w \in W, n < -\dim(X_w) \right \}.
    \end{align*}
The heart of this $t$-structure is the category of stratified motivic perverse sheaves $\mscr\perv(X,X^+) = \mscr\perv(X) \cap \derivednori^b(X,X^+)$. The functors
\begin{align*} 
\rat_X \colon \mscr\perv(X) \longrightarrow \perv(X^{\an},\mathbb{Q}) \\ 
\rfrak_X^{\et} \colon \mscr\perv(X) \longrightarrow \perv_{\ell}(X_{\et},\mathbb{Q}_{\ell}) 
\end{align*}
restrict to faithful, exact functors of abelian $\mathbb{Q}$-categories
\begin{align*} 
\rat_{(X,X^+)} \colon \mscr\perv(X,X^+) \longrightarrow \perv(X^{\an},X^{+,\an},\mathbb{Q}) \\ 
\rfrak_{(X,X^+)}^{\et} \colon \mscr\perv(X,X^+) \longrightarrow \perv_{\ell}(X_{\et},X_{\et}^+,\mathbb{Q}_{\ell}). 
\end{align*}
\end{prop}

\begin{proof}
   The existence of the $t$-structure follows from recollectement (see \cite[1.4.10]{bbd}). Concern the last part, we know the analogous description for $\derivedcat^b_{\ct}((-)^{\an},\mathbb{Q})$ and since $\rat$ is conservative, exact and commutes with $f^*,f^!$ (see \cite[Theorem 5.1]{florian+morel-2019}), it reflects the identity $\cthnor^n(\iota^*_w(M)) = 0, \cthnor^n(\iota^!_w(M)) = 0$ and hence an induced functor. 
\end{proof}

\begin{lem} \label{lem: stratified perverse motives inside non-stratified perverse motives} 
The category $\mscr\perv(X,X^+)$ is a $\mathbb{Q}$-linear, abelian, Noetherian and Artinian category and the inclusion $\mscr\perv(X,X^+) \subset \mscr\perv(X)$ is a Serre subcategory. 
\end{lem}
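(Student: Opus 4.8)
The plan is to deduce every assertion from the corresponding classical statement about stratified perverse sheaves, transported along the Betti realization. The abelian and $\mathbb{Q}$-linear assertions cost nothing: by Proposition~\ref{prop: cohomological interpretation of stratified motives} the category $\mscr\perv(X,X^+)$ is the heart of a $t$-structure on the $\mathbb{Q}$-linear triangulated subcategory $\derivednori^b(X,X^+)\subseteq\derivednori^b(X)$, and the heart of a $t$-structure on a $\mathbb{Q}$-linear triangulated category is $\mathbb{Q}$-linear abelian. So the real content is the Serre property, and the noetherian and artinian statements will follow from it formally.

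The key point is the identity
\[
\mscr\perv(X,X^+)\;=\;\bigl\{\,M\in\mscr\perv(X)\;:\;\rat_X(M)\in\perv(X^{\an},X^{+,\an})\,\bigr\}.
\]
To obtain it, combine Proposition~\ref{prop: cohomological interpretation of stratified motives}, which gives $\mscr\perv(X,X^+)=\mscr\perv(X)\cap\derivednori^b(X,X^+)$, with the Corollary above characterizing $\derivednori^b(X,X^+)$ as the preimage under the Betti realization $\betti^*_X$ of the stratified-constructible derived category $\derivedcatct^b(X^{\an},X^{+,\an},\mathbb{Q})$. Since $\betti^*_X$ is perverse $t$-exact and restricts to $\rat_X$ on perverse objects, and since $\perv(X^{\an},X^{+,\an})=\perv(X^{\an})\cap\derivedcatct^b(X^{\an},X^{+,\an},\mathbb{Q})$, intersecting with $\mscr\perv(X)$ produces exactly the displayed formula.

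Granting this, the Serre property is immediate. Let $0\to A\to B\to C\to 0$ be a short exact sequence in $\mscr\perv(X)$. The realization $\rat_X$ is exact (and faithful, being part of the universal abelian factorization defining $\mscr\perv(X)$; see also Lemma~\ref{lem: motivic perverse and constructible sheaves on ind-schemes} for the ind-scheme case), so it carries this to a short exact sequence $0\to\rat_X(A)\to\rat_X(B)\to\rat_X(C)\to 0$ in $\perv(X^{\an})$. Now $\perv(X^{\an},X^{+,\an})$ is a Serre subcategory of $\perv(X^{\an})$: this is classical for schemes (\cite{bbd}; see also \cite{achar-book}), and the ind-variety case follows by passing to the filtered colimit $\perv(X^{\an})=\colim_i\perv(X_i^{\an})$ along the exact, fully faithful transition functors. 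Hence $\rat_X(B)\in\perv(X^{\an},X^{+,\an})$ if and only if both $\rat_X(A)$ and $\rat_X(C)$ lie there, and by the displayed identity this says precisely that $B\in\mscr\perv(X,X^+)$ if and only if $A,C\in\mscr\perv(X,X^+)$. Thus $\mscr\perv(X,X^+)$ is a Serre subcategory of $\mscr\perv(X)$. Finally, $\mscr\perv(X)=\colim_i\mscr\perv(X_i)$ is noetherian and artinian by Lemma~\ref{lem: motivic perverse and constructible sheaves on ind-schemes}, and a subcategory of a noetherian (resp. artinian) abelian category that is closed under subobjects---in particular, any Serre subcategory---is again noetherian (resp. artinian).

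The one step that is not purely formal is the key identity, i.e. the fact that the intrinsic, constructible-cohomology description of $\mscr\perv(X,X^+)$ agrees with the realization-theoretic one; this is the content of the Corollary above, which in turn rests on the independence of $\mscr\localsystem(X_w)$ of the chosen realization (Proposition~\ref{prop: motivic local systems are independent of realizations}), and I expect this to be the point demanding the most care. One could instead bypass the realization and argue intrinsically---checking directly that the two conditions $\cthnor^n(\iota_w^*M)\in\mscr\localsystem(X_w)$ and $\cthnor^n(\iota_w^!M)\in\mscr\localsystem(X_w)$ are stable under passage to sub- and quotient-objects in $\mscr\perv(X)$, by noetherian induction on the stratification via the recollement of Proposition~\ref{prop: six operations of ind-schemes} and the fact that $\mscr\localsystem(X_w)$ is closed under extensions and under kernels and cokernels of its own morphisms---but this merely reproduces the classical argument, and the realization shortcut makes it unnecessary.
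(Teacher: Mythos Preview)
Your proof is correct. The paper, however, takes the intrinsic route you sketch in your final paragraph rather than the realization shortcut: it first observes that for a single smooth stratum $\mscr\localsystem(X_w)[\dim(X_w)]\subset\mscr\perv(X_w)$ is Serre (by reducing to the analytic statement), and then, for general $X$, feeds a short exact sequence $0\to M'\to M\to M''\to 0$ in $\mscr\perv(X)$ into the long exact sequence of $\cthnor^n(\iota_w^*(-))$ to conclude that $\cthnor^n(\iota_w^*M)\in\mscr\localsystem(X_w)$ if and only if the same holds for $M'$ and $M''$. Your approach is shorter and packages everything into the single observation that $\mscr\perv(X,X^+)$ is the preimage under an exact functor of a Serre subcategory; the paper's approach makes more visible exactly which stratum-by-stratum facts (the Serre property of $\mscr\localsystem(X_w)$) are doing the work, and avoids invoking the Corollary identifying $\derivednori^b(X,X^+)$ as a Betti preimage. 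Both arguments ultimately rest on the realization, just at different layers.
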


\begin{proof}
  Clearly, $\mscr\perv(X,X^+)$ is $\mathbb{Q}$-linear, abelian. The properties of being Noetherian and Artinian follows from the property of being a Serre subcategory so it suffices to treat this last claim. If $X$ is smooth and $X = X^+$ then $\mscr\perv(X,X^+) = \mscr\localsystem(X)[\dim(X)] \subset \mscr\perv(X)$ because the same holds for ordinary local systems and perverse sheaves (see for instance \cite[Proposition 3.4.1]{achar-book}). Now for general $X$, a short exact sequence
  \begin{equation*}
      0 \longrightarrow M' \longrightarrow M \longrightarrow M'' \longrightarrow 0
  \end{equation*}
 in $\mscr\perv(X)$ induces long exact sequence
      \begin{equation*}
          ... \longrightarrow \cthnor^n(\iota_w^*(M')) \longrightarrow \cthnor^n(\iota_w^*(M)) \longrightarrow \cthnor^n(\iota_w^*(M'')) \longrightarrow \cthnor^{n+1}(\iota_w^*(M')) \longrightarrow ... 
      \end{equation*}
   and hence $\cthnor(\iota_w^*(M)) \in \mscr\localsystem(X_w)$ if and only if $\cthnor(\iota_w^*(M')),\cthnor(\iota_w^*(M'')) \in \mscr\localsystem(X_w)$ for every $w \in W$ (the same for $\iota_w^!$'s operations) thanks to the first observation and hence $M \in \derivednori^b(X,X^+)$ if and only if $M',M'' \in \derivednori^b(X,X^+)$. The rest then follows from the fact that $\mscr\perv(X,X^+) = \mscr\perv(X) \cap \derivednori^b(X,X^+)$. 
\end{proof}

\begin{prop} \label{prop: galois descent of stratified motives} 
Let $X$ be an ind-variety over $k$ with a Whitney-Nori stratification. Let $e \colon \Spec(F) \longrightarrow \Spec(k)$ be a field extension of subfields of $\mathbb{C}$, then there is a canonical $t$-exact functor
\begin{equation*}
    \derivednori^b(X,X^+) \longrightarrow \derivednori^b(X_F,X_F^+). 
\end{equation*}
Hence an exact functor
\begin{equation*}
    \mscr\perv(X,X^+) \longrightarrow \mscr\perv(X_F,X_F^+). 
\end{equation*}
Moreover, if $e$ is a Galois extension, there is an equivalence of categories
\begin{equation*}
    \mscr\perv(X,X^+) \simeq \mscr\perv(X_{k'},X^+_{k'})^{\Gal(k'/k)}.
\end{equation*}
\end{prop}

\begin{proof}
   Indeed, this follows from the corresponding statement for motivic local systems (see \cite{jacobsen+terenzi-2025}) and proposition \ref{thm: simple objects of perverse motives on ind-schemes}.
\end{proof}

\subsection{Functoriality}  Functorialities of perverse Nori motives behave like normal perverse sheaves. The restriction to stratified one requires further assumptions. 
\begin{lem} \label{lem: pullbacks of stratified Nori motives}
Let $(f,f^+) \colon (X,X^+) \longrightarrow (Y,Y^+)$ be a morphism of stratified ind-varieties, then the functor $f^* \colon \derivednori^b(Y) \longrightarrow \derivednori^b(X)$ restricts to a perverse $t$-exact functor
\begin{equation*}
    f^{\dagger} \colon \derivednori^b(Y,Y^+) \longrightarrow \derivednori^b(X,X^+)
\end{equation*}
and hence a fully faithful exact functor
\begin{equation*}
    f^{\dagger} \colon \mscr\perv(Y,Y^+) \longrightarrow \mscr\perv(X,X^+). 
\end{equation*}
provided that $f$ is smooth and $f^+$ is smooth on each stratum.
\end{lem}

\begin{proof}
   We take remark \ref{rmk: alternative description of stratified t-structure} into account. First, $f^{\dagger}$ is well-restricted because $\phnor^n(\iota^!_w(f^{\dagger}(M)) = \phnor^n (f^{+,\dagger}\iota^!_w(M)) = f^{+,\dagger}\phnor^*(\iota_w^*M)$, which belongs to $ f^{+,\dagger}\mscr\operatorname{Loc}(Y_w)[-\dim(X_w)] =  f^{+,*}\mscr\operatorname{Loc}(Y_w)[-\dim(X_w)+\dim(f_w)] \subset \mscr\operatorname{Loc}(Y_w)[-\dim(Y_w)]$ (note that smoothness of $f$ here is necessary for base change and dimensions, the case $\iota_w^*$ is easier). The perverse $t$-exactness is argued similarly. The full faithfulness is a consequence of smooth descent in proposition \ref{prop: pullback of smooth morphism is fully faithful}. 
\end{proof}

\begin{rmk}
This is an enhancement of \cite[Lemma 3.2.12]{richarz+scholbach-2020}\cite[Lemma 2.10]{richarz+scholbach-2021}. In fact, the result in \loccit \ establishes an equivalence on Tate motives by relying on the Beilinson-Soul\'e vanishing conjecture. Meanwhile, here (and for ordinary perverse sheaves) one should only expect a fully faithful functor. The reason behind this is for Tate motives, the functors $f^*,f^{\dagger}$ are automatically surjective since they preserve the unit object and twists. Such a surjectivity already fails for ordinary perverse sheaves by taking $X,Y$ to be schemes (for instance $Y = \Spec(\mathbb{C}),X = \mathbb{A}_{\mathbb{C}}^1$). 
\end{rmk}

\begin{lem} \label{lem: pushforwards of quasi-finite morphisms of stratified Nori motives} 
Let $i \colon Z \longhookrightarrow X$ be a quasi-finite morphism of stratified ind-varieties, then the functors $i^*,i_!$ are right perverse $t$-exact, the functors $i^!,i_*$ are left perverse $t$-exact. 
\end{lem}

\begin{proof}
   This is obvious since these exactness already hold true for schemes.
\end{proof}

We recall the notion of a stratified semismall morphism (see for instance, \cite[Section 6.1]{baumann+riche-2018}): a morphism $f \colon (X,X^+) \longrightarrow (Y,Y^+)$ is semismall if it is proper, $f(X_w)$ is a union of strata, and for each $y \in Y_{w'} \subset f(X_w)$, one has
\begin{equation*}
    \dim(X_y \times X_w) \leq \frac{1}{2}(\dim(X_w) - \dim(Y_{w'}))
\end{equation*}
and we say that $f$ is a locally trivial fibration if for such $X_w,Y_{w'}$, the morphism $X_w \times (Y_{w'} \times_Y X) \longrightarrow Y_{w'}$ is a locally trivial fibration in the Zariski topology.
\begin{prop} \label{prop: pushforwards of stratified motives}
Let $f \colon (X,X^+) \longrightarrow (Y,Y^+)$ be a stratified semismall morphism and locally trivial fibration. Assume that strata of $X^+,Y^+$ are smooth, if $M \in \derivednori^b(X,X^+)$, then $f_*(M) \in \derivednori^b(Y,Y^+)$. Moreover, if $M \in \mscr\perv(X,X^+)$, then $f_*(M) \in \mscr\perv(Y,Y^+)$. 
\end{prop}

\begin{proof}
We are beneficial from the corresponding analytic statements (see for instance \cite{baumann+riche-2018}) and the conservativity of the realization functor. 
\end{proof}
\begin{lem} \label{lem: tensor products and internal homs of stratified motives}
The category $\derivednori^b(X,X^+)$ is stable under $(-) \otimes (-)$ and $\underline{\Hom}(-,-)$ (consequently, by duality). 
\end{lem}

\begin{proof}
Again, we are beneficial from the corresponding analytic statements and the conservativity of the realization functor.  
\end{proof}

\begin{cor} \label{cor: box products of stratified motives} 
Let $X^+ = \coprod_{w \in W}X_w \longrightarrow X, Y^+ = \coprod_{r \in R}Y_r \longrightarrow Y$ be stratified ind-varieties. Let $X^+ \times Y^+ = \coprod_{(w,r) \in W \times R}(X_w \times Y_r) \longrightarrow X \times Y$ be the product stratification. If $M \in \derivednori^b(X,X^+)$ and $N \in \derivednori^b_G(Y,Y^+)$ then $M \boxtimes N \in \derivednori^b(X \times Y, X^+ \times Y^+)$. 
\end{cor}

\begin{proof}
  This is an immediate consequence of lemmas \ref{lem: equivariant, stratified pullbacks} and \ref{lemma: equivariant, stratified tensors} since the projections $(X \times Y, X^+ \times Y^+) \longrightarrow (X,X^+),(Y,Y^+)$ are stratified $G$-equivariant morphisms. 
\end{proof}

\subsection{Intersection Motives}
Let $\iota \colon X^+ = \coprod_{w \in W}X_w \longrightarrow X$ be a stratified ind-scheme. We have factorizations $\iota_w \colon X_w \overset{j_w}{\longrightarrow } \overline{X}_w \overset{i_w}{\longrightarrow} X$ where $j_w$ is an open immersion and $i_w$ is a closed immersion. For any $w \in W$, $L \in \mscr\localsystem(X_w)$, we define the \textit{Nori intersection motive} 
\begin{equation*}
    \mathbf{IC}_{w}(L) = \mathbf{IC}(X_w,L) = (i_w)_*(j_w)_{!*}(L[\dim(X_w)]). 
\end{equation*}
It is obvious that the motive $\mathbf{IC}_w(L)$ belongs to $\mscr\perv(X,X^+)$ and $\rat_X(\mathbf{IC}_w(L)) = \mathbf{IC}_w(\rat_X(L))$ in $\perv(X,X^+)$. 
Most of formal properties of intersection cohomology complexes hold true for intersection motives. We summarize some ingredients need in the upcoming sections. The proofs are analogous to the classical setting. 



\begin{prop} \label{prop: two ses of intersection motives}
Let $X$ be an ind-variety. Let $i \colon Z \longhookrightarrow X$ be a closed immersion with open complement $j \colon U \longrightarrow X$. Let $M \in \mscr\perv(X)$.
\begin{enumerate} 
    \item If $M$ has no quotient supported on $Z$, then there is a short exact sequence
    \begin{equation*}
        0 \longrightarrow \phnor^0(i_*i^!(M)) \longrightarrow M \longrightarrow j_{!*}j^*(M) \longrightarrow 0
    \end{equation*}
    \item If $M$ has no subobject supported on $Z$, then there is a short exact sequence
    \begin{equation*}
        0 \longrightarrow j_{!*}j^*(M) \longrightarrow M \longrightarrow \phnor^0(i_*i^*(M)) \longrightarrow 0 
    \end{equation*}
\end{enumerate}
\end{prop}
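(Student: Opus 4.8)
The plan is to deduce both short exact sequences from the standard recollement triangles together with the weight/support calculus already recorded in the excerpt. Recall that for the open-closed decomposition $(j,i)$ one has the triangle $i_*i^!(M) \to M \to j_*j^*(M) \to +1$ and, dually, $j_!j^*(M) \to M \to i_*i^*(M) \to +1$ in $\derivednori^b(X)$. Applying the perverse cohomology functors $\phnor^n$ and using that $i_*, j_!$ are right perverse $t$-exact and $i_*, j_*$ are left perverse $t$-exact (these facts hold for perverse Nori motives, as noted after Proposition~\ref{prop: pushforwards of stratified motives} and in the functoriality subsection), one extracts from the first triangle the exact sequence
\begin{equation*}
\phnor^{-1}(j_*j^*M) \longrightarrow \phnor^0(i_*i^!M) \longrightarrow M \longrightarrow \phnor^0(j_*j^*M) \longrightarrow \phnor^1(i_*i^!M),
\end{equation*}
using $M \in \mscr\perv(X)$ so that $\phnor^0(M) = M$ and the negative/positive perverse cohomologies of $M$ vanish.

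For part (1), I would first identify $j_{!*}j^*(M)$ with the image of $\phnor^0(j_!j^*M) \to \phnor^0(j_*j^*M)$; since $\phnor^0(j_*j^*M)$ receives a map from $M$ whose cone is (a shift of) something supported on $Z$, the hypothesis that $M$ has no quotient supported on $Z$ forces the composite $M \to \phnor^0(j_*j^*M)$ to factor through the subobject $j_{!*}j^*(M)$, and moreover to surject onto it (because $j^*$ of the map $M \to j_{!*}j^*M$ is the identity, so the cokernel is supported on $Z$ and is a quotient of $M$, hence zero). The kernel of $M \twoheadrightarrow j_{!*}j^*(M)$ is then supported on $Z$ and, being a subobject of $M$, it is $\phnor^0$ of $i_*i^!(M)$ by the long exact sequence above together with the vanishing $\phnor^{-1}(j_*j^*M)=0$, which itself follows since $j^*M$ lives in perverse degree $0$ on $U$ and $j_*$ is left $t$-exact. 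Part (2) is the Verdier dual of part (1): apply $\mathbb{D}$, which exchanges $i_*i^!$ with $i_*i^*$, exchanges $j_!j^*$ with $j_*j^*$, fixes $j_{!*}j^*$, and swaps "no quotient supported on $Z$'' with "no subobject supported on $Z$''; since $\mathbb{D}$ is exact on $\mscr\perv(X)$ this transports the sequence of (1) into the sequence of (2).

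The main obstacle I anticipate is the bookkeeping around intermediate extension and the support conditions on an ind-scheme rather than a scheme: one must check that the functors $j^*, j_*, j_!, i^*, i^!$ and the intermediate extension $j_{!*}$ are all available and satisfy the recollement formalism in $\derivednori^b(X)$ when $X = \colim_i X_i$ is an ind-variety and $i \colon Z \hookrightarrow X$ is a schematic closed immersion. This is handled by Proposition~\ref{prop: six operations of ind-schemes} (localization sequences for schematic immersions) together with Lemma~\ref{lem: motivic perverse and constructible sheaves on ind-schemes} (the perverse $t$-structure on $\derivednori^b(X)$), and one reduces each assertion to a fixed $X_i$ containing $Z$ and the relevant data, where \cite{florian+morel-2019} supplies everything. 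A second, more routine point is verifying that "$M$ has no quotient (resp.\ subobject) supported on $Z$'' is the exact hypothesis needed to kill the error terms $\phnor^{\pm 1}$; this is the same argument as for ordinary perverse sheaves and can be imported verbatim since $\rat_X$ is faithful and exact, so an identity of objects or exactness of a sequence in $\mscr\perv(X)$ can be tested after realization when convenient.
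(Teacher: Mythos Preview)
Your proposal is correct and follows exactly the recollement argument the paper defers to (the paper's proof is a one-liner citing \cite[Lemma 3.3.8]{achar-book}). One minor slip worth tightening: in your surjectivity step for part (1), the cokernel of a map $M \to j_{!*}j^*M$ is a quotient of $j_{!*}j^*M$, not of $M$; the clean fix is to first note that the image $I$ of $M \to \phnor^0(j_*j^*M)$ automatically contains $j_{!*}j^*M$ (since the canonical map $\phnor^0(j_!j^*M)\to\phnor^0(j_*j^*M)$ factors through $M$), and then observe that $I/j_{!*}j^*M$ is a quotient of $M$ supported on $Z$, hence zero by hypothesis.
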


\begin{proof}
   The proof is a formal manipulation of four operations. One can consult \cite[Lemma 3.3.8]{achar-book} for instance. 
\end{proof}
\begin{lem} \label{lem: Verdier dualities exchange motivic local systems}
Let $L \in \mscr\localsystem(X_w)$ be a motivic local system, then $\mathbb{D}(\mathbf{IC}_w(L)) \simeq \mathbf{IC}_w(L^{\vee})(\dim(X_w))$. 
\end{lem}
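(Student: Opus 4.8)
The plan is to prove the isomorphism by unwinding the definition $\mathbf{IC}_w(L) = (i_w)_*(j_w)_{!*}(L[\dim(X_w)])$ and transporting the Verdier dual through each of its three constituent operations --- the closed pushforward $(i_w)_*$, the intermediate extension $(j_w)_{!*}$ along the open immersion $j_w$, and the formation of the shifted local system on the smooth stratum $X_w$ --- then composing the resulting canonical isomorphisms and pulling the Tate twist out.

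First I would treat the closed pushforward: since $i_w$ is a closed immersion one has $(i_w)_* = (i_w)_!$, so the exchange formula $\mathbb{D} \circ (i_w)_! \simeq (i_w)_* \circ \mathbb{D}$ from the four-operations formalism of Ivorra--Morel recorded earlier for $\derivednori^b$ yields a natural isomorphism $\mathbb{D}\big((i_w)_* N\big) \simeq (i_w)_*\mathbb{D}(N)$ for $N \in \mscr\perv(\overline{X}_w)$. Next, for $j_w \colon X_w \hookrightarrow \overline{X}_w$ I would establish $\mathbb{D} \circ (j_w)_{!*} \simeq (j_w)_{!*} \circ \mathbb{D}$; this is formal from $(j_w)_{!*}(M) = \Img\big(\phnor^0(j_{w!}M) \to \phnor^0(j_{w*}M)\big)$, because $\mathbb{D}$ is an exact contravariant autoequivalence of $\mscr\perv(\overline{X}_w)$ and hence carries the image of a morphism to the image of its dual, while $\mathbb{D} j_{w!} \simeq j_{w*}\mathbb{D}$, $\mathbb{D} j_{w*} \simeq j_{w!}\mathbb{D}$ and $\mathbb{D}\circ\phnor^0 \simeq \phnor^0\circ\mathbb{D}$ (the latter since Verdier duality swaps the two halves of the motivic perverse $t$-structure). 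Finally, on the smooth variety $X_w$ of dimension $d_w = \dim(X_w)$ the dualizing complex is $\omega_{X_w} \simeq \mathds{1}_{X_w}(d_w)[2d_w]$ by absolute purity, and since a motivic local system is strongly dualizable (see the proof of Proposition~\ref{prop: motivic local systems are independent of realizations}) one gets $\mathbb{D}_{X_w}(L[d_w]) = \underline{\Hom}(L[d_w],\omega_{X_w}) \simeq L^\vee[d_w](d_w)$ with $L^\vee = \underline{\Hom}(L,\mathds{1}_{X_w})$. Chaining the three isomorphisms and commuting the Tate twist past the six operations gives
\[
\mathbb{D}(\mathbf{IC}_w(L)) \simeq (i_w)_*(j_w)_{!*}\big(L^\vee[d_w](d_w)\big) = \mathbf{IC}_w(L^\vee)(d_w).
\]

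The main obstacle is bookkeeping rather than mathematical depth: one must ensure that each of the three exchange isomorphisms (the interaction of $\mathbb{D}$ with $\phnor^0$ and with $j_{!*}$, and the computation of $\omega_{X_w}$) is genuinely available in $\derivednori^b$ and not merely in the classical category of perverse sheaves, and that they are mutually compatible. As a safety net I would observe that all maps in play are natural, so it suffices to check the composite is an isomorphism after applying the faithful exact realization $\rat_X$; since $\rat_X$ commutes with the six operations, with $\mathbb{D}$, and with intermediate extension (so that $\rat_X(\mathbf{IC}_w(L)) = \mathbf{IC}_w(\rat_X L)$, as already recorded), the statement collapses to the classical identity $\mathbb{D}(\mathbf{IC}_w(\rat_X L)) \simeq \mathbf{IC}_w((\rat_X L)^\vee)(d_w)$ for ordinary perverse sheaves, and a faithful exact functor between abelian categories reflects isomorphisms.
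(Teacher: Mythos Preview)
Your proof is correct and follows a genuinely different route from the paper's. The paper invokes the characterization of intersection motives by support conditions (as in \cite[Lemma 3.3.13]{achar-book}, the same result recorded here as Proposition~\ref{prop: characterizations of intersection motives}): one checks directly that $\mathbb{D}(\mathbf{IC}_w(L))$ is supported on $\overline{X}_w$, restricts to $L^\vee(\dim X_w)[\dim X_w]$ on $X_w$, and satisfies the requisite vanishing of $\iota_{w'}^*$ and $\iota_{w'}^!$ on lower strata, hence must be $\mathbf{IC}_w(L^\vee)(\dim X_w)$. You instead unwind the definition $\mathbf{IC}_w(L)=(i_w)_*(j_w)_{!*}(L[\dim X_w])$ and transport $\mathbb{D}$ through each layer, producing an explicit chain of natural isomorphisms. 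Your approach has the advantage of exhibiting a canonical identification rather than appealing to a uniqueness statement; the paper's approach is slightly more robust in that it avoids verifying compatibilities among the exchange isomorphisms.

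One point deserves care: the identity ``$\mathbb{D}\circ\phnor^0 \simeq \phnor^0\circ\mathbb{D}$'' is not a natural isomorphism of functors on all of $\derivednori^b$, only on objects concentrated in a single half of the perverse $t$-structure. That suffices here since $j_{w!}M\in{}^p\derivednori^{\leq 0}$ and $j_{w*}M\in{}^p\derivednori^{\geq 0}$, but you should say so explicitly. Finally, note that the paper warns precisely against the naive ``apply $\rat_X$'' strategy because a priori there is no canonical morphism between the two sides; your safety net escapes this objection only because you \emph{first} construct such a morphism and then invoke that faithful exact functors reflect isomorphisms --- which is valid, but in fact unnecessary, since each of your three steps is already an isomorphism in $\derivednori^b$.
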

\begin{proof}
Note that it is not clear whether one can apply the realization functor since it is not clear whether there exists a canonical morphism between $\mathbb{D}(\mathbf{IC}_w(L)), \mathbf{IC}_w(L^{\vee})(\dim(X_w))$. However, as in classical case, intersection motives have universal properties and we can proceed like in, for instance, \cite[Lemma 3.3.13]{achar-book}.
\end{proof}
The following will be particularly useful for use in the proof of (a corollary of) Kazhdan-Lusztig parity vanishing. 
\begin{prop} \label{prop: characterizations of intersection motives}
Let $\iota \colon X^+ = \coprod_{w \in W} X_w \longrightarrow X$ be a Whitney-Nori stratification of an ind-variety $X$. Let $M \in \mscr\perv(X,X^+)$ (so that $\mathbb{D}(M) \in \mscr\perv(X,X^+)$ by lemma  \ref{lem: Verdier duality on stratified motives}). Let $w \in W$ and $L \in \mscr\localsystem(X_w)$, then the following statements are equivalent:
\begin{enumerate}
    \item $M \simeq \mathbf{IC}_w(L)$. 
    \item $M$ is supported on $\overline{X}_w$, $M_{\mid X_w} \simeq L[\dim(X_w)]$ and for each $X_{w'} \in \overline{X}_w \setminus X_w$, one has that
    \begin{equation*}
        \iota_{w'}^*(M) \in \derivednori^b(X_{w'},X_{w'})^{\leq -\dim(X_{w'})-1} \ \ \ \ \textnormal{and} \ \ \ \  \iota_{w'}^!(M) \in \derivednori^b(X_{w'},X_{w'})^{\geq -\dim(X_{w'})+1}
    \end{equation*}
    where by an abuse of notation $\iota_{w'} \colon X_{w'} \longhookrightarrow \overline{X}_w \setminus X_w \longhookrightarrow X$ is the obvious inclusion. 
\end{enumerate}
\end{prop}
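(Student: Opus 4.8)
The plan is to pass from $X$ to the quasi-projective variety $\overline{X}_w$ and then transport to $\mscr\perv$ the classical intrinsic characterisation of the intermediate extension (in the style of \cite{bbd} and \cite[\S 3.3]{achar-book}), all of whose ingredients are now available: recollement for the motivic perverse $t$-structure, the four operations, the stratumwise description of that $t$-structure (Proposition~\ref{prop: cohomological interpretation of stratified motives}), the two short exact sequences of Proposition~\ref{prop: two ses of intersection motives}, and the fact that $\rat_X$ commutes with $f^*$, $f^!$ and with the formation of perverse cohomology objects and is faithful and exact on hearts. First I would make the reduction: in both (1) and (2) the motive $M$ is supported on $\overline{X}_w$, which carries the induced finite stratification by the strata $X_v$, $v\leq w$. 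Writing $i_w\colon\overline{X}_w\hookrightarrow X$, $j_w\colon X_w\hookrightarrow\overline{X}_w$, and $M'\coloneqq i_w^*M$ (so $M\simeq (i_w)_*M'$, and the conditions of (2) for $M$ on $X$ are equivalent to those for $M'$ on $\overline{X}_w$, since every $\iota_{w'}$ with $X_{w'}\subset\overline{X}_w$ factors through $i_w$), the statement reduces to: $M'\simeq (j_w)_{!*}(L[\dim X_w])$ in $\mscr\perv(\overline{X}_w,\overline{X}_w^+)$ if and only if $M'$ satisfies the conditions of (2) on $\overline{X}_w$. The proposition then follows by applying the fully faithful, perverse $t$-exact functor $(i_w)_*$ and recalling $\mathbf{IC}_w(L)=(i_w)_*(j_w)_{!*}(L[\dim X_w])$.

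For the direction $(1)\Rightarrow(2)$ I would first note that $M'=(j_w)_{!*}(L[\dim X_w])$ restricts to $L[\dim X_w]$ over the open stratum $X_w$, since $(j_w)_{!*}$ is the identity there, and is supported on all of $\overline{X}_w$ by construction. The boundary estimates amount to the vanishing of certain perverse cohomology objects $\phnor^n(\iota_{w'}^*M')$ and $\phnor^n(\iota_{w'}^!M')$; since $\rat_X$ commutes with $\iota_{w'}^*$, $\iota_{w'}^!$ and with $\phnor^n$ and is faithful and exact on hearts, these can be checked after applying $\rat_X$, where — using $\rat_X(\mathbf{IC}_w(L))\simeq\mathbf{IC}_w(\rat_X(L))$ — they are precisely the classical support conditions characterising the intersection complex $\mathbf{IC}_w(\rat_X(L))$. (Alternatively one reproves $\iota_{w'}^*(j_w)_{!*}A\in\derivednori^b(X_{w'},X_{w'})^{\leq-\dim X_{w'}-1}$ and $\iota_{w'}^!(j_w)_{!*}A\in\derivednori^b(X_{w'},X_{w'})^{\geq-\dim X_{w'}+1}$ directly from $(j_w)_{!*}A=\Img(\phnor^0((j_w)_!A)\to\phnor^0((j_w)_*A))$ and the right/left perverse $t$-exactness of $\iota_{w'}^*$, $\iota_{w'}^!$, exactly as in the classical proof.)

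For the direction $(2)\Rightarrow(1)$ I would put $Z=\overline{X}_w\setminus X_w$, a closed union of (smooth) strata, with closed immersion $i\colon Z\hookrightarrow\overline{X}_w$ and open complement $j=j_w$; note $i^*M',i^!M'\in\derivednori^b(Z,Z^+)$ because $M'\in\mscr\perv(\overline{X}_w,\overline{X}_w^+)$ and restriction to a closed union of strata preserves stratified constructibility. By Proposition~\ref{prop: cohomological interpretation of stratified motives} applied to $Z$ (with Remark~\ref{rmk: alternative description of stratified t-structure} on the smooth strata), the stratumwise hypotheses of (2) say exactly that $i^*M'\in\derivednori^b(Z,Z^+)^{\leq-1}$ and $i^!M'\in\derivednori^b(Z,Z^+)^{\geq1}$. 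As $i_*$ is perverse $t$-exact, $\phnor^0(i_*i^*M')=i_*\phnor^0(i^*M')=0$ and $\phnor^0(i_*i^!M')=i_*\phnor^0(i^!M')=0$. Hence $M'$ has no nonzero subobject and no nonzero quotient supported on $Z$: a quotient $M'\twoheadrightarrow i_*N'$ with $N'\in\mscr\perv(Z)$ yields, after the right $t$-exact functor $\phnor^0 i^*$, a surjection $0=\phnor^0(i^*M')\twoheadrightarrow\phnor^0(i^*i_*N')=N'$, so $N'=0$, and dually for subobjects using $\phnor^0 i^!$. Proposition~\ref{prop: two ses of intersection motives}(1) then gives a short exact sequence $0\to\phnor^0(i_*i^!M')\to M'\to j_{!*}j^*M'\to 0$; the left term vanishes and $j^*M'\simeq L[\dim X_w]$ by (2), so $M'\simeq (j_w)_{!*}(L[\dim X_w])$, i.e.\ $M\simeq\mathbf{IC}_w(L)$.

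The step I expect to be the main obstacle is, in $(2)\Rightarrow(1)$, the identification of the per-stratum conditions of (2) with the global $t$-structure membership $i^*M'\in\derivednori^b(Z,Z^+)^{\leq-1}$ and $i^!M'\in\derivednori^b(Z,Z^+)^{\geq1}$: one must know that $i^*M'$ and $i^!M'$ genuinely land in the stratified category $\derivednori^b(Z,Z^+)$ and then match shift conventions with care — smoothness of each $X_{w'}$ is precisely what forces the perverse $t$-structure there to be the constructible one shifted by $\dim X_{w'}$, which is what lets Proposition~\ref{prop: cohomological interpretation of stratified motives} and Remark~\ref{rmk: alternative description of stratified t-structure} be invoked on $Z$. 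A secondary, purely bookkeeping point is keeping the normalisation of $\mscr\localsystem(X_w)$ consistent with the shift $[\dim X_w]$ throughout. Everything else is the standard recollement argument, which transports to $\mscr\perv$ verbatim because the four operations, the perverse $t$-structure and the faithful exact functor $\rat_X$ are all in hand.
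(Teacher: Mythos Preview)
Your proposal is correct and follows essentially the same route as the paper, which simply records that $(1)\Rightarrow(2)$ is obvious by definition and that $(2)\Rightarrow(1)$ is obtained by repeating the classical argument of \cite[Lemma~3.3.11]{achar-book}. You have in effect spelled out precisely that recollement argument using the paper's own Proposition~\ref{prop: two ses of intersection motives}, and your optional shortcut for $(1)\Rightarrow(2)$ via the faithful exact realization $\rat_X$ is consistent with how the paper argues elsewhere.
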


\begin{proof}
   The direction $(1) \Rightarrow (2)$ is obvious by definition. The direction $(2) \Rightarrow (1)$ can obtained by repeating the proof of ordinary perverse sheaves (see for instance \cite[Lemma 3.3.11]{achar-book}).
\end{proof}

We can then classify simple objects in $\mscr\perv(X,X^+)$. The following is an analogue of \cite[Theorem 3.3.8]{richarz+scholbach-2020}. 

\begin{theorem} \label{thm: simple objects of perverse motives on ind-schemes}
Let $j \colon U \longrightarrow X$ be a locally closed immersion of ind-varieties. If $M$ is a simple object in $\mscr\perv(U)$, then $j_{!*}(M)$ is a simple object in $\mscr\perv(X)$. Moreover, if there exists a Whitney-Nori stratification $\iota \colon X^+ = \coprod_{w \in W} X_w \longrightarrow X$ consisting of smooth varieties $X_w$, then any simple object in $\mscr\perv(X,X^+)$ is of the form $\mathbf{IC}_w(L)$ with $L$ being a simple object in $\mscr\localsystem(X_w)$ (hence necessarily pure of some weight). 
\end{theorem}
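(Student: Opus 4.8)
The plan is to reduce everything to statements about intersection motives and motivic local systems that have already been established, mirroring the classical argument of Beilinson--Bernstein--Deligne.

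First I would prove the statement about $j_{!*}(M)$ for $M$ simple. By the last displayed formula in property (8) of the recollection of standard properties, $\Hom_{\mscr\perv(U)}(A,B) = \Hom_{\mscr\perv(X)}(j_{!*}(A),j_{!*}(B))$, so in particular $j_{!*}(M)$ has no proper nonzero endomorphisms beyond scalars. But simplicity does not follow from this alone; instead I would argue as in the classical setting: write $j = i \circ j'$ with $j'$ an open immersion into $\overline{U}$ and $i$ a closed immersion, and note $i_*$ preserves simplicity (it is fully faithful and exact, and a subobject of $i_*N$ is supported on the closed set, hence of the form $i_*N'$). So it suffices to treat the open case. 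For $j'$ open, by Proposition \ref{prop: two ses of intersection motives} any nonzero subobject or quotient of $j'_{!*}(M)$ restricts nontrivially to $U$ (since $j'_{!*}(M)$ has neither sub nor quotient supported on the boundary, any such sub/quotient, if it were supported on the boundary, would have to be zero); its restriction to $U$ is then a nonzero sub/quotient of $M$, hence all of $M$ by simplicity, and one concludes that the sub/quotient equals $j'_{!*}(M)$ by the intermediate-extension characterization. This is the standard argument and transfers verbatim since we have all four operations, the recollement, and the $t$-exactness facts; alternatively, one may simply apply the faithful exact functor $\rat_X$ and invoke the classical result together with the fact that $\rat_X$ commutes with $j_{!*}$ (which holds since it commutes with $j_!$, $j_*$ and $\phnor^0$) --- a sub/quotient of $j_{!*}(M)$ in $\mscr\perv(X)$ maps to a sub/quotient of $j_{!*}(\rat_U M)$, and one traces back. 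I would use whichever of these two is cleanest; the conservativity of $\rat_X$ is not available, so the first (intrinsic) argument is safer.

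Next, for the classification: let $S$ be a simple object of $\mscr\perv(X,X^+)$. Since this is a Serre subcategory of $\mscr\perv(X)$ by Lemma \ref{lem: stratified perverse motives inside non-stratified perverse motives}, $S$ is also simple in $\mscr\perv(X)$. Pick the smallest stratum closure $\overline{X}_w$ containing the support of $S$; then $j_w^*(S)$ is a nonzero object of $\mscr\perv(X_w, X_w) = \mscr\localsystem(X_w)[\dim X_w]$, i.e.\ $j_w^*(S) = L[\dim X_w]$ for a motivic local system $L$ on $X_w$, and $L$ must be simple (a subobject would pull back to a subobject of $S$ via $j_{w!*}$ using the adjunction/intermediate extension, contradicting simplicity of $S$). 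Now $S$ has no sub or quotient supported on $\overline{X}_w \setminus X_w$ (such would be a proper nonzero subobject of the simple $S$), so Proposition \ref{prop: two ses of intersection motives} forces $S \simeq j_{w!*}(L[\dim X_w])$, and then $S \simeq i_{w*} j_{w!*}(L[\dim X_w]) = \mathbf{IC}_w(L)$. The fact that a simple motivic local system is pure of some weight follows from Lemma \ref{lem: weight filtrations of motivic local systems}: any object of $\mscr\localsystem(X_w)[\dim X_w]$ has a weight filtration with graded pieces in $\mscr\localsystem(X_w)[\dim X_w] \cap \mscr\perv(X,n)$, and a simple object must coincide with a single graded piece, hence be pure.

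The main obstacle I anticipate is the absence of conservativity of the realization functor $\rat_X$: several classical arguments ``check on the underlying perverse sheaf,'' but here $\rat_X$ is only faithful and exact, so I cannot deduce isomorphisms from isomorphisms after realization. This forces the proof to be intrinsic --- I must run the BBD recollement argument (via Proposition \ref{prop: two ses of intersection motives} and the characterization of intermediate extensions) directly in $\mscr\perv(X,X^+)$. A secondary technical point is handling the ind-scheme structure: one should observe that a simple object of $\mscr\perv(X,X^+) = \colim_w \mscr\perv(\overline{X}_w, \coprod_{v \le w} X_v)$ is supported on some finite stage $\overline{X}_w$ (since its support is a closed subset, hence contained in finitely many strata), reducing the classification to the case of an honest stratified variety, where the above runs without change. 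I would also double-check that ``support in the smallest stratum closure'' makes sense, i.e.\ that the support of a perverse Nori motive is a well-defined closed union of strata --- this is immediate from the recollement and the fact that $\iota_w^*$-vanishing for all $w$ in an open union of strata characterizes support in the complement.
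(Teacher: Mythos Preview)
Your argument is correct and follows the classical BBD route, but the paper takes a slightly different path that is worth noting. For the classification of simples, you work directly: given a simple $S$, find the open stratum $X_w$ in its support, restrict, and invoke Proposition~\ref{prop: two ses of intersection motives} to identify $S$ with $\mathbf{IC}_w(L)$. The paper instead proves the stronger statement that \emph{every} object of $\mscr\perv(X,X^+)$ admits a finite filtration with subquotients of the form $\mathbf{IC}_w(L)$, by noetherian induction on the stratification (peel off an open stratum, use both short exact sequences of Proposition~\ref{prop: two ses of intersection motives}, and apply induction to the pieces supported on the complement). The classification of simples is then an immediate consequence. The paper's approach buys the filtration statement, which is used later (e.g.\ in the proof of Corollary~\ref{cor: indecomposable objects} and the semisimplicity arguments of \S5.2); your direct argument is cleaner for the bare classification but does not yield this.

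One small correction: your worry that ``conservativity of $\rat_X$ is not available'' is unfounded at the abelian level. A faithful exact functor between abelian categories is automatically conservative (if $\rat_X(N)=0$ then $\id_N$ maps to $0$, so $N=0$; hence kernels and cokernels are detected). The paper uses this freely (see e.g.\ the proofs of Propositions~\ref{prop: pullback of smooth morphism is fully faithful} and \ref{prop: pushforwards of stratified motives}). So your ``safer'' intrinsic argument is fine, but the alternative route via $\rat_X$ would also have worked.
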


\begin{proof}
We see that objects of the forms $j_{!*}(M)$ and $i_{w*}j_{w,!*}(M)$ are simple. In the case of a given stratification, one might assume that $X$ is a stratified scheme. It suffices to prove that any object in $\mscr\perv(X,X^+)$ admits a filtration by objects of the form $i_{w*}j_{w,!*}(M)$ with $M$ being an object in $\mscr\localsystem(X_w)$. We can proceed by noetherian induction. Assume that the result holds for strict closed subschemes of $X$. Let $j\colon U \longhookrightarrow X$ be an open stratum with closed complement $i \colon X \setminus U = Z \longhookrightarrow X$. Let $M \in \mscr\perv(X,X^+)$ be an object then $j^*(M) \in \mscr\operatorname{Loc}(U)$ by lemma \ref{lem: pullbacks of stratified Nori motives}. Thus, $j_{!*}j^*(M) = \mathbf{IC}(U,j^*M)$ is of the desired form (and simple if $M$ is simple). Let $M' = \operatorname{Coker}(\phnor^0(i_*i^!M) \longrightarrow M)$. Note that $j^*(M') \simeq j^*(M)$ so $M'$ is supported on $Z$ so it admits a filtration of the desired form. By proposition \ref{prop: two ses of intersection motives}, there is a short exact sequence 
\begin{equation*}
    0 \longrightarrow \phnor^0(i_*i^!M) \longrightarrow M \longrightarrow M' \longrightarrow 0
\end{equation*}
Similarly, there is also short exact sequence
\begin{equation*}
    0 \longrightarrow \mathbf{IC}(U,j^*M) \longrightarrow M' \longrightarrow \phnor^0 (i_*i^*M') \longrightarrow 0.
\end{equation*}
By induction, both $\phnor^0(i_*i^!M),\phnor^0 (i_*i^*M')$ admit filtrations by intersection motives of desired forms and hence $M$ must admit a filtration of the desired form as well. 
\end{proof}

\begin{cor} \label{cor: realizations of semisimple objects}
Let $X$ be an ind-scheme with a Whitney-Nori stratification $\iota \colon X^+ = \coprod_{w \in W}X_w \longrightarrow X$. If $M \in \mscr\perv(X,X^+)$ is a semisimple object, then $\rat_X(M) \in \perv(X,X^+)$ is a semisimple object.
\end{cor}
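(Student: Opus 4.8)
The plan is to reduce to a single simple intersection motive and then import the decomposition theorem on the analytic side. Since $\rat_X$ is additive and $\mscr\perv(X,X^+)$ is artinian (Lemma~\ref{lem: stratified perverse motives inside non-stratified perverse motives}), a semisimple $M$ is a finite direct sum of simple objects, so it suffices to show that $\rat_X(S)$ is semisimple for every simple $S\in\mscr\perv(X,X^+)$. By Theorem~\ref{thm: simple objects of perverse motives on ind-schemes} we may write $S\simeq\mathbf{IC}_w(L)$ with $L\in\mscr\localsystem(X_w)$ simple, hence pure of some weight $n$; and by the compatibility $\rat_X(\mathbf{IC}_w(L))\simeq\mathbf{IC}_w(\rat_{X_w}(L))$ recorded right after the definition of the Nori intersection motive, we get $\rat_X(S)\simeq\mathbf{IC}_w(\rat_{X_w}(L))$ in $\perv(X^{\an},X^{+,\an})$. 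Since $\mathbf{IC}_w(-)$ carries a direct sum of local systems to the corresponding direct sum of intersection complexes and $\mathbf{IC}_w(\mathcal{L})$ is simple exactly when $\mathcal{L}$ is an irreducible local system, everything comes down to the claim that $\rat_{X_w}(L)$ is a \emph{semisimple} local system on $X_w^{\an}$.

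For that I would use purity. Route one (Hodge): it is known that a pure motivic local system underlies a polarizable variation of $\mathbb{Q}$-Hodge structure (equivalently, $\rat_{X_w}(L[\dim X_w])$ underlies a pure Hodge module of weight $n+\dim X_w$), and Deligne's semisimplicity theorem for polarizable variations of Hodge structure then gives that $\rat_{X_w}(L)$ is semisimple. Route two ($\ell$-adic): by Proposition~\ref{prop: motivic local systems are independent of realizations} the $\ell$-adic realization of $L$ is a pure lisse $\overline{\mathbb{Q}}_\ell$-sheaf; spreading $X_w$ and $L$ out over a subfield $k_0\subset k$ finitely generated over $\mathbb{Q}$ — which leaves the analytic space $X_w^{\an}=X_w(\mathbb{C})$ untouched — one deduces geometric semisimplicity from Weil~II, and then transports it back: a subspace of a finite-dimensional continuous $\pi_1^{\textnormal{\'et}}(X_{w,\overline{k_0}})$-representation is stable under the topological monodromy iff it is stable under the \'etale monodromy (density), and a $\mathbb{Q}$-local system is semisimple iff it is so after $(-)\otimes_{\mathbb{Q}}\overline{\mathbb{Q}}_\ell$; hence $\rat_{X_w}(L)$ is semisimple.

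Finally, assembling the pieces: each $\rat_{X_w}(L)$ occurring is semisimple, so each $\mathbf{IC}_w(\rat_{X_w}(L))$ is a semisimple perverse sheaf, and a finite direct sum of semisimple perverse sheaves is semisimple; therefore $\rat_X(M)$ is semisimple. The only non-formal input — and thus the main obstacle — is the semisimplicity of the realized local system $\rat_{X_w}(L)$, i.e. the decomposition-theorem ingredient of the middle paragraph; I note that in the intended application to $\mathrm{Gr}_G$ stratified by $\lnor^+G$-orbits this step is vacuous, since those orbits are simply connected and carry only trivial motivic local systems, so one may alternatively quote the corollary only in that generality if the Hodge- or $\ell$-adic-weight input is to be avoided.
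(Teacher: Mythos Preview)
Your argument follows the same skeleton as the paper's: reduce to a simple object, write it as $\mathbf{IC}_w(L)$ with $L$ a simple (hence pure) motivic local system, use the compatibility $\rat_X(\mathbf{IC}_w(L))\simeq\mathbf{IC}_w(\rat_{X_w}(L))$, and conclude by showing that $\rat_{X_w}(L)$ is a semisimple local system. The paper dispatches that last step by citing \cite[Lemma~6.12]{terenzi-2024} (which in turn rests on \cite{tubach-2025} and \cite{jacobsen-2025}), whereas you sketch the underlying content directly via Deligne's semisimplicity for polarizable variations of Hodge structure or, alternatively, Weil~II after spreading out. These are precisely the ingredients behind the cited lemma, so the difference is one of packaging rather than method: the paper appeals to an off-the-shelf result, while you unpack it. Your Route~two is correct but relies implicitly on the Betti--\'etale comparison for local systems (so that semisimplicity of the $\ell$-adic realization over $\overline{k_0}$ transfers to semisimplicity of the Betti realization tensored with $\mathbb{Q}_\ell$); this is standard but worth making explicit if you go that way.
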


\begin{proof}
   It suffices to assume that $M$ is simple. The object $M$ is necessarily of the form $\mathbf{IC}(X_w,M_w)$ for some $w \in W$ and $M_w \in \mscr \operatorname{Loc}(X_w)$ being simple. By virtue of \cite[Lemma 6.12]{terenzi-2024} (built upon the work \cite{tubach-2025}\cite{jacobsen-2025}), the object $\rat_{X_w}(M_w)$ is semisimple in $\mscr\perv(X)$ and hence also semisimple in $\mscr\perv(X,X^+)$ by lemma \ref{lem: stratified perverse motives inside non-stratified perverse motives} and as a consequence 
   \begin{equation*}
   \rat_X(\mathbf{IC}(X_w,M_w)) = \mathbf{IC}(X_w, \rat_{X_w}(M_w))
   \end{equation*}
is semisimple as well. 
\end{proof}

\subsection{(Shifted) Tate motives inside perverse Nori motives}

Let us discuss stratified Tate motives on ind-varieties. For technical reasons, we cannot define Tate motives on ind-varieties but fortunately, we can still define \textit{shifted Tate motives} and this is in analogy with the content of \cite[Section 3]{richarz+scholbach-2020}. Let $\iota \colon X^+ = \coprod_{w \in W}X_w \longrightarrow X$ be a Whitney-Nori stratification. as in the preceding section. 
\begin{defn}  \label{defn: stratified derived Tate motives} 
\begin{enumerate} 
 \item The \textit{category of stratified derived Tate motives} $\mathbf{DT}^b(X,X^+)$ is defined as the full subcategory $\derivednori^b(X,X^+)$ such that 
 \begin{align*}
    \mathbf{DT}^b(X,X^+)  & = \left \{M \in \mathbf{DT}^b(X) \mid \cthnor^n(\iota^*_w(M)) \in \mscr\operatorname{Tate}(X_w) \right \} \\ 
    & = \left \{M \in \mathbf{DT}^b(X) \mid \phnor^n(\iota^*_w(M)) \in \mscr\operatorname{Tate}_p(X_w) \right \}
\end{align*}
\item The \textit{category of shifted, stratified Tate motives} $\mscr\textnormal{Tate}_p(X,X^+)$ is defined as the full subcategory of $\mscr\perv(X,X^+)$ containing all motives $M$ such that $\iota_w^*(M) \in \mscr\textnormal{Tate}_p(X_w)$ for all $w \in W$. The fully faithful embedding 
\begin{equation*}
    \mscr\perv(X,X^+) \longhookrightarrow \derivednori^b(X,X^+)
\end{equation*}
restricts to a fully faithful embedding 
\begin{equation*}
    \mscr\operatorname{Tate}(X,X^+) \longhookrightarrow \mathbf{DT}^b(X,X^+).
\end{equation*}
  \end{enumerate} 
\end{defn}

\begin{lem} \label{lem: characterizations of stratified Tate motives}
The subcategory category $\mscr\textnormal{Tate}_p(X,X^+) \subset \mscr\perv(X,X^+)$ contains motives of the form $\iota_{w,*}(L[\dim(X_w)])$ with $L \in \mscr\textnormal{Tate}(X_w)$ and stable under extensions and direct summands. Moreover, $\mscr\textnormal{Tate}_p(X,X^+) \subset \mscr\perv(X,X^+)$ is the smallest full subcategory of $\mscr\perv(X,X^+)$ having these properties.
\end{lem}

\begin{proof}
The first claim is clear since each $\mscr\operatorname{Tate}_p(X_w) \subset \mscr\localsystem(X_w)$ has the same properties. The second claim then follows. 
\end{proof}

The following lemma is obvious, given lemma \ref{lem: Tate cohomological functors on schemes}.  
\begin{lem} \label{lem: Tate cohomological functors on ind-schemes} 
 The composition 
  \begin{equation*}
      \mathbf{DT}^b(X,X^+) \longrightarrow \derivednori^b(X,X^+) \overset{\phnor^n}{\longrightarrow} \mscr\perv(X,X^+)
  \end{equation*}
  takes values in $\mscr\textnormal{Tate}_p(X,X^+)$ for any $n \in \mathbb{Z}$.
\end{lem}

\begin{proof}
We have to check that for each $w \in W$, $\iota_w^*(\cthnor^n(M)) = \cthnor^n(\iota_w^*(M)) \in \mscr\textnormal{Tate}(X_w)$  and this is clear from the definition. 
\end{proof}

\begin{lem} \label{lem: intersection motives of Tate motives are Tate}
If $L \in \mscr\textnormal{Tate}(X_w) \subset \mscr\localsystem(X_w)$, then $\mathbf{IC}_w(L) \in \mscr\textnormal{Tate}_p(X,X^+)$. 
\end{lem}

\begin{proof}
  Since $\mathbf{IC}_w(L) \in \mscr\perv(X,X^+)$, it remains to show that $\iota_w^*(L) \in \mscr\textnormal{Tate}_p(X_w)$. By proposition \ref{prop: characterizations of intersection motives}, this motive is nothing but $L[\dim(X_w)]$ and hence this is obvious. 
\end{proof}

\subsection{Weight structure (revisited)} 
Let us declare that a motive $M \in \derivednori^b(X,X^+)$ (and hence $\mscr\perv(X,X^+)$) has weight $\leq n$ (resp, $\geq n$ or pure of weight $n$) if the underlying motive $M \in \derivednori^b(X)$ has the corresponding property. 
\begin{cor} \label{prop: weight-exactness of operations on ind-schemes}
Let $f \colon (X,X^+) \longrightarrow (Y,Y^+)$ be a morphism of Whitney-Nori stratified ind-schemes, if the operations $(f^*,f_*,f_!,f^!,\otimes, \underline{\Hom},\boxtimes)$ are well-defined, the they enjoy the same properties as in {\color{blue}section 2.1}. 
\end{cor}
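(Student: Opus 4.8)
The plan is to reduce everything to two facts that are already in place: first, that the weight structure on the stratified categories is, by the convention adopted at the beginning of this subsection, simply the restriction of the weight structure on the ambient ind-scheme categories $\derivednori^b(-)$; and second, that each operation in the list $(f^*,f_*,f_!,f^!,\otimes,\underline{\Hom},\boxtimes)$, whenever it is well-defined on the stratified categories, is the restriction of the corresponding operation on $\derivednori^b(-)$. Granting these, the weight-exactness statements descend formally.

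First I would record the tautology that the fully faithful inclusion $\derivednori^b(X,X^+) \hookrightarrow \derivednori^b(X)$ both preserves and reflects the property of having weight $\leq n$ (resp.\ $\geq n$, resp.\ being pure of weight $n$): this is immediate from the definition of the weight structure on $\derivednori^b(X,X^+)$. Consequently the transversality of the weight structure to the perverse $t$-structure, the existence of weight filtrations, the orthogonality $\Hom(K,L)=0$ for $K$ of weight $\leq w$ and $L$ of weight $>w$, and the attendant $\Ext$-vanishing between pure objects, are all inherited verbatim from $\derivednori^b(X)$, which carries these properties by the weight-structure lemma for ind-schemes proved just after Proposition~\ref{prop: six operations of ind-schemes} (itself reduced there to the case of schemes, as recalled in section~2.1).

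Next I would go through the operations one at a time and check that each is a restriction of the ambient one: $f^\dagger$ is by construction the restriction of $f^*$, and $f_*$ lands in the stratified subcategory under the hypotheses of Proposition~\ref{prop: pushforwards of stratified motives} (together with Lemma~\ref{lem: pullbacks of stratified Nori motives}); $f_!$ and $f^!$ are then handled by Verdier duality at the stratified level, Lemma~\ref{lem: Verdier duality on stratified motives}; and $\otimes$, $\underline{\Hom}$, $\boxtimes$ preserve the stratified subcategories under the standing hypothesis that these operations are well-defined. In each case the square formed by the operation on source and target together with the inclusion functors commutes. Combining this with the weight-exactness of $(f^*,f_*,f_!,f^!,\otimes,\underline{\Hom},\boxtimes)$ on $\derivednori^b(-)$ for ind-schemes and with the first step yields exactly the statements of section~2.1: $f^*$ and $f_!$ send weight $\leq w$ to weight $\leq w$, $f_*$ and $f^!$ send weight $\geq w$ to weight $\geq w$, $\otimes$ adds weight bounds, $\underline{\Hom}$ sends $(\leq w,\geq v)$ to $\leq v-w$, and $\boxtimes$ is weight-exact.

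The only mildly delicate point, and the one I expect to require a word of care, is the box product: the target $\derivednori^b(X\times_k Y,(X\times_k Y)^+)$ must be formed with respect to the product stratification, and one must know this makes sense before the restriction of $\boxtimes$ even exists. This, however, is subsumed in the hypothesis that the operations are well-defined; once $\boxtimes$ is available at the stratified level, its weight-exactness is read off from the ambient statement, since a box product of motives coming from weight $\leq w$ and weight $\leq v$ objects of $\derivednori^b(X)$ and $\derivednori^b(Y)$ has weight $\leq w+v$ in $\derivednori^b(X\times_k Y)$, hence also in the stratified subcategory. So there is in fact no serious obstacle: the corollary is a formal consequence of the inheritance of the weight structure and of the operations being restrictions of the ambient ones.
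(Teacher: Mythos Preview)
Your proposal is correct and follows the same reduction as the paper: the weight structure on $\derivednori^b(X,X^+)$ is by definition the restriction of that on $\derivednori^b(X)$, so once the operations are known to restrict, their weight properties are inherited from the ambient ind-scheme case (itself reduced to schemes). The paper's own proof is the one-liner ``This is obvious, given the case of schemes,'' so your sketch simply unpacks what the paper leaves implicit; there is no substantive difference in approach.
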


\begin{proof}
  This is obvious, given the case of schemes. 
\end{proof}

\begin{cor}
Let $j \colon (U,U^+) \longrightarrow (X,X^+)$ be a locally closed immersion of Whitney-Nori stratified ind-varieties and $L \in \mscr\perv(U,U^+,n)$ be an object pure of weight $n$, then $j_{!*}(L)$ is pure of weight $n$. 
\end{cor}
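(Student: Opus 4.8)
The plan is to reduce the statement to the case of an open immersion of $k$-varieties, where preservation of weights under the intermediate extension is the theorem of Ivorra--Morel recalled in Section~2.1, and to handle the closed direction by weight-exactness of $i_*=i_!$. First I would use that the weight of an object of $\mscr\perv(X,X^+)$ is, by definition, the weight of its image in $\derivednori^b(X)=\colim_i\derivednori^b(X_i)$, a filtered colimit along pushforwards by closed immersions, which are perverse $t$-exact and weight-exact. Thus $L$ is the image of some $L_i\in\mscr\perv(U_i,U_i^+,n)$, pure of weight $n$, at a finite stage; $j$ is induced by a locally closed immersion $j_i\colon U_i\hookrightarrow X_i$ of quasi-projective $k$-varieties; and since the transition pushforwards (by closed immersions) commute with $j_!$, $j_*$ and $\phnor^0$, hence with $j_{!*}$, one has $j_{!*}(L)\simeq(\iota_i)_*\bigl((j_i)_{!*}(L_i)\bigr)$ with $(\iota_i)_*$ weight-exact. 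So it suffices to prove the statement when $X$ and $U$ are $k$-varieties.

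On the variety level, I would factor $j$ as $U\overset{k}{\hookrightarrow}\overline U\overset{i}{\hookrightarrow}X$, with $k$ the open immersion onto the scheme-theoretic closure and $i$ the complementary closed immersion; in particular $\overline U$ needs no stratification, and $L\in\mscr\perv(U)$ is pure of weight $n$ (the stratified weight being the ambient one). From $j_!=i_*k_!$, $j_*=i_*k_*$, the perverse $t$-exactness of $i_*=i_!$ and the exactness of $i_*$ on perverse hearts, one gets $j_{!*}(L)\simeq i_*\,k_{!*}(L)$ by applying $i_*$ to $k_{!*}(L)=\Img\bigl(\phnor^0(k_!L)\to\phnor^0(k_*L)\bigr)$. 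Since $k$ is an \emph{open} immersion, $k_{!*}$ preserves weights (Section~2.1, after \cite{florian+morel-2019}), so $k_{!*}(L)$ is pure of weight $n$. Finally, $i$ being a closed immersion we have $i_*=i_!$, and by the weight-exactness of the four operations recalled in Section~2.1 this single functor preserves both the subcategory of objects of weight $\ge n$ (as $i_*$) and that of objects of weight $\le n$ (as $i_!$); hence it takes objects pure of weight $n$ to objects pure of weight $n$. Therefore $j_{!*}(L)=i_*k_{!*}(L)$ is pure of weight $n$, and it lies in $\mscr\perv(X,X^+)$ by the stratified formalism.

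The argument is entirely formal once the two inputs are granted — weight-preservation of $j_{!*}$ for open immersions, and weight-exactness of $i_*=i_!$ for closed immersions — both of which are recorded in Section~2.1. The only point demanding (routine) care is the descent to a finite stage of the ind-presentation together with the identity $j_{!*}(L)\simeq(\iota_i)_*\bigl((j_i)_{!*}(L_i)\bigr)$, i.e. that $j_{!*}$ commutes with pushforward along the transition closed immersions; this is of the same nature as the compatibilities already used for Lemma~\ref{lem: motivic perverse and constructible sheaves on ind-schemes} and Theorem~\ref{thm: simple objects of perverse motives on ind-schemes}. One could equally well argue directly at the level of ind-schemes, invoking Corollary~\ref{prop: weight-exactness of operations on ind-schemes} in place of the reduction, but passing to the scheme-level statement makes the hypotheses entirely transparent.
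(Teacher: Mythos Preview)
Your proof is correct and spells out in full what the paper leaves implicit (the paper's proof reads, in its entirety, ``This is similar to the classical case''). The argument you give --- reduce to schemes via the filtered presentation and weight-exactness of the transition pushforwards, then factor a locally closed immersion as open followed by closed, use that $j_{!*}$ preserves weights for open immersions (Section~2.1, item~(8)), and that $i_*=i_!$ for a closed immersion is weight-exact in both directions --- is precisely the classical argument the paper is alluding to. One cosmetic point: you use $k$ for the open immersion, which clashes with the base field; consider another letter.
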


\begin{proof}
This is similar to the classical case.  
\end{proof}

\section{Equivariant perverse Nori motives}
In this section, our ultimate goal is to define the (stratified) equivariant perverse Nori motives $\mscr\perv_G(X)$. We propose four definitions of this category (where the fourth definition, to our best knowledge, is not available in the ordinary setting). The functorialities between equivariant perverse Nori motives shall be used implicitly throughout next sections. At the end, we study the such category where algebraic groups are replaced by pro-algebraic groups. 
\subsection{Equivariant Six Operations}
As in remark \ref{rmk: extensions to prestacks}, we can extend $\mathbf{DN}^b$ (and $\mathbf{DN},\daet$ depends on the taste) to prestacks. In this section, we are mainly interested in quotient stacks $X/G$. Let $G$ be algebraic $k$-group acting on a $k$-variety $X$. We also say that $X$ is a $G$-\textit{variety} and hence have at hand the quotient stack $[X/G]$. We define 
\begin{equation*}
    \hbf_G(X) \coloneqq \hbf([X/G]) \coloneqq \lim \left( \begin{tikzcd}[sep=scriptsize]
              \hbf(X) \arrow[r,shift left= 2] \arrow[r,shift right= 2] & \hbf(X \times_{\xfrak} X) \arrow[l,dashed] \arrow[r] \arrow[r,shift right= 4] \arrow[r,shift left= 4]  & ... \arrow[l,shift left = 2,dashed ] \arrow[l, shift right = 2,dashed] 
    \end{tikzcd}\right).
\end{equation*}
if $\hbf \in \left\{\derivednori^b(-),\derivednori(-),\daet(-),\daetct(-,\mathbb{Q}), \derivedcat((-)^{\an},\mathbb{Q}),\derivedcat^b_{\ct}((-)^{\an},\mathbb{Q}), \derivedcat((-)_{\et},\mathbb{Q}_{\ell}),\derivedcat^b_{\ct}((-)_{\et},\mathbb{Q}_{\ell}) \right \}$. The realization functors such as Betti, $\ell$-adic, Nori realization functors extend to 
\begin{equation*}
\begin{split} 
    \betti^*_{[X/G]} \colon \daet_G(X,\mathbb{Q}) & \longrightarrow \derivedcat_G(X^{\an},\mathbb{Q}) \\ 
    \rfrak^{\et}_{[X/G],\ell} \colon \daet_G(X,\mathbb{Q}) & \longrightarrow \derivedcat_G(X_{\et},\mathbb{Q}_{\ell}) \\ 
    \operatorname{Nri}^*_{[X/G]} \colon \daet_G(X,\mathbb{Q}) & \longrightarrow \derivednori_G(X) \\ 
    \end{split}
\end{equation*}
The enhancement developed in \cite{liu+zheng-2017} (see also \cite{khan-2019}\cite{hoyois-2017}\cite{tubach_2025}) allows us to extend all six operations from schemes to algebraic stacks; in particular, to quotient stacks (note that for six functors, one needs unbounded theories such as $\mathbf{DN}(-),\daet(-,\mathbb{Q})$. However, we do not need the full power of six operations on stacks in this article but instead we will work distinctive operations on quotient stacks that make the theory particularly rich and interesting. In what below, operations colored in {\color{red}red} are possibly not well-defined for "compact" theories (such as $\derivednori^b,\daetct$). 
\begin{enumerate}
     \item  Let $G$ be an algebraic group. Let $X$ be a $G$-variety and $H \leq G$ be a subgroup. We denote by $p \colon X/H \longrightarrow X/G$ the projection. The operations 
    \begin{align*}
        \operatorname{Res}^G_H \coloneqq p^* \colon \hbf_G(X) &  \longrightarrow \hbf_H(X) \\ 
       {\color{red} \operatorname{Av}^G_{H*} \coloneqq p_* \colon \hbf_H(X)} & {\color{red} \longrightarrow \hbf_G(X)} \\ 
      {\color{red} \operatorname{Av}^G_{H!} \coloneqq p_{\#} = p_!(\dim(G/H))[2\dim(G/H)] \colon \hbf_H(X) }&  {\color{red} \longrightarrow \hbf_G(X) }
    \end{align*}
    are called \textit{restriction functors}, \textit{right and left averaging functors}, respectively. If $K \subset H \subset G$, then 
    \begin{equation*}
        \res^H_K \circ \res^G_H = \res^G_K.
    \end{equation*}
    \item Let $G$ be an algebraic group and $H \trianglelefteq G$ be a normal subgroup and let $X$ be a $G/H$-variety. We denote by $q \colon X/G \longrightarrow X/(G/H)$ the projection. 
      \begin{align*}
        \operatorname{Infl}^G_{G/H} \coloneqq q^* \colon \hbf_{G/H}(X) &  \longrightarrow \hbf_G(X) \\ 
      {\color{red} \operatorname{Inv}^G_{H*} \coloneqq q_* \colon \hbf_H(X)} & {\color{red}\longrightarrow \hbf_G(X)} \\ 
       {\color{red} \operatorname{Inv}^G_{H!} \coloneqq  q_!(-\dim(G/H))[-2\dim(G/H)] \colon \hbf_H(X)} & {\color{red}\longrightarrow \hbf_G(X)}
    \end{align*}
    are called \textit{inflation functors}, \textit{right and left invariant functors}, respectively. If $K \trianglelefteq H \trianglelefteq G$ be normal subgroups, then 
    \begin{equation*}
        \operatorname{Infl}^G_{G/H} \simeq \operatorname{Infl}^G_{G/K} \circ \operatorname{Infl}^{G/K}_{G/H}. 
    \end{equation*}
\end{enumerate}
The reason for the non-existence of {\color{red}operations colored in red} can be seen as a part of a more general question involving constructiblities of four operations of Artin stacks and they exist only when one allows suitable bounded below and above versions $\hbf^+$, $\hbf^-$ (see \cite[Section 6.8]{achar-book} for some useful comments). For general stacks, the question of constructibility can be subtle and some achievements are obtained in \cite[Theorem 3.8]{tubach_2025}) for mixed Hodge modules. Here we can avoid this problem since we do not need $\operatorname{Inv}^G_{H*}, \operatorname{Inv}^G_{H!}$. Nevertheless, by following the traditional approaches, we show that $\operatorname{Av}_{H*}^G, \operatorname{Av}_{H!}^G$ are indeed well-defined. The restriction functors and inflation functors satisfy additional properties:
 \begin{lem} 
 The following statements hold true:
\begin{enumerate}
    \item [(1)] $\operatorname{Res}^G_H$ commutes with six operations of $G$-equivariant morphisms.  
    \item [(2)] $\infl^G_H$ commutes with six operations of $(G/H)$-equivariant morphisms. 
    \item [(3)]  Restrictions and inflations commute in the following sense: Let $H,K \leq G$ be subgroups with $K \trianglelefteq G$ normal. Let $X$ be a $(G/K)$-variety, then there is a natural isomorphism
    \begin{equation} \label{eq:1}
        \res^G_H \circ \infl^G_{G/K} \simeq \infl^H_{H/(H \cap K)} \circ \res^{G/K}_{H/(H \cap K)}.
    \end{equation}
\end{enumerate}
\end{lem}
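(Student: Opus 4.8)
The plan is to express all three operations as $*$-pullbacks along canonical \emph{smooth} morphisms of quotient stacks and then deduce everything from the standard compatibilities of the six operations furnished by the $\infty$-categorical enhancement of \cite{liu+zheng-2017}. Recall that for $H\leq G$ and a $G$-variety $X$ one has $\res^G_H=p_X^*$ for the projection $p_X\colon[X/H]\longrightarrow[X/G]$, which is the base change of $BH\to BG$ along $[X/G]\to BG$ and is smooth (its fibres are the smooth variety $G/H$); and for $H\trianglelefteq G$ normal and a $(G/H)$-variety $X$ one has $\infl^G_{G/H}=q_X^*$ for the projection $q_X\colon[X/G]\longrightarrow[X/(G/H)]$, which is the base change of $BG\to B(G/H)$ and is smooth (its fibres are $BH$, smooth over $k$ since $H$ is), though no longer representable.

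For (1), given a $G$-equivariant finite-type morphism $f\colon X\to Y$, the square with vertical arrows $p_X,p_Y$ and horizontal arrows the induced maps $[X/H]\to[Y/H]$, $[X/G]\to[Y/G]$ is Cartesian, being a base change of $BH\to BG$. One then reads off each commutation: with $f^*$ it is pseudofunctoriality of $(-)^*$; with $f_!$ it is base change for $f_!$ along the Cartesian square; with $f_*$ it is smooth base change (valid because $p_X,p_Y$ are smooth); with $f^!$ it follows from the $f_*$ case together with the relative purity isomorphism $g^!\simeq g^*(d)[2d]$ for a smooth $g$ of relative dimension $d$, applied to $p_X,p_Y$ (both of relative dimension $\dim(G/H)$); with $\otimes$ and $\mathds{1}$ it is the symmetric monoidality of $*$-pullback; and with $\underline{\Hom}$ it is the compatibility of internal Hom with smooth pullback. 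Part (2) is proved by the identical list of citations, now for the square with vertical arrows $q_X,q_Y$, Cartesian as a base change of $BG\to B(G/H)$, with $q_X,q_Y$ smooth of relative dimension $-\dim H$; the only extra input is that smooth base change and relative purity hold for the \emph{non-representable} smooth morphisms $q_X,q_Y$, which is exactly what the stacky formalism of \cite{liu+zheng-2017} guarantees. The transitivity relations $\res^H_K\circ\res^G_H=\res^G_K$ and $\infl^G_{G/H}\simeq\infl^G_{G/K}\circ\infl^{G/K}_{G/H}$ are then pseudofunctoriality of $(-)^*$ applied to $[X/K]\to[X/H]\to[X/G]$, respectively $[X/G]\to[X/(G/K)]\to[X/(G/H)]$.

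For (3), the point is group-theoretic at the start: since $H\cap K=\ker(H\to G/K)$, the quotient $H/(H\cap K)$ embeds in $G/K$ (so a $(G/K)$-variety $X$ is canonically an $H/(H\cap K)$-variety) and the square of homomorphisms
\begin{equation*}
\begin{tikzcd}
H \arrow[r] \arrow[d] & H/(H\cap K) \arrow[d] \\
G \arrow[r] & G/K
\end{tikzcd}
\end{equation*}
commutes, both composites being $h\mapsto hK$. As $G'\mapsto[X/G']$ is pseudofunctorial in homomorphisms of groups acting on $X$ by pulled-back actions, applying $[X/-]$ yields a commuting square of quotient stacks whose left edge is the $p_X$ computing $\res^G_H$, whose bottom edge is the $q_X$ computing $\infl^G_{G/K}$, whose top edge is the $q_X$ computing $\infl^H_{H/(H\cap K)}$, and whose right edge is the $p_X$ computing $\res^{G/K}_{H/(H\cap K)}$; applying the pseudofunctor $(-)^*$ to this commuting square produces the asserted natural isomorphism~\eqref{eq:1}.

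The main obstacle is not any single computation but the coherence bookkeeping: because $\hbf_G(X)=\hbf([X/G])=\lim\hbf(C_\pi^\bullet)$, every ``commutes'' above must be upgraded to a coherent system of equivalences compatible with simplicial descent, and the validity of smooth base change and relative purity for non-representable smooth morphisms of algebraic stacks must be \emph{invoked} rather than reproved. Both are supplied by \cite{liu+zheng-2017} (see also \cite{khan-2019}), so once these are cited the statements (1)--(3) follow formally from the diagrams above.
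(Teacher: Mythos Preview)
Your proof is correct and follows essentially the same approach as the paper: both reduce the lemma to the observation that $\res^G_H$ and $\infl^G_{G/H}$ are $*$-pullbacks along morphisms of quotient stacks, so that (1)--(3) are formal consequences of the six-functor formalism on stacks. The paper's proof consists of the single sentence ``These all follow from the functorialities of pullbacks,'' whereas you spell out precisely which compatibility (base change, smooth base change, relative purity, monoidality) is invoked for each of the six operations, and make explicit the commuting square of group homomorphisms underlying (3); your version is thus a careful unpacking of what the paper leaves implicit.
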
 
\begin{proof}
These all follow from the functorialities of pullbacks.
\end{proof} 
There are also non-trivial properties listed in the following
\begin{theorem} \label{theorem: properties of special equivariant six operations}
\begin{enumerate}
    \item Let $G$ be an algebraic group and $X$ be a $G$-variety. Suppose that $X$ admits a geometric quotient $p \colon X \longrightarrow X/G$, then 
\begin{equation*}
  p^* \colon \hbf_G(X) \longrightarrow \hbf(X/G)
\end{equation*}
is an equivalence of stable $\infty$-categories. 
\item Let $G$ be an algebraic group and $X$ be a $G$-variety. Let $H \trianglelefteq G$ be a normal subgroup so that $X$ is a principal $H$-variety with quotient $p \colon X \longrightarrow X/H$. The functor
\begin{equation*}
    p^* \circ \operatorname{Infl}^G_{G/H} \colon \hbf_{G/H}(X/H) \longrightarrow \hbf_G(X) 
\end{equation*}
ís an equivalence of stable $\infty$-categories. 
\item Let $G$ be an algebraic group and $H \leq G$ be a subgroup. Let $X$ be a $H$-variety and $i \colon X \longrightarrow G \times^H X$ be unit section, i.e., on points $i(x) = (1,x)$. If $\hbf(-)$ is $\mathbb{Q}$-linear, there is an isomorphism of functors
\begin{equation*}
    i^*[\dim(G/H)] \circ \res^G_H \simeq i^![\dim(G/H)](\dim(G/H)) \circ \res^G_H \colon \hbf(G \times^H X) \longrightarrow \hbf_H(X).
\end{equation*}
Moreover, these functors are equivalences of stable $\infty$-categories.
\item Let $G$ be an algebraic group and $H \leq G$ be a subgroup. Let $X$ be a $H$-variety. Let $\overline{a} \colon G \times^H X \longrightarrow X$ be the morphism induced by the action morphism. There are isomorphisms of functors
\begin{equation*}
\begin{split} 
    \av^G_{H*} \simeq \overline{a}_* \circ (i^* \circ \res^G_H)^{-1} \colon \hbf_H(X) & \longrightarrow \hbf_G(X) \\ 
    \av^G_{H!} \simeq \overline{a}_! \circ (i^* \circ \res^G_H)^{-1} \colon \hbf_H(X) & \longrightarrow \hbf_G(X) 
    \end{split}
\end{equation*}
Consequently, if $\hbf(X)$ is compactly generated and pushforwards preserve constructible motives, then $\operatorname{Av}_{H*}^G,\operatorname{Av}_{H!}^G$ restrict to constructible motives. 
\end{enumerate}
\end{theorem}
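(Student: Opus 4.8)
The plan is to deduce all four parts from the descent description of the equivariant categories, feeding everything else to the equivariant six-functor formalism of Theorem~\ref{thm: equivariant six operations}. Recall that $\hbf_G(X)=\hbf([X/G])=\lim\hbf(C_\pi^\bullet)$ for a smooth atlas $\pi\colon X\to[X/G]$; since $\hbf$ is an $h$-hypersheaf this is functorial in morphisms of algebraic stacks (pullback $\varphi^*$, and its right adjoint $\varphi_*$, being computed levelwise on \v{C}ech nerves), so an equivalence of quotient stacks induces an equivalence of the categories $\hbf(-)$ compatible with the six operations and with Verdier duality. Everything then reduces to three standard identifications of quotient stacks. (i) If the $G$-action on $X$ is free with geometric quotient $X/G$ (so that $\pi$ is a $G$-torsor), then $[X/G]\xrightarrow{\sim}X/G$. (ii) If $H\trianglelefteq G$ and $X\to X/H$ is an $H$-torsor, then taking quotients in stages gives $[X/G]\simeq[(X/H)/(G/H)]$. (iii) For $H\leq G$ and an $H$-variety $X$, the $H$-equivariant section $i\colon X\hookrightarrow G\times^H X$, $x\mapsto[1,x]$, induces an equivalence of stacks $\varphi\colon[X/H]\xrightarrow{\sim}[(G\times^H X)/G]$ (with quasi-inverse sending a $G$-torsor $Q\to G\times^H X$ to the $H$-reduction cut out by $Q\to G/H$), and the $G$-equivariant map $\overline a\colon G\times^H X\to X$, $[g,x]\mapsto g\cdot x$, induces $[\overline a]\colon[(G\times^H X)/G]\to[X/G]$ with $[\overline a]\circ\varphi=p$, $p\colon[X/H]\to[X/G]$ the projection. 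Each identification is an elementary groupoid manipulation, once the relevant quotients are known to be fppf torsors/bundles as hypothesized.

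Part~(1) is immediate from (i): $\hbf_G(X)=\hbf([X/G])\simeq\hbf(X/G)$, and $p^*$ is this equivalence. For Part~(2), the morphism of stacks $[X/G]\xrightarrow{[p]}[(X/H)/G]\xrightarrow{q}[(X/H)/(G/H)]$ is the equivalence of (ii), and its pullback functor is exactly $p^*\circ\operatorname{Infl}^G_{G/H}$ (indeed $\operatorname{Infl}^G_{G/H}=q^*$, and $p^*$ is pullback along $[p]$); hence the composite is an equivalence of categories.

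For Part~(3) (where $\hbf(G\times^H X)$ is read as $\hbf_G(G\times^H X)$ for the left-translation action, as the presence of $\res^G_H$ forces): the equivalence assertion is precisely (iii), since $\varphi^*=i^*\circ\res^G_H$ — first restrict the $G$-equivariance to $H$, then pull back along the section — so it is an equivalence $\hbf_G(G\times^H X)\xrightarrow{\sim}\hbf_H(X)$; the shift $[\dim(G/H)]$ is a cosmetic normalization rendering it perverse $t$-exact for $\hbf=\derivednori^b$, consistent with the convention $(-)^\dagger$. The comparison $i^*[\dim(G/H)]\circ\res^G_H\simeq i^![\dim(G/H)](\dim(G/H))\circ\res^G_H$ is then formal: one has $i^!=\mathbb D\,i^*\,\mathbb D$; the change-of-group functor $\res^G_H$ on $G\times^H X$ is pullback along the $(G/H)$-bundle $[(G\times^H X)/H]\to[(G\times^H X)/G]$, so $\mathbb D\circ\res^G_H\simeq\res^G_H\circ\mathbb D$ up to the relative purity twist $(\dim(G/H))[2\dim(G/H)]$ of that bundle; and $i^*\circ\res^G_H=\varphi^*$ commutes with $\mathbb D$ on the nose, $\varphi$ being an equivalence. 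Composing these isomorphisms on the essential image of $\res^G_H$ gives the desired relation between $i^*$ and $i^!$, with the shift and twist read off from the purity of the bundle.

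Part~(4) is transport of structure along (iii): by $[\overline a]\circ\varphi=p$, the functors $\av^G_{H*}=p_*$ and $\av^G_{H!}=p_\#$ are carried by $\varphi^*$ to $\overline a_*$ and $\overline a_\#$ precomposed with $(\varphi^*)^{-1}=(i^*\circ\res^G_H)^{-1}$; since both $p$ and $\overline a$ are smooth of relative dimension $\dim(G/H)$, the identities $p_\#=p_!(\dim(G/H))[2\dim(G/H)]$ and $\overline a_\#=\overline a_!(\dim(G/H))[2\dim(G/H)]$ match up under $\varphi^*$, yielding the formulas with $\overline a_*,\overline a_!$. The step I expect to cost the most care is the bookkeeping in (3) and (4): fixing the comparison isomorphism $\varphi^*\simeq i^*[\dim(G/H)]\circ\res^G_H$ once and for all, tracking how Verdier-duality and relative-purity twists propagate through $\varphi$, and verifying that $\overline a_\#$ is carried precisely onto $p_\#$. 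Everything else is the formal six-functor yoga of Theorem~\ref{thm: equivariant six operations} together with $h$-descent for $\hbf$; the classical results for equivariant constructible sheaves in \cite{bernstein+lunts-1994} serve only as a template, the arguments here being carried out directly at the level of the coefficient system $\hbf$.
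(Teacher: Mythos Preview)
Your proposal is correct and follows essentially the same strategy as the paper: reduce everything to equivalences of quotient stacks and feed the rest to the six-functor formalism. The paper phrases the identifications somewhat differently in places---for part~(3) it writes out explicit commutative diagrams of $\pr_2^\dagger\infl$ functors before deferring to \cite[Theorem~6.5.10]{achar-book}, and for part~(4) it verifies the adjoint property directly from $\overline{a}\circ i=\id_X$ rather than invoking the stack-level identity $[\overline{a}]\circ\varphi=p$---but these are cosmetic variations on the same underlying argument. Your systematic use of the three stack identifications (i)--(iii) is if anything cleaner than the paper's somewhat compressed treatment.
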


\begin{proof}
Part $(1)$ is a special case of part $(2)$, for which we argue as follows: if $X/H$ exists as a scheme, the projection $X/H \longrightarrow (X/G)/(G/H)$ is a smooth cover of $(X/H)/(G/H) = X/G$ and hence they yields the same category thanks to the universal property of limits. Regarding part $(3)$, by relative purity (i.e. $f^! \simeq f^*(d)[2d]$), there are commutative diagrams
\begin{equation*}
    \begin{tikzcd}[sep=large]
              \hbf_H(X) \arrow[rr,"\pr_2^{\dagger}\infl^{H \times H}_{1 \times H}"] \arrow[rd,"\pr_2^{\dagger}\infl^{G \times H}_{1 \times H}",swap] & & \hbf_{H \times H}(X) \\ 
              & \hbf_{G \times H}(G \times X) \arrow[ru,swap,"\widetilde{i}^*\res^{G \times H}_{H \times H}\textnormal{[}-\dim(G/H)\textnormal{]}"] & 
    \end{tikzcd} \ \ \ \ \ \ \ \ \  \begin{tikzcd}[sep=large]
              \hbf_H(X) \arrow[rr,"\pr_2^{\dagger}\infl^{H \times H}_{1 \times H}"] \arrow[rd,"\pr_2^{\dagger}\infl^{G \times H}_{1 \times H}",swap] & & \hbf_{H \times H}(X) \\ 
              & \hbf_{G \times H}(G \times X) \arrow[ru,swap,"\widetilde{i}^*\textnormal{[}\dim(G/H)\textnormal{]}(\dim(G/H))"] & 
    \end{tikzcd}
\end{equation*}
with $\widetilde{i} \colon H \times X \longhookrightarrow G \times X$ the canonical immersion, the rest is formally same as in \cite[Theorem 6.5.10]{achar-book}. Concern the first claim of part $(4)$, it suffices to show that, for instance, $\overline{a}_* \circ (i^* \circ \res^G_H)^{-1}$ is a right adjoint of $\res^G_H$. Equivalently, $(i^* \circ \res^G_H) \circ \overline{a}^* \circ \res^G_H$ is the identity functor. This is clear since $\overline{a} \circ i = \id_X$. The second claim of part $(4)$ is obvious.  
\end{proof} 

\begin{cor} \label{cor: left averaging = right averaging}
Let $G$ be an algebraic group and $X$ be a $G$-variety. Let $H \leq G$ be a closed subgroup of finite index. Then $\av^G_{H!} \simeq \av^G_{H*}$.
\end{cor}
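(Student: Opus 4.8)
The plan is to reduce the statement to the single fact that a finite-index closed subgroup contains the identity component. Concretely, first I would observe that $H\cap G^{\circ}$ is a closed subgroup of finite index in the connected group $G^{\circ}$; its finitely many cosets are closed, hence each is also open, which forces $H\cap G^{\circ}=G^{\circ}$. Thus $G^{\circ}\subseteq H$, the quotient $G/H$ is a quotient of the finite étale group scheme $\pi_{0}(G)=G/G^{\circ}$, and in particular $G/H$ is a finite $k$-scheme with $\dim(G/H)=0$. Feeding this into the definitions, $\av^{G}_{H\ast}=p_{\ast}$ and $\av^{G}_{H!}=p_{\#}=p_{!}(\dim(G/H))[2\dim(G/H)]=p_{!}$, where $p\colon [X/H]\to[X/G]$ is the projection; so the claim reduces to producing an isomorphism $p_{!}\simeq p_{\ast}$.

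For this I would invoke Theorem~\ref{theorem: properties of special equivariant six operations}(4), which identifies $\av^{G}_{H\ast}\simeq\overline{a}_{\ast}\circ(i^{\ast}\circ\res^{G}_{H})^{-1}$ and $\av^{G}_{H!}\simeq\overline{a}_{!}\circ(i^{\ast}\circ\res^{G}_{H})^{-1}$ for $\overline{a}\colon G\times^{H}X\to X$. Since the second factor is common to both, it suffices to show $\overline{a}_{!}\simeq\overline{a}_{\ast}$. Here I would use the $G$-equivariant identification $G\times^{H}X\xrightarrow{\ \sim\ }X\times(G/H)$, $[(g,x)]\mapsto(gx,gH)$ (for the diagonal $G$-action on the target), under which $\overline{a}$ becomes the projection $X\times(G/H)\to X$. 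Because $G/H$ is finite over $k$ by the previous paragraph, this projection is finite, hence proper; as properness is smooth-local on the target, the induced morphism of quotient stacks $[G\times^{H}X/G]\to[X/G]$ is proper, and the properness part of the six-functor formalism for algebraic stacks (Theorem~\ref{thm: equivariant six operations}) gives $\overline{a}_{!}\simeq\overline{a}_{\ast}$. Combined with the reduction above this yields $\av^{G}_{H!}\simeq\av^{G}_{H\ast}$. (Alternatively, one can argue directly that $p$ itself is proper by pulling it back along the smooth atlas $X\to[X/G]$, where it becomes $X\times(G/H)\to X$, and then apply $f_{!}\simeq f_{\ast}$ for proper $f$ directly.)

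I do not expect a genuine obstacle here: the content is entirely formal once Theorem~\ref{theorem: properties of special equivariant six operations}(4) and the stacky six-functor formalism are in hand. The only points that deserve care are (i) making the reduction $G^{\circ}\subseteq H$, equivalently the finiteness of $G/H$, precise — this is exactly what makes the Tate twist and shift in the definition of $\av^{G}_{H!}$ vanish — and (ii) checking that "proper" for the stack morphism $[X/H]\to[X/G]$ is legitimately inherited from its atlas pullback $X\times(G/H)\to X$, which is a standard smooth-descent argument.
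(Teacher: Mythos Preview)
Your proof is correct and follows essentially the same approach as the paper: the paper's proof is the single line ``This is clear since $G \times^H X = (G/H) \times X$ is then proper over $X$,'' which implicitly invokes Theorem~\ref{theorem: properties of special equivariant six operations}(4) exactly as you do. You simply supply the details the paper omits (the argument that $G^{\circ}\subseteq H$ so that $G/H$ is finite, and the smooth-local verification of properness).
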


\begin{proof}
This is clear since $G \times^H X = (G/H) \times X$ is then proper over $X$. 
\end{proof}

\begin{prop} \label{prop: equivariant motives of unipotent groups} 
\begin{enumerate}
    \item Let $G$ be an algebraic group and $H \subset G$ be a subgroup such that $G/H$ is unipotent. Let $X$ be a $G$-variety, the functor
\begin{equation*}
    \res^G_H \colon \hbf_G(X) \longrightarrow \hbf_H(X)
\end{equation*}
is fully faithful.
    \item Let $G$ be an algebraic group and let $U \trianglelefteq G$ be a connected, unipotent, normal subgroup. Let $X$ be a $(G/U)$-variety, the functor
    \begin{equation*}
        \infl_{G/U}^G \colon \hbf_{G/U}(X) \longrightarrow \hbf_G(X)
    \end{equation*}
    is an equivalence of stable $\infty$-categories. 
\end{enumerate}
\end{prop}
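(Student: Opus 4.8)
The plan is to reduce both assertions, via smooth descent, to a single input: $*$-pullback along a Zariski-locally trivial affine-space bundle is an equivalence on $\hbf$. This is exactly homotopy invariance of the coefficient system, combined with Zariski (equivalently $h$-) descent to pass from the trivial case $\mathbb{A}^n_Y\to Y$ to the locally trivial one. I will use the presentation recalled above: for an algebraic $k$-group $\Gamma$ acting on a variety $Y$ the canonical atlas $Y\to[Y/\Gamma]$ has Čech nerve the bar construction $Y\times\Gamma^{\bullet}$, so $\hbf_{\Gamma}(Y)=\lim_{\Delta}\hbf(Y\times\Gamma^{\bullet})$, and a morphism of quotient stacks arising from a compatible morphism of atlases induces the corresponding map of totalizations. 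Since we work over a field admitting a complex embedding, hence of characteristic zero, every connected unipotent group $U$ is split, so $G\to G/U$ is a $U$-torsor and therefore an iterated $\mathbb{G}_a$-torsor, which is Zariski-locally trivial; likewise $G/H$ is isomorphic as a $k$-variety to an affine space whenever $G/H$ is unipotent.

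For (2): the atlases fit into a diagram $X\to[X/G]\to[X/(G/U)]$, so $\infl^G_{G/U}=q^{*}$ is the map of totalizations induced by the morphism of cosimplicial $\infty$-categories $\hbf(X\times(G/U)^{\bullet})\to\hbf(X\times G^{\bullet})$, whose $n$-th arrow is $*$-pullback along the natural projection $X\times G^{n}\to X\times(G/U)^{n}$ — a base change of the $n$-fold product of $G\to G/U$, hence a Zariski-locally trivial affine-space bundle, hence an equivalence. A levelwise equivalence of $\Delta$-diagrams induces an equivalence on limits, so $\infl^G_{G/U}$ is an equivalence of stable $\infty$-categories; restricting, one obtains the statement for $\mscr\perv$ and for Tate and Artin motives. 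For (1): writing $p\colon[X/H]\to[X/G]$, so that $\res^G_H=p^{*}$ with right adjoint $p_{*}$ ($p$ being representable, smooth and of finite type with fibre $G/H$), full faithfulness of $p^{*}$ amounts to the unit $\mathrm{id}\to p_{*}p^{*}$ being an equivalence, which may be checked after the conservative functor $\pi_G^{*}$ attached to the atlas $\pi_G\colon X\to[X/G]$. Since $\pi_G$ is the universal $G$-torsor and $[X/H]\simeq[(X\times(G/H))/G]$ with the diagonal action, base change along $\pi_G$ trivialises the bundle: in the resulting Cartesian square $p$ pulls back to the projection $p'\colon X\times(G/H)\to X$, a trivial affine-space bundle, so $(p')^{*}$ is fully faithful. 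Smooth base change identifies $\pi_G^{*}$ applied to the unit of $p^{*}\dashv p_{*}$ with the unit of $(p')^{*}\dashv p'_{*}$ applied to $\pi_G^{*}(-)$, which is an equivalence; hence $\res^G_H$ is fully faithful. (In fact the Čech-nerve comparison used in (2) shows $p^{*}$ is an equivalence as soon as $G/H\cong\mathbb{A}^N$.)

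The conceptual skeleton is light; the effort lies in two supporting points, the second of which I expect to be the real obstacle. First, one must know that $G\to G/U$, respectively $G/H$, is Zariski-locally a product with affine space — this is the structure theory of split unipotent groups in characteristic zero (central series of $U$ with $\mathbb{G}_a$-quotients, plus Zariski-local triviality of $\mathbb{G}_a$-torsors). Second, one must check that the levelwise pullback equivalences are natural in the simplicial direction, so that they genuinely assemble into an equivalence of totalizations, and that smooth base change together with conservativity of pullback along a smooth atlas are available for the representable-but-stacky morphisms $p$ and $q$; these are formal consequences of the $(\infty,1)$-enhanced six-functor formalism of Liu--Zheng, Khan and Hoyois recalled above, but should be spelled out, in particular the compatibility of the unit of an adjunction with smooth base change used in (1). (If one wished to allow non-homotopy-invariant coefficient systems, the transition maps in (2) would only be fully faithful and one would additionally need the input $\hbf(BU)\simeq\hbf(\Spec k)$; over a field this theory is nonetheless motivic, so this is not an issue here.)
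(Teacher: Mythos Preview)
Your proof is correct. Both parts rest on the same input as the paper --- a connected unipotent group in characteristic zero is, as a variety, an affine space, so $\mathbb{A}^1$-homotopy invariance applies --- but the packaging differs. For (1) the paper uses the induction equivalence (part (3) of Theorem~\ref{theorem: properties of special equivariant six operations}) together with the isomorphism $G \times^H X \simeq (G/H)\times X$, $(g,x)\mapsto([g],gx)$, to identify $\bar a^*$ with pullback along the projection $(G/H)\times X\to X$; you instead pull the morphism $p\colon[X/H]\to[X/G]$ back along the atlas $\pi_G\colon X\to[X/G]$ and invoke conservativity of $\pi_G^*$ plus smooth base change. These are two presentations of the same Cartesian square. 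For (2) the routes genuinely diverge: the paper deduces it from (1) via the $\res$/$\infl$ commutation relation~\eqref{eq:1}, whereas your \v{C}ech-nerve argument is self-contained (levelwise the map $X\times G^n\to X\times(G/U)^n$ is an affine-space bundle, hence an equivalence, so the totalization is too) and does not depend on (1); this also makes transparent that the statement holds for any $\mathbb{A}^1$-invariant, descent-satisfying coefficient system. One caveat on your closing parenthetical: the naive \v{C}ech comparison for (1) using the atlases $X\to[X/H]$ and $X\to[X/G]$ gives levelwise closed immersions $X\times H^n\hookrightarrow X\times G^n$, which are not equivalences; to run that argument one would have to present $[X/H]$ via $X\times(G/H)$ instead, but your base-change proof already delivers the stronger conclusion directly.
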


\begin{proof}
\begin{enumerate}
    \item 
    Let $G$ act on $(G/H) \times X$ by the rule $g \cdot ([g'],x) = ([gg'],gx)$. The projection $\pr_2 \colon (G/H) \times X \longrightarrow X$ is then $G$-equivariant. Let $\overline{a} \colon G \times^H X \longrightarrow X$ be the morphism induced by the action morphism. Let $i \colon X \longrightarrow G \times^H X$ be the unit section. There is a commutative diagram 
    \begin{equation*}
        \begin{tikzcd}[sep=large]
                  G \times^H X \arrow[rd,"\overline{a}",swap] \arrow[rr] & & (G/H) \times X \arrow[dl,"\pr_2"] \\ 
                  & X & 
        \end{tikzcd}
    \end{equation*}
    where the horizontal arrow $(g,x) \longmapsto ([g],gx)$ is an isomorphism. Since $G/H$ is a unipotent, it is isomorphic to an affine space. By the $\mathbb{A}^1$-homotopy axiom, we obtain that $\pr_2^* \colon \hbf(X) \longrightarrow \hbf((G/H) \times X)$ is fully faithful and hence $\overline{a}^*$ must be fully faithful as well. There is a commutative diagram 
    \begin{equation*}
        \begin{tikzcd}[sep=large]
                  \hbf_G(X) \arrow[r,"\overline{a}^*"] \arrow[d,"\res^G_H",swap]  & \hbf_G(G \times^H X) \arrow[d,"\res^G_H"] \\ 
                  \hbf_H(X) & \hbf_H(G \times^H X). \arrow[l,"i^*"] 
        \end{tikzcd}
    \end{equation*}
    Since $a \circ i = \id_X$, we must have that $\res^G_H \colon \hbf_G(X) \longrightarrow \hbf_H(X)$ is fully faithful because $i^* \circ \res^G_H$ is an equivalence thanks to {\color{blue}induction equivalence}. 
    \item By \ref{eq:1}, there is an isomorphism of functors $\res^G_U \circ \infl^G_{G/U} \simeq \infl^{U}_1 \circ \res^{G/U}_1 \simeq \id$. This implies that $\res^G_U$ is essentially surjective and thanks to the preceding that $\res^G_U$ is fully faithful and hence an equivalence. Consequently, $\infl^G_{G/U}$ must be an equivalence as well.
\end{enumerate}
\end{proof}

\begin{rmk}
\begin{enumerate} 
\item A purely topological method can be used in the case of derived categories $\derivedcat^b(-)$ since in this case their equivariant versions can be defined in terms of $n$-acyclic resolutions. This approach is taken in \cite{bernstein+lunts-1994} (see also \cite{achar-book}). 
\item In some sense, the proposition above says that if we are only interested in affine algebraic groups, then it suffices to study equivariant categories $\hbf_G(X)$ only for $G$ being \textit{reductive} since we can always consider $G/R_u(G)$ with $R_u(G)$ being the unipotent radical of $G$. 
\end{enumerate}
\end{rmk}

Let us provide an immediate application of the theory, namely, the Fourier-Laumon transform on derived Nori motives. 

\begin{ex} Let $S$ be a $k$-variety. Let $V$ be a vector bundle of rank $r$ over $S$ whose dual bundle is denoted by $V^{\vee}$. There are natural $\gm_m$-actions on $V$ and $V^{\vee}$ given by scaling.The Fourier-Laumon transform is an equivalence of categories
\begin{equation*}
    \operatorname{Four} \colon \derivednori_{\gm_m}(V) \longrightarrow \derivednori_{\gm_m}(V^{\vee}). 
\end{equation*}
To define the Fourier-Laumon transform, let us consider the diagram
\begin{equation*}
    \begin{tikzcd}[sep=large]
      & V \times_S V^{\vee} \arrow[ld,"\pr_1",swap] \arrow[rd,"\pr_2"] \arrow[rr,"m"] & & \mathbb{A}^1_S \\
      V & & V^{\vee} & 
    \end{tikzcd}
\end{equation*}
where $\pr_1,\pr_2$ are canonical projections, $m$ is the standard pairing. Let $\mu \colon \gm_{m,S} \times_S \gm_{m,S} \longrightarrow \gm_{m,S}$ be the multiplication morphism. Let $V \times_S V^{\vee}$ be endowed with the $\gm_{m,S}^2$-action given by the action of $\gm_{m,S}$ on each factor. The morphism $m \colon V \times_S V^{\vee} \longrightarrow \mathbb{A}_S^1$ is then $\mu$-equivariant. We define three subgroups of $\gm_{m,S}^2$ by formulas
\begin{equation*}
    \begin{split}
        K_1 & = \left \{(1,\lambda) \mid \lambda \in \gm_{m,S} \right \} \subset \gm_{m,S}^2 \\ 
        K_2 & = \left \{(\lambda,1) \mid \lambda \in \gm_{m,S} \right \} \subset \gm_{m,S}^2 \\ 
        K_{\mu} & = \left \{(\lambda,\lambda^{-1}) \mid \lambda \in \gm_{m,S} \right \} \subset \gm_{m,S}^2 
    \end{split}
\end{equation*}
Let $j \colon \gm_{m,S} \longhookrightarrow \mathbb{A}^1_S$ be the canonical open immersion. The \textit{Fourier-Laumon transform} is defined as
    \begin{equation*}
        \begin{split}
            \operatorname{Four}_V \colon \derivednori_{\gm_m}(V) & \longrightarrow \derivednori_{\gm_m}(V^{\vee}) \\ 
            M & \longmapsto \operatorname{Inv}_{K_2,!}\pr_{2,!}\left(\pr_1^* \operatorname{Infl}^{\gm^2_m}_{\gm^2_m/K_1}(M) \otimes m^* \operatorname{Infl}^{\gm^2_m}_{\gm^2_m/K_{\mu}}(\jcal) \right)[r]
        \end{split}
    \end{equation*}
    where $\jcal = j_*(\mathds{1}_{\gm_m})[1](1) \in \derivednori_{\gm_m}(\mathbb{A}^1_S)$ is the \textit{Fourier kernel}. The formation of $\operatorname{Four}$ is universal and hence also applicable to other settings such as derived categories $\mathbf{D}(X^{\an},\mathbb{Q})$. A crucial point is that for $\mathbf{D}(X^{\an},\mathbb{Q})$, $\operatorname{Four}_V$ is well-defined on constructible part (see for instance \cite[Lemma 6.9.8]{achar-book}), namely, on the category $\lim \ \derivedcat^b_{\ct}(\gm_m^n \times X^{\an},\mathbb{Q})$. Thiago Landim (in his unpublished thesis) defined the bounded, constructible part $\mathbf{DN}_{\gm_m}^b(-)$ of $\mathbf{DN}_{\gm_m}(-)$ using a cohomological intepretation and showed that $\operatorname{Four}_V$ is well-defined on $\mathbf{DN}_{\gm_m}^b(-)$. We can cheat a little by declaring a motive in $\mathbf{DN}_{\gm_m}(-)$ to be bounded constructible if its realization is bounded constructible, then the Fourier-Laumon transform is still well-defined on bounded constructible motives. By \cite{laumon-2003}, the Fourier-Laumon transform $\operatorname{Four}_V \colon \derivednori^b_{\gm_m}(V) \longrightarrow \derivednori^b_{\gm_m}(V^{\vee})$ is an equivalence of categories. More precisely, $\operatorname{Four}_{V'} \circ \operatorname{Four}_V(-) \simeq (-)(-r)$. The conservativity of the realization shows that the same holds for Nori motives. 
\end{ex}

\subsection{Equivariant Perverse Nori Motives}
In this section, we freely use the materials on (derived) equivariant Nori motives given in the appendix. Let $G$ be an algebraic group acting on a $k$-variety $X$, the easiest definition of an equivariant Nori motives $M \in \mscr\perv_G(X)$ is a motive in $\mscr\perv(X)$ such that there exists an isomorphism $a^{\dagger}(M) \simeq \pr_2^{\dagger}(M)$, where 
\begin{equation*}
    a,\pr_2 \colon G \times_k X \longrightarrow X
\end{equation*}
denote the action, the projection, respectively. It turns out that there are at least three more candidates for the definition of this category (one more in comparison to the classical theory) and the content of this section is to show that they are indeed equivalent. Let $G$ be a connected algebraic group \footnote{The connectedness is important here, as remarked in \cite[Appendix A]{baumann+riche-2018}.} and $X$ be a $G$-variety. Let
\begin{equation*}
    a \colon G \times_k X \longrightarrow X \ \ \ \ \textnormal{and} \ \ \ \ p \colon G \times_k X \longrightarrow X
\end{equation*}
be the action morphism and the projection, respectively. 
\begin{prop} \label{prop: t-structures on equivariant categories} 
Let $\derivednori_G^b(X) = \lim \derivednori^b(G^n \times_k X)$ be the corresponding category of equivariant derived Nori motives developed abstractly in the previous section. There are two $t$-structures on $\derivednori_G^b(X)$, called the motivic constructible $t$-structure and motivic perverse $t$-structure. Moreover, 
\begin{enumerate}
    \item The realization $\derivednori_G^b(X) \longrightarrow \derivedcat_G^b(X^{\an},\mathbb{Q})$ is $t$-exact for both $t$-structures. 
    \item The restriction $\derivednori_G^b(X) \longrightarrow \derivednori^b(X)$ is $t$-exact for both $t$-structures. 
\end{enumerate}
and consequently, these functors restrict to t-exact functors on constructible motives. 
\end{prop}
\begin{proof}
The existence of t-structures on the limit category is is an application of \cite[Lemma 3.2.18]{richarz+scholbach-2020}. 
\end{proof}

Follow \cite[Appendix]{baumann+riche-2018}, we propose first three categories of equivariant perverse Nori motives:
\begin{enumerate}
    \item As in \cite{bernstein+lunts-1994}, we define the category $\mscr\perv_G^{\#}(X)$ as the heart of perverse $t$-structure on$\derivednori_G^b(X)$. The forgetful functor $\res^G \colon \derivednori^b_G(X) \longrightarrow \derivednori^b(X)$ is $t$-exact and $(\res^G)^{-1}(\mscr\perv(X)) = \mscr\perv_G^{\#}(X)$. 
    \item We define the category $\mscr\perv_G^{\dagger}(X)$ whose objects are pair $(M,\theta)$ where $M \in \mscr\perv(X)$ and $\theta \colon a^{\dagger}(M) \overset{\sim}{\longrightarrow} p^{\dagger}(M)$ is an isomorphism such that
    \begin{equation*}
        e^*(\theta) = \id_M \ \ \ \ \textnormal{and} \ \ \ \ (m \times \id_X)^*(\theta) = (\pr_{23})^*(\theta) \circ (\id_G \times a)^*(\theta)
    \end{equation*}
    and whose morphisms $(M,\theta) \longrightarrow (M',\theta')$ are morphisms $f \colon M \longrightarrow M'$ in $\mscr\perv(X)$ such that the diagram
    \begin{equation*}
        \begin{tikzcd}[sep=large]
                 a^{\dagger}(M) \arrow[r,"\theta"] \arrow[d,"a^*(f)",swap] & \pr_2^{\dagger}(M) \arrow[d,"p^*(f)"] \\ 
                 a^{\dagger}(M') \arrow[r,"\theta'"] & \pr_2^{\dagger}(f)
        \end{tikzcd}
    \end{equation*}
    commutes. 
     \item We define the category $\mscr\perv_G^{\wr}(X)$ as the full subcategory of $\mscr\perv(X)$ consisting of motives $M$ such that there is an isomorphism $a^{\dagger}(M) \simeq p^{\dagger}(M)$ in $\mscr\perv(X)$; equivalently, an isomorphism $a^*(M) \simeq p^*(M)$ in $\derivednori^b(X)$. 
    \end{enumerate} 
By spelling out the definition of $\mscr\perv^{\#}_G(X)$, we see that
\begin{equation*}
    \mscr\perv_G^{\#}(X) = \lim \left( \begin{tikzcd}[sep=scriptsize]
              \mscr\perv(X) \arrow[r,shift left= 2] \arrow[r,shift right= 2] & \mscr\perv(G \times_k X) \arrow[l,dashed] \arrow[r] \arrow[r,shift right= 4] \arrow[r,shift left= 4]  & \mscr\perv(G \times_k G \times_k X) \arrow[l,shift left = 2,dashed ] \arrow[l, shift right = 2,dashed] \arrow[r,shift left = 1] \arrow[r,shift right = 1] \arrow[r,shift right= 3] \arrow[r,shift left= 3]   & \cdots  \arrow[l,shift left = 2,dashed ] \arrow[l, shift right = 2,dashed] \arrow[l,dashed]
    \end{tikzcd}\right).
\end{equation*}
Since $(\Delta^{\op})^+$ is cofinal in $(\Delta^{\op})$ and $e^{\dagger}[-\dim(G)]$ is exact, we can compute the limit above after adding unit sections. Now because $\mscr\perv^{\#}_G(X)$ is an ordinary category, the higher data $n \geq 2$ does not affect the limit. Hence, $\mscr\perv^{\#}_G(X)$ is equivalent to the category $\mscr\perv^{\dagger}_G(X)$. There are functors
\begin{equation*}
    \mscr\perv_G^{\#}(X) \simeq \mscr\perv_G^{\dagger}(X) \longrightarrow \mscr\perv^{\wr}_G(X)
\end{equation*}
where the right arrow is the forgetful functor and under $\rat_X$, they become equivalences of categories. Let us discuss operations of equivariant perverse sheaves. 
\begin{prop}
Let $? \in \left \{\#,\dagger,\wr,\varnothing \right\}$. 
\begin{enumerate}
    \item Let $G$ be an algebraic group and $X,Y$ be $G$-varieties. Let $f \colon X \longrightarrow Y$ be a smooth, surjective morphism of $G$-varieties. There is a well-defined functor 
     \begin{equation*}
      f^{\dagger} \colon \mscr\perv_G^?(Y) \longrightarrow \mscr\perv^?_G(X)
  \end{equation*}
  compatible with $f^{\dagger}$ of ordinary perverse sheaves. 
  \item Let $G$ be an algebraic group and $X$ be a $G$-variety. Let $H \leq G$ be a subgroup of $G$. There is well-defined functor
  \begin{equation*}
      \operatorname{Res}^G_H \colon \mscr\perv_G^?(X) \longrightarrow \mscr\perv_H^?(X)
  \end{equation*}
     compatible with $\res^G_H$ of ordinary perverse sheaves. 
    \item Let $G$ be an algebraic group. Let $H \trianglelefteq G$ be a normal subgroup. There is well-defined functor
  \begin{equation*}
      \infl^G_{G/H} \colon \mscr\perv^?_{G/H}(X) \longrightarrow \mscr\perv^?_G(X)
  \end{equation*}
  compatible with $\infl^G_{G/H}$ of ordinary perverse sheaves. 
\end{enumerate}
Moreover, the same statements hold true if one replaces analytic perverse sheaves $\perv_G(-)$ with $\ell$-adic perverse sheaves $\perv_{G,\ell}(-)$. 
\end{prop}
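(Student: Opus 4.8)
The plan is to construct each of the three functors first on the equivariant ($\infty$-)categories supplied by the discussion of equivariant six operations, then descend to the hearts in the cases $? \in \{\#,\dagger,\wr\}$, and finally treat $? = \varnothing$ through the universal property of the abelian factorization; throughout, compatibility with the analytic and $\ell$-adic pictures is forced by the compatibility of the equivariant realizations with all six operations.

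First, at the derived level everything is already in place. By Theorem~\ref{thm: equivariant six operations} a smooth surjective $G$-equivariant $f \colon X \to Y$ induces $f^{*} \colon \derivednori^b_G(Y) \to \derivednori^b_G(X)$; a subgroup $H \leq G$ induces $\res^G_H \colon \derivednori^b_G(X) \to \derivednori^b_H(X)$, the pullback along $[X/H] \to [X/G]$; and a normal subgroup $H \trianglelefteq G$ induces $\infl^G_{G/H} \colon \derivednori^b_{G/H}(X) \to \derivednori^b_G(X)$, the pullback along $[X/G] \to [X/(G/H)]$. Being pullbacks, these are built level-wise on the Čech nerves that compute the equivariant categories, so they commute with the equivariant Betti and $\ell$-adic realizations and with the corresponding operations on $\derivedcat^b_{G^{\an},\ct}(-)$ and $\derivedcat^b_{G,\ct}(-,\mathbb{Q}_\ell)$.

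Next I would verify perverse $t$-exactness, which forces these functors to restrict to the hearts. By the definition of the equivariant perverse $t$-structure (see \eqref{eq: first definition of equivariant perverse Nori motives}), the forgetful functor $\res^G \colon \derivednori^b_G(X) \to \derivednori^b(X)$ is $t$-exact and detects the heart; since $\res^G_H$ and $\infl^G_{G/H}$ commute with the forgetful functors to $\derivednori^b(X)$ and act as the identity on underlying non-equivariant motives, they are perverse $t$-exact, and $f^{\dagger} = f^{*}[d_f]$ is perverse $t$-exact because $f$ is smooth (property~(3) of the standard list in Section~2.1, transported along the conservative exact realization $\rat$ as in the proof of Proposition~\ref{prop: pullbacks of stratified Nori motives}; cf.\ \cite{bernstein+lunts-1994,achar-book}). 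This yields the functors for $? = \#$, and then for $? = \dagger$ and $? = \wr$ either via the equivalence $\mscr\perv_G^{\#} \simeq \mscr\perv_G^{\dagger}$ and the forgetful functor $\mscr\perv_G^{\dagger} \to \mscr\perv_G^{\wr}$, or directly: given $M$ with an isomorphism $a^{\dagger}(M) \simeq p^{\dagger}(M)$, pulling it back along $\id_G \times f$ (using the commuting action and projection squares $a_Y \circ (\id_G \times f) = f \circ a_X$ and $p_Y \circ (\id_G \times f) = f \circ p_X$), resp.\ along $H \times_k X \hookrightarrow G \times_k X$, resp.\ along $G \times_k X \to (G/H) \times_k X$, furnishes the required datum on $f^{\dagger}(M)$, $\res^G_H(M)$, $\infl^G_{G/H}(M)$, and the unit and cocycle identities of the $\dagger$-datum transport by functoriality of pullback. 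Compatibility with the corresponding functors on $\perv_{G^{\an}}(-)$ and $\perv_{G,\ell}(-)$ is immediate from the previous step.

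Finally, for $? = \varnothing$ the category $\mscr\perv_G(X)$ is the universal abelian factorization of $\phnor^0 \circ \betti^*_{X,G}$, so each functor is obtained from the $2$-functoriality of this construction (the same mechanism used to build the four operations on $\mscr\perv$ in \cite{florian+morel-2019}): each of $f^{*}$, $\res^G_H$, $\infl^G_{G/H}$ sits in a square of exact functors on $\daet_{G,\ct}(-,\mathbb{Q})$ and perverse $t$-exact functors on $\derivedcat^b_{G^{\an},\ct}(-,\mathbb{Q})$ and $\perv_{G^{\an}}(-)$, compatibly with $\betti^*$ and $\phnor^0$, so the composite $\phnor^0_{\univ,G} \circ f^{*}$ (and likewise with $\res^G_H$, $\infl^G_{G/H}$) factors through $\phnor^0_{\univ,G}$ on the source and produces the desired functor, automatically compatible with $\rat^G_X$. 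The $\ell$-adic statements are verbatim the same with $\betti^*$ replaced by $\rfrak^{\et}_{\ell}$ and $\perv_{G^{\an}}$ by $\perv_{G,\ell}$. The only genuinely delicate points are (a) pinning down the normalization so that $\infl^G_{G/H}$ is perverse $t$-exact \emph{on the nose} rather than up to a shift by $\dim(G/H)$ --- equivalently, checking that the equivariant perverse $t$-structures on $\derivednori^b_G(X)$ and $\derivednori^b_{G/H}(X)$ match under level-wise pullback --- and (b) the precise bookkeeping in the $2$-functoriality argument of the last paragraph; both reduce through the conservative exact realizations to the corresponding facts for ordinary equivariant perverse sheaves, so no essentially new difficulty arises.
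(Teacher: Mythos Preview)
Your proposal is correct and follows essentially the same approach as the paper: for $? \in \{\#,\dagger,\wr\}$ you appeal to the explicit descriptions of these categories (working through the derived level and restricting to hearts for $\#$, then unwinding the equivariance datum directly for $\dagger$ and $\wr$), and for $? = \varnothing$ you invoke the universal property of the abelian factorization. The paper's own proof is a one-line sketch pointing to exactly these two mechanisms, so your write-up simply fills in the details the author left implicit.
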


\begin{proof}
For $? \in \left \{\#,\dagger,\wr \right \}$, the existences of these operations follow from the explicit description and the commutativities follow from the similar descriptions of analytic perverse sheaves (or $\ell$-adic ones). For $? = \varnothing$, everything follows from the definition of universal factorizations. 
\end{proof}

\begin{prop} \label{prop: properties of equivariant perverse sheaves}
Let $G$ be a smooth algebraic group.
\begin{enumerate} 
\item Let $X,Y$ be $G$-varieties. Let $f \colon X \longrightarrow Y$ be a smooth, surjective $G$-equivariant morphism. The functor
\begin{equation*}
    f^{\dagger} \colon \mscr\perv_G^{\dagger}(Y) \longrightarrow \mscr\perv_G^{\dagger}(X)
\end{equation*}
is faithful. If $f$ has geometrically connected fibers, then $f^{\dagger}$ is fully faithful.
\item Let $H \trianglelefteq G$ be a connected, normal subgroup. Given $G/H$-variety $X$, the functor
\begin{equation*}
    \operatorname{Infl}^G_{G/H} \colon \mscr\perv_{G/H}^{\dagger}(X) \longrightarrow \mscr\perv_G^{\dagger}(X) 
\end{equation*}
is fully faithful. If moreover, $G$ is connected, then $\operatorname{Infl}^G_{G/H}$ is an equivalence of categories. 
\item Let $X$ be a principal $G$-variety and let $p \colon X \longrightarrow X/G$ be the quotient morphism. The functor
\begin{equation*}
    p^{\dagger} \colon \mscr\perv^{\dagger}(X/G) \longrightarrow \mscr\perv_G^{\dagger}(X)
\end{equation*}
is an equivalence of categories. 
\item Let $H \leq G$ be a closed subgroup. Let $X$ be a $H$-variety and let $i \colon X \longrightarrow G \times^H X$ be the morphism $i(x) = (1,x)$. The functor
    \begin{equation*}
        i^*[-\dim(G/H)] \colon \mscr\perv_G^{\dagger}(G \times^H X) \longrightarrow \mscr\perv_H^{\dagger}(X)
    \end{equation*}
    is an equivalence of categories.
\item Assume that $G$ is connected, then the inclusion $\mscr\perv_G^{\dagger}(X) \longhookrightarrow \mscr\perv_G^{\wr}(X)$ is an equivalence of categories. 
\end{enumerate}
\end{prop}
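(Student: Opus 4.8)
\emph{Proof proposal.} The uniform strategy is to transport each assertion from its classical counterpart for analytic (equivalently, $\ell$-adic) equivariant perverse sheaves --- these are the statements of \cite[Appendix]{baumann+riche-2018} and \cite{achar-book}, \cite{bernstein+lunts-1994} --- using three inputs already available: the non-equivariant smooth-descent full faithfulness (Propositions \ref{prop: pullback of smooth morphism is fully faithful} and \ref{prop: smooth descent}); the exactness and faithfulness of $\rat_X$ on hearts together with its conservativity on $\derivednori^b$; and the derived-level equivariant equivalences of Section 4.1 (Theorem \ref{theorem: properties of special equivariant six operations} and Proposition \ref{prop: equivariant motives of unipotent groups}). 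Throughout one uses the two models of the category: $\mscr\perv_G^{\dagger}(X)\simeq\mscr\perv_G^{\#}(X)=\lim_{n\in\Delta}\mscr\perv(G^n\times_k X)$ along the simplicial bar diagram, and $\mscr\perv_G^{\dagger}(X)$ as the heart of the perverse $t$-structure on $\derivednori^b_G(X)=\derivednori^b([X/G])$. I would prove the parts in the order $(1)$, $(5)$, $(2)$, $(3)$ and $(4)$.

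For $(1)$: faithfulness is immediate, since the forgetful functor $\mscr\perv_G^{\dagger}(Y)\to\mscr\perv(Y)$ is faithful and $f^{\dagger}$ on equivariant objects is intertwined, via forgetting, with $f^{\dagger}\colon\mscr\perv(Y)\to\mscr\perv(X)$, which is faithful by Proposition \ref{prop: pullback of smooth morphism is fully faithful}. When $f$ has geometrically connected fibres, use the bar model: $f$ induces the morphisms $\id_{G^n}\times f\colon G^n\times_k X\to G^n\times_k Y$, still smooth surjective with geometrically connected fibres, so each $(\id_{G^n}\times f)^{\dagger}$ is fully faithful by Proposition \ref{prop: pullback of smooth morphism is fully faithful}; a morphism in the target limit is a compatible family of morphisms, each descending uniquely and with the cocycle compatibilities descending by faithfulness, so $f^{\dagger}$ is fully faithful. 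For $(5)$: since $G$ is connected, the morphisms $a,p=\pr_2\colon G\times_k X\to X$ and the outer projection $\pr\colon G\times_k G\times_k X\to X$ are smooth surjective with geometrically connected fibres, hence $a^{\dagger},p^{\dagger},\pr^{\dagger}$ are fully faithful. Full faithfulness of $\mscr\perv_G^{\dagger}(X)\hookrightarrow\mscr\perv_G^{\wr}(X)$: for $(M,\theta_M),(N,\theta_N)$ and any $f\colon M\to N$ in $\mscr\perv(X)$, the two maps $\theta_N^{-1}\circ p^{\dagger}(f)\circ\theta_M$ and $a^{\dagger}(f)$ from $a^{\dagger}M$ to $a^{\dagger}N$ both restrict along the unit section $e$ to $f$ (using $e^{\dagger}\theta=\id$), so they agree by full faithfulness of $a^{\dagger}$; thus $f$ automatically respects the equivariant structures and these structures are unique. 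Essential surjectivity onto $\mscr\perv_G^{\wr}(X)$: given any isomorphism $\theta\colon a^{\dagger}M\simeq p^{\dagger}M$, replace it by $\theta\circ a^{\dagger}(e^{\dagger}\theta)^{-1}$ to arrange $e^{\dagger}\theta=\id$; the defect of the cocycle identity is then an automorphism of $\pr^{\dagger}M$ over $G\times_k G\times_k X$, which lies in the (fully faithful) image of $\pr^{\dagger}$ and restricts along the double unit section to the identity, hence is the identity, so $M$ underlies an object of $\mscr\perv_G^{\dagger}(X)$.

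For $(2)$: full faithfulness of $\operatorname{Infl}^G_{G/H}$ follows exactly as in $(1)$ from the bar model, since it is the limit of the pullbacks along $(G\to G/H)^n\times\id_X\colon G^n\times_k X\to (G/H)^n\times_k X$, each smooth surjective with fibre $H^n$, geometrically connected because $H$ is. When $G$ is connected, both $G$ and $G/H$ are connected, so $(5)$ gives $\mscr\perv_G^{\dagger}(X)\simeq\mscr\perv_G^{\wr}(X)$ and $\mscr\perv_{G/H}^{\dagger}(X)\simeq\mscr\perv_{G/H}^{\wr}(X)$; moreover $\mscr\perv_G^{\wr}(X)=\mscr\perv_{G/H}^{\wr}(X)$ as full subcategories of $\mscr\perv(X)$, because the quotient map $G\times_k X\to (G/H)\times_k X$ is smooth surjective with connected fibre $H$, so its $(-)^{\dagger}$ is fully faithful and gives a bijection between the sets of isomorphisms $a^{\dagger}M\simeq p^{\dagger}M$ on the two sides (using equation \eqref{eq:1} to match the action morphisms). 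Chasing through, $\operatorname{Infl}^G_{G/H}$ is identified with this composite equivalence.

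For $(3)$ and $(4)$: these are transport of structure along derived equivalences. For $(3)$, with $X\to X/G$ a principal $G$-bundle, Theorem \ref{theorem: properties of special equivariant six operations}(1) (equivalently (2), or Proposition \ref{prop: smooth descent}) gives $p^{*}\colon\derivednori^b_G(X)\simeq\derivednori^b(X/G)$; the functor $p^{\dagger}=p^{*}[\dim G]$ is perverse $t$-exact --- checked after $\rat$ by conservativity and exactness, since $\rat$ commutes with $p^{*}$ and shifts and the classical statement holds --- so it restricts to the equivalence on hearts. For $(4)$, Theorem \ref{theorem: properties of special equivariant six operations}(3) supplies the induction equivalence $\derivednori^b_G(G\times^H X)\simeq\derivednori^b_H(X)$ realized by $i^{*}[-\dim(G/H)]$, and the shift is exactly the one making this functor perverse $t$-exact (again verified after $\rat$ against the classical induction equivalence), hence it restricts to an equivalence of hearts. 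Compatibility with the analytic and $\ell$-adic realizations is immediate from the constructions of the preceding proposition. The main obstacle, in my view, is twofold: the $t$-exactness bookkeeping in $(3)$ and $(4)$, where one must pin down the dimension shifts so that Section 4.1's derived equivalences become perverse $t$-exact (and here one genuinely leans on the classical analytic statements pulled back through the conservative exact realization); and the normalization and cocycle verification in $(5)$, which cannot be routed through $\rat$ (faithful but not full) and so must be done internally for Nori motives --- this is possible only because, for $G$ connected, the pullbacks $a^{\dagger},p^{\dagger},\pr^{\dagger}$ are fully faithful by Proposition \ref{prop: pullback of smooth morphism is fully faithful}, which lets the Bernstein--Lunts/Achar argument run verbatim.
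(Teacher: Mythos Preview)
Your proposal is correct and follows essentially the same approach as the paper: the paper's proof is a one-liner that invokes Propositions \ref{prop: pullback of smooth morphism is fully faithful} and \ref{prop: smooth descent} and says parts (1), (2), (3), (5) are ``similar to the classical cases'' in \cite[Propositions 6.2.6, 6.2.8, 6.2.10, 6.2.17]{achar-book}, while part (4) is deduced from Theorem \ref{theorem: properties of special equivariant six operations}(3) --- and you have simply spelled out those classical arguments in the Nori setting. The only cosmetic difference is that for (3) you route through the derived equivalence of Theorem \ref{theorem: properties of special equivariant six operations}(1) plus $t$-exactness, whereas the paper groups (3) with the abelian-level descent arguments; since you yourself note this is ``equivalently Proposition \ref{prop: smooth descent}'', the two routes coincide.
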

\begin{proof}
Given propositions \ref{prop: pullback of smooth morphism is fully faithful}, \ref{prop: smooth descent}, the proofs of (1), (2), (3), (5) are similar to the ones in classical cases. One can consult \cite[Propositions 6.2.6, 6.2.8, 6.2.10, Proposition 6.2.17]{achar-book}. Part (4) will follow from part 3 in theorem \ref{theorem: properties of special equivariant six operations}.
\end{proof}

\begin{convention}
From now on, we write $\mscr\perv_G(X)$ to indicate the category $\mscr\perv_G^{\#}(X)$. This should cause no confusion thanks to the previous theorem. 
\end{convention}

\begin{ex} 
This is an application of corollary \ref{cor: left averaging = right averaging}. Let $G$ be a finite group acting on a variety $X$, then there is an equivalence of categories
\begin{equation*}
    \derivedcat^b(\perv_{G^{\an}}(X^{\an})) \simeq \derivedcat^b_{G^{\an}}(X^{\an}). 
\end{equation*}
(see for instance \cite[Theorem 6.6.12]{achar-book}). Let us show that we can obtain a similar equivalence for Nori motives
\begin{equation*}
     \derivedcat^b(\mscr\perv_G^{\#}(X)) \simeq \derivednori^b_G(X). 
\end{equation*}
Both are applications of Beilinson's realization functor. In \cite{beilinson-1987-2}, Beilinson constructs a realization functor $\derivedcat^b(\ascr) \longrightarrow \tscr$ from the derived category of the heart $\ascr$ of a bounded $t$-structure on a triangulated category $\tscr$ and proves that this realization is an equivalence of and only if any morphism of the form $M \longrightarrow N[n]$ with $M,N \in \ascr$ is effaceable, namely, there exists an epimorphism $p \colon M' \longrightarrow M$ and a monomorphism $i \colon N \longrightarrow N'$ such that $i[n] \circ f \circ p \colon M' \longrightarrow N'[n]$ is zero. 

Now by corollary \ref{cor: left averaging = right averaging}, left average functor equals right average functors and by theorem \ref{theorem: properties of special equivariant six operations} they are $t$-exact (since they are combinations of $t$-exact functors). Given $M \in \mscr\perv_G(X)$, the unit morphism $\av_{1!}^G\res^G_1(M) \longrightarrow M$ is surjective (note that $\av_{!}^G\res^G(M) \in \mscr\perv_G(X)$ by the previous observation) in $\mscr\perv_G(X)$; indeed, this follows from the fact that $\rat^G_X \colon \mscr\perv_G(X) \longrightarrow \perv_G(X)$ is exact and the same holds for ordinary equivariant perverse sheaves (see for instance \cite[Theorem 6.6.12]{achar-book}). Let $M,N \in \mscr\perv_G(X)$ and consider a morphism $f \colon M \longrightarrow N[n]$ in $\derivednori_G^b(X)$, then $\res^G_1(f)$ is effaceable in $\derivednori^b(X) = \derivedcat^b(\mscr\perv(X))$ (see for instance \cite[Lemma A.5.17]{achar-book}), the rest then is same as \cite[Theorem 6.6.12]{achar-book}.
\end{ex} 
\subsection{Equivariant motivic local systems} In this subsection, we develop the notion of equivariant motivic local systems. Equivariant local systems are to equivariant motivic perverse sheaves what local systems are to motivic perverse sheaves. This also leads to the notion of equivariant motivic fundamental group, which already appears in the topological setting.

\begin{defn}
  Let $G$ be a connected algebraic groups over $k$. Let $X$ be a smooth $k$-variety on which $G$ acts on and let $a,p \colon G \times_k X \longrightarrow X$ be the action and projection, respectively . We define a full subcategory of $\mscr\perv_G(X)$, called the \textit{category of shifted equivariant motivic local systems}
\begin{equation*}
\begin{split} 
    \mscr\operatorname{Loc}_{G,p}(X) & = \left \{L \in \mscr\perv_G(X) \mid \res^G_1(L) \in \mscr\operatorname{Loc}_p(X) \right \} \\ 
    & = \left \{L \in \mscr\localsystem_p(X) \mid a^{\dagger}(L) \simeq p^{\dagger}(L) \right \}
    \end{split} 
\end{equation*}
The \textit{category of equivariant motivic local systems} is defined as $\mscr\operatorname{Loc}_G(X) \coloneqq \mscr\operatorname{Loc}_{G,p}(X)[\dim(X)]$. 
\end{defn}
We have some important remarks:
\begin{rmk}
\begin{enumerate} 
    \item The inclusion $\mscr\localsystem_G(X) \longhookrightarrow \mscr\localsystem(X)$ is fully faithful.
    \item In this definition, we do not require $X$ to be a principal $G$-variety (as in the topological setting, see \cite[Proposition 6.2.13]{achar-book}), this means that in general there are equivariant perverse Nori motives that are \textit{not} shifted equivariant motivic local systems. However, if $X$ is a principal $G$-variety, then since $\localsystem_{G,p}(X) = \perv_G(X)$, we indeed have that $\mscr\localsystem_{G,p}(X) = \mscr\perv_G(X)$. 
    \item There is a commutative diagram 
    \begin{equation*}
        \begin{tikzcd}[sep=large]
          \mscr\localsystem_G(X) \arrow[r] \arrow[d] & \mscr\localsystem(X) \arrow[d]  \\ 
          \localsystem_G(X) \arrow[r] & \localsystem(X)
        \end{tikzcd}
    \end{equation*}
    but this diagram is \textit{not} cartesian. This results in some new phenomena. For instance, if $X^{\an}$ is simply connected then $\localsystem_G(X) = \localsystem(X)$ but still $\mscr\localsystem_G(X) \neq \mscr\localsystem(X)$. The reason behind this is that equivariance is a property of motives, not of their realizations. An isomorphism $\rat(a^*(L)) \simeq \rat(p^*(L))$ does not lift to an isomorphism $a^*(L) \simeq p^*(L)$ for $L \in \mscr\localsystem(X)$. 
\end{enumerate}
\end{rmk}

\begin{lem}
The category $\mscr\localsystem_{G,p}(X)$ can be described as 
\begin{equation*}
    \mscr\operatorname{Loc}_{G,p}(X) = \left \{L \in \mscr\perv_G(X) \mid L \ \textnormal{is strongly dualisable} \right \}. 
\end{equation*}
\end{lem}

\begin{proof}
Indeed, we have the same description for the non-equivariant case and since $\res^G_1$ is conservative, exact and commutes with $\otimes, \underline{\Hom}$  so it reflects strong duals and we win.   
\end{proof}


\begin{theorem}
Let $G$ a connected algebraic group and $X$ be a principal $G$-variety of dimension $d$. The category $\mscr\operatorname{Loc}_{G,p}(X)$ endowed with
\begin{equation*}
\begin{split} 
    (-) \otimes (-) \colon \mscr\operatorname{Loc}_{G,p}(X) \times  \mscr\operatorname{Loc}_{G,p}(X) & \longrightarrow  \mscr\operatorname{Loc}_{G,p}(X) \\ 
   (M, N) & \longmapsto (M[-d] \otimes N[-d])[d]
   \end{split} 
\end{equation*}
is a neutral Tannakian category over $\mathbb{Q}$. 
\end{theorem}

\begin{proof}
The proof is as same as the proof of \cite[Theorem 6.3]{terenzi-2024}.     
\end{proof}

\begin{defn}
    Let $x \in X(k)$ be a $k$-point. This defines a fiber functor 
    \begin{equation*}
        x^* \colon \mscr\localsystem_{G}(X) \longrightarrow \localsystem(X) \longrightarrow \operatorname{Vect}_{\mathbb{Q}}
    \end{equation*}
    and the Tannakian dual of $\mscr\operatorname{Loc}_{G}(X)$ is denoted by $\gscr_G^{\mot}(X,x)$, called the \textit{motivic equivariant fundamental group} of $(X,x)$. In particular, if $X = \Spec(k)$, then the action is trivial and $\gscr^{\mot}_G(X,x) = \gscr^{\mot}(k)$, the motivic Galois group. 
\end{defn}
\begin{rmk} 
Let $G$ be a connected algebraic group and $X$ be a principal $G$-variety, then every $G$-equivariant perverse sheaf on $X$ is a shifted (by $\dim(X)$) local system. Moreover, there is an equivalence of categories (see \cite[Proposition 6.2.13]{achar-book})
\begin{equation*}
    \localsystem_G(X) \simeq k[G^x/(G^x)^{\circ}]-\operatorname{Mod}^{\textnormal{fg}}
\end{equation*}
where $G^x \subset G$ is a stabilizer of some point $x \in G$ and $(G^x)^{\circ}$ is its identity component. This reminds us of the equivalence $\localsystem^{\textnormal{ft}}(X) \simeq \mathbb{Q}[\pi_1(X)]-\operatorname{Mod}^{\textnormal{fg}}$ in the non-equivariant setting. For this reason, we call $\pi_1^G(X,x) = G^x/(G^x)^{\circ}$ the \textit{equivariant fundamental group} of $X$. The group $\gscr_G^{\mot}(X,x)$ can be regarded as a motivic avatar of $\pi_1^G(X,x)$. 
\end{rmk} 

\begin{rmk} Now let $X$ be a $k$-variety with a $k$-point $x \in X(k)$. Thanks to Grothendieck, there is a split exact sequence
\begin{equation*}
    1 \longrightarrow \pi_1^{\et}(\overline{X},\overline{x}) \longrightarrow \pi_1^{\et}(X,x) \longrightarrow \operatorname{Gal}(\overline{k}/k) \longrightarrow 1. 
\end{equation*}
In the works of Terenzi \cite{terenzi-2025} and Jacobsen \cite{jacobsen-2025}, there is a motivic avatar of the sequence above 
\begin{equation*}
    1 \longrightarrow \pi_1^{\textnormal{mot}}(X,x) \longrightarrow \gscr(X,x) \longrightarrow \gscr^{\textnormal{mot}}(k) \longrightarrow 1, 
\end{equation*}
where $\gscr^{\mot}(k) = \gscr_1(k,k)$, $\gscr(X,x) = \gscr_1(X,x)$ as above and $\pi_1^{\textnormal{mot}}(X,x)$ is the motivic Galois group arising from local systems of geometric origin. A fundamental theorem in \cite{jacobsen-2025} shows that $\pi_1^{\textnormal{mot}}(X,x)$ is the Malcev completion of the topological fundamental group $\pi_1(X,x)$. One might pose a similar question: how can one relate $\Ker(\gscr_G(X,x) \longrightarrow \gscr_G^{\mot}(k))$ with $\pi_1^G(X,x) = G^x/(G^x)^{\circ}$?
\end{rmk}
\subsection{Equivariant perverse Nori motives on (stratified) ind-schemes} In this subsection, we investigate equivariant perverse Nori motives on stratified ind-varieties. We also define equivariant intersection motives and show that those of simple equivariant local systems constitute to all simple objects of equivariant perverse Nori motives. We first study the derived version and recover the heart via the perverse $t$-structure. Let $G = \lim_{n \in \mathbb{N}} G_n$ be a pro-algebraic group acting on an ind-scheme $X = \colim_{i \in I} X_i$. By remark \ref{rmk: extensions to prestacks}, the categories $\derivednori^b(G^n \times_k X)$ are well-defined. 

\begin{defn} \label{defn: equivariant derived Nori motives for pro-algebraic groups}
  Let $G = \lim_{n \in \mathbb{N}} G_n$ be a pro-algebraic group acting on an ind-scheme $X$, we can still define the \textit{category of equivariant derived Nori motives} 
  \begin{equation*}
    \derivednori_G^b(X) \coloneqq   \derivednori^b([X/G]) \coloneqq \lim \left( \begin{tikzcd}[sep=scriptsize]
                \derivednori^b(X) \arrow[r,shift left= 2] \arrow[r,shift right= 2] &   \derivednori^b(G \times_k X) \arrow[l,dashed] \arrow[r] \arrow[r,shift right= 4] \arrow[r,shift left= 4]  & ... \arrow[l,shift left = 2,dashed ] \arrow[l, shift right = 2,dashed] 
    \end{tikzcd}\right).
\end{equation*}
When $X$ is an stratified ind-variety $\iota \colon X^+ = \coprod_{w \in W} X_w \longrightarrow X $, we define the \textit{category of stratified equivariant derived Nori motives} $\derivednori^b_G(X,X^+)$ as the homotopy pullback
  \begin{equation*}
      \derivednori^b_G(X,X^+) \coloneqq \derivednori^b(X,X^+) \times_{\derivednori^b(X)} \derivednori^b_G(X).
  \end{equation*}
 Clearly, $\derivednori^b_G(X,X^+)$ is the full subcategory of $\derivednori^b_G(X)$ whose images under the forgetful functor $\derivednori^b_G(X) \longrightarrow \derivednori^b(X)$ are in $\derivednori^b(X,X^+)$ and there is a restriction functor
 \begin{equation*}
     \res^G \colon \derivednori^b_G(X,X^+) \longrightarrow \derivednori^b(X,X^+). 
 \end{equation*}
\end{defn}

Let us assume the following situation to ensure that there is a well-behave heart on $\derivednori^b_G(X,X^+)$. 

\begin{hypothesis} \label{hyp: action of pro-group}
\begin{enumerate}
    \item We assume that the index set $I = \mathbb{N}$ each $X_n$ is induced by a corresponding $G_n$-action (in fact, one can do more general setting but this is what usually happens in practice). 
    \item We assume that each $G_m \longrightarrow G_n$ is surjective for $n \leq m$ and $\Ker(G_m \longrightarrow G_n)$ is a smooth, connected, unipotent group. 
    \item We assume that each $X_w$ is stable under $G$-action. By taking suitable closure $\overline{X}_w$, each $X_i$ can be stratified $X_i^+ = X^+ \times_X X_i = \coprod_{w \in W_i} X_w \longrightarrow X_i$ for some finite set $W_i \subset X$.
\end{enumerate}
\end{hypothesis}


\begin{prop} \label{prop: equivariant perverse Nori motives on ind-schemes}
 Let $G = \lim_{n \in \mathbb{N}} G_n$ be a pro-algebraic group. Let $X = \colim_{n \in \mathbb{N}} X_n$ be an ind-scheme on which $G$ acts. Assume hypothesis \ref{hyp: action of pro-group} holds true then
 \begin{equation*}
    \derivednori^b_{G}(X,X^+) =  \underset{i \in I}{\colim} \  \derivednori^b_{G}(X_n,X_n^+)  =  \underset{i \in I}{\colim} \  \lim_{m \geq n}  \derivednori^b_{G_m}(X_n,X^+_n)
\end{equation*}
and carries a $t$-structure whose heart is 
\begin{equation*}
   \mscr\perv_G(X,X^+) =  \underset{i \in \mathbb{N}}{\colim} \  \mscr\perv_G(X_n,X_n^+)  =  \underset{i \in \mathbb{N}}{\colim} \  \lim_{m \geq n}  \mscr\perv_{G_m}(X_n,X_n^+),
\end{equation*}
called the category of $G$-equivariant stratified Nori motives. 
\end{prop}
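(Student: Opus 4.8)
The plan is to reduce the statement to three facts already available: that the tower $(G_j)_j$ contributes nothing new because its transition kernels are connected unipotent; that $\derivednori^b(-)$, together with the stratified condition, commutes with the filtered colimit presenting $X$; and that a filtered colimit of stable $\infty$-categories along $t$-exact functors inherits a $t$-structure, heart and all.

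\emph{Step 1: stabilisation in the group direction.} Fix $i$. By Hypothesis \ref{hyp: action of pro-group} the $G$-action on $X_i$ factors through $G_i$, so for $j\ge i$ the scheme $X_i$ carries the $G_j$-action pulled back along $G_j\twoheadrightarrow G_i$, one has $[X_i/G]=\lim_{j\ge i}[X_i/G_j]$, and by construction $\derivednori^b_{G}(X_i)=\lim_{j\ge i}\derivednori^b_{G_j}(X_i)$. Since $\Ker(G_j\to G_i)$ is smooth, connected and unipotent, Proposition \ref{prop: equivariant motives of unipotent groups}(2) shows that $\infl^{G_j}_{G_i}\colon\derivednori^b_{G_i}(X_i)\to\derivednori^b_{G_j}(X_i)$ is an equivalence; by \eqref{eq:1} (with $H=\{1\}$) it commutes with the conservative forgetful functors to $\derivednori^b(X_i)$, hence is perverse $t$-exact and preserves the stratified subcategories, so the tower $\{\derivednori^b_{G_j}(X_i,X_i^+)\}_{j\ge i}$ is essentially constant with all comparison functors $t$-exact equivalences. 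Each term carries a perverse $t$-structure with heart $\mscr\perv_{G_j}(X_i,X_i^+)$: non-equivariantly this is Proposition \ref{prop: cohomological interpretation of stratified motives}, and it is lifted equivariantly from the scheme case exactly as in the construction of the equivariant perverse $t$-structure above, the forgetful functor to $\derivednori^b(X_i,X_i^+)$ being $t$-exact and conservative. Hence $\derivednori^b_{G}(X_i,X_i^+)=\lim_{j\ge i}\derivednori^b_{G_j}(X_i,X_i^+)$ acquires a perverse $t$-structure with heart $\mscr\perv_{G}(X_i,X_i^+)=\lim_{j\ge i}\mscr\perv_{G_j}(X_i,X_i^+)$, each equivalent to its $j$-th term. (When the strata are single $G$-orbits, which is the case of interest, any $G_j$-equivariant motive on $X_i$ is automatically stratified, its restriction to a stratum being a motivic local system, so $\derivednori^b_{G_j}(X_i)=\derivednori^b_{G_j}(X_i,X_i^+)$, matching the notation of the displayed formulas.)

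\emph{Step 2: passage to the ind-scheme and the $t$-structure.} By definition $\derivednori^b_G(X,X^+)=\derivednori^b(X,X^+)\times_{\derivednori^b(X)}\derivednori^b_G(X)$. Now $\derivednori^b(X)=\colim_i\derivednori^b(X_i)$ and $\derivednori^b(X,X^+)=\colim_i\derivednori^b(X_i,X_i^+)$ along pushforwards of closed immersions (Lemma \ref{lem: motivic perverse and constructible sheaves on ind-schemes} and the definition of stratified motives, the strata of $X$ being the strata of the $X_i$). Applying the $h$-hypersheaf $\derivednori^b$ to $[X/G]=\colim_i[X_i/G]$, whose transition maps are closed immersions of stacks with pushforwards the structure maps of the colimit, gives $\derivednori^b_G(X)=\colim_i\derivednori^b_G(X_i)$. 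Since filtered colimits commute with finite limits of presentable stable $\infty$-categories,
\begin{equation*}
\derivednori^b_G(X,X^+)\simeq\colim_i\bigl(\derivednori^b(X_i,X_i^+)\times_{\derivednori^b(X_i)}\derivednori^b_G(X_i)\bigr)=\colim_i\derivednori^b_G(X_i,X_i^+),
\end{equation*}
which, combined with Step 1, is the whole first displayed formula. The transition functors $\derivednori^b_G(X_i,X_i^+)\to\derivednori^b_G(X_{i'},X_{i'}^+)$ are pushforwards along $G$-equivariant closed immersions, hence perverse $t$-exact (a closed immersion is proper, and equivariant $t$-exactness reduces to the non-equivariant statement via the conservative $t$-exact forgetful functor). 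A filtered colimit of stable $\infty$-categories with $t$-structures along $t$-exact functors carries a $t$-structure whose connective and coconnective parts, and whose heart, are the colimits of those of the terms, exactly as was used for $\mscr\perv(X)$ in Lemma \ref{lem: motivic perverse and constructible sheaves on ind-schemes}. Applying this to $\derivednori^b_G(X,X^+)=\colim_i\derivednori^b_G(X_i,X_i^+)$ produces the perverse $t$-structure, with heart $\colim_i\mscr\perv_G(X_i,X_i^+)=\mscr\perv_G(X,X^+)$; passing to hearts in Steps 1 and 2 gives the second displayed formula.

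\emph{Main obstacle.} The one point genuinely requiring care is the identity $\derivednori^b_G(X)=\colim_i\derivednori^b_G(X_i)$: one must know that the right Kan extension of $\derivednori^b$ from schemes to quotient stacks remains compatible with filtered colimits along closed immersions, i.e. that forming $\derivednori^b$ of $[X/G]=\colim_i[X_i/G]$ commutes with the colimit at the level of $\infty$-categories. This is the stacky counterpart of Lemma \ref{lem: motivic perverse and constructible sheaves on ind-schemes}; it follows from Tubach's $h$-hyperdescent for $\derivednori^b$ once one checks that the presentation $X=\colim_iX_i$ together with $G=\lim_jG_j$ provides a compatible smooth atlas of $[X/G]$, after which the remaining steps are formal manipulations with limits, colimits and $t$-exact functors.
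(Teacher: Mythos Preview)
Your proof is correct and follows essentially the same route as the paper: stabilise the group direction using Proposition~\ref{prop: equivariant motives of unipotent groups}, then pass to the colimit in the ind-scheme direction and intersect with the stratified condition via the fibre-product definition, finally invoking the general fact that a filtered colimit along $t$-exact fully faithful functors inherits a $t$-structure with the expected heart. The paper is terser---it cites \cite[Lemma~3.2.18]{richarz+scholbach-2020} for the last step where you invoke Lemma~\ref{lem: motivic perverse and constructible sheaves on ind-schemes}---and in fact your discussion of the ``main obstacle'' (that $\derivednori^b_G(X)=\colim_i\derivednori^b_G(X_i)$ needs justification) is more careful than the paper, which simply asserts this formula as part of the proof.
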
 

\begin{proof}
Since each $\Ker(G_{n+1} \longrightarrow G_n)$ is smooth, connected, unipotent, its underlying scheme is isomorphic to some $\mathbb{A}_k^{m}$ and hence $\derivednori^b_{G_j}(X_i)$ does not depend on $j \geq i$ thanks to proposition \ref{prop: equivariant motives of unipotent groups}, thus these categories are equivalent to $\derivednori^b_G(X_i)$. The pushforwards along the closed immersions 
\begin{equation*} 
\derivednori^b_{G}(X_j)  \longrightarrow \derivednori^b_{G}(X_i)
\end{equation*} 
are then well-defined and thus the category 
\begin{equation*}
    \derivednori^b_{G}(X) = \ \underset{i \in I}{\colim} \  \derivednori^b_{G}(X_i)  =  \underset{i \in I}{\colim} \  \lim_{j \geq i} \derivednori^b_{G_j}(X_i)
\end{equation*}
is well-defined as well. Taking fiber product on both side gives us the desired equality. Regard the $t$-structure, the pushforwards $\derivednori^b_{G}(X_j)  \longrightarrow \derivednori^b_{G}(X_i)$ are $t$-exact and fully faithful so the hypotheses of \cite[Lemma 3.2.18]{richarz+scholbach-2020} is fulfilled and the heart is computed in the expected way. 
\end{proof}

\begin{prop} \label{prop: forgetful functors of ind-schemes are fully faithful}
Assume that hypothesis \ref{hyp: action of pro-group} holds true and each $G_n$ is connected, then the restriction of 
\begin{equation*}
    \res^G \colon \derivednori^b_G(X,X^+) \longrightarrow \derivednori^b(X,X^+)
\end{equation*}
to hearts, namely, 
\begin{equation*}
    \res^G \colon \mscr\perv_G(X,X^+) \longrightarrow \mscr\perv(X,X^+)
\end{equation*}
is fully faithful. More precisely, let us denote by $a, p \colon G \times X \longrightarrow X$ the action and the projection, respectively, then
\begin{equation*}
 \mscr\perv_G(X,X^+) = \left \{M \in \mscr\perv(X,X^+) \mid a^*(M) \simeq p^*(M) \right \}. 
\end{equation*}
\end{prop}

\begin{proof}
Indeed, the morphism $\res^G \colon \mscr\perv_G(X,X^+) \longrightarrow \mscr\perv(X,X^+)$ can be rewritten as 
\begin{equation*} 
\underset{i \in \mathbb{N}}{\colim} \  \lim_{m \geq n}  \mscr\perv_{G_m}(X_n,X_n^+) \longrightarrow  \underset{i \in \mathbb{N}}{\colim} \ \mscr\perv(X_n,X_n^+) 
\end{equation*} 
so we win because each $\mscr\perv_{G_m}(X_n,X_n^+) \longhookrightarrow \mscr\perv(X_n,X_n^+)$ is fully faithful (as $G_m$'s are connected) and a colimit of fully faithful functors remains fully faithful. The second statement then follows as well because the same is true for each $\mscr\perv_{G_m}(X_n,X_n^+) = \left \{M \in \mscr\perv(X_n,X_n^+) \mid a^*(M) \simeq p^*(M) \right \} \longhookrightarrow \mscr\perv(X_n,X_n^+)$. 
\end{proof}

\begin{ex}
If $G$ is a connected unipotent algebraic group acting transitively on a smooth variety $X$, then $\mscr\perv_G(X,X) = \mscr\localsystem_G(X)$ thanks to proposition \ref{prop: forgetful functors of ind-schemes are fully faithful}. However, this holds true for any connected algebraic group (but the action is needed to be transitive so that we can consider $\mscr\perv_G(X,X)$. 
\end{ex}

\begin{prop}
With the hypotheses of proposition \ref{prop: equivariant perverse Nori motives on ind-schemes}, there exist realization functors
\begin{equation*}
\begin{split} 
    \rat_{(X,X^+)} \colon \derivednori^b_G(X,X^+) \longrightarrow \derivedcat^b_{G^{\an}}(X^{\an},X^{+,\an},\mathbb{Q}) \\
     \rfrak^{\et}_{(X,X^+)} \colon \derivednori^b_G(X,X^+) \longrightarrow \derivedcat^b_{G,\ell}(X_{\et},X^{+}_{\et},\mathbb{Q}_{\ell}) 
    \end{split} 
\end{equation*}
where the right hand sides are defined in \cite{bernstein+lunts-1994}\cite{baumann+riche-2018} or alternatively we can re-proceed like the case $\derivednori^b(-)$. These functors restrict to functors
\begin{equation*}
\begin{split} 
    \mscr\perv_G(X,X^+) \longrightarrow \perv_{G^{\an}}(X^{\an},X^{+,\an},\mathbb{Q}) \\
     \mscr\perv_G(X,X^+) \longrightarrow \perv_{G,\ell}(X_{\et},X^{+}_{\et},\mathbb{Q}_{\ell}) 
     \end{split} 
\end{equation*}
which are compatible with forgetful functors on both sides. 
\end{prop}

\begin{proof}
   This is simply functoriality since both sides are defined in the same way.
\end{proof}

\begin{prop} \label{prop: galois descent of equivariant, stratified motives} 
Let $X$ be an ind-variety over $k$ with a Whitney-Nori stratification. Let $e \colon \Spec(F) \longrightarrow \Spec(k)$ be a field extension of subfields of $\mathbb{C}$, then there is a canonical $t$-exact functor
\begin{equation*}
    \derivednori^b_G(X,X^+) \longrightarrow \derivednori^b_{G_F}(X_F,X_F^+). 
\end{equation*}
Hence an exact functor
\begin{equation*}
    \mscr\perv_G(X,X^+) \longrightarrow \mscr\perv_{G_F}(X_F,X_F^+). 
\end{equation*}
Moreover, if $e$ is a Galois extension, then there is an equivalence of categories
\begin{equation*}
    \mscr\perv_{G_k}(X_k,X_k^+) \simeq \mscr\perv_{G_F}(X_F,X^+_F)^{\Gal(F/k)}.
\end{equation*}
\end{prop}

\begin{proof}
The morphism $ \derivednori^b_G(X,X^+) \longrightarrow \derivednori^b_{G_F}(X_F,X_F^+)$ exists thanks to the universal property and the existence of $\derivednori^b(X,X^+) \longrightarrow \derivednori^b(X_F,X_F^+)$ in proposition \ref{prop: galois descent of stratified motives}. To show Galois descent, it suffices to check that $e_* \colon \mscr\perv(X,X^+) \longrightarrow \mscr\perv(X_F,X_F^+)$ restricts to $e_* \colon \mscr\perv_G(X,X^+) \longrightarrow \mscr\perv_{G_F}(X_F,X_F^+)$ but this is clear since $e$ is \'etale so it commutes with $a^{\dagger},p^{\dagger}$. 
\end{proof}

\subsection{Equivariant Functoriality} This is an equivariant version of {\color{blue}section 3.5}. Let $G$ be a pro-algebraic group acting on two stratified ind-varieties $X^+ = \coprod_{w \in W}X_w \longrightarrow X$ and $Y^+ = \coprod_{r \in R}Y_r \longrightarrow Y_r$. A stratified $G$-equivariant morphism $f \colon X \longrightarrow Y$ is a $G$-equivariant morphism so that the restriction to each stratum is a $G$-equivariant morphism of schemes. 

\begin{lem} \label{lem: equivariant, stratified pullbacks} 
Let $G$ be a pro-algebraic group. Let $f \colon X \longrightarrow Y$ be a stratified $G$-equivariant morphism of stratified ind-varieites, then the functor $f_G^* \colon \derivednori^b(Y) \longrightarrow \derivednori^b_G(X)$ restricts to a functor
\begin{equation*}
    f^*_G \colon \derivednori^b_G(Y,Y^+) \longrightarrow \derivednori^b_G(X,X^+)
\end{equation*}
and 
\end{lem}

\begin{proof}
This simply follows from the definition because $f^* \colon \derivednori^b(Y,Y^+) \longrightarrow \derivednori^b(X,X^+)$ (lemma \ref{lem: pullbacks of stratified Nori motives}) and $f^*_G \colon \derivednori^b_G(Y) \longrightarrow \derivednori^b(X)$ are well-defined. 
\end{proof}
\begin{lem}
Let $i \colon Z \longhookrightarrow X$ be a quasi-finite, stratified $G$-equivariant morphism of stratified ind-varieties, then the functors $i^*,i_!$ are right perverse $t$-exact, the functors $i^!,i_*$ are left perverse $t$-exact. 
\end{lem}

\begin{proof}
Similar to the above, this follows from lemma \ref{lem: pushforwards of quasi-finite morphisms of stratified Nori motives}.   
\end{proof}

\begin{prop} \label{prop: pushforwards of equivariant stratified motives}
Let $f \colon (X,X^+) \longrightarrow (Y,Y^+)$ be a stratified semismall, equivariant morphism and locally trivial fibration. Assume that strata of $X^+,Y^+$ are smooth, if $M \in \derivednori^b_G(X,X^+)$, then $f_*(M) \in \derivednori^b_G(Y,Y^+)$. Moreover, if $M \in \mscr\perv_G(X,X^+)$, then $f_*(M) \in \mscr\perv_G(Y,Y^+)$.
\end{prop}

\begin{proof}
This is a consequence of proposition \ref{prop: pushforwards of stratified motives}.  
\end{proof}

\begin{lem} \label{lemma: equivariant, stratified tensors}
The category $\derivednori^b_G(X,X^+)$ is stable under $(-) \otimes (-)$ and $\underline{\Hom}(-,-)$ (consequently, by duality). 
\end{lem}

\begin{proof}
Let $M,N \in \derivednori^b_G(X,X^+) \subset \derivednori^b_G(X)$, then $\res^G(M \otimes N) = \res^G(M) \otimes \res^G(N)$, but $\res^G(M),\res^G(N)\in \derivedcat^b(X,X^+)$ and $\derivednori^b(X,X^+)$ is closed under tensor product by lemma \ref{lem: tensor products and internal homs of stratified motives}. 
\end{proof}

\begin{cor} \label{cor: box products of equivariant, stratified motives} 
Let $G$ be a pro-algebraic group acting on stratified ind-varieties $X^+ = \coprod_{w \in W}X_w \longrightarrow X, Y^+ = \coprod_{r \in R}Y_r \longrightarrow Y$. Let $X^+ \times Y^+ = \coprod_{(w,r) \in W \times R}(X_w \times Y_r) \longrightarrow X \times Y$ be the product stratification endowed with the diagonal action. If $M \in \derivednori^b_G(X,X^+)$ and $N \in \derivednori^b_G(Y,Y^+)$ then $M \boxtimes N \in \derivednori^b_G(X \times Y, X^+ \times Y^+)$. 
\end{cor}

\begin{proof}
  This is an immediate consequence of lemmas \ref{lem: equivariant, stratified pullbacks} and \ref{lemma: equivariant, stratified tensors} since the projections $(X \times Y, X^+ \times Y^+) \longrightarrow (X,X^+),(Y,Y^+)$ are stratified $G$-equivariant morphisms. 
\end{proof}

\begin{cor} \label{cor: box products of products of groups}
Let $G,H$ be pro-algebraic groups acting on stratified ind-varieties $X^+ = \coprod_{w \in W}X_w \longrightarrow X, Y^+ = \coprod_{r \in R}Y_r \longrightarrow Y$, respectively. Let $X^+ \times Y^+ = \coprod_{(w,r) \in W \times R}(X_w \times Y_r) \longrightarrow X \times Y$ be the product stratification endowed with the product action $G \times H$. If $M \in \derivednori^b_G(X,X^+)$ and $N \in \derivednori^b_H(Y,Y^+)$ then $M \boxtimes N \in \derivednori^b_{G \times H}(X \times Y, X^+ \times Y^+)$. 
\end{cor}

\begin{proof}
We note that the action of $G \times H$ on $X \times Y$ still satisfies hypothesis \ref{hyp: action of pro-group}  so $ \derivednori^b_{G \times H}(X \times Y, X^+ \times Y^+)$ is well-defined. We let $G \times H$ act on $X$ by $(g,h)\cdot x = g \cdot x$ and on $Y$ by $(g,h) \cdot y = h \cdot y$. These actions again satisfy hypothesis \ref{hyp: action of pro-group}. The result is then an obvious consequence of corollary \ref{cor: box products of equivariant, stratified motives}. 
\end{proof}

\subsection{Equivariant Intersection Motives} Let us show that in the equivariant setting, one can also characterize simple objects, known as equivariant intersection motives. The case of ordinary perverse sheaves is treated in \cite[Section 5.2]{bernstein+lunts-1994}. Let $\iota \colon X^+ = \coprod_{w \in W}X_w \longrightarrow X$ be a stratified ind-scheme so that hypothesis \ref{hyp: action of pro-group} is satisfied. We have factorizations $\iota_w \colon X_w \overset{j_w}{\longrightarrow } \overline{X}_w \overset{i_w}{\longrightarrow} X$ where $j_w$ is an open immersion and $i_w$ is a closed immersion. The operations $j_w,i_w$ are equivariant so they induce equivariant functors
\begin{equation*}
    \begin{split}
        i_{w*} \colon \mscr\perv_G(\overline{X}_w,\overline{X}_w^+) \longrightarrow \mscr\perv_G(X,X^+) \\ 
        j_{w!}, j_{w*} \colon \mscr\perv_G(X_w,X_w) = \mscr\operatorname{Loc}_{G,p}(X_w) \longrightarrow \mscr\perv_G(\overline{X}_w,\overline{X}_w^+) \\ 
        j_{w,!*} \colon \operatorname{Im}(\phnor_G^0 \circ j_{w!} \longrightarrow \phnor_G^0 \circ j_{w*}) \colon \mscr\operatorname{Loc}_{G,p}(X_w) \longrightarrow \mscr\perv_G(\overline{X}_w,\overline{X}^+_w).
    \end{split}
\end{equation*}
In particular, we can define the \textit{equivariant Nori intersection motive} 
\begin{equation*}
    \mathbf{IC}_{w}^G(L) = \mathbf{IC}^G(X_w,L) = (i_w)_*(j_w)_{!*}(L[\dim(X_w)]) \in \mscr\perv_G(X,X^+)
\end{equation*}
for any $L \in \mscr\operatorname{Loc}_G(X_w)$.


\begin{prop}  \label{prop: characterizations of equivariant intersection motives}
Let $G = \lim_{n \in \mathbb{N}} G_n$ be a pro-algebraic group acting on a stratified ind-variety $\iota \colon X^+ = \coprod_{w \in W} X_w \longrightarrow X$. Assume that hypothesis \ref{hyp: action of pro-group} is satisfied. Let $M \in \mscr\perv_G(X,X^+)$ (so that $\mathbb{D}(M) \in \mscr\perv(X,X^+)$ by lemma  \ref{lem: Verdier duality on stratified motives}). Let $w \in W$, then the following statements are equivalent:
\begin{enumerate}
    \item $M \simeq \mathbf{IC}_w^G(L)$ for some equivariant motivic local system $L \in \mscr\localsystem_G(X_w)$. 
    \item $\res^G(M) \simeq \mathbf{IC}_w(\res^G(L))$ for some equivariant motivic local system $L \in \mscr\localsystem(X_w)$.  
\end{enumerate}
\end{prop}

\begin{proof}
This is simply because $\res^G$ is fully faithful (here we use proposition \ref{prop: forgetful functors of ind-schemes are fully faithful}) and $\mathbf{IC}_w(\res^G(L)) \simeq \res^G(\mathbf{IC}_w^G(L))$; indeed, this follows from the fact that $\res^G$ is exact (on the derived level) and commutes with six operations. 
\end{proof}

As a consequence, we have the following that is analogous to proposition \ref{prop: characterizations of equivariant intersection motives}. 
\begin{cor} 
Assume the hypotheses of the previous proposition, then the following statements are equivalent:
\begin{enumerate}
    \item $M \simeq \mathbf{IC}_w^G(L)$. 
    \item $\res^G(M)$ is supported on $\overline{X}_w$, $\res^G(M)_{\mid X_w} \simeq \res^G(L)[\dim(X_w)]$ and for each $X_{w'} \in \overline{X}_w \setminus X_w$, one has that
    \begin{equation*}
        \iota_{w'}^*(\res^G(M)) \in \derivednori^b(X_{w'},X_{w'})^{\leq -\dim(X_{w'})-1} \ \ \ \ \textnormal{and} \ \ \ \  \iota_{w'}^!(\res^G(M)) \in \derivednori^b(X_{w'},X_{w'})^{\geq -\dim(X_{w'})+1}
    \end{equation*}
\end{enumerate}
\end{cor}

\begin{prop} \label{prop: simple objects of equivariant perverse motives on ind-schemes}
Let $j \colon U \longrightarrow X$ be a $G$-equivariant locally closed immersion of ind-varieties. If $M$ is a simple object in $\mscr\perv(U)$, then $j_{!*}(M)$ is a simple object in $\mscr\perv_G(X,X^+)$. Under hypothesis \ref{hyp: action of pro-group}, any simple object in $\mscr\perv_G(X,X^+)$ is of the form $\mathbf{IC}_w^G(L)$ with $L$ being a simple object in $\mscr \localsystem_G(X_w)$. Moreover, 
\begin{equation*}
  \begin{split} 
      \res^G \colon \mscr\perv_G(X,X^+) & \longrightarrow \mscr\perv(X,X^+) \\ 
      \mathbf{IC}^G_w(L) & \longmapsto \mathbf{IC}_w(\res^G(L))
    \end{split} 
\end{equation*}
for any $L \in \mscr\localsystem_G(X_w)$. 
\end{prop}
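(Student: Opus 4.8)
The plan is to bootstrap from Theorem~\ref{thm: simple objects of perverse motives on ind-schemes}: after reducing to a stratified scheme acted on by a single connected algebraic group, one transports its proof across the forgetful functor $\res^{G}$, and then settles the last clause by analysing motivic local systems on a simply connected stratum. Throughout I assume, as in all cases of interest (e.g. $G=\lnor^{+}H$ with $H$ connected reductive), that the $G_{n}$ are connected, so that $\res^{G}$ is fully faithful by Proposition~\ref{prop: forgetful functors of ind-schemes are fully faithful}.

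By Proposition~\ref{prop: equivariant perverse Nori motives on ind-schemes}, $\mscr\perv_G(X,X^+)=\colim_{i}\mscr\perv_G(X_i,X_i^+)$ along the $t$-exact, fully faithful pushforwards attached to the closed immersions $X_i\hookrightarrow X_j$, which preserve and reflect simplicity; and by Proposition~\ref{prop: equivariant motives of unipotent groups} the category $\mscr\perv_{G_j}(X_i,X_i^+)$ does not depend on $j\geq i$. Hence we may take $X$ to be a stratified $k$-scheme with an action of a single connected algebraic group, still written $G$. For the first assertion: $\res^{G}$ is faithful and exact, hence reflects isomorphisms, so a nonzero object whose image is simple is itself simple. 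As the equivariant six functors of Theorem~\ref{thm: equivariant six operations} commute with $\res^{G}$, so does $j_{!*}=\Img(\phnor^{0}j_{!}\to\phnor^{0}j_{*})$, giving $\res^{G}(j_{!*}M)=j_{!*}(\res^{G}M)$; since for connected $G$ the forgetful functor identifies $\mscr\perv_G(U)$ with a full subcategory of $\mscr\perv(U)$ closed under subobjects, $\res^{G}M$ is again simple, so $j_{!*}(\res^{G}M)$ is simple by Theorem~\ref{thm: simple objects of perverse motives on ind-schemes}, and therefore $j_{!*}M$ is simple in $\mscr\perv_G(X,X^+)$.

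For the classification I would rerun, inside $\mscr\perv_G(X,X^+)$, the Noetherian induction proving Theorem~\ref{thm: simple objects of perverse motives on ind-schemes}. The only thing to check is that every step is equivariant: the functors $i_{*},i^{!},i^{*},j^{*},j_{!},j_{*}$ and the adjunction morphisms producing the two short exact sequences of Proposition~\ref{prop: two ses of intersection motives} all commute with $\res^{G}$, so the maximal subobject of an object supported on a closed stratum, its quotient, and those short exact sequences all lie in $\mscr\perv_G(X,X^+)$ (exactness being reflected by the faithful exact $\res^{G}$). The base case $X=X^+=X_{w}$ smooth is immediate, since $\mscr\perv_G(X_{w},X_{w})=\mscr\localsystem_G(X_{w})[\dim X_{w}]$ by definition, whose simples are the shifted simple equivariant motivic local systems. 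Thus every object of $\mscr\perv_G(X,X^+)$ admits a filtration with graded pieces $\mathbf{IC}_{w}(L)$, $L\in\mscr\localsystem_G(X_{w})$; for $M$ simple this forces $M\simeq\mathbf{IC}_{w}(L)$ for a single $w$, and pulling back along $j_{w}\colon X_{w}\hookrightarrow\overline{X}_{w}$ identifies $L\simeq M_{\mid X_{w}}[-\dim X_{w}]$, which is then simple.

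Finally, suppose each $X_{w}^{\an}$ is simply connected. Then every object of $\mscr\localsystem(X_{w})$, and of $\mscr\localsystem_G(X_{w})$, has constant underlying analytic (and, by Proposition~\ref{prop: motivic local systems are independent of realizations}, $\ell$-adic) local system, and its simple objects are exactly the pullbacks along the structure map $f_{w}\colon X_{w}\to\Spec k$ of simple objects of $\mscr\perv(\Spec k)$: for the strata occurring on the affine Grassmannian these are affine spaces and the claim is $\mathbb{A}^{1}$-invariance of lisse motives, while in general it records the triviality of the geometric part of the motivic fundamental group of $X_{w}$. Since $f_{w}$ is $G$-equivariant for the trivial action on $\Spec k$, each such simple local system carries a (for connected $G$, unique) equivariant structure, so $\mscr\localsystem_G(X_{w})\to\mscr\localsystem(X_{w})$ is a bijection on isomorphism classes of simple objects; combined with the classification above and the first assertion, $\res^{G}$ induces the asserted bijection on simple objects of $\mscr\perv_G(X,X^+)$ and $\mscr\perv(X,X^+)$. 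The main obstacle is precisely this last identification of simple (equivariant) motivic local systems on a simply connected stratum: unlike everything else it does not follow from the six-functor formalism plus faithfulness of $\res^{G}$, but rests either on $\mathbb{A}^{1}$-homotopy invariance (in the affine Grassmannian application, where the strata are affine spaces) or on a structural statement about the motivic fundamental group; the remaining verification that the Noetherian induction is equivariant step by step is routine.
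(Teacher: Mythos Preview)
Your approach is essentially the paper's: rerun the argument of Theorem~\ref{thm: simple objects of perverse motives on ind-schemes} inside the equivariant category, using that the six functors (hence $j_{!*}$) commute with $\res^G$. For the final clause the paper simply asserts $\mscr\localsystem_G(X_w)=\mscr\localsystem(X_w)$ from $\localsystem_G(X_w)=\localsystem(X_w)=1$, whereas you correctly isolate this as the one non-formal step and name the needed input ($\mathbb{A}^1$-invariance for affine-space strata, or triviality of the geometric part of the motivic fundamental group); your caution here is warranted, and the added detail elsewhere is routine bookkeeping rather than a different strategy.
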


\begin{proof}
  The proof is formally the same as in theorem \ref{thm: simple objects of perverse motives on ind-schemes} with suitable modifications. The equality $\res^G(  \mathbf{IC}^G_w(L)) =  \mathbf{IC}_w(\res^G(L))$ is already explained in the proof of proposition \ref{prop: characterizations of equivariant intersection motives}.
\end{proof}

\begin{prop}
Let $G$ be a pro-algebraic group acting on stratified ind-varieties $X^+ = \coprod_{w \in W}X_w \longrightarrow X, Y^+ = \coprod_{r \in R}Y_r \longrightarrow Y$. Let $X^+ \times Y^+ = \coprod_{(w,r) \in W \times R}(X_w \times Y_r) \longrightarrow X \times Y$ be the product stratification endowed with the diagonal action. There is an isomorphism 
\begin{equation*}
   \mathbf{IC}^G_{w}(M) \boxtimes \mathbf{IC}_r^G(N) \simeq \mathbf{IC}^G_{(w,r)}(M \boxtimes N) 
\end{equation*}
with $M \in \mscr\localsystem_G(X_w), N \in \mscr\localsystem_G(Y_r)$.

\end{prop}


\begin{proof}
The left hand side is an object of $\derivednori^b_G(X \times Y,X^+ \times Y^+)$ thanks to corollary \ref{cor: box products of equivariant, stratified motives}. Now using proposition \ref{prop: characterizations of equivariant intersection motives}, we just have to prove that this already holds at the non-equivariant level, namely, $  \mathbf{IC}_{w}(M) \boxtimes \mathbf{IC}_r(N) \simeq \mathbf{IC}_{(w,r)}(M \boxtimes N)$. Let us prove that $ \mathbf{IC}_{w}(M) \boxtimes \mathbf{IC}_r(N) $ satisfies the characterizing property of the right hand side. For instance, $\mathbf{IC}_{w}(M)_{\mid X_w} \in \derivednori^b(X_w,X_w)^{\leq -\dim(X_w)-1}, \mathbf{IC}_r(N)_{\mid Y_r} \in \derivednori^b(Y_r,Y_r)^{\leq -\dim(Y_r)-1}$ and thus by the exactness of $\boxtimes$, we see that $\left( \mathbf{IC}_{w}(M)_{\mid X_w} \boxtimes \mathbf{IC}_r(N)_{\mid Y_r} \right)_{\mid X_w \times Y_r} \simeq (\mathbf{IC}_{w}(M)_{\mid X_w}  \boxtimes \mathbf{IC}_r(N)_{\mid Y_r} ) \in \derivednori^b(X \times Y, X^+ \times Y^+)^{\leq -\dim(X_w \times Y_r)}$ for all $(w,r) \in X^+ \times Y^+$. 
\end{proof}

\begin{cor}  \label{cor: intersection motives commute with galois extensions} 
Let $e \colon \Spec(F)  \longhookrightarrow \Spec(k)$ be a Galois extension of subfields of $\mathbb{C}$, then under the functor in proposition \ref{prop: galois descent of equivariant, stratified motives}
\begin{equation*}
\begin{split} 
     \mscr\perv_{G_k}(X_k,X^+_k) & \longrightarrow  \mscr\perv_{G_F}(X_F,X^+_F) \\ 
    \mathbf{IC}^{G_k}_w(L_k) & \longmapsto  e^*\mathbf{IC}_w^{G_k}(L_k) \simeq \mathbf{IC}^{G_F}_w(e^*L_k)
    \end{split} 
\end{equation*}
for any $L_k \in \mscr\localsystem_{G_k}(X_{w,k})$. 
\end{cor}

\begin{proof}
  This is because $e$ is \'etale so $e^* = e^!$ commutes with all involved operations. 
\end{proof}

Finally, we have an equivariant analogue of theorem \ref{thm: simple objects of perverse motives on ind-schemes}, the proof remains formally the same so we leave it to the reader. 

\begin{theorem} \label{thm: simple objects of equivariant perverse motives on ind-schemes}
Any simple object in $\mscr\perv_G(X,X^+)$ is of the form $\mathbf{IC}_w^G(L)$ with $L$ being a simple object in some $\mscr\localsystem_G(X_w)$. 
\end{theorem}

\subsection{(Shifted) equivariant Tate motives inside equivariant perverse Nori motives} We still assume hypothesis \ref{hyp: action of pro-group} in this subsection. 

\begin{defn}
  \begin{enumerate} 
  \item We define the \textit{category of equivariant, stratified derived Tate motives} $\mathbf{DT}^b_G(X,X^+)$ to be the subcategory of $\derivednori^b_G(X,X^+)$ whose images under the restriction functor lie in $\mathbf{DT}^b(X)$. 
    \item The \textit{category of shifted, equivariant, stratified Tate motives} $\mscr\textnormal{Tate}_{G,p}(X,X^+)$ is the full subcategory of $\mscr\perv_G(X,X^+)$ whose images under the forgetful functors are shifted Tate motives, namely, belong to $\mscr\operatorname{Tate}_p(X,X^+)$. We note that there is a fully faithful embedding
    \begin{equation*}
        \mscr\operatorname{Tate}_{G,p}(X,X^+) \longhookrightarrow \mathbf{DT}^b_G(X,X^+).
    \end{equation*}
    Indeed, we need to check that 
    \begin{equation*}
        \mscr\operatorname{Tate}_{G,p}(X,X^+) \longhookrightarrow \mscr\perv_G(X,X^+) \longhookrightarrow \derivednori^b_G(X,X^+)
    \end{equation*}
    takes values in $\mathbf{DT}^b_G(X,X^+)$ and hence it suffices to look at underlying non-equivariant motives (see definition \ref{defn: stratified derived Tate motives}). 
    \end{enumerate} 
\end{defn}

\begin{cor}
The composition 
\begin{equation*}
    \mathbf{DT}^b_G(X,X^+) \longhookrightarrow \derivednori^b_G(X,X^+) \overset{\phnor^n}{\longrightarrow} \mscr\perv_G(X,X^+) 
\end{equation*}
takes values in $\mscr\textnormal{Tate}_{G,p}(X,X^+)$ for any $n \in \mathbb{Z}$. 
\end{cor}

\begin{proof}
  This follows from lemma \ref{lem: Tate cohomological functors on ind-schemes}. 
\end{proof}

\begin{lem} 
If $L \in \mscr\textnormal{Tate}_G(X_w) \subset \mscr\localsystem_G(X_w)$, then $\mathbf{IC}_w^G(L) \in \mscr\textnormal{Tate}_{G,p}(X,X^+)$. 
\end{lem}

\begin{proof}
  Since $\mathbf{IC}_w^G(L) \in \mscr\perv_G(X,X^+)$, it remains to show that $\res^G(\mathbf{IC}_w^G(L)) = \mathbf{IC}_w(\res^G(L))$ belongs to $\mscr\operatorname{Tate}_p(X,X^+)$. It suffices to check that $\res^G(L) \in \mscr\operatorname{Tate}_p(X_w)$ but this is clear from the definition. 
\end{proof}
\subsection{Weights on Equivariant Motives}  Let $n \in \mathbb{Z}$, a motive $M \in \derivednori^b_G(X,X^+)$ has weights $\leq n$ (resp, $\geq n$) if and only if $\res^G(M) \in \derivednori^b(X,X^+)$ weights $\leq n$ (resp, $\geq n$). 
\begin{cor} 
Let $f \colon (X,X^+) \longrightarrow (Y,Y^+)$ be a $G$-equivariant morphism so that the restriction to each stratum is $G$-equivariant, if the operations $(f^*,f_*,f_!,f^!,\otimes, \underline{\Hom},\boxtimes)$ are well-defined, the they enjoy the same properties as in {\color{blue}section 2.1}. 
\end{cor}

\begin{proof}
  This is obvious, given proposition \ref{prop: weight-exactness of operations on ind-schemes}. 
\end{proof}

\begin{cor}
Let $j \colon (U,U^+) \longrightarrow (X,X^+)$ be a $G$-equivariant, locally closed immersion of and $L \in \mscr\perv_G(U,U^+,n)$ be an object pure of weight $n$, then $j_{!*}(L)$ is pure of weight $n$. 
\end{cor}

\begin{proof}
This is similar to the classical case.  
\end{proof}
\section{Derived Nori Motives on Partial Affine Flag Varieties}
In this section, we study the $\lnor^+G$-equivariant (derived) Nori motive on the affine Grassmannian $\mathrm{Gr}_G$ associated with a reductive group $G$. Although our ultimate goal is the motives associated with $\mathrm{Gr}_G$ but due to technical reasons (certain reduction step), it is convenient to us to define the general partial (affine) flag varieties $\flag_{G,\mathbf{f}}$ associated with facets $\mathbf{f}$ in the spherical apartment, in which $\mathrm{Gr}_G = \flag_{G,0}$ ($0$ is the origin) is a special case.
\subsection{Recollections on partial affine flag varieties}.
Let $G$ be a split reductive group over $k$. Let us choose $T \subset B \subset G$, a maximal torus $T$ contained in a Borel subgroup $B$. Let $W = N_G(T)/T$ be the Weyl group, where $N_G(T)$ is a normalizer of $T$ in $G$. 

We denote by $X^*(T), X_*(T)$ the characters and cocharacters, respectively. Let $R \subset X^*(T), R^{\vee} \subset X_*(T)$ be the root and coroot systems of $(G,T)$, respectively. The choice of a Borel subgroup $B \subset G$ leads to a set $R^+ \subset R$ of \textit{positive roots}. Let $\acal = X_*(T) \otimes \mathbb{R}$ be the standard apartment and we identify $X^*(T) \otimes \mathbb{R}$ with $\acal^{\vee}$ via the natural pairing $X^*(T) \times X_*(T)\longrightarrow \mathbb{Z}$. Hence we view elements in $X^*(T) \otimes \mathbb{R}$ as $\mathbb{R}$-linear map on $\acal$. Let $R_{\aff} = (R \times \mathbb{Z})$ be the set of \textit{affine roots}. 

For each $(\alpha,n) \in R_{\aff}$, we consider the affine hyperplane $H_{\alpha,n} = \left \{x \in \acal \mid \left <\alpha,x \right> = n \right \}$. An \textit{alcove} is a connected component of $\acal \setminus \left( \bigcup_{(\alpha,n)\in R_{\aff}} H_{\alpha,n}\right)$. There is a \textit{standard alcove} defined via $B$
\begin{equation*} 
\mathbf{a}_0 = \left \{x \in \acal \mid 0 < \left<\alpha,x \right> < 1 \ \forall \ \alpha \in R^+\right \}
\end{equation*} 
Each closure $\overline{\mathbf{a}}$ of an alcove is decomposed into locally closed subsets $\mathbf{f}$, called \textit{facets}. For instance, if $G = \mathrm{SL}_3$ (the $A_2$-system), $\overline{\mathbf{a}}_0$ is a triangle with three vertices (including the origin), three edges, one (open) triangle. 

For each $(\alpha,n) \in R_{\aff}$, the hyperplane $H_{\alpha,n}$ admits an affine reflection $s_{\alpha,n}(x) = x - (\left<x,\alpha\right> - n)\alpha^{\vee}$. The \textit{affine Weyl group} $W_{\aff}$ is the subgroup of the group of affine transformations $\operatorname{Aff}(\acal)$ generated by $s_{\alpha,n}$, i.e., $W_{\aff} = \left <s_{\alpha,n} \mid (\alpha,n) \in R_{\aff} \right > \subset \operatorname{Aff}(\acal)$. It is well-known that $W_{\aff} \simeq W \ltimes \mathbb{Z}R$ and $W_{\aff}$ is a Coxeter group (see for instance \cite{iwahori+matsumoto-1965}). Indeed, let $\alpha_0$ be the highest root (with respect to the choice of $B$) in the finite root system, then 
\begin{equation*}
    W_{\aff} = \left <s_{\alpha_0,1}, s_{\alpha,0} \mid \alpha \in R \right> \subset \operatorname{Aff}(\acal). 
\end{equation*}
We refer the elements of $\left \{s_{\alpha_0,1},s_{\alpha,0} \mid \alpha \in R \right \}$ as \textit{simple affine reflections}. In particular, $W_{\aff}$ admits a length function 
\begin{equation*}
    \ell \colon W_{\aff} \longrightarrow \mathbb{Z}_{>0} \\ 
\end{equation*}
The \textit{extended affine Weyl group} (also called the Iwahori-Weyl group) is the group $W_{\ext} = W \ltimes X_*(T)$. Despite of not being a Coxeter group, $W_{\ext}$ still admits a length function 
\begin{equation*} 
\ell \colon W_{\ext} \longrightarrow \mathbb{Z}_{\geq 0}.
\end{equation*} 
If $w \in W_{\aff} \subset W_{\ext}$, then $\ell(w)$ is the minimal number of simple reflections needed to represent $w$. Elements in $W_{\ext}$ can have length zero. Let us set $\Omega = \left \{w \in W_{\ext} \mid \ell(w) = 0 \right \}$, then there is an isomorphism
\begin{equation*}
    W_{\ext} \simeq \Omega \ltimes W_{\aff}. 
\end{equation*}
In particular, any element $w$ in $W_{\ext}$ is expressed as a product of the form $sw_1\cdots w_n$ with $s \in \Omega$ and $w_i \in W_{\aff}$. For such an expression with minimal $n$, we have $\ell(w) = n$. For each facet $\mathbf{f}$, let $W_{\mathbf{f}} \subset W_{\ext}$ be the finite subgroup generated by those $s_{\alpha,n}$ with $(\alpha,n)_{\mid \mathbf{f}}=0$. 
\begin{ex} \label{ex: length function} 
For facets $\mathbf{f},\mathbf{f}' \subset \overline{\mathbf{a}}_0$ contained in the closure of the standard alcove, the length function on $W_{\ext}$ induces a length function on the double class $W_{\mathbf{f}'}\setminus W_{\ext} /W_{\mathbf{f}}$. For instance, if $\mathbf{f} = \mathbf{f}' = 0$ is the origin, then $W_{\mathbf{f}'} \setminus W_{\ext} / W_{\mathbf{f}}  = X_*(T)^+$ is the set of positive coroots and the length function is given by $\ell(\mu) = \left<2\rho,\mu\right>$, where $2\rho = \sum_{\alpha \in R^+}\alpha$ is the sum of positive roots. 
\end{ex}
\subsection{Parahoric subgroups}. We are interested in certain closed subgroups of the \textit{loop group}
\begin{equation*}
     \lnor G\colon \Alg_k \longrightarrow \sets  \ \ \ \ R \longmapsto G(R(\!(t)\!))
\end{equation*}
called \textit{parahoric subgroups}. In \cite{bruhat+tits-1984}\cite{bruhat+tits-1987}, Bruhat and Tits show that for each facet $\mathbf{f}$, there is an algebraic $k[\![t]\!]$-group scheme $\gscr$ so that $\gscr \otimes k(\!(t)\!) \simeq G \otimes k(\!(t)\!)$. Let us denote by $\gscr_{\mathbf{f}}$ its connected component of the identity. Let us set $\pscr_{\mathbf{f}}(R) = \gscr_{\mathbf{f}}(R[\![t]\!]) \subset LG$ for any $k$-algebra $R$. The $\pscr_{\mathbf{f}}$ is called \textit{parahoric subgroups} associated with the facet $\mathbf{f}$. It is a closed subgroup of $\lnor G$ and is pro-algebraic group with a presentation of the form $\pscr_{\mathbf{f}} = \lim_{n \in \mathbb{N}} \pscr_{\mathbf{f},n}$ so that $\Ker(\pscr_{\mathbf{f},n+1} \longrightarrow \pscr_{\mathbf{f},n})$ is a vector group (in fact, isomorphic to $\mathbb{A}_k^{\dim(G)}$). 
\begin{ex}
\begin{enumerate}
    \item If $\mathbf{f} = 0$ is the origin, then $\pscr_{\mathbf{f}} = \lnor^+G$ is the \textit{positive loop group}, i.e., $\lnor^+G(R) = G(R[\![t]\!])$ for any $k$-algebra $R$. Clearly, $\lnor^+G = \lim_{n \in\mathbb{N}}\lnor^+_nG$ with $\lnor^+_nG(R) = G(R[t]/(t^{n+1}))$ and each $\Ker(\lnor^+_{n+1}G \longrightarrow \lnor^+_nG)$ is a vector group. 
    \item If $\mathbf{f}= \mathbf{a}_0$ is the standard alcove, then $\pscr_{\mathbf{f}} = \inor$ is the \textit{Iwahori subgroup}, i.e., the inverse image of $B$ under the reduction $\pi \colon \lnor^+G \longrightarrow G, t \longmapsto 0$. If we write $\inor_n = \pi^{-1}_n(B)$ with $\pi_n \colon \lnor^+_nG\longrightarrow G$ the reduction map, then $\inor = \lim_{n \in \mathbb{N}} \inor_n$ and each $\Ker(\inor_{n+1} \longrightarrow \inor_n)$ is a vector group as in the case of the positive loop group. 
\end{enumerate}
\end{ex}
For each facet $\mathbf{f}$, the \'etale sheafification
\begin{equation*}
    \begin{split} 
    \lnor G/\pscr_{\mathbf{f}} \colon \Alg_k & \longrightarrow \sets \\ 
    R & \longmapsto G(R(\!(t)\!))/\pscr_{\mathbf{f}}(R[\![t]\!]).
    \end{split} 
\end{equation*}
is called the \textit{partial affine flag variety}, denoted $\flag_{G,\mathbf{f}}$. It is an ind-projective ind-varieties. The action of $\lnor G$ on $\flag_{G,\mathbf{f}}$ restricts to an action of $\pscr_{\mathbf{f}'}$ on $\flag_{G,\mathbf{f}}$ for any other facet $\mathbf{f}'$. Moreover, there exists a presentation $\flag_{G,\mathbf{f}} = \colim_{n \in \mathbb{N}}\flag_{G,\mathbf{f},n}$ so that $\flag_{G,\mathbf{f},n}$ is a projective variety, stable under the $\pscr_{\mathbf{f}'}$-action and the $\pscr_{\mathbf{f}'}$-action factors through an action of some $\pscr_{\mathbf{f}',n}$. 
\begin{ex}
Two most important cases for us are $\flag_{G,0} = \mathrm{Gr}_G$ the \textit{affine Grassmannian} and $\flag_{G,\mathbf{a}_0} = \flag_G$ the \textit{full affine flag variety}. For instance, if $G = \mathrm{GL}_n$ is some general linear group, there is an identification $\mathrm{Gr}_G(R) = \left \{\textnormal{lattices of} \ R(\!(t)\!)^n \right \}$ and hence there is a presentation $\mathrm{Gr}_G(R) = \colim_{N \in \mathbb{N}}\mathrm{Gr}_{G,N}(R)$ with $\mathrm{Gr}_{G,N}(R) = \left \{L \subset R(\!(t)\!)^n \ \textnormal{lattice} \mid t^N R[\![t]\!] \subset L \subset t^{-N}R[\![t]\!]\right \}$ and these $\mathrm{Gr}_{G,N}$'s are projective.
\end{ex}

\subsection{Whitney-Nori conditions} Let us discuss the (affine) Bruhat decomposition. Let $\mathbf{f},\mathbf{f}' \subset \overline{\mathbf{a}}_0$ be facets contained in the closure of the standard alcove, the decomposition into orbits of $\pscr_{\mathbf{f}'}$-action on $\flag_{G,\mathbf{f}}$ takes the following form
\begin{equation*}
    \flag_{G,\mathbf{f}} = \coprod_{w \in W_{\mathbf{f}'}\setminus W_{\ext} / W_{\mathbf{f}}}\flag_G^{w}(\mathbf{f}',\mathbf{f}).
\end{equation*}
The varieties $\flag_G^{w}(\mathbf{f}',\mathbf{f})$ are \textit{Schubert varieties}. Let us mention some standard examples:
\begin{ex} \label{ex: examples of affine Bruhat decompositions} 
\begin{enumerate} 
\item If $\mathbf{f}' = \mathbf{a}_0$, then $\flag_G^{w}(\mathbf{f}',\mathbf{f}) \simeq \mathbb{A}_k^{\ell(w)}$, where $\ell \colon W_{\mathbf{f}'} \setminus W_{\ext} / W_{\mathbf{f}} \longrightarrow \mathbb{Z}_{\geq 0}$ is the induced length function in example \ref{ex: length function}.
\item If $\mathbf{f} = \mathbf{f}' = 0$, then although $\mathrm{Gr}_G^w = \flag_G^{w}(\mathbf{f}',\mathbf{f})$ is not an affine space, we still have that $\dim(\flag_G^{w}(\mathbf{f}',\mathbf{f})) = \ell(w) = \left <2\rho,w \right>$. Moreover, $\flag_G^{w}(\mathbf{f}',\mathbf{f})$ is an affine bundle over the partial flag variety $G/B$. In particular, it is simply connected in the analytic topology. 
\item The following situation reminds us about the stratification of the partial flag variety $G/P$ into $B$-orbits (where $P$ is a parabolic subgroup). Let $s \in W_{\ext}$ be a \textit{simple affine reflection}, there is a unique facet $\mathbf{f}_s$ of maximal dimension in $\overline{\mathbf{a}}_0$ such that $s(\mathbf{f}_s) = \mathbf{f}_s$. The group $W_{\mathbf{f}_s}$ is then generated by $s$ and the canonical projection $\flag_{G,0} \longrightarrow \flag_{G,\mathbf{f}_s}$ is a fibration in the \'etale topology with fibers $\pscr_{\mathbf{f}_s}/\inor \simeq \mathbb{P}^1_k$. 
\end{enumerate} 
\end{ex}
\begin{prop}
Let $\mathbf{f},\mathbf{f}'$ be facets contained in $\overline{\mathbf{a}}_0$, then the decomposition
\begin{equation*}
    \flag_{G,\mathbf{f}} = \coprod_{w \in W_{\mathbf{f}'}\setminus W_{\ext} / W_{\mathbf{f}}} \flag_G^{w}(\mathbf{f}',\mathbf{f})
\end{equation*}
is a Whitney-Nori stratification. 
\end{prop}

\begin{proof}
 By remark \ref{rmk: testing Whitney-Nori on Betti realization}, it suffices to check this on $\derivedcat^b_{\ct}((-)^{\an},\mathbb{Q})$, namely we have to verify that
\begin{equation*}
    \iota_w^*\iota_{s,*}(M) \in \derivedcat^b_{\localsystem(\flag_G^{w}(\mathbf{f}',\mathbf{f}))}(\flag_G^{w}(\mathbf{f}',\mathbf{f}),\mathbb{Q}) \ \forall \ w,s \in W_{\mathbf{f}'}\setminus W_{\ext} / W_{\mathbf{f}}, M \in \derivedcat^b_{\localsystem(\flag_G^{s}(\mathbf{f}',\mathbf{f}))}(\flag_G^{s}(\mathbf{f}',\mathbf{f}),\mathbb{Q}).
\end{equation*}
However, since $\flag_G^w(\mathbf{f}',\mathbf{f})$ is simply connected (see example \ref{ex: examples of affine Bruhat decompositions}), we can argue as in example \ref{ex: Bruhat decomposition is Whitney-Nori} to reduce to the case $M = \mathds{1}$. The rest is formally identical to the proof of \cite[Theorem 5.1.1]{richarz+scholbach-2020}.
\end{proof}

\begin{cor}
\begin{enumerate} 
\item There is a well-defined category $\derivednori^b(\flag_{G,\mathbf{f}},\pscr_{\mathbf{f}'})$ (the second variable $\pscr_{\mathbf{f}'}$ indicates the stratification into $\pscr_{\mathbf{f}'}$-orbits) with a $t$-structure whose heart is $\mscr\perv(\flag_{G,\mathbf{f}},\pscr_{\mathbf{f}'})$. 
\item The category $\mscr\perv(\flag_{G,\mathbf{f}},\pscr_{\mathbf{f}'})$ is generated by intersection motives of the form $\mathbf{IC}_w(L)$ with $w \in W_{\mathbf{f}'} \setminus W_{\ext} / W_{\mathbf{f}}$ and $L \in \mscr\localsystem(\flag_G^w(\mathbf{f}',\mathbf{f}))$.
\end{enumerate} 
\end{cor}

\begin{proof}
The fist part is proposition \ref{prop: equivalent definitions of stratified motives} and second part is theorem \ref{thm: simple objects of perverse motives on ind-schemes}.
\end{proof}

\begin{cor}
\begin{enumerate} 
\item The affine Bruhat decomposition satisfies hypothesis \ref{hyp: action of pro-group}, hence there is a well-defined category $\derivednori^b_{\pscr_{\mathbf{f}'}}(\flag_{G,\mathbf{f}},\pscr_{\mathbf{f}'})$ with a $t$-structure whose heart is $\mscr\perv_{\pscr_{\mathbf{f}'}}(\flag_{G,\mathbf{f}})$. 
\item The category $\mscr\perv_{\pscr_{\mathbf{f}'}}(\flag_{G,\mathbf{f}},\pscr_{\mathbf{f}'})$ is generated by intersection motives of the form $\mathbf{IC}_w^{\pscr_{\mathbf{f}'}}(L)$ with $w \in W_{\mathbf{f}'} \setminus W_{\ext} / W_{\mathbf{f}}$ and $L \in \mscr\localsystem_{\pscr_{\mathbf{f}'}}(\flag_G^w(\mathbf{f}',\mathbf{f}))$.
\item The forgetful functor
\begin{equation*}
    \mscr\perv_{\pscr_{\mathbf{f}'}}(\flag_{G,\mathbf{f}},\pscr_{\mathbf{f}'}) \longrightarrow \mscr\perv(\flag_{G,\mathbf{f}},\pscr_{\mathbf{f}'})
\end{equation*}
is fully faithful and the essentially image is given by 
\begin{equation*}
    \left \{M \in \mscr\perv(\flag_{G,\mathbf{f}},\pscr_{\mathbf{f}'}) \mid a^*(M) \simeq p^*(M) \right \}, 
\end{equation*}
where $a,p \colon \pscr_{\mathbf{f}'} \times \flag_{G,\mathbf{f}} \longrightarrow \flag_{G,\mathbf{f}}$ denote the action and the projection, respectively. 
\end{enumerate} 
\end{cor}

\begin{proof}
The first part is proposition \ref{prop: equivariant perverse Nori motives on ind-schemes}, the second part is theorem \ref{thm: simple objects of equivariant perverse motives on ind-schemes} and the third part is proposition \ref{prop: forgetful functors of ind-schemes are fully faithful}. 
\end{proof}
\subsection{The convolution product} Let $\mathbf{f}',\mathbf{f},\mathbf{f}^{''} \subset \overline{\mathbf{a}}_0$ be facets. The ind-projective ind-variety 
\begin{equation*}
    \flag_{G,\mathbf{f}} \widetilde{\times} \flag_{G,\mathbf{f}^{''}} \coloneqq (\lnor G \times^{\pscr_{\mathbf{f}}} \lnor G/\pscr_{\mathbf{f}^{''}})^{\et}
\end{equation*}
is called the \textit{convolution flag variety}. Let us consider morphisms
\begin{equation*}
    \begin{tikzcd}[sep=large] 
     \flag_{G,\mathbf{f}} \times  \flag_{G,\mathbf{f}^{''}} & \lnor G \times \flag_{G,\mathbf{f}^{''}} \arrow[r,"q"] \arrow[l,"p",swap] &   \flag_{G,\mathbf{f}} \widetilde{\times} \flag_{G,\mathbf{f}^{''}} \arrow[r,"m"] & \flag_{G,\mathbf{f}^{''}}.
    \end{tikzcd}
\end{equation*}
The functor
\begin{equation*}
     q^* \colon \derivednori^b_{\pscr_{\mathbf{f}'}}(\flag_{G,\mathbf{f}} \widetilde{\times} \flag_{G,\mathbf{f}^{''}},\pscr_{\mathbf{f}'}) \overset{\sim}{\longrightarrow}  \derivednori^b_{\pscr_{\mathbf{f}'} \times \pscr_{\mathbf{f}}}(\lnor G \times \flag_{G,\mathbf{f}^{''}},\pscr_{\mathbf{f}'} \times \pscr_{\mathbf{f}})
\end{equation*}
is an equivalence of categories by means of theorem \ref{theorem: properties of special equivariant six operations}. Let $M \in \derivednori^b_{\pscr_{\mathbf{f}'}}(\flag_{G,\mathbf{f}},\pscr_{\mathbf{f}'}), N \in \derivednori^b_{\pscr_{\mathbf{f}}}(\flag_{G,\mathbf{f}^{''}},\pscr_{\mathbf{f}})$, the motives $M \boxtimes N$ is in $\derivednori^b_{\pscr_{\mathbf{f}'}\times \pscr_{\mathbf{f}}}( \flag_{G,\mathbf{f}} \times  \flag_{G,\mathbf{f}^{''}},\pscr_{\mathbf{f}'}\times \pscr_{\mathbf{f}})$ thanks to corollary \ref{cor: box products of products of groups} and the motive $p^*(M \boxtimes N)$ is in $\derivednori^b_{\pscr_{\mathbf{f}'} \times \pscr_{\mathbf{f}}}(\lnor G \times \flag_{G,\mathbf{f}^{''}},\pscr_{\mathbf{f}'} \times \pscr_{\mathbf{f}})$ and hence descends to a motive $M \widetilde{\boxtimes}N \in  \derivednori^b_{\pscr_{\mathbf{f}'}}(\flag_{G,\mathbf{f}} \widetilde{\times} \flag_{G,\mathbf{f}^{''}},\pscr_{\mathbf{f}'})$ by the formula
\begin{equation*}
    q^*(M \widetilde{\boxtimes}N) \simeq p^*(M \boxtimes N).
\end{equation*}
We define the convolution product 
\begin{equation*}
     (-) \star (-) \colon  \derivednori^b_{\pscr_{\mathbf{f}'}}(\flag_{G,\mathbf{f}},\pscr_{\mathbf{f}'}) \times  \derivednori^b_{\pscr_{\mathbf{f}}}(\flag_{G,\mathbf{f}^{''}},\pscr_{\mathbf{f}}) \longrightarrow  \derivednori^b_{\pscr_{\mathbf{f}'}}(\flag_{G,\mathbf{f}^{''}},\pscr_{\mathbf{f}'})
\end{equation*}
by the formula
\begin{equation*}
    M \star N \coloneqq m_*(M \widetilde{\boxtimes } N) = m_*(q^*)^{-1}p^*(M \boxtimes N) \in \derivednori^b_{\pscr_{\mathbf{f}'}}(\flag_{G,\mathbf{f}^{''}},\pscr_{\mathbf{f}'}). 
\end{equation*}
In particular, if $\mathbf{f}'=\mathbf{f}= \mathbf{f}^{''}=0$, there is a convolution product
\begin{equation*}
    (-) \star (-) \colon \derivednori^b_{\lnor^+G}(\mathrm{Gr}_G,\lnor^+G) \times \derivednori^b_{\lnor^+G}(\mathrm{Gr}_G,\lnor^+G)\longrightarrow \derivednori^b_{\lnor^+G}(\mathrm{Gr}_G,\lnor^+G)
\end{equation*}
of the affine Grassmannian. This special convolution is studied in the upcoming sections.
\begin{prop}
The convolution product is associative. More precisely, let $M_1 \in   \derivednori^b_{\pscr_{\mathbf{f}'}}(\flag_{G,\mathbf{f}},\pscr_{\mathbf{f}'}), M_2 \in   \derivednori^b_{\pscr_{\mathbf{f}}}(\flag_{G,\mathbf{f}},\pscr_{\mathbf{f}}), M_3 \in \derivednori^b_{\pscr_{\mathbf{f}}}(\flag_{G,\mathbf{f}^{''}},\pscr_{\mathbf{f}})$, then there is a canonical isomorphism
\begin{equation*}
    (M_1 \star M_2) \star M_3 \simeq M_1 \star (M_2 \star M_3)
\end{equation*}
in $\derivednori^b_{\pscr_{\mathbf{f}'}}(\flag_{G,\mathbf{f}^{''}},\pscr_{\mathbf{f}'})$. In particular, the convolution product
\begin{equation*}
    (-) \star (-) \colon \derivednori^b_{\lnor^+G}(\mathrm{Gr}_G,\lnor^+G) \times \derivednori^b_{\lnor^+G}(\mathrm{Gr}_G,\lnor^+G)\longrightarrow \derivednori^b_{\lnor^+G}(\mathrm{Gr}_G,\lnor^+G)
\end{equation*}
is associative. 
\end{prop}

\begin{proof}
This is an application of base change theorems. The proof is essentially the same as \cite[Lemma 3.7]{richarz+scholbach-2021} though in $\loccit$, the author writes everything in a stacky style. 
\end{proof}

\begin{rmk}
We can cheat in the case of $\derivednori^b_{\lnor^+G}(\mathrm{Gr}_G,\lnor^+G)$ by checking the associativity under the Betti (resp. $\ell$-adic) realization and uses the established result in the analytic setting (resp. $\ell$-adic).
\end{rmk}

\begin{prop} \label{prop: convolution products preserve derived Tate motives}
The convolution product restricts to a convolution product
\begin{equation*}
     (-) \star (-) \colon  \mathbf{DT}^b_{\pscr_{\mathbf{f}'}}(\flag_{G,\mathbf{f}},\pscr_{\mathbf{f}'}) \times  \mathbf{DT}^b_{\pscr_{\mathbf{f}}}(\flag_{G,\mathbf{f}^{''}},\pscr_{\mathbf{f}}) \longrightarrow  \mathbf{DT}^b_{\pscr_{\mathbf{f}'}}(\flag_{G,\mathbf{f}^{''}},\pscr_{\mathbf{f}'})
\end{equation*}
on Tate motives.
\end{prop}

\begin{proof}
   The operations $\boxtimes, p^*, q^*$ preserve Tate motives so we just have to prove that $m_* \colon \mathbf{DT}^b_{\pscr_{\mathbf{f}'}}(\flag_{G,\mathbf{f}} \widetilde{\times} \flag_{G,\mathbf{f}^{''}},\pscr_{\mathbf{f}'}) \longrightarrow \mathbf{DT}^b_{\pscr_{\mathbf{f}'}}(\flag_{G,\mathbf{f}^{''}},\pscr_{\mathbf{f}'})$ (this is well-defined thanks to \ref{prop: pushforwards of stratified  motives}) preserves Tate motives, namely, it restricts to $m_* \colon \mathbf{DN}^b_{\pscr_{\mathbf{f}'}}(\flag_{G,\mathbf{f}} \widetilde{\times} \flag_{G,\mathbf{f}^{''}},\pscr_{\mathbf{f}'}) \longrightarrow \mathbf{DN}^b_{\pscr_{\mathbf{f}'}}(\flag_{G,\mathbf{f}^{''}},\pscr_{\mathbf{f}'}G)$. It is enough to work at a non-equivariant level and this is \cite[Theorem 3.17]{richarz+scholbach-2021}. 

\end{proof}
\section{The Cases of Affine Grassmannians}
In this section, we apply the tools developed in the preceding ones to affine Grassmannians. We begin with the Kazhdan-Lusztig parity vanishing, which provides a deeper look at the structure of the categories $\mscr\perv(\mathrm{Gr}_G,\lnor^+G), \mscr\perv_{\lnor^+G}(\mathrm{Gr}_G,\lnor^+G)$. 
\subsection{Kazhdan-Lusztig parity vanishing and consequences}
Thanks to the tools that we develop in preceding sections, we have at hands a category of stratified Nori motives $\mscr\perv(\mathrm{Gr}_G,\lnor^+G)$ and its equivariant counterpart $\mscr\perv_{\lnor^+G}(\mathrm{Gr}_G,\lnor^+G)$ embedded (fully faithful) in $\mscr\perv(\mathrm{Gr}_G,\lnor^+G)$ with essential image given by
\begin{equation*}
    \left \{M \in \mscr\perv(\mathrm{Gr}_G,\lnor^+G,n) \mid a^*(M) \simeq p^*(M) \right \}
\end{equation*}
where $a,p \colon \lnor^+G \times \mathrm{Gr}_G \longrightarrow \mathrm{Gr}_G$ denotes the action and the projection, respectively. Now we would like to study these categories more deeply. For each $\lambda \in X_*(T)^+ = W_0 \setminus W_{\ext} / W_0$, let $\mathrm{Gr}^{\lambda}_G = \flag_G^{\lambda}(0,0)$ and $\mathrm{Gr}_G^{\leq \lambda} = \overline{\mathrm{Gr}}_G^{\lambda}$ and $j_{\lambda} \colon \mathrm{Gr}^{\lambda}_G \longhookrightarrow \mathrm{Gr}^{\leq \lambda}_G, i_{\lambda} \colon \overline{\mathrm{Gr}}^{\leq \lambda}_G \longhookrightarrow \mathrm{Gr}_G$ be the associated immersions. We recall the definition of the \textit{Nori's intersection motive} 
\begin{equation*}
    \mathbf{IC}_{\lambda}(L) \coloneqq (i_{\lambda})_*(j_{\lambda})_{!*}(L[\left<2\rho,\lambda \right>]) \in \mscr\perv(\mathrm{Gr}_G,\lnor^+G)
\end{equation*}
where $L \in \mscr\localsystem(\mathrm{Gr}^{\lambda}_G)$. 
\begin{prop}[Kazhdan-Lusztig parity vanishing] \label{thm: Kazhdan-Lusztig parity vanishing}
For $\lambda \in X_*(T)^+$
\begin{equation*} 
  \cthnor^n(\mathbf{IC}_{\lambda}(L)) \neq 0 \ \forall \ n \not\equiv \left<2\rho,\lambda \right> \ (\operatorname{mod}  2)
\end{equation*}
where $L\in \mscr\localsystem(\mathrm{Gr}_G^{\lambda})$ is a motivic local system. Similarly, if $L \in \mscr\localsystem_{\lnor^+G}(\mathrm{Gr}_G^{\lambda})$, then 
\begin{equation*} 
  \cthnor^n_{\lnor^+G}(\mathbf{IC}_{\lambda}^{\lnor^+G}(L)) \neq 0 \ \forall \ n \not\equiv \left<2\rho,\lambda \right> \ (\operatorname{mod}  2)
\end{equation*}
\end{prop}

\begin{proof}
We note that $\rat(\mathbf{IC}_{\lambda}(L)) = \mathbf{IC}_{\lambda}(\rat(L))$ and $\rat(L) \in \localsystem(\mathrm{Gr}_G^{\lambda}) = \operatorname{Vect}_{\mathbb{Q}}$ since $\mathrm{Gr}^{\lambda}_G$ is simply connected. Thus $\rat(\mathbf{IC}_{\lambda}(L)) = \mathbf{IC}_{\lambda}(\mathds{1})^{\oplus m}$ for some $m$. At this step we are beneficial from the corresponding statements in \cite{gaitsgory-2001}\cite{haines-2004} (see also, for instance, \cite{baumann+riche-2018}). The result then follows from the compatibility $\rat_{\mathrm{Gr}_G} \circ \cthnor^n =  \cthnor^n \circ \rat_{\mathrm{Gr}_G}$ (see lemma \ref{lem: motivic perverse and constructible sheaves on ind-schemes}) and the conservativity of $\rat_{\mathrm{Gr}_G}$. The equivariant case follows from the non-equivariant one thanks to conservativity. 
\end{proof}

\begin{prop} \label{prop: a corollary of parity vanishing}
Let $\lambda,\mu \in X_*(T)^+$ and $M \in \mscr\localsystem(\mathrm{Gr}^{\lambda}_G), N \in \mscr\localsystem(\mathrm{Gr}^{\mu}_G)$ be motivic local systems, then 
\begin{equation*}
    \Hom_{\derivednori^b(\mathrm{Gr}_G,\lnor^+G)}(\mathbf{IC}_{\lambda}(M),\mathbf{IC}_{\mu}(N)[+1]) = \begin{cases} 
    \Hom_{\derivednori^b(\mathrm{Gr}^{\lambda}_G)}(M,N[+1]) & \lambda = \mu  \\ 
    0 & \lambda \neq \mu\\ 
    \end{cases}
\end{equation*}
\end{prop}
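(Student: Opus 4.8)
The plan is to reduce everything to the Kazhdan--Lusztig parity vanishing (Proposition~\ref{thm: Kazhdan-Lusztig parity vanishing}), the stalk/costalk characterisation of intersection motives (Proposition~\ref{prop: characterizations of intersection motives}), and the orthogonality $\Hom_{\derivednori^b}(C,D)=0$ when $C$ sits in perverse degrees $\le a$, $D$ in perverse degrees $\ge b$, and $a<b$. The single new input I would isolate first is a \emph{parity bump}: if $\nu<\kappa$ in $X_*(T)^+$ lie in the same connected component of $\mathrm{Gr}_G$ and $L\in\mscr\localsystem(\mathrm{Gr}^\kappa_G)$, then on $\mathrm{Gr}^\nu_G$ one has $\iota_\nu^*\mathbf{IC}_\kappa(L)$ in perverse degrees $\le-2$ and $\iota_\nu^!\mathbf{IC}_\kappa(L)$ in perverse degrees $\ge 2$ (rather than merely $\le-1$, resp.\ $\ge 1$). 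Indeed Proposition~\ref{prop: characterizations of intersection motives} supplies the bounds $\le-1$ and $\ge1$; since $\dim\mathrm{Gr}^\nu_G=\langle2\rho,\nu\rangle\equiv\langle2\rho,\kappa\rangle\pmod2$ (the parity of $\lnor^+G$-orbits being constant on a component) while Proposition~\ref{thm: Kazhdan-Lusztig parity vanishing} places $\cthnor^\bullet(\mathbf{IC}_\kappa(L))$ in one parity class, the extremal cohomological degree allowed by the support/cosupport bound has the wrong parity and vanishes, improving the bound by one; the statement for $\iota_\nu^!$ follows from that for $\iota_\nu^*$ applied to $L^\vee$ via Verdier duality and Lemma~\ref{lem: Verdier dualities exchange motivic local systems}.

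With this in hand, take $\lambda\neq\mu$. If they lie in different components, the supports $\overline{\mathrm{Gr}^\lambda_G}$ and $\overline{\mathrm{Gr}^\mu_G}$ are disjoint and the $\Hom$ vanishes. Otherwise let $i_\lambda\colon\overline{\mathrm{Gr}^\lambda_G}\hookrightarrow\mathrm{Gr}_G$ and $i_\mu\colon\overline{\mathrm{Gr}^\mu_G}\hookrightarrow\mathrm{Gr}_G$ be the closed immersions. If $\mu<\lambda$, then $\mathbf{IC}_\mu(N)=i_{\mu*}\mathbf{IC}^{\overline{\mathrm{Gr}^\mu_G}}_\mu(N)$ and $(i_\mu^*,i_{\mu*})$-adjunction rewrites the group as $\Hom_{\overline{\mathrm{Gr}^\mu_G}}(i_\mu^*\mathbf{IC}_\lambda(M),\mathbf{IC}^{\overline{\mathrm{Gr}^\mu_G}}_\mu(N)[1])$; every stratum of $\overline{\mathrm{Gr}^\mu_G}$ is $<\lambda$, so the parity bump puts $i_\mu^*\mathbf{IC}_\lambda(M)$ in perverse degrees $\le-2$, the target in degrees $\ge-1$, and the group is $0$. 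If instead $\lambda\not>\mu$ (covering $\lambda<\mu$ and the incomparable case), then $\mathbf{IC}_\lambda(M)=i_{\lambda*}\mathbf{IC}^{\overline{\mathrm{Gr}^\lambda_G}}_\lambda(M)$ and $(i_{\lambda*},i_\lambda^!)$-adjunction rewrites it as $\Hom_{\overline{\mathrm{Gr}^\lambda_G}}(\mathbf{IC}^{\overline{\mathrm{Gr}^\lambda_G}}_\lambda(M),i_\lambda^!\mathbf{IC}_\mu(N)[1])$; on each stratum $\mathrm{Gr}^\nu_G\subset\overline{\mathrm{Gr}^\lambda_G}$ either $\nu\not\le\mu$ and $\iota_\nu^!\mathbf{IC}_\mu(N)=0$, or $\nu\le\mu$, which forces $\nu<\mu$ (as $\mu\not\le\lambda$) and hence $\iota_\nu^!\mathbf{IC}_\mu(N)$ in perverse degrees $\ge2$ by the bump. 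Thus $i_\lambda^!\mathbf{IC}_\mu(N)[1]$ lies in degrees $\ge1$ against a perverse source, and again the group is $0$.

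For $\lambda=\mu$ I would work on $Z=\overline{\mathrm{Gr}^\lambda_G}$, with $j\colon\mathrm{Gr}^\lambda_G\hookrightarrow Z$ the open stratum and $i\colon\partial Z\hookrightarrow Z$ its closed complement, set $A=\mathbf{IC}^{Z}_\lambda(M)$, $B=\mathbf{IC}^{Z}_\lambda(N)$, and apply $\Hom_Z(A,-)$ to the recollement triangle $i_*i^!B\to B\to j_*j^*B\xrightarrow{+1}$. Using $j^*A=M[\langle2\rho,\lambda\rangle]$, $j^*B=N[\langle2\rho,\lambda\rangle]$ and $j^*j_*=\id$, the $j_*j^*$-term contributes $\Hom_{\mathrm{Gr}^\lambda_G}(M,N[1])$, while $\Hom_Z(A,i_*i^!B[k])=\Hom_{\partial Z}(i^*A,i^!B[k])$ vanishes for $k=1,2$: the parity bump gives $i^*A$ in perverse degrees $\le-2$ and $i^!B$ in perverse degrees $\ge2$ on $\partial Z$, so $i^!B[k]$ lies in degrees $\ge2-k\ge0>-2$. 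Hence the connecting maps around $\Hom_Z(A,B[1])$ in the long exact sequence vanish and $\Hom_Z(A,B[1])\xrightarrow{\ \sim\ }\Hom_{\mathrm{Gr}^\lambda_G}(M,N[1])$, which — since both intersection motives are supported on $Z$ and $\mscr\perv(\mathrm{Gr}_G,\lnor^+G)\subset\mscr\perv(\mathrm{Gr}_G)$ is full — is exactly the claim.

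The main obstacle is bookkeeping rather than conceptual: one must translate carefully between the constructible and perverse $t$-structures on the smooth strata so that the parity constraint of Proposition~\ref{thm: Kazhdan-Lusztig parity vanishing} genuinely sharpens the support and cosupport bounds by a full unit, and one must check that this constraint propagates to costalks $\iota_\nu^!$ through Verdier duality. Once the parity-bump lemma is established, both cases of the proposition reduce to elementary $t$-structure orthogonality, the only remaining geometric input being the constancy of the parity of $\lnor^+G$-orbit dimensions on each connected component of $\mathrm{Gr}_G$.
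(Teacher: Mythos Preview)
Your proof is correct and rests on exactly the same ingredients as the paper's: the Kazhdan--Lusztig parity vanishing, the stalk/costalk bounds on intersection motives from Proposition~\ref{prop: characterizations of intersection motives}, and recollement plus $t$-structure orthogonality. The organization differs in two minor but pleasant ways. First, you isolate the ``parity bump'' (sharpening the support/cosupport bounds from $\le -1,\ge 1$ to $\le -2,\ge 2$) as a standalone lemma and then apply it uniformly; the paper instead re-derives this sharpening in place in each of its three sub-cases. Second, for $\lambda=\mu$ you apply $\Hom(A,-)$ to the recollement triangle $i_*i^!B\to B\to j_*j^*B$ on the target side, whereas the paper applies $\Hom(-,B[1])$ to the dual triangle $j_!j^*A\to A\to i_*i^*A$ on the source side; and for $\lambda\neq\mu$ you handle both comparable directions by separate adjunctions, while the paper invokes Verdier duality to reduce to one direction. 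Your packaging is slightly cleaner and avoids the explicit duality swap, but the two arguments are the same in substance.
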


\begin{proof}
The proof of the equivariant is identical to the proof of the non-equivariant case so we just focus on the first one. We follow the argument in \cite[Proof of Proposition 4.4]{baumann+riche-2018}\cite[Corollary 5.5]{richarz+scholbach-2021} (see also \cite{gaitsgory-2001}). 
\begin{enumerate} 
\item \textbf{First case $\lambda = \mu$}.  If $\lambda = \mu$, let us consider the diagram
\begin{equation*}
\begin{tikzcd}[sep=large]
    \mathrm{Gr}^{\lambda}_G \arrow[r,"j"] \arrow[rd,"j_{\lambda}",swap] & \overline{\mathrm{Gr}^{\lambda}_G} \arrow[d,"i_{\lambda}"] & \overline{\mathrm{Gr}^{\lambda}_G}  \setminus \mathrm{Gr}^{\lambda}_G \arrow[l,"i",swap] \arrow[ld] \\ 
    & \mathrm{Gr}_G & 
    \end{tikzcd}
\end{equation*}
There is a localization sequence
\begin{equation*}
\begin{split} 
    i_{\lambda!}j_!j^*i_{\lambda}^*\mathbf{IC}_{\lambda}(M) \longrightarrow  i_{\lambda!}i_{\lambda}^*\mathbf{IC}_{\lambda}(M) \longrightarrow  i_{\lambda!}i_*i^*i_{\lambda}^*\mathbf{IC}_{\lambda}(M) \longrightarrow +1
    \end{split} 
\end{equation*}
resulting in long exact sequences (by applying $\Hom_{\derivednori^b(\mathrm{Gr}_G,\lnor^+G)}((-), \mathbf{IC}_{\lambda}(N)[+1])$)
\begin{equation*}
    ... \longrightarrow \Hom(j_!j^*i_{\lambda}^*\mathbf{IC}_{\lambda}(M),i_{\lambda}^!\mathbf{IC}_{\lambda}(N)[+1]) \longrightarrow \Hom(i_{\lambda}^!\mathbf{IC}_{\lambda}(M),i_{\lambda}^!\mathbf{IC}_{\lambda}(N)[+1]) \longrightarrow \Hom(i_*i^*i_{\lambda}^*\mathbf{IC}_{\lambda}(M),i_{\lambda}^!\mathbf{IC}_{\lambda}(N)[+1]) \longrightarrow \cdots 
\end{equation*}
In this sequence, let us claim that
\begin{equation*}
    \begin{cases}
        \Hom(j_!j^*i_{\lambda}^*\mathbf{IC}_{\lambda}(M),i_{\lambda}^!\mathbf{IC}_{\lambda}(N)[+1])& = \Hom_{\derivednori^b(\mathrm{Gr}^{\lambda}_G)}(M,N[+1])  \ \ \ \ (1) \\ 
        \Hom(i_{\lambda}^!\mathbf{IC}_{\lambda}(M),i_{\lambda}^!\mathbf{IC}_{\lambda}(N)[+1]) & = \Hom(\mathbf{IC}_{\lambda}(M),\mathbf{IC}_{\lambda}(N)[+1]) \ \ \ \ (2) \\ 
        \Hom(i_*i^*i_{\lambda}^*\mathbf{IC}_{\lambda}(M),i_{\lambda}^!\mathbf{IC}_{\lambda}(N)[+1]) & = 0 \ \ \ \ (3) \\ 
        \Hom(i_*i^*i_{\lambda}^*\mathbf{IC}_{\lambda}(M),i_{\lambda}^!\mathbf{IC}_{\lambda}(N)[+2]) & = 0 \ \ \ \  (4)
    \end{cases}
\end{equation*}
Indeed, $(1),(2)$ are trivial by adjunctions and recollectements. Let us prove $(3),(4)$. or $\nu < \lambda$, we denote by $j_{\nu} \colon \mathrm{Gr}_G^{\nu} \longhookrightarrow \overline{\mathrm{Gr}}^{\lambda}_G \setminus \mathrm{Gr}_G^{\lambda}= \coprod_{\nu < \lambda} \mathrm{Gr}^{\nu}_G$ the obvious inclusion, we have 
\begin{equation*}
i^!i_{\lambda}^!(\mathbf{IC}_{\lambda}(M)) = \bigoplus_{\nu < \lambda} j_{\nu}^!i_{\lambda}^!(\mathbf{IC}_{\lambda}(M)) \ \ \ \ \text{and} \ \ \ \ 
i^*i_{\lambda}^*(\mathbf{IC}_{\lambda}(N)) = \bigoplus_{\nu < \lambda} j_{\nu}^*i_{\lambda}^*(\mathbf{IC}_{\lambda}(N)) 
\end{equation*}
For $(3)$, we have 
\begin{equation*}
    \Hom((i_{\lambda}i)^*\mathbf{IC}_{\lambda}(M),(i_{\lambda}i)^!\mathbf{IC}_{\lambda}(N)[+1]) = \bigoplus_{\nu < \lambda} \Hom(j_{\nu}^*i_{\lambda}^*(\mathbf{IC}_{\lambda}(M)), j_{\nu}^!i_{\lambda}^!(\mathbf{IC}_{\lambda}(N))[+1]).
\end{equation*}
By proposition \ref{prop: characterizations of intersection motives}, $j_{\nu}^*i_{\lambda}^*(\mathbf{IC}_{\lambda}(M))$ lives in perverse degree $\leq -\left<2\rho,\nu\right>-1$ and $j_{\nu}^!i_{\lambda}^!(\mathbf{IC}_{\lambda}(N))[+1]$ lives in perverse degree $\geq -\left<2 \rho, \nu\right>$ so the hom group must be zero. For $(4)$, we have the same argument but now the degree increases by one so let us show that $\phnor^{-\left<2\rho,\nu\right>-1}((j_{\nu}i_{\lambda})^*\mathbf{IC}_{\lambda}(M)) = 0$. Since $\mathrm{Gr}_G^{\nu}$ is smooth of dimension $\left<2\rho,\nu\right>$ one has that 
\begin{equation*}
\begin{split} 
    \phnor^{-\left<2\rho,\nu\right>-1}((j_{\nu}i_{\lambda})^*\mathbf{IC}_{\lambda}(M)) & = \cthnor^{-\left<2\rho,\nu\right>-1}((j_{\nu}i_{\lambda})^*\mathbf{IC}_{\lambda}(M))[\left<2\rho,\nu\right>] \\ 
    & = (j_{\nu}i_{\lambda})^*\cthnor^{-\left<2\rho,\nu\right>-1}(\mathbf{IC}_{\lambda}(M))[\left<2\rho,\nu\right>]
    \end{split} 
\end{equation*}
By Kazhdan-Lusztig parity vanishing \ref{thm: Kazhdan-Lusztig parity vanishing} and the fact that $\left <2\rho,\nu \right> \equiv \left<2\rho,\lambda \right> (\operatorname{mod} 2)$, this motive vanishes and hence we win this case.
\item \textbf{Second case neither $\lambda \leq \mu$ nor $\mu \leq \lambda$.} The proof is identical to the second case of \cite[Proof of proposition 4.4]{baumann+riche-2018}, in which there is no need to use Kazhdan-Lusztig parity vanishing. 
\item \textbf{Third case: $\lambda \neq \mu$ and $\lambda \leq \mu$ or $\mu \leq \lambda$}. Since Verdier duality is an anti-autoequivalence mapping $\mathbf{IC}_{\lambda}(M)$ to $\mathbf{IC}_{\lambda}(M^{\vee})(\left <2\rho,\lambda \right>)$ and $\mathbf{IC}_{\mu}(N)$ to $\mathbf{IC}_{\mu}(N^{\vee})(\left <2\rho,\mu \right>)$ (see lemma \ref{lem: Verdier dualities exchange motivic local systems}), we can, without loss of generality, assume that $\mu \leq \lambda$, i.e., $\mathrm{Gr}_G^{\mu} \subset \overline{\mathrm{Gr}_G^{\lambda}}$. Let us consider a triangle
\begin{equation*}
    \mathbf{IC}_{\mu}(N) \longrightarrow (i_{\lambda}j_{\mu})_*(i_{\lambda}j_{\mu})^*\mathbf{IC}_{\mu}(N) \longrightarrow P \longrightarrow +1.
\end{equation*}
Note that $(i_{\lambda}j_{\mu})_*(i_{\lambda}j_{\mu})^*\mathbf{IC}_{\mu}(N) \simeq (i_{\lambda}j_{\mu})_*(N[\left<2\rho,\mu\right>])$ and hence the triangle induces a short exact sequence 
\begin{equation*}
    \Hom(\mathbf{IC}_{\lambda}(M),P) \longrightarrow \Hom(\mathbf{IC}_{\lambda}(M),\mathbf{IC}_{\mu}(N)[+1]) \longrightarrow \Hom(\mathbf{IC}_{\lambda}(M),(i_{\lambda}j_{\mu})_*(N[\left<2\rho,\mu\right>+1]).
\end{equation*}
The first group is zero without using Kazhdan-Lusztig parity vanishing. The third group 
\begin{equation*}
    \Hom(\mathbf{IC}_{\lambda}(M),(i_{\lambda}j_{\mu})_*(N[\left<2\rho,\mu\right>+1]) = \Hom((i_{\lambda}j_{\mu})^*\mathbf{IC}_{\lambda}(M),N[\left<2\rho,\mu\right>+1])
\end{equation*}
is also zero because in general $(i_{\lambda}j_{\mu})^*\mathbf{IC}_{\lambda}(M)$ lives in constructible degree $\leq -\left<2\rho,\mu\right>-1$ but by Kazhdan-Lusztig parity vanishing \ref{thm: Kazhdan-Lusztig parity vanishing} and the fact that $\left <2\rho,\mu \right> \equiv \left<2\rho,\lambda \right> (\operatorname{mod} 2)$, it actually lives in constructible degrees $\leq -\left<2\rho,\mu\right>-2$. 
\end{enumerate} 
\end{proof}

We note that in a Noetherian and Artinian category, any object is a direct sum of indecomposable objects.  Moreover, if the category itself is semisimple, then being indecomposable is equivalent to being simple. 

\begin{cor} \label{cor: indecomposable objects}
Let $M \in \mscr\perv(\mathrm{Gr}_G,\lnor^+G)$, then there exist $\mu_i \in X_*(T)^+$ and indecomposable motives $L_i \in \mscr\localsystem(\mathrm{Gr}_G^{\mu_i})$ with $i = \overline{1,n}$ such that $M \simeq \oplus_{i=1}^n \mathbf{IC}_{\mu_i}(L_i)$. An object $\mathbf{IC}_{\lambda}(L) \in \mscr\perv(\mathrm{Gr}_G,\lnor^+G)$ is simple if and only if $L \in \mscr\localsystem(\mathrm{Gr}_G^{\lambda})$ is simple. 
\end{cor}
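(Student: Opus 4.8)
The plan is to deduce the corollary from the structural facts already assembled, principally Theorem~\ref{thm: simple objects of perverse motives on ind-schemes} together with Proposition~\ref{prop: a corollary of parity vanishing}. First, recall that Lemma~\ref{lem: stratified perverse motives inside non-stratified perverse motives} gives that $\mscr\perv(\mathrm{Gr}_G,\lnor^+G)$ is Noetherian and Artinian, hence of finite length, so every object is a finite direct sum of indecomposables; the only content is therefore to show that the indecomposables are exactly the $\mathbf{IC}_{\mu}(L)$ with $L$ indecomposable, and that $\mathbf{IC}_{\lambda}(L)$ is simple iff $L$ is simple.

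For the classification of indecomposables, I would argue as follows. Take $M \in \mscr\perv(\mathrm{Gr}_G,\lnor^+G)$ indecomposable. By Theorem~\ref{thm: simple objects of perverse motives on ind-schemes}, $M$ admits a (Jordan--H\"older) filtration whose graded pieces are of the form $\mathbf{IC}_{\mu}(L)$ with $L \in \mscr\localsystem(\mathrm{Gr}_G^{\mu})$ simple. The key point is that these extensions all split: $\Ext^1$ between intersection motives supported on distinct strata vanishes, and between those on the same stratum is controlled by $\Ext^1$ of local systems on a simply connected Schubert cell. Concretely, $\Ext^1_{\mscr\perv(\mathrm{Gr}_G,\lnor^+G)}(\mathbf{IC}_\lambda(M),\mathbf{IC}_\mu(N)) \hookrightarrow \Hom_{\derivednori^b(\mathrm{Gr}_G,\lnor^+G)}(\mathbf{IC}_\lambda(M),\mathbf{IC}_\mu(N)[+1])$, which by Proposition~\ref{prop: a corollary of parity vanishing} is $0$ when $\lambda\neq\mu$ and equals $\Hom_{\derivednori^b(\mathrm{Gr}^{\lambda}_G)}(M,N[+1]) = \Ext^1_{\mscr\localsystem(\mathrm{Gr}^{\lambda}_G)[\langle 2\rho,\lambda\rangle]}(M,N)$ when $\lambda=\mu$. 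Since each $\mathrm{Gr}^{\lambda}_G$ is simply connected in the analytic topology, $\mscr\localsystem(\mathrm{Gr}^{\lambda}_G) = \operatorname{Vect}_{\mathbb{Q}}$ (via Proposition~\ref{prop: motivic local systems are independent of realizations} and the Betti comparison), so these $\Ext^1$-groups vanish as well; more precisely the only nontrivial extensions among the $M$'s come from the motivic Galois group $\gscr_{\mot}(k)$ acting through $\mscr\localsystem(\mathrm{Gr}^\lambda_G) \simeq \mathbf{HM}(k)$, which is what makes $L$ allowed to be merely indecomposable rather than simple. Grouping the Jordan--H\"older pieces by their supporting stratum and using the vanishing of cross-stratum $\Ext^1$, $M$ decomposes as $\bigoplus_\mu \mathbf{IC}_\mu(L_\mu)$ where $L_\mu \in \mscr\localsystem(\mathrm{Gr}_G^\mu)$; indecomposability of $M$ forces a single summand $\mathbf{IC}_{\mu}(L)$, and since $\mathbf{IC}_{\mu}(-)$ is fully faithful with exact quasi-inverse $(j_\mu i_\mu)^\ast[-\langle 2\rho,\mu\rangle]$ on its image (Proposition~\ref{prop: characterizations of intersection motives} and property (8) of \S2.1), $L$ must be indecomposable.

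For the last sentence: $\mathbf{IC}_{\lambda}(-)\colon \mscr\localsystem(\mathrm{Gr}_G^\lambda)[\langle 2\rho,\lambda\rangle] \to \mscr\perv(\mathrm{Gr}_G,\lnor^+G)$ is fully faithful by property (8) of \S2.1 (the identity $\Hom_{\mscr\perv(U)}(A,B) = \Hom_{\mscr\perv(X)}(j_{!*}A,j_{!*}B)$), so it reflects simplicity: if $L$ is simple then $\mathbf{IC}_\lambda(L)$ has no proper nonzero subobject of the form $\mathbf{IC}_\lambda(L')$, and by the classification any subobject is a sum of intersection motives, all necessarily supported on $\overline{\mathrm{Gr}^\lambda_G}$ with the top-stratum contribution a sub-$\mathbf{IC}$; the standard recollement argument (as in Theorem~\ref{thm: simple objects of perverse motives on ind-schemes}, direction $(1)\Rightarrow(2)$ of Proposition~\ref{prop: characterizations of intersection motives}) then forces the subobject to be $0$ or all of $\mathbf{IC}_\lambda(L)$. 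Conversely if $L$ is not simple, a proper subobject $L'\subsetneq L$ yields $\mathbf{IC}_\lambda(L')\subsetneq \mathbf{IC}_\lambda(L)$, so $\mathbf{IC}_\lambda(L)$ is not simple.

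The main obstacle is the splitting argument in the second paragraph: one must be careful that vanishing of $\Ext^1$ between the simple subquotients across a full Jordan--H\"older filtration actually yields a direct-sum decomposition into the $\mathbf{IC}_\mu(L_\mu)$, since a priori $L_\mu$ could be a nontrivial extension of simple local systems and the decomposition is only \emph{stratum-wise}. The clean way is to induct on the length of the support: peel off the top-dimensional stratum $\mathrm{Gr}^\lambda_G$, apply Proposition~\ref{prop: two ses of intersection motives} to split $M$ as $\mathbf{IC}_\lambda(j_\lambda^\ast M) \oplus M''$ with $M''$ supported on $\overline{\mathrm{Gr}^\lambda_G}\setminus\mathrm{Gr}^\lambda_G$ using the $\Ext^1$-vanishing above to kill the connecting maps, then recurse on $M''$. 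Everything else is a transcription of the classical affine-Grassmannian arguments (as in \cite{baumann+riche-2018}) now legitimized at the motivic level by the conservativity and exactness of $\rat$ and the parity vanishing of Proposition~\ref{thm: Kazhdan-Lusztig parity vanishing}.
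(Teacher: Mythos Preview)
Your approach is essentially the paper's: both hinge on Proposition~\ref{prop: a corollary of parity vanishing} to control $\Ext^1$ between intersection motives, followed by an induction (the paper inducts on the length of $M$, you on the strata in the support) to assemble the decomposition. There is, however, a genuine misstatement: you write that simple connectedness gives ``$\mscr\localsystem(\mathrm{Gr}^{\lambda}_G) = \operatorname{Vect}_{\mathbb{Q}}$ \dots\ so these $\Ext^1$-groups vanish as well''. Simple connectedness only forces the \emph{realization} $\localsystem((\mathrm{Gr}^{\lambda}_G)^{\an})$ to be $\operatorname{Vect}_{\mathbb{Q}}$; the motivic category $\mscr\localsystem(\mathrm{Gr}^{\lambda}_G)$ is (as you say in the very next clause) equivalent to $\mathbf{HM}(k)$ and has nontrivial $\Ext^1$. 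If those $\Ext^1$'s really vanished the whole category would be semisimple and there would be nothing to prove---indeed no $L$ could ever be ``merely indecomposable''. Fortunately your argument only invokes the cross-stratum vanishing, so the error is cosmetic rather than structural; just delete that half-sentence.

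One refinement of your ``clean way'': Proposition~\ref{prop: two ses of intersection motives} has hypotheses (no subobject, resp.\ no quotient, supported on the closed complement) which are not automatic for a general $M$. The fix is to first pass to $M/N$ with $N$ the maximal subobject supported off the top stratum $\mathrm{Gr}^{\lambda}_G$, apply Proposition~\ref{prop: two ses of intersection motives}(2) there to get $0\to\mathbf{IC}_\lambda(j_\lambda^*M)\to M/N\to Q\to 0$ with $Q$ supported on lower strata, and then use the cross-stratum $\Ext^1$-vanishing (extended from simples to iterated extensions by a straightforward d\'evissage on length) to split both this sequence and $0\to N\to M\to M/N\to 0$; this yields $M\simeq\mathbf{IC}_\lambda(j_\lambda^*M)\oplus M''$ with $M''$ supported on strictly smaller strata, and you can recurse.
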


\begin{proof}
By the discussion above, any object $M$ is decomposed into indecomposable objects so we may assume that $M$ itself is indecomposable. We proceed by induction on the leng $l(M)$ of $M$. If $l(M)=1$, then $M$ is simple and hence $M = \mathbf{IC}_{\lambda}(L)$ for some $\lambda \in X_*(T)^+$ and $L \in \mscr\localsystem(\mathrm{Gr}^{\lambda}_G,n)$ by theorem \ref{thm: simple objects of perverse motives on ind-schemes}. It is clear that $L$ must be indecomposable as $M$ is, but $\mscr\perv(\mathrm{Gr}^{\lambda}_G,n)$ is semisimple by \ref{lem: weight filtrations of motivic local systems}\cite[Theorem 6.24]{florian+morel-2019}, being indecomposible is equivalent to being simple. Thus, $M$ is of the desired form. If $l(M) \geq 2$, let $M' \subset M$ be a subobject with $l(M')=1$ and hence simple, by induction $M' \simeq \mathbf{IC}_{\mu_1}(L_1)$ and $M/M' =  \mathbf{IC}_{\mu_2}(L_2)$ (this quotient is also indecomposable). Now by virtues of proposition \ref{prop: a corollary of parity vanishing}
\begin{equation*}
    \Hom_{\derivednori^b(\mathrm{Gr}_G,\lnor^+G)}\left(\mathbf{IC}_{\mu_1}(L_1),\mathbf{IC}_{\mu_2}(L_2)[+1] \right) = \operatorname{Ext}^1_{\mscr\perv(\mathrm{Gr}_G^{\mu})}(L_1,L_2) 
\end{equation*}
which contains the class of $M$, so $\mu_1 = \mu_2$ and $M$ is an intersection motive of a motivic local system which is an extension of $L_1[\left<2\rho,\mu \right>],L_2[\left<2\rho,\mu \right>]$ in $\mscr\perv(\mathrm{Gr}_G^{\mu})$ (since shifted motivic local systems form a Serre subcategory of motivic perverse sheaves). 
\end{proof}
For $n \in \mathbb{Z}$, we define $\mscr\perv_{\lnor^+G}(\mathrm{Gr}_G,\lnor^+G,n)$ to be the full subcategory whose objects are $\mscr\perv_{\lnor^+G}(\mathrm{Gr}_G,\lnor^+G) \cap \mscr\perv(\mathrm{Gr}_G,\lnor^+G,n)$ and we define $\mscr\perv_{\lnor^+G}(\mathrm{Gr}_G,\lnor^+G,\pure) = \bigoplus_{n \in \mathbb{Z}} \mscr\perv_{\lnor^+G}(\mathrm{Gr}_G,\lnor^+G,n)$.

\begin{cor} \label{cor: semisimplicity of perverse motives on aff grass}
The categories $\mscr\perv(\mathrm{Gr}_G,\lnor^+G,n)$ (with $n\in \mathbb{Z}$) and $\mscr\perv(\mathrm{Gr}_G,\lnor^+G,\textnormal{pure})$ are semisimple.
\end{cor}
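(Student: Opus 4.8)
The plan is to derive both semisimplicity statements from the structural results already in hand: Corollary \ref{cor: indecomposable objects}, which writes every object of $\mscr\perv(\mathrm{Gr}_G,\lnor^+G)$ as a finite sum of intersection motives $\mathbf{IC}_\mu(L)$ with $L$ indecomposable, together with Proposition \ref{prop: a corollary of parity vanishing}, which controls $\operatorname{Ext}^1$ between intersection motives. Concretely, I would first record that $\mscr\perv(\mathrm{Gr}_G,\lnor^+G,n)$ is an abelian subcategory of $\mscr\perv(\mathrm{Gr}_G,\lnor^+G)$ which is Noetherian and Artinian: kernels, images and cokernels of morphisms between objects pure of weight $n$ remain pure of weight $n$ by strictness of the weight filtration, while finiteness is inherited from Lemma \ref{lem: stratified perverse motives inside non-stratified perverse motives}. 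It therefore suffices to prove that every short exact sequence of objects pure of weight $n$ splits, equivalently that $\operatorname{Ext}^1$ between simple objects of $\mscr\perv(\mathrm{Gr}_G,\lnor^+G,n)$ vanishes.

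Next I would take $M\in\mscr\perv(\mathrm{Gr}_G,\lnor^+G,n)$ and apply Corollary \ref{cor: indecomposable objects} to write $M\simeq\bigoplus_{i=1}^{r}\mathbf{IC}_{\mu_i}(L_i)$ with $L_i\in\mscr\localsystem(\mathrm{Gr}^{\mu_i}_G)$ indecomposable. Each summand is a direct summand of the pure object $M$, hence itself pure of weight $n$; applying $\iota_{\mu_i}^{*}$ and $\iota_{\mu_i}^{!}$ to $\mathbf{IC}_{\mu_i}(L_i)$ and using that $j_{\mu_i}^{*}=j_{\mu_i}^{!}$ for the open immersion $j_{\mu_i}$ while $i_{\mu_i,*}$ and $(j_{\mu_i})_{!*}$ preserve weights, one finds that $L_i[\left<2\rho,\mu_i\right>]$, and therefore $L_i$, is pure. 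Since $\mathrm{Gr}^{\mu_i}_G$ is a smooth variety, the subcategory of pure objects of $\mscr\perv(\mathrm{Gr}^{\mu_i}_G)$ of a fixed weight is semisimple by \cite[Theorem 6.24]{florian+morel-2019}, so the indecomposable pure object $L_i$ is simple; by Corollary \ref{cor: indecomposable objects} this forces $\mathbf{IC}_{\mu_i}(L_i)$ to be simple. Hence $M$ is a finite direct sum of simple objects and $\mscr\perv(\mathrm{Gr}_G,\lnor^+G,n)$ is semisimple. (Equivalently, one may argue via $\operatorname{Ext}^1$: for simple $\mathbf{IC}_\lambda(L),\mathbf{IC}_\mu(L')$ pure of weight $n$, an extension in the subcategory is an extension in $\mscr\perv(\mathrm{Gr}_G,\lnor^+G)$, hence classified by $\Hom_{\derivednori^b(\mathrm{Gr}_G,\lnor^+G)}(\mathbf{IC}_\lambda(L),\mathbf{IC}_\mu(L')[1])$, which vanishes for $\lambda\neq\mu$ by Proposition \ref{prop: a corollary of parity vanishing} and equals $\operatorname{Ext}^1_{\mscr\perv(\mathrm{Gr}^\lambda_G)}(L,L')=0$ for $\lambda=\mu$, again by semisimplicity of pure Nori motives on the smooth variety $\mathrm{Gr}^\lambda_G$.)

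Finally, for $\mscr\perv(\mathrm{Gr}_G,\lnor^+G,\pure)=\bigoplus_{n\in\mathbb{Z}}\mscr\perv(\mathrm{Gr}_G,\lnor^+G,n)$ I would observe that a morphism between objects pure of two distinct weights has image pure of two distinct weights, hence zero, so the decomposition is orthogonal; a direct sum of semisimple abelian categories is semisimple, which concludes the proof. There is no genuine obstacle here, the substance having already gone into Proposition \ref{prop: a corollary of parity vanishing} and Corollary \ref{cor: indecomposable objects}; the only point demanding a little care is the weight bookkeeping showing each $L_i$ is pure, which is a routine consequence of the weight-exactness properties of $\iota_{\mu_i}^{*}$, $\iota_{\mu_i}^{!}$, $i_{\mu_i,*}$ and $(j_{\mu_i})_{!*}$ recorded in Section 2.1.
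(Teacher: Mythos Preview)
Your proof is correct and follows essentially the same route as the paper: decompose via Corollary \ref{cor: indecomposable objects}, use that direct summands of a pure object are pure, invoke \cite[Theorem 6.24]{florian+morel-2019} on the smooth strata to upgrade indecomposable pure $L_i$ to simple, and conclude. Your version is more explicit about the weight bookkeeping for $L_i$ and adds the alternative $\operatorname{Ext}^1$ argument, but these are elaborations rather than a different approach.
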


\begin{proof}
   By lemma above, each object $M \in \mscr\perv(\mathrm{Gr}_G,\lnor^+G,n)$ is of the form $\oplus_{i=1}^n \mathbf{IC}_{\mu_i}(L_i)$ with $L_i \in \mscr\localsystem(\mathrm{Gr}_G^{\mu_i},n)$ being indecomposable. Since each $\mscr\localsystem(\mathrm{Gr}_G^{\mu_i},n) \subset \mscr\perv(\mathrm{Gr}_G^{\mu_i},n)$ is a Serre subcategory and the latter one is semisimple by \cite[Theorem 6.24]{florian+morel-2019}, the objects $L_i$ are necessarily simple in $\mscr\localsystem(\mathrm{Gr}_G^{\mu_i},n)$. Each object $\mathbf{IC}_{\mu_i}(L_i)$ must be simple as well, hence $\mscr\perv(\mathrm{Gr}_G,\lnor^+G,n)$ is semisimple. In particular, $\mscr\perv(\mathrm{Gr}_G,\lnor^+G,\textnormal{pure}) = \oplus_{n \in \mathbb{Z}}\mscr\perv(\mathrm{Gr}_G,\lnor^+G,n)$ is semisimple as well.
\end{proof}

\begin{cor}
A motive $\mscr\perv(\mathrm{Gr}_G,\lnor^+G)$ is in $\mscr\textnormal{Tate}(\mathrm{Gr}_G,\lnor^+G)$ if and only if it is of the form $\oplus_{i=1}^n \mathbf{IC}_{\mu_i}(L_i)$ with $L_i \in \mscr\textnormal{Tate}(\mathrm{Gr}_G^{\mu_i})$. 
\end{cor}

\begin{proof}
  If each $L_i$ is a Tate motive, then so is $\oplus_{i=1}^n\mathbf{IC}_{\mu_i}(L_i)$ thanks to lemma \ref{lem: intersection motives of Tate motives are Tate}. Conversely, if the direct sum is a Tate motive then by lemma \ref{lem: characterizations of stratified Tate motives}, each direct summand $\mathbf{IC}_{\mu_i}(L_i)$ is a Tate motive and hence by definition $L_i[\left<2\rho,\mu_i\right>] = \mathbf{IC}_{\mu_i}(L_i)_{\mid \mathrm{Gr}_G^{\mu_i}}$ is a shifted Tate motive. 
\end{proof}

\begin{cor}
Let $M \in \mscr\perv_{\lnor^+G}(\mathrm{Gr}_G,\lnor^+G)$, then there exist $\mu_i \in X_*(T)^+$ and indecomposable motives $L_i \in \mscr\localsystem_{\lnor^+G}(\mathrm{Gr}_G^{\mu_i})$ with $i = \overline{1,n}$ such that $M \simeq \oplus_{i=1}^n \mathbf{IC}_{\mu_i}^{\lnor^+G}(L_i)$. An object $\mathbf{IC}_{\lambda}^{\lnor^+G}(L) \in \mscr\perv_{\lnor^+G}(\mathrm{Gr}_G,\lnor^+G)$ is simple if and only if $L \in \mscr\localsystem_{\lnor^+G}(\mathrm{Gr}_G^{\lambda})$ is simple. 
\end{cor}

\begin{proof}
 This is a consequence of proposition \ref{prop: characterizations of equivariant intersection motives}. We decompose $\res^{\lnor^+G}(M) = \oplus_{i =1}^n \mathbf{IC}_{\mu_i}(L_i)^{\oplus n_i}$ in $\mscr\perv(\mathrm{Gr}_G,\lnor^+G)$ with $\mu_i$ pairwise distinct, $L_i \in \mscr\localsystem(\mathrm{Gr}^{\mu_i}_G)$, $n_i \in \mathbb{N}$. By looking at supports and use the smooth base change, we see that each $L_i$ carries an equivariant structure.
\end{proof}

\subsection{Convolution product of affine Grassmannians}  
Thanks to the previous section, there is an associative convolution product  (the derived level) 
\begin{equation*}
    (-) \star (-) \colon \derivednori^b_{\lnor^+G}(\mathrm{Gr}_G,\lnor^+G) \times \derivednori^b_{\lnor^+G}(\mathrm{Gr}_G,\lnor^+G)\longrightarrow \derivednori^b_{\lnor^+G}(\mathrm{Gr}_G,\lnor^+G).
\end{equation*}
We want to define convolution products on $\mscr\perv_{\lnor^+G}(\mathrm{Gr}_G,\lnor^+G$ and $\mscr\operatorname{Tate}_{\lnor^+G,p}(\mathrm{Gr},\lnor^+G)$. 
\begin{prop} \label{prop: convolution products restrict to subcategories of motives}
The convolution product 
\begin{equation*} 
(-) \star (-) \colon \derivednori^b_{\lnor^+G}(\mathrm{Gr}_G,\lnor^+G) \times \derivednori^b_{\lnor^+G}(\mathrm{Gr}_G,\lnor^+G)\longrightarrow \derivednori^b_{\lnor^+G}(\mathrm{Gr}_G,\lnor^+G)
\end{equation*} 
is $t$-exact in both variables. In particular, if $M_1,M_2 \in \mscr\perv_{\lnor^+G}(\mathrm{Gr}_G,\lnor^+G)$ then 
\begin{equation*}
    \phnor^n_{\lnor^+G}(M_1 \star M_2) = 0 \ \forall \ n \neq 0
\end{equation*}
and $M_1 \star M_2 = \phnor^0_{\lnor^+G}(M_1 \star M_2) = m_*\phnor^0_{\lnor^+G}(M_1 \widetilde{\boxtimes} M_2)$ defines a convolution product
\begin{align*}
(-) \star (-) \colon \mscr\perv_{\lnor^+G}(\mathrm{Gr}_G,\lnor^+G) \times \mscr\perv_{\lnor^+G}(\mathrm{Gr}_G,\lnor^+G) \longrightarrow \mscr\perv_{\lnor^+G}(\mathrm{Gr}_G,\lnor^+G)
\end{align*}
which is exact in both variables. 
\end{prop}

\begin{proof}
   The convolution product is $t$-exact because $m_*,q^*,p^*,\boxtimes$ are $t$-exact (for $m_*$, see proposition \ref{prop: pushforwards of equivariant stratified motives}). Concern the vanishing of cohomology, one uses the fact that Betti realization is conservative and commutes with cohomology (since it is exact), the vanishing is then translated to \cite[Proposition 6.1]{baumann+riche-2018} where one can investigate the vanishing of pushforwards of proper, semismall, locally trivial maps of analytic varieties. 
\end{proof}

\begin{cor} \label{cor: convolution products preserve Tate motives}
There is a restricted convolution product
\begin{equation*}
    (-) \star (-) \colon \mscr\operatorname{Tate}_{\lnor^+G,p}(\mathrm{Gr}_G,\lnor^+G) \times \mscr\operatorname{Tate}_{\lnor^+G,p}(\mathrm{Gr}_G,\lnor^+G) \longrightarrow \mscr\operatorname{Tate}_{\lnor^+G,p}(\mathrm{Gr}_G,\lnor^+G)
\end{equation*}
on Tate motives. 
\end{cor}

\begin{proof}
This is a consequence of propositions \ref{prop: convolution products preserve derived Tate motives} and \ref{prop: convolution products restrict to subcategories of motives}.
\end{proof}

\begin{prop} \label{prop: convolution products are weight exact} 
The convolution product 
\begin{equation*} 
(-) \star (-) \colon \derivednori^b_{\lnor^+G}(\mathrm{Gr}_G,\lnor^+G) \times \derivednori^b_{\lnor^+G}(\mathrm{Gr}_G,\lnor^+G) \longrightarrow \derivednori^b_{\lnor^+G}(\mathrm{Gr}_G,\lnor^+G)
\end{equation*} 
is weight-exact, i.e., if $A,B \in \derivednori^b_{\lnor^+G}(\mathrm{Gr}_G,\lnor^+G)^{w \leq 0}$, then $A \star B \in \derivednori^b_{\lnor^+G}(\mathrm{Gr}_G,\lnor^+G)^{w \leq 0}$ and likewise for $w \geq 0$. 
\end{prop}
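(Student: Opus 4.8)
The plan is to decompose the convolution $A\star B=m_*(A\,\widetilde{\boxtimes}\,B)$ into the twisted external product $\widetilde{\boxtimes}$ and the (ind-)proper pushforward $m_*$, and to check weight-exactness of each after reducing to the underlying non-equivariant motive. As usual, the weight structure on $\derivednori^b_{\lnor^+G}(\mathrm{Gr}_G,\lnor^+G)$ is the one detected by the forgetful functor $\for^{\lnor^+G}$ to $\derivednori^b(\mathrm{Gr}_G)$, and this functor carries $\star$ to the convolution built from the same correspondence $\mathrm{Gr}_G\times\mathrm{Gr}_G\xleftarrow{p}\lnor G\times\mathrm{Gr}_G\xrightarrow{q}\lnor G\times^{\lnor^+G}\mathrm{Gr}_G\xrightarrow{m}\mathrm{Gr}_G$ and the same descent relation $q^*(A\widetilde{\boxtimes}B)\simeq p^*(A\boxtimes B)$, now with values in the non-equivariant categories (this still makes sense because $p^*(A\boxtimes B)$ descends along the $\lnor^+G$-torsor $q$: the induced action on the first factor $\mathrm{Gr}_G=\lnor G/\lnor^+G$ is trivial, so only the $\lnor^+G$-equivariance of $B$ is used). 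Hence it suffices to show that $A,B\in\derivednori^b(\mathrm{Gr}_G)^{w\le 0}$ implies $m_*(A\widetilde{\boxtimes}B)\in\derivednori^b(\mathrm{Gr}_G)^{w\le 0}$, and symmetrically for $w\ge 0$; I will spell out only the $w\le 0$ half.

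For the pushforward: $m$ is ind-proper --- a filtered colimit of projective morphisms of the finite-dimensional twisted-product convolution varieties --- so $m_*\simeq m_!$ on constructible motives by Proposition~\ref{prop: six operations of ind-schemes}, whence $m_*$ preserves weight $\le 0$ (being $m_!$) and weight $\ge 0$ (being $m_*$). It remains to show $A\widetilde{\boxtimes}B$ has weight $\le 0$. By the defining relation $q^*(A\widetilde{\boxtimes}B)\simeq p^*(A\boxtimes B)$, with $p$ and $q$ pro-smooth, this can be checked after $q^*$. Now $\boxtimes$ is weight-exact (Section~2.1), and pullback along a smooth morphism $f$ is weight-exact: relative purity $f^!\simeq f^*(d)[2d]$, together with the stability of weight $\le w$ under $f^*$ and of weight $\ge w$ under $f^!$ and the weight-neutrality of the twist $(d)[2d]$, forces $f^*$ to preserve weight $\ge w$ as well. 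Hence $p^*(A\boxtimes B)$, and therefore $q^*(A\widetilde{\boxtimes}B)$, has weight $\le 0$.

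What is left is to descend this conclusion along the smooth surjection $q$: that a motive on an ind-scheme has weight $\le 0$ as soon as its pullback along a smooth surjection does. I would deduce this smooth descent for the weight structure from the $h$-hypersheaf property of $\derivednori^b(-)$ \cite{tubach-2025} (which makes $q^*$ conservative) together with the compatibility of the weight-truncation functors with smooth pullback. Alternatively --- and more in the spirit of this section --- one tests against the $\ell$-adic realization $\rat_\ell$: it is weight-exact, and since each Schubert stratum is simply connected (so every simple object of $\derivednori^b(\mathrm{Gr}_G,\lnor^+G)$ is a Tate twist of an $\mathbf{IC}_\lambda(\mathds{1})$) it is conservative on $\derivednori^b(\mathrm{Gr}_G,\lnor^+G)$, exactly as used in the proof of Proposition~\ref{thm: Kazhdan-Lusztig parity vanishing}; a conservative weight-exact functor reflects the weight structure, $\rat_\ell$ commutes with $\star$, and the weight-exactness of convolution is classical for $\ell$-adic (equivalently analytic) perverse sheaves \cite{baumann+riche-2018}\cite{richarz+scholbach-2021}. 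The $w\ge 0$ case is symmetric, or is deduced from the $w\le 0$ case by Verdier duality: $\mathbb{D}(A\star B)\simeq\mathbb{D}A\star\mathbb{D}B$, $\mathbb{D}$ exchanges the two halves of the weight structure, and $\mathbb{D}$ preserves $\derivednori^b_{\lnor^+G}(\mathrm{Gr}_G,\lnor^+G)$.

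The only step that is not routine bookkeeping inside the already-established six-functor and weight formalism is this last one: smooth descent for the weight structure along $q$ (equivalently, in the realization-based variant, the reflection of weights by the conservative weight-exact $\rat_\ell$, which rests on conservativity of $\rat_\ell$ over the stratified category --- available here precisely because the affine Grassmannian is stratified by simply connected pieces). The passage to the underlying motive, weight-exactness of $m_*$ for ind-proper $m$, and weight-exactness of $\boxtimes$ and of smooth pullback all follow directly from what was recorded earlier.
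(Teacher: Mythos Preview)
Your argument is correct and follows the same decomposition as the paper: weight-exactness of $\boxtimes$, of smooth pullback along $p$, descent along $q$, and then pushforward along $m$. You are in fact more careful than the paper's own proof, which handles only $\widetilde{\boxtimes}$ and does not mention the $m_*$ step at all; your treatment of $m_*\simeq m_!$ via ind-properness is the right way to fill that in.

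The one place where you work harder than necessary is the descent along $q$. The paper stays in the equivariant category throughout and simply observes that $q^*$ is an \emph{equivalence} there (it is an $\lnor^+G$-torsor, so this is the induction equivalence of Theorem~\ref{theorem: properties of special equivariant six operations}); since the equivariant weight is by definition detected by the forgetful functor, which commutes with $q^*$, an equivariant equivalence trivially preserves and reflects weights. By passing to the non-equivariant category first you lose this equivalence and are forced to invoke smooth-local descent for the weight structure. That is fine, but your fallback via the $\ell$-adic realization is shakier: the assertion that ``a conservative weight-exact functor reflects the weight structure'' is not a formality and would need an argument (e.g.\ via weight filtrations and conservativity on graded pieces). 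I would drop that alternative and either stay equivariant as the paper does, or cite smooth-locality of the weight structure directly.
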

\begin{proof}
  The box product $(-) \boxtimes (-)$ is weight-exact on schemes. The morphism $p \colon \lnor G \times \mathrm{Gr}_G \longrightarrow \mathrm{Gr}_G \times \mathrm{Gr}_G$ is smooth and hence weight exact and $q^*$ is a smooth equivalence, hence respects as well. Thus $(-) \widetilde{\boxtimes} (-)$ has the same weight as $(-) \boxtimes (-)$. 
\end{proof}

\begin{prop} \label{prop: Soegel's bimodules}
The composition
\begin{equation*} 
    \derivednori^b_{\lnor^+G}(\mathrm{Gr}_G,\lnor^+G) \longrightarrow \derivednori^b(\mathrm{Gr}_G,\lnor^+G) \overset{\epsilon_!}{\longrightarrow} \derivednori^b(k) 
\end{equation*}
is a monoidal functor when $\derivednori^b_{\lnor^+G}(\mathrm{Gr}_G,\lnor^+G)$ is endowed with the convolution product and $\derivednori^b(k)$ is endowed with the tensor product. 
\end{prop}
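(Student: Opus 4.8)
The plan is to identify the functor in question as $F=\epsilon_!\circ\for^{\lnor^+G}$, where $\epsilon\colon\mathrm{Gr}_G\to\Spec(k)$ is the structure morphism, and to produce the monoidal structure by a base-change computation along the convolution diagram. Throughout I would work with the underlying (non-equivariant) motives, since $\for^{\lnor^+G}$ commutes with $m_*$ and with the formation of the twisted external product $M\widetilde\boxtimes N$ — the latter is by definition the descent along the $\lnor^+G$-torsor $q$ of $p^*(M\boxtimes N)$, and this descent exists by construction. First I would dispose of the unit: the monoidal unit of the convolution is the skyscraper $\delta_e=s_{e,*}\mathds 1$ at the base point $s_e\colon\Spec(k)\hookrightarrow\mathrm{Gr}_G$, which is a closed immersion with $\epsilon\circ s_e=\id$, so $\epsilon_!\delta_e\simeq(\epsilon\circ s_e)_!\mathds 1=\mathds 1_k$, the unit of $\otimes$ on $\derivednori^b(k)$. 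It then remains to construct, naturally in $M,N\in\derivednori^b(\mathrm{Gr}_G,\lnor^+G)$, an isomorphism $\epsilon_!(M\star N)\simeq\epsilon_!(M)\otimes\epsilon_!(N)$ satisfying the associativity coherence.

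For the core computation I would use that the convolution morphism $m\colon\lnor G\times^{\lnor^+G}\mathrm{Gr}_G\to\mathrm{Gr}_G$ is ind-proper, so $m_*=m_!$ and hence $\epsilon_!(M\star N)=(\epsilon\circ m)_!(M\widetilde\boxtimes N)$. The slick point is that $\epsilon\circ m$ is just the structure morphism of $\lnor G\times^{\lnor^+G}\mathrm{Gr}_G$, which also factors through the first projection $\pr_1\colon\lnor G\times^{\lnor^+G}\mathrm{Gr}_G\to\mathrm{Gr}_G$, $[g,h]\mapsto[g]$; thus $\epsilon_!(M\star N)\simeq\epsilon_!\,\pr_{1,!}(M\widetilde\boxtimes N)$, and everything reduces to the isomorphism
\begin{equation*}
\pr_{1,!}(M\widetilde\boxtimes N)\;\simeq\;M\otimes\epsilon^*\epsilon_!(N)
\end{equation*}
in $\derivednori^b(\mathrm{Gr}_G,\lnor^+G)$, after which a last application of $\epsilon_!$ and the projection formula finishes the job. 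To prove this displayed isomorphism I would observe that the square with top row $q\colon\lnor G\times\mathrm{Gr}_G\to\lnor G\times^{\lnor^+G}\mathrm{Gr}_G$, bottom row $r\colon\lnor G\to\mathrm{Gr}_G$ (the arc-group torsor), and vertical arrows $\pr_1$ and the projection $a\colon\lnor G\times\mathrm{Gr}_G\to\lnor G$, is Cartesian. Base change for $(-)_!$ gives $r^*\pr_{1,!}(M\widetilde\boxtimes N)\simeq a_!\,q^*(M\widetilde\boxtimes N)=a_!\,p^*(M\boxtimes N)$ by the defining relation for $\widetilde\boxtimes$; since $p=r\times\id$ one has $p^*(M\boxtimes N)\simeq a^*r^*M\otimes b^*N$ with $b$ the other projection, so the projection formula yields $a_!\,p^*(M\boxtimes N)\simeq r^*M\otimes a_!b^*N$, and a second base change along the product square identifies $a_!b^*N\simeq r^*\epsilon^*\epsilon_!N$. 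Hence $r^*\pr_{1,!}(M\widetilde\boxtimes N)\simeq r^*(M\otimes\epsilon^*\epsilon_!N)$ canonically, and since $r$ is a smooth surjection and $\derivednori^b(-)$ is an $h$-hypersheaf, this descends to the asserted isomorphism on $\mathrm{Gr}_G$.

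Finally I would check the coherence diagrams (associativity and unitality) that upgrade these object-level isomorphisms to an honest monoidal structure on $F$. All of the comparison maps above are assembled from instances of base change and of the projection formula inside the six-functor formalism for perverse Nori motives, and these are compatible with the Betti realization $\betti^*$; since $\betti^*$ is exact, conservative, and strictly compatible with $f_!$, $f^*$, $\otimes$, $\boxtimes$, $\widetilde\boxtimes$ and $\star$, while on the analytic side the functor $\mathrm{R}\Gamma_c(\mathrm{Gr}_G^{\an},-)\circ\for^{\lnor^+G^{\an}}$ is a monoidal functor for convolution versus tensor product by classical geometric Satake (see \cite{baumann+riche-2018}), the required coherences hold because they hold after applying $\betti^*$; the same argument with $\rfrak^{\et}_{\ell}$ in place of $\betti^*$ yields compatibility with the $\ell$-adic realizations. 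The step I expect to be the main obstacle is precisely this last one — passing from a natural object-level isomorphism to a fully coherent monoidal functor structure — together with the attendant technical checks that $m$ is ind-proper and that $\epsilon_!$ is genuinely defined on the subcategory at hand; the latter is harmless, since every object there is a finite iterated extension of intersection motives supported on finite-dimensional Schubert varieties, on which $\epsilon_!$ agrees with the (bounded) pushforward of a projective $k$-variety.
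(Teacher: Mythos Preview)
Your construction of the natural isomorphism $\epsilon_!(M\star N)\simeq\epsilon_!(M)\otimes\epsilon_!(N)$ via the Cartesian square over $\pr_1$, base change, and the projection formula is correct and is exactly the argument the paper is pointing to: its proof simply cites \cite[Proposition~5.13]{richarz+scholbach-2021} (and Zhu's lecture notes) with the remark that one replaces Tate motives by Nori motives, and those references carry out precisely the computation you wrote down. The passage through the $\lnor^+G$-torsor $r\colon\lnor G\to\mathrm{Gr}_G$ is standard; in practice one either works equivariantly (so that the descent along $r$ is the equivalence $\derivednori^b_{\lnor^+G}(\lnor G)\simeq\derivednori^b(\mathrm{Gr}_G)$ already used to define $\widetilde\boxtimes$) or passes to finite-dimensional approximations, as you note at the end.

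The one point that does not work as written is your justification of the associativity and unit coherences. You argue that the pentagon commutes ``because it holds after applying $\betti^*$'', invoking conservativity. But conservativity of $\betti^*$ on $\derivednori^b(k)$ only detects whether a morphism is an isomorphism; it does \emph{not} imply faithfulness on Hom-sets (already the exact faithful functor $\mathbf{HM}(k)\to\operatorname{Vect}_\mathbb{Q}$ fails to be faithful on bounded derived categories, since motivic $\Ext^1$ is nonzero while $\Ext^1_{\mathbb{Q}}$ vanishes), so you cannot deduce equality of two composites from equality of their realizations. The correct argument is the one you almost state but then abandon: every comparison map you use is an instance of base change, projection formula, or composition of $!$-pushforwards, and the coherences among these are part of the $(\infty,1)$-categorical six-functor formalism for $\derivednori^b$ established in \cite{tubach-2025} and recorded in the paper. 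The pentagon and unit triangles for $\epsilon_!\circ\for^{\lnor^+G}$ are then formal consequences of those built-in coherences, with no appeal to any realization needed. This is how \cite[Proposition~5.13]{richarz+scholbach-2021} handles it, and it transports verbatim to the Nori setting.
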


\begin{proof}
   The proof is similar to \cite[Proposition 5.13]{richarz+scholbach-2021} (see also \cite[Lemma 5.2.3]{zhu-2016}\cite[Lemma 2.18]{zhu-2017}). Indeed, this is \cite[Proposition 5.13]{richarz+scholbach-2021} by choosing $\mathbf{f}= 0$ to be the trivial facet and replacing Tate motives by Nori motives. 
\end{proof}

\begin{lem}
Let $e \colon \Spec(F) \longrightarrow \Spec(k)$ be a field extension of subfields of $\mathbb{C}$, then the convolution product commutes with $e^*$, namely, there is a commutative diagram
\begin{equation*}
    \begin{tikzcd}[sep=large]
      \derivednori^b_{\lnor^+G}(\mathrm{Gr}_G,\lnor^+G) \times   \derivednori^b_{\lnor^+G}(\mathrm{Gr}_G,\lnor^+G) \arrow[r,"(-) \star (-)"] \arrow[d,"(e^*\textnormal{,}e^*)",swap] &    \derivednori^b_{\lnor^+G}(\mathrm{Gr}_G,\lnor^+G) \arrow[d,"e^*"] \\ 
        \derivednori^b_{\lnor^+G_F}(\mathrm{Gr}_{G_F},\lnor^+G_F) \times    \derivednori^b_{\lnor^+G_F}(\mathrm{Gr}_{G_F},\lnor^+G_F)\arrow[r,"(-) \star (-)"] &    \derivednori^b_{\lnor^+G_F}(\mathrm{Gr}_{G_F},\lnor^+G_F).
    \end{tikzcd}
\end{equation*}
\end{lem}

\begin{proof}
  This is clear because convolution products are defined in terms pullbacks of smooth morphisms, pushforwards of ind-proper morphisms, box products and $e^*$ hence commutes with all of them. 
\end{proof}

\subsection{Fusion and commutativity of convolution product}
We would like to show that the convolution product is commutative. Suppose we are given a natural morphism $M_1 \star M_2 \longrightarrow M_2 \star M_1$ so that under the Betti realization, this becomes the natural commutativity constraint of the complex convolution product then we win thanks to the conservativity. As in the analytic and $\ell$-cases, this is possible. Let us recall the definition of the Beilinson–Drinfeld Grassmannian. Let $X = \mathbb{A}^1_k$, we define the following Grassmannians via their moduli interpretations
\begin{equation*}
    \begin{split}
        \mathrm{Gr}_{G,X} \colon \Alg_k & \longrightarrow \sets \\ 
        R & \longmapsto \left\{ (\fcal,\nu,x)\;\left\lvert\;
\begin{array}{l}
x \in X(R) \\
\fcal \colon G\text{-bundle on} \ X_R \\
\nu \colon \fcal_{\mid X_R \setminus x} \simeq G \times (X_R \setminus x)
\end{array}
\right.
\right\} / \textnormal{iso}
    \end{split}
\end{equation*}
\begin{equation*}
    \begin{split}
        \mathrm{Gr}_{G,X^2} \colon \Alg_k & \longrightarrow \sets \\ 
        R & \longmapsto \left\{ (\fcal,\nu,x_1,x_2)\;\left\lvert\;
\begin{array}{l}
x_1,x_2 \in X(R) \\
\fcal \colon G\text{-bundle on} \ X_R \\
\nu \colon \fcal_{\mid X_R \setminus (x_1 \cup x_2)} \simeq G \times (X_R \setminus (x_1 \cup x_2))
\end{array}
\right.
\right\} / \textnormal{iso}
    \end{split}
\end{equation*}
(for more details and other moduli interpretations, one can consult \cite[Section 7.4, Chapter I]{baumann+riche-2018}). There is a canonical projection $\mathrm{Gr}_{G,X^2} \longrightarrow X^2$ and a diagonal morphism $\Delta \colon X \longrightarrow X^2$. Now it is clear from the moduli interpretations that
\begin{align*}
    \mathrm{Gr}_{G,X^2} \times_{X^2} \Delta(X) & \simeq \mathrm{Gr}_{G,X} \\ 
    \mathrm{Gr}_{G,X^2} \times_{X^2} (X^2 \setminus \Delta(X)) & \simeq (\mathrm{Gr}_{G,X} \times \mathrm{Gr}_{G,X} \times_{X^2} (X^2 \setminus \Delta(X)). 
\end{align*}
Let us denote by $i \colon \mathrm{Gr}_{G,X}  \longrightarrow \mathrm{Gr}_{G,X^2}$ and $\tau \colon \mathrm{Gr}_{G,X} = \mathrm{Gr}_G \times X  \longrightarrow \mathrm{Gr}_X$ the corresonding embedding and projection, respectively. 
\begin{prop} \label{prop: commutativity constraints}
There is a canonical isomorphism
\begin{equation*}
    M_1 \star M_2 \simeq M_2 \star M_1
\end{equation*}
for $M_1,M_2 \in \mscr\perv_{\lnor^+G}(\mathrm{Gr}_G,\lnor^+G)$. 
\end{prop}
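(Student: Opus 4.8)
The plan is to realise the convolution $\star$ as a \emph{fusion} product over the Beilinson--Drinfeld Grassmannian and to read off the commutativity constraint from the involution of $X^2$ exchanging the two marked points; as throughout this paper, the identities we need will be tested after the conservative realisation $\betti^*$, where they reduce to the classical computation of \cite{mirkovic+vilonen-2007}\cite{baumann+riche-2018}.

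First I would set up a global version of the convolution. Over $X=\mathbb{A}^1_k$ one has the relative Grassmannian $\mathrm{Gr}_{G,X}\to X$, a global positive loop group acting fibrewise, and a global twisted--product space $\widetilde{\mathrm{Gr}}_{G,X^2}\to X^2$ equipped with maps $p_X,q_X,m_X$ over $X^2$ parallel to $p,q,m$, all carrying compatible stratifications by relative Schubert cells (smooth over their base, with simply connected analytic fibres). Given $M_1,M_2\in\mscr\perv_{\lnor^+G}(\mathrm{Gr}_G,\lnor^+G)$ I would spread them out to equivariant objects $M_{1,X},M_{2,X}$ on $\mathrm{Gr}_{G,X}$ (possible because $\mathrm{Gr}_{G,X}\to X$ is Zariski--locally a product and equivariant motivic local systems on the strata are rigid), form the twisted product $M_{1,X}\,\widetilde{\boxtimes}\,M_{2,X}$ exactly as in the construction of $\widetilde{\boxtimes}$ above, suitably normalised, and set $\mathcal{A}(M_1,M_2)\coloneqq m_{X,*}(M_{1,X}\,\widetilde{\boxtimes}\,M_{2,X})[1]$. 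Because $m_X$ is, in families, stratified semismall and a locally trivial fibration, Proposition \ref{prop: pushforwards of stratified motives} shows that $\mathcal{A}(M_1,M_2)$ is a stratified perverse Nori motive on $\mathrm{Gr}_{G,X^2}$, and the two base changes recorded just before the statement give canonical identifications
\begin{equation*}
    i^{*}\mathcal{A}(M_1,M_2)[-1]\simeq M_1\star M_2,\qquad \mathcal{A}(M_1,M_2)\big|_{X^2\setminus\Delta}\simeq\bigl(M_{1,X}\boxtimes_X M_{2,X}\bigr)\big|_{X^2\setminus\Delta}[1],
\end{equation*}
using $\mathrm{Gr}_{G,X^2}\times_{X^2}\Delta(X)\simeq\mathrm{Gr}_{G,X}$ for the first and $\mathrm{Gr}_{G,X^2}\times_{X^2}(X^2\setminus\Delta)\simeq(\mathrm{Gr}_{G,X}\times\mathrm{Gr}_{G,X})\times_{X^2}(X^2\setminus\Delta)$ for the second.

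Next I would identify $M_1\star M_2$ with a nearby--cycles fibre. The motive $M_{1,X}\,\widetilde{\boxtimes}\,M_{2,X}$ is locally acyclic over $X^2$ in the evident sense, and proper pushforward preserves this, so $\mathcal{A}(M_1,M_2)$ has no vanishing cycles along $\Delta$; hence its restriction to the diagonal coincides with the unipotent nearby--cycles $\Psi_{\Delta}$ of its restriction to $X^2\setminus\Delta$ (both the acyclicity and the required isomorphism being verified after $\betti^*$, using the semismallness of $m_X$). Combining with the previous paragraph yields a canonical identification $M_1\star M_2\simeq\Psi_{\Delta}\bigl(M_{1,X}\boxtimes_X M_{2,X}\bigr)[1]$. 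Now the transposition $\sigma\in S_2$ of the two factors of $X^2$ lifts canonically to $\mathrm{Gr}_{G,X^2}$ and $\widetilde{\mathrm{Gr}}_{G,X^2}$ covering the swap of $X^2$; it fixes $\Delta$ and exchanges the two legs, so on $X^2\setminus\Delta$ it carries $M_{1,X}\boxtimes_X M_{2,X}$ to $M_{2,X}\boxtimes_X M_{1,X}$ via the commutativity of the external tensor product of Nori motives (\cite{terenzi-2025}). Transporting along $\sigma$ and using that $\sigma$ fixes $\Delta$ produces a natural isomorphism $\Psi_{\Delta}(M_{1,X}\boxtimes_X M_{2,X})\simeq\Psi_{\Delta}(M_{2,X}\boxtimes_X M_{1,X})$, that is, $M_1\star M_2\simeq M_2\star M_1$.

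Finally, every functor used ($\widetilde{\boxtimes}$, $m_{X,*}$, $i^{*}$, $\Psi_{\Delta}$, and the $\sigma$--action) commutes with $\betti^{*}$ and with each $\rfrak^{\et}_{\ell}$; therefore the morphism just built is sent by $\betti^{*}$ to the fusion commutativity isomorphism of \cite{mirkovic+vilonen-2007}\cite{baumann+riche-2018} (respectively to its $\ell$--adic analogue), which is an isomorphism, and since $\betti^{*}$ is faithful and exact it reflects isomorphisms of morphisms, so $M_1\star M_2\simeq M_2\star M_1$ in $\mscr\perv_{\lnor^+G}(\mathrm{Gr}_G,\lnor^+G)$. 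The main obstacle lies not in the commutativity argument itself but in the preliminary bookkeeping of the relative and equivariant structures: spreading equivariant perverse Nori motives out over $X$, normalising the twisted product so that it is perverse and locally acyclic over $X^2$, and checking that the nearby--cycles functor of the Ivorra--Morel formalism interacts correctly with the semismall pushforward $m_{X,*}$ and with the stratifications. None of these steps introduces a genuinely new phenomenon compared with the analytic or $\ell$--adic cases, precisely because every required identity can be tested after the conservative functor $\betti^{*}$.
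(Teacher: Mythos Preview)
Your approach is essentially the same fusion strategy as the paper's: spread $M_1,M_2$ over $X=\mathbb{A}^1$, form a global convolution on the Beilinson--Drinfeld Grassmannian, read off commutativity from the $S_2$-involution on $X^2\setminus\Delta$, and verify all required identities through the conservative Betti realisation. The key technical difference is how you pass from the global convolution to the punctual one. You invoke unipotent nearby cycles $\Psi_{\Delta}$ together with a local-acyclicity argument; the paper instead shows that the global convolution $\tau^{\circ}(M_1)\star\tau^{\circ}(M_2)$ is the intermediate extension $j_{!*}$ of its restriction to $X^2\setminus\Delta$, via the characterisation $i^*[-1]\simeq i^![1]$ (following \cite[Lemmas 7.8 and 7.10]{baumann+riche-2018}), and then transports the swap on the open part.

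The paper's route is more self-contained within the framework already built: $j_{!*}$ and the operations $i^*,i^!$ are defined for perverse Nori motives, and the relevant isomorphisms are canonical maps whose invertibility is checked by conservativity of $\betti^*$. Your route, while valid in spirit, relies on a nearby-cycles functor for $\derivednori^b$ and on its compatibility with $\betti^*$, with proper pushforward, and with the stratified formalism; none of this is developed or cited in the paper (the introduction only records nearby cycles for $\daet$). If motivic nearby cycles for Nori motives are available with the expected properties your argument goes through, but as written it imports an ingredient that the paper deliberately avoids; the $j_{!*}$ formulation sidesteps this entirely.
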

\begin{proof}
  First we see that, there are canonical isomorphisms
\begin{equation*}
    i^*[-1](\tau^{\circ}(M_1) \star \tau^{\circ}(M_2)) \simeq \tau^{\circ}(M_1 \star M_2) \simeq i^![1](\tau^{\circ}(M_1) \star \tau^{\circ}(M_2)).
\end{equation*}
   The morphisms among three terms are constructed canonically, so they are compatible with realizations and hence we can use \cite[Lemma 7.8]{baumann+riche-2018} and the conservativity of the Betti realization. Alternatively, one can copy the argument in $\loccit$. Second, we claim that \begin{equation*}
    j_{!*}((\tau^{\circ}(M_1) \boxtimes \tau^{\circ}(M_2))_{\mid U}) \simeq \tau^{\circ}(M_1) \star \tau^{\circ}(M_2)
\end{equation*}
with $U = X^2 \setminus \Delta(X)$. Indeed, this is similar to \cite[Lemma 7.10, Chapter I]{baumann+riche-2018} or otherwise one can use the conservativity of the Betti realization again.  Finally, to deduce the commutativity constraint, one repeats \cite[Section 7.6, Chapter I]{baumann+riche-2018}.
\end{proof}

\subsection{The unit object}  In this section, we will see how the intersection motives decompose and prove that $\mathbf{IC}_0^{\lnor^+G}(\mathds{1})$ is the unit of the convolution product. 

\begin{prop} \label{prop: unit object} 
Let $\mu \in X_*(T)^+$ and $p_{\mu} \colon \mathrm{Gr}^{\mu}_G \longrightarrow \Spec(k)$ be the structural morphism. Let $L \in \mscr\localsystem(k) = \mscr\localsystem_{\lnor^+G}(k)$, there is a canonical isomorphism 
\begin{equation*}
    \inters_0^{\lnor^+G}(L) \star \inters_{\mu}^{\lnor^+G}(\mathds{1}) \simeq \inters_{\mu}^{\lnor^+G}(p_{\mu}^*(L))
\end{equation*}
More generally, let $L' \in \mscr\localsystem_{\lnor^+G}(\mathrm{Gr}^{\mu}_G)$, then there is a canonical isomorphism
\begin{equation*}
    \mathbf{IC}_0^{\lnor^+G}(L) \star \mathbf{IC}_{\mu}^{\lnor^+G}(L') \simeq \mathbf{IC}_{\mu}^{\lnor^+G}(p_{\mu}^*(L) \otimes L').
\end{equation*}
\end{prop}
\begin{proof} 
The proof is as same as the proof is of \cite[Lemma 5.4]{richarz+scholbach-2021}, in the diagram
\begin{equation*}
    \begin{tikzcd}[sep=large] 
             \mathrm{Gr}_G \times \mathrm{Gr}_G & \lnor G \times \mathrm{Gr}_G \arrow[r,"q"] \arrow[l,"p",swap] & \lnor G \times^{\lnor^+G} \mathrm{Gr}_G \arrow[r,"m"] & \mathrm{Gr}_G  
    \end{tikzcd}
\end{equation*}
we have that
\begin{equation*}
    \begin{split}
        \mathbf{IC}_0^{\lnor^+G}(L) \star \mathbf{IC}^{\lnor^+G}_{\mu}(\mathds{1}) & = m_*(q^*)^{-1}p^*( \mathbf{IC}_0^{\lnor^+G}(L) \boxtimes \mathbf{IC}^{\lnor^+G}_{\mu}(\mathds{1}))  \\ 
        & \simeq p_{\mu}^*(L) \otimes \mathbf{IC}_{\mu}^{\lnor^+G}(\mathds{1}) \\ 
        & \simeq \mathbf{IC}_{\mu}^{\lnor^+G}(p_{\mu}^*(L)), 
    \end{split}
\end{equation*}
where the second isomophism follows from the fact that $m,q$ are isomorphisms over $qp^{-1}(\mathrm{Gr}_G^{\leq 0} \times \mathrm{Gr}_G^{\leq \mu})$ and the last isomorphism follows from proposition \ref{prop: characterizations of equivariant intersection motives}. 
 \end{proof} 

\begin{lem} \label{lem: triangular decomposition of intersection motives} 
Let $\mu,\lambda \in X_*(T)$, then there is a decomposition 
\begin{equation*}
    \mathbf{IC}_{\mu}^{\lnor^+G}(\mathds{1}) \star \mathbf{IC}_{\lambda}^{\lnor^+G}(\mathds{1}) = \mathbf{IC}_{\mu+\lambda}^{\lnor^+G}(\mathds{1}) \oplus \bigoplus_{\nu < \mu + \lambda} \mathbf{IC}_{\nu}^{\lnor^+G}(\mathds{1}(-\left<\rho,\mu+\lambda - \nu \right>))^{\oplus n_{\nu}}
\end{equation*}
for integers $n_{\mu} \in \mathbb{N}$. In particular, if $\mathrm{Gr}^{\mu}_G,\mathrm{Gr}^{\lambda}_G$ are singletons, then
\begin{equation*}
    \mathbf{IC}_{\mu}^{\lnor^+G}(\mathds{1}) \star \mathbf{IC}_{\lambda}^{\lnor^+G}(\mathds{1}) = \mathbf{IC}_{\mu+\lambda}^{\lnor^+G}(\mathds{1})
\end{equation*}
\end{lem}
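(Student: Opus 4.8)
The plan is to identify $A:=\mathbf{IC}_{\mu}(\mathds{1})\star\mathbf{IC}_{\lambda}(\mathds{1})$ as a semisimple pure perverse Tate motive and then to read off its simple constituents from the classical analytic convolution via the Betti realization. Throughout I would work, after the forgetful equivalence, inside $\mscr\perv(\mathrm{Gr}_G,\lnor^+G)$; the $t$-exact forgetful functor of Proposition~\ref{prop: forgetful functors of ind-schemes are fully faithful} lets one pass freely between the equivariant and non-equivariant pictures. The first step is to record three properties of $A$. It is a stratified Tate motive, since convolution preserves $\mathbf{DT}^{b}_{\lnor^+G}(\mathrm{Gr}_G,\lnor^+G)$ and, by Proposition~\ref{prop: convolution products restrict to subcategories of motives}, descends to a product on the perverse Tate subcategory. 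It is pure: the shifted constant motives $\mathds{1}_{\mathrm{Gr}^{\mu}_G}[\langle 2\rho,\mu\rangle]$ are pure of weight $\langle 2\rho,\mu\rangle$, hence so are $\mathbf{IC}_{\mu}(\mathds{1})$ and $\mathbf{IC}_{\lambda}(\mathds{1})$ because $j_{!*}$ and $i_{*}$ preserve weights (Section~2.1), and since $\star$ is weight-exact, $A$ is pure of weight $\langle 2\rho,\mu+\lambda\rangle$. Finally it is perverse: $\phnor^{n}(A)=0$ for $n>0$ by Proposition~\ref{prop: convolution products restrict to subcategories of motives}, and the vanishing for $n<0$ follows either because convolution commutes with Verdier duality, so that $\mathbb{D}A\simeq A(\langle 2\rho,\mu+\lambda\rangle)$ by Lemma~\ref{lem: Verdier dualities exchange motivic local systems} — a Tate twist of $A$ — while $\mathbb{D}$ exchanges $\phnor^{\leq 0}$ with $\phnor^{\geq 0}$; or because $\betti^{*}(A)=\mathbf{IC}_{\mu}(\mathbb{Q})\star\mathbf{IC}_{\lambda}(\mathbb{Q})$ is perverse by the semismallness of $m$ and $\betti^{*}$ is perverse $t$-exact and conservative. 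Thus $A\in\mscr\perv(\mathrm{Gr}_G,\lnor^+G,\pure)$.

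By Corollary~\ref{cor: semisimplicity of perverse motives on aff grass} this category is semisimple, and its simple Tate objects are exactly the $\mathbf{IC}_{\nu}(\mathds{1}(n))$ (the classification of simple stratified Tate motives, the corollary following Theorem~\ref{thm: simple objects of perverse motives on ind-schemes}); purity of weight $\langle 2\rho,\mu+\lambda\rangle$ forces, for each $\nu$ that occurs, a unique admissible twist $n_{\nu}$, so $A\simeq\bigoplus_{\nu}\mathbf{IC}_{\nu}(\mathds{1}(n_{\nu}))^{\oplus m_{\nu}}$. A support computation bounds $\nu$: the support of $\mathbf{IC}_{\mu}(\mathds{1})\,\widetilde{\boxtimes}\,\mathbf{IC}_{\lambda}(\mathds{1})$ is $\overline{\widetilde{\mathrm{Gr}}_G^{\mu,\lambda}}$, and $m$ maps it onto $\overline{\mathrm{Gr}^{\mu+\lambda}_G}$, so $m_{\nu}=0$ unless $\nu\leq\mu+\lambda$. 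To pin down the multiplicities I would apply $\betti^{*}$: each Schubert cell is simply connected, so $\betti^{*}(\mathds{1}_{\mathrm{Gr}^{\mu}_G})=\mathbb{Q}$ and $\betti^{*}(A)=\mathbf{IC}_{\mu}(\mathbb{Q})\star\mathbf{IC}_{\lambda}(\mathbb{Q})$, whereas the realization kills Tate twists, giving $\betti^{*}(A)\simeq\bigoplus_{\nu}\mathbf{IC}_{\nu}(\mathbb{Q})^{\oplus m_{\nu}}$. Comparing with the classical decomposition $\mathbf{IC}_{\mu}(\mathbb{Q})\star\mathbf{IC}_{\lambda}(\mathbb{Q})\simeq\mathbf{IC}_{\mu+\lambda}(\mathbb{Q})\oplus\bigoplus_{\nu<\mu+\lambda}\mathbf{IC}_{\nu}(\mathbb{Q})^{\oplus n_{\nu}}$ (see \cite{baumann+riche-2018} and \cite[\S5]{richarz+scholbach-2021}) and invoking Krull--Schmidt together with faithfulness and conservativity of $\betti^{*}$, one obtains $m_{\mu+\lambda}=1$, $m_{\nu}=n_{\nu}$ for $\nu<\mu+\lambda$, and $m_{\nu}=0$ otherwise, which is the asserted formula. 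For the last statement, if $\mathrm{Gr}^{\mu}_G$ and $\mathrm{Gr}^{\lambda}_G$ are single points then $\langle 2\rho,\mu\rangle=\langle 2\rho,\lambda\rangle=0$ and the analytic convolution is the translate of $\mathbf{IC}_{\lambda}(\mathbb{Q})$ by $\mu$, i.e.\ $\mathbf{IC}_{\mu+\lambda}(\mathbb{Q})$ with no lower terms, so $A=\mathbf{IC}_{\mu+\lambda}(\mathds{1}(0))=\mathbf{IC}_{\mu+\lambda}(\mathds{1})$ by purity (compare also Proposition~\ref{prop: unit object}).

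The hard part will be the first step: showing that the motivic convolution $A$ already lands in the semisimple abelian category of pure stratified perverse Nori motives rather than merely in its bounded derived category. This is exactly where weight-exactness of $\star$, purity of the intersection motives, the one-sided vanishing $\phnor^{>0}(A)=0$, and the compatibility of convolution with Verdier duality (or, alternatively, the perverse $t$-exactness and conservativity of the Betti realization) have to be combined; once $A$ is known to be pure and perverse, semisimplicity plus the conservative realization reduce the remaining numerical content entirely to the already-known analytic decomposition, with no further geometric input on $\mathrm{Gr}_G$ beyond the semismallness of $m$.
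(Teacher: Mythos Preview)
Your proposal is correct and follows essentially the same route as the paper: establish that $A$ is perverse, pure of weight $\langle 2\rho,\mu+\lambda\rangle$, and Tate, then use semisimplicity to decompose it into Tate intersection motives and read off the multiplicities via the Betti realization and the classical analytic decomposition. The only cosmetic difference is that the paper first invokes Corollary~\ref{cor: indecomposable objects} to write $A$ as a sum of $\mathbf{IC}_{\nu}(L_{\nu})$ with indecomposable $L_{\nu}$ and then argues that each $L_{\nu}$ is a Tate twist from purity and Tate-ness, whereas you appeal directly to Corollary~\ref{cor: semisimplicity of perverse motives on aff grass} and the classification of simple Tate objects; you are also more explicit than the paper in justifying perversity of $A$ (via Verdier duality or conservativity of $\betti^{*}$), a point the paper leaves implicit.
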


\begin{proof}
   Note that by general theory (see corollary \ref{cor: indecomposable objects}), there exist indecomposable motives $L_i$ such that \begin{equation*} 
   \mathbf{IC}_{\mu}^{\lnor^+G}(\mathds{1}) \star \mathbf{IC}_{\lambda}^{\lnor^+G}(\mathds{1}) = \oplus_{\nu \in X_*(T)^+}\mathbf{IC}_{\nu}^{\lnor^+G}(L_i)^{\oplus n_{\nu}}
   \end{equation*} 
   (a finite sum). Under the Betti realization, only those $\nu \leq \mu + \lambda$ survive and $n_{\mu + \lambda} = 1$. The motives $\mathbf{IC}_{\mu}^{\lnor^+G}(\mathds{1}), \mathbf{IC}_{\lambda}^{\lnor^+G}(\mathds{1})$ are pure of weights $\left<2\rho,\mu\right>,\left<2\rho,\lambda\right>$, respectively, thanks to proposition \ref{prop: convolution products are weight exact}. Consequently, by proposition \ref{prop: convolution products restrict to subcategories of motives}, $\mathbf{IC}_{\mu}^{\lnor^+G}(\mathds{1}) \star \mathbf{IC}_{\lambda}^{\lnor^+G}(\mathds{1})$ is pure of weight $\left<2\rho,\mu+\lambda \right>$ and is a shifted Tate motive by corollary \ref{cor: convolution products preserve Tate motives}. Hence $\mathbf{IC}_{\nu}^{\lnor^+G}(L_{\nu})$ and $\mathbf{IC}_{\mu}(\res^{\lnor^+G}(L_{\nu}))$ are also pure of the same weight and they are supported on $\mathrm{Gr}^{\leq \nu}_G$. Consequently, $L_{\nu}[\left<2\rho,\nu\right>]$ itself is also a shifted Tate motive pure of weight $\left <2\rho,\mu+\lambda \right>$, hence is of the form $\mathds{1}(\left <\rho,\mu+\lambda\right>)$ by virtues of lemma \ref{lem: pure Tate motives}. 
\end{proof}

\section{The Motivic Satake Equivalence}

\subsection{Small motives}

Unfortunately, unlike the case of ordinary perverse sheaves or mixed Tate motives, we \textit{cannot} hope that the category $\mscr\perv_{\lnor^+G}(\mathrm{Gr},\lnor^+G)$ to be a neutral Tannakian category unless we have a theory of \textit{local systems of simply connected spaces}. In the case of ordinary perverse sheaves. 
\begin{defn}
  Let $\scal \subset \mscr\localsystem(k)$ be a Tannakian subcategory containing all Tate twists $\mathds{1}(n)$. The \textit{$\scal$-motivic Satake category} $\mscr\perv^{\scal}_{\lnor^+G} (\mathrm{Gr}_G,\lnor^+G)$ is the smallest full abelian subcategory of $\mscr\perv_{\lnor^+G}(\mathrm{Gr}_G,\lnor^+G)$ so that:
  \begin{enumerate}
      \item It is stable under the convolution product. 
      \item It is stable under direct sums.
      \item It contains all objects of the form $ \mathbf{IC}^{\lnor^+G}_{\mu}(p_{\mu}^*(L))$ for equivariant motivic local systems $L \in \scal$, where $\mu \in X_*(T)^+$ and $p_{\mu}^* \colon \mscr\localsystem(k) = \mscr\localsystem_{\lnor^+G}(k) \longrightarrow \mscr\localsystem_{\lnor^+G}(\mathrm{Gr}^{\mu})$. 
  \end{enumerate}
 Spell out the definition, every object in $\mscr\perv^{\scal}_{\lnor^+G} (\mathrm{Gr}_G,\lnor^+G)$ admits a filtration whose graded pieces are subquotients of objects of the form 
 \begin{equation*}
     \bigoplus_{\mu_{1},...,\mu_{n} \in X_*(T)^+}  \mathbf{IC}^{\lnor^+G}_{\mu_{1}}(p_{\mu_{1}}^*(L_{1})) \star \cdots \star \mathbf{IC}^{\lnor^+G}_{\mu_{n}}(p_{\mu_{n}}^*(L_{n}))
 \end{equation*}
for equivariant motivic local systems $L_{i} \in \scal$ (one can take $L_{i_j}$'s to be indecomposable motives). The convolution product on $\derivednori^b_{\lnor^+G}(\mathrm{Gr}_G,\lnor^+G)$ restricts to convolution products 
\begin{equation*} 
(-) \star (-) \colon \mscr\perv^{\scal}_{\lnor^+G}(\mathrm{Gr}_G,\lnor^+G) \times \mscr\perv^{\scal}_{\lnor^+G}(\mathrm{Gr}_G,\lnor^+G) \longrightarrow \mscr\perv^{\scal}_{\lnor^+G}(\mathrm{Gr}_G,\lnor^+G). 
\end{equation*} 
If $\scal_1 \subset \scal_2 \subset \mscr\localsystem(k)$, then there is unitary monoidal inclusion 
\begin{equation*}
    \mscr\perv_{\lnor^+G}^{\scal_1}(\mathrm{Gr}_G,\lnor^+G) \longhookrightarrow  \mscr\perv_{\lnor^+G}^{\scal_2}(\mathrm{Gr}_G,\lnor^+G).
\end{equation*}
In particular, there is a unitary monoidal inclusion
\begin{equation*}
    \mscr\perv_{\lnor^+G}^{\mscr\operatorname{Tate}_p(k)}(\mathrm{Gr}_G,\lnor^+G) \longhookrightarrow \mscr\perv^{\mscr\localsystem(k)}_{\lnor^+G}(\mathrm{Gr}_G,\lnor^+G). 
\end{equation*}
There is a canonical embedding $ \mscr\perv_{\lnor^+G}^{\mscr\operatorname{Tate}_p(k)}(\mathrm{Gr}_G,\lnor^+G) \longhookrightarrow \mscr\operatorname{Tate}_{\lnor^+G,p}(\mathrm{Gr}_G,\lnor^+G)$ since each $\mathbf{IC}^{\lnor^+G}_{\mu}(p_{\mu}^*(L))$ is an equivariant Tate motive. We do not know whether this is an equivalence or not. 
\end{defn}

\begin{cor}
The object $\inters_0^{\lnor^+G}(\mathds{1})$ is the unit object of the convolution product on $\mscr\perv_{\lnor^+G}^{\scal}(\mathrm{Gr}_G,\lnor^+G)$. 
\end{cor}

\begin{proof}
This is obvious thanks to proposition \ref{prop: unit object}.   
\end{proof}

\begin{lem}
Let $\mu_1,...,\mu_n \in X_*(T)^+$, then there is a decomposition of the form
\begin{equation*}
    \mathbf{IC}^{\lnor^+G}_{\mu_{1}}(p_{\mu_{1}}^*(L_{1})) \star \cdots \star \mathbf{IC}^{\lnor^+G}_{\mu_{n}}(p_{\mu_{n}}^*(L_{n})) = \bigoplus_{\lambda \leq \left|\mu_{\bullet} \right|}  \mathbf{IC}^{\lnor^+G}_{\lambda}(p_{\lambda}^*(L_1 \otimes \cdots \otimes L_n)(-\left<\rho,\left|\mu_{\bullet} \right|-\lambda \right>))^{n_{\lambda}}.
\end{equation*}
\end{lem}

\begin{proof}
  It suffices to show the case $n=2$ and then do an induction. We have
  \begin{align*}
       \mathbf{IC}^{\lnor^+G}_{\mu_1}(p_{\mu_1}^*(L_1)) \star  \mathbf{IC}^{\lnor^+G}_{\mu_2}(p_{\mu_2}^*(L_2)) & = \mathbf{IC}_0^{\lnor^+G}(L_1) \star \mathbf{IC}_{\mu_1}^{\lnor^+G}(\mathds{1}) \star \mathbf{IC}_0^{\lnor^+G}(L_2) \star \mathbf{IC}_{\mu_2}^{\lnor^+G}(\mathds{1})  \\ 
       & \simeq \mathbf{IC}_0^{\lnor^+G}(L_1) \star \mathbf{IC}_0^{\lnor^+G}(L_2) \star \mathbf{IC}_{\mu_1}^{\lnor^+G}(\mathds{1}) \star \mathbf{IC}_{\mu_2}^{\lnor^+G}(\mathds{1}) \\ 
       & \simeq  \mathbf{IC}_0^{\lnor^+G}(L_1) \star \mathbf{IC}_0^{\lnor^+G}(L_2) \star \left( \bigoplus_{\lambda \leq \mu_1 + \mu_2}\mathbf{IC}_{\lambda}^{\lnor^+G}(\mathds{1}(\left< \rho, \mu_1 + \mu_2 \right>))^{\oplus n_{\lambda}} \right) \\ 
       & \simeq \bigoplus_{\lambda \leq \mu_1+\mu_2} \mathbf{IC}_0^{\lnor^+G}(L_1 \otimes L_2) \star \mathbf{IC}_{\lambda}^{\lnor^+G}(\mathds{1}(\left< \rho, \mu_1 + \mu_2 - \lambda \right>))^{\oplus n_{\lambda}} \\ 
       & \simeq \bigoplus_{\lambda \leq \mu_1+\mu_2} \mathbf{IC}_0^{\lnor^+G}(L_1 \otimes L_2) \star \mathbf{IC}_{\lambda}^{\lnor^+G}(\mathds{1}(\left< \rho, \mu_1 + \mu_2 - \nu \right>))^{\oplus n_{\lambda}} \\ 
       & \simeq \bigoplus_{\lambda \leq \mu_1+\mu_2}  \mathbf{IC}_{\lambda}^{\lnor^+G}(p_{\lambda}^*(L_1 \otimes L_2)(\left< \rho, \mu_1 + \mu_2 - \nu \right>))^{\oplus n_{\lambda}}
  \end{align*}
  as desired. 
\end{proof}

\begin{cor} \label{cor: generators of small motives} 
The category $\mscr\perv_{\lnor^+G}^{\scal}(\mathrm{Gr}_G,\lnor^+G)$ is the smallest full subcategory of $\mscr\perv_{\lnor^+G}(\mathrm{Gr}_G,\lnor^+G)$ that is stable under convolution product, direct sums and contains all motives of the form $\mathbf{IC}_{\mu}^{\lnor^+G}(p_{\mu}^*(L))$ with $\mu \in X_*(T)^+,L \in \scal$. 
\end{cor}


\begin{prop}
Let $e \colon \Spec(F) \longrightarrow \Spec(k)$ be a field extension of subfields of $\mathbb{C}$. Let $\scal \subset \mscr\localsystem(k)$ be a Tannakian subcategory containing all Tate twists. There is a unital monoidal functor 
\begin{equation*}
     \mscr\perv^{\scal}_{\lnor^+G_k}(\mathrm{Gr}_{G_k},\lnor^+G_k)  \longrightarrow \mscr\perv^{e^*(\scal)}_{\lnor^+G_F}(\mathrm{Gr}_{G_F},\lnor^+G_F).
\end{equation*}
If $e$ is an \'etale morphism, then there is a canonical equivalence of categories
\begin{equation*}
    \mscr\perv^{\scal}_{\lnor^+G_k}(\mathrm{Gr}_{G_k},\lnor^+G_k) \simeq \mscr\perv^{e^*(\scal)}_{\lnor^+G_F}(\mathrm{Gr}_{G_F},\lnor^+G_F)^{\Gal(F/k)}.
\end{equation*}
\end{prop}

\begin{proof}
  By proposition \ref{prop: galois descent of equivariant, stratified motives}, there is an exact functor 
 \begin{equation*}
    e^* \colon \mscr\perv_{\lnor^+G_k}(\mathrm{Gr}_{G_k},\lnor^+G_k)  \longrightarrow \mscr\perv_{\lnor^+G_F}(\mathrm{Gr}_{G_F},\lnor^+G_F).
\end{equation*}
By corollary, it suffices to check that the images of motives of the form $\mathbf{IC}_{\mu}^{\lnor^+G}(p_{\mu}^*(L))$ with $L \in \scal$ lie in $\mscr\perv^{e^*(\scal)}_{\lnor^+G_F}(\mathrm{Gr}_{G_F},\lnor^+G_F)^{\Gal(F/k)}$ but this is obvious. Regarding Galois descent, it is enough to show that
 \begin{equation*}
    e_* \colon  \mscr\perv_{\lnor^+G_F}(\mathrm{Gr}_{G_F},\lnor^+G_F) \longrightarrow  \mscr\perv_{\lnor^+G_k}(\mathrm{Gr}_{G_k},\lnor^+G_k) 
\end{equation*}
restricts to a functor
 \begin{equation*}
    e_* \colon \mscr\perv^{e^*(\scal)}_{\lnor^+G_F}(\mathrm{Gr}_{G_F},\lnor^+G_F) \longrightarrow  \mscr\perv^{\scal}_{\lnor^+G_k}(\mathrm{Gr}_{G_k},\lnor^+G_k).
\end{equation*}
Again, we just need to verify that $e_*(\mathbf{IC}_{\mu}^{\lnor^+G_F}(p_{\mu}^*(e^*(L)))$ lies in the right hand side for any $L \in \scal$. However, by corollary \ref{cor: intersection motives commute with galois extensions}, $e_*(\mathbf{IC}_{\mu}^{\lnor^+G_F}(p_{\mu}^*(e^*(L)))= \bigoplus_{g \in \Gal(F/k)} \mathbf{IC}_{\mu}^{\lnor^+G}(p_{\mu}^*(L))$ so we win.
\end{proof}
\subsection{The Tannakian structure (the fiber functor)}

Note that by the work of Nori, the category $\mscr\perv(k)$ is endowed with a neutral Tannakian structure whose dual group is the motivic Galois group. Let us briefly review the construction (for details see \cite{huber-book}). The category $\mscr\perv(k)$ can be identified with the category $\mathbf{HM}(k)$ (see \cite[Prosition 2.11]{florian+morel-2019}; see also \cite{florian-2017}) built from the quiver $\operatorname{Pair}_k$ whose objects are triplets $(X,Y,n)$ with $X$ a $k$-variety, $Y \subset X$ a closed subvariety and $n \in \mathbb{Z}$. After a technical modification, the natural tensor product
\begin{equation*}
    (X,Y,n) \otimes (X',Y',n') = (X \times X', Y \times X' \cup X \times Y', n+n')
\end{equation*}
becomes a genuine product. The category $\mathbf{HM}(k)$ is a neutral Tannakian category whose fiber functor is 
\begin{equation*}
    \begin{split}
      \hbf^*_{\textnormal{Nori}} \colon   \mathbf{HM}(k) & \longrightarrow \operatorname{Vect}_{\mathbb{Q}} \\ 
        (X,Y,n) & \longmapsto \hnor^n(X(\mathbb{C}),Y(\mathbb{C}),\mathbb{Q}). 
    \end{split}
\end{equation*}
In this section, we show that the $\mscr\perv_{\lnor^+G}(\mathrm{Gr}_G,\lnor^+G)$ is a neutral Tannakian category. 
\begin{defn}
Let $G$ be a connected reductive group over $k$, we define the \textit{motivic fiber functor} to be the composition
\begin{equation*}
  \omega^{\mot} \colon  \mscr\perv^{\scal}_{\lnor^+G} (\mathrm{Gr}_G,\lnor^+G) \overset{\textnormal{forgetful}}{\longrightarrow} \mscr\perv(\mathrm{Gr}_G,\lnor^+G) \overset{\epsilon_!}{\longrightarrow} \derivednori^b(\mscr\perv(k))  \overset{\mathrm{Gr}^{\ct}}{\longrightarrow} \mscr\perv(k) \overset{\hbf^*_{\textnormal{Nori}}}{\longrightarrow} \mathrm{Vect}_{\mathbb{Q}}.
\end{equation*}
where 
\begin{align*}
    \mathrm{Gr}^{\ct}(M) = \bigoplus_{i \in \mathbb{Z}}\cthnor^i(M)
\end{align*}
is the total constructible cohomology functor (viewed as a complex with trivial differentials). 
\end{defn}

\begin{lem} \label{lem: compatibility of fiber functors} 
There is a commutative diagram 
\begin{equation*}
    \begin{tikzcd}[sep=large]
            \mscr\perv^{\scal}_{\lnor^+G} (\mathrm{Gr}_G,\lnor^+G) \arrow[d,"\rat"] \arrow[r,"\omega^{\mot}"] & \operatorname{Vect}_{\mathbb{Q}} \\ 
             \perv_{\lnor^+G}(\mathrm{Gr}_G,\lnor^+G,\mathbb{Q}) \arrow[ru,"\omega^{\betti}",swap] & 
    \end{tikzcd}
\end{equation*}
where $\omega^{\textnormal{Betti}}$ is the ordinary fiber functor of the Satake equivalence in \cite{mirkovic+vilonen-2007}.
\end{lem}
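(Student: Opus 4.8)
The plan is to unwind both fiber functors to a common underlying construction, namely the total (constructible) cohomology of the underlying complex of motivic constructible sheaves, and then invoke the compatibility of the realization functor $\rat$ with all the operations involved in the two formulas. Concretely, $\omega^{\mot}$ is defined as the composite
\[
\mscr\perv_{\lnor^+G}(\mathrm{Gr}_G,\lnor^+G) \xrightarrow{\for} \mscr\perv(\mathrm{Gr}_G,\lnor^+G) \xrightarrow{\epsilon_!} \derivednori^b(\mscr\perv(k)) \xrightarrow{\mathrm{Gr}^{\ct}} \mscr\perv(k) \xrightarrow{\hbf^*_{\textnormal{Nori}}} \operatorname{Vect}_{\mathbb{Q}},
\]
and, on the analytic side, the Mirkovi\'c--Vilonen fiber functor $\omega^{\betti}$ is $M \mapsto \bigoplus_i H^i(\mathrm{Gr}_G, M)$ (total hypercohomology, i.e.\ the pushforward $\epsilon_!$ to a point followed by taking total cohomology). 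So the task is to produce a natural isomorphism between $\hbf^*_{\textnormal{Nori}} \circ \mathrm{Gr}^{\ct} \circ \epsilon_! \circ \for$ and $\bigoplus_i H^i(\mathrm{Gr}_G^{\an},-) \circ \rat$.

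First I would record the three compatibilities of $\rat$ that do all the work: (i) $\rat$ is compatible with the forgetful functor $\for^{\lnor^+G}$ and with $\epsilon_!$ (the pushforward to $\Spec k$), which is part of the six-functor compatibility of the Betti realization from Theorem 2.4(2) and its ind-scheme extension in Lemma \ref{lem: motivic perverse and constructible sheaves on ind-schemes}; (ii) $\rat$ intertwines the motivic constructible $t$-structure with the ordinary constructible $t$-structure, hence $\rat_{\Spec k}\circ\cthnor^i \simeq \cthnor^i \circ \rat$, again from Lemma \ref{lem: motivic perverse and constructible sheaves on ind-schemes} (together with Tubach's theorem); and (iii) the classical fact that $\rat_{\Spec k} = \hbf^*_{\textnormal{Nori}}\circ(\text{forget to }\mscr\perv(k))$ recovers ordinary singular cohomology on $\mscr\perv(k)\simeq\mathbf{HM}(k)$, which is essentially the definition of the Nori realization (\cite{huber-book}). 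Chaining these: for $M\in\mscr\perv_{\lnor^+G}(\mathrm{Gr}_G,\lnor^+G)$,
\[
\hbf^*_{\textnormal{Nori}}\bigl(\mathrm{Gr}^{\ct}(\epsilon_!\for\, M)\bigr) \;\simeq\; \bigoplus_i \hbf^*_{\textnormal{Nori}}\bigl(\cthnor^i(\epsilon_!\for\,M)\bigr) \;\simeq\; \bigoplus_i H^i\bigl(\rat_{\Spec k}\,\epsilon_!\for\,M\bigr) \;\simeq\; \bigoplus_i H^i\bigl(\mathrm{Gr}_G^{\an},\rat(M)\bigr),
\]
where the last step uses $\rat\circ\epsilon_! \simeq \epsilon_!^{\an}\circ\rat$ and $\rat\circ\for \simeq \for\circ\rat$. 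This is exactly $\omega^{\betti}(\rat(M))$, and every isomorphism in the chain is functorial in $M$, so the diagram commutes up to a canonical natural isomorphism.

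The one point that needs a little care — and what I'd flag as the main (mild) obstacle — is matching the two ways of forming ``total cohomology'': $\omega^{\mot}$ first applies $\mathrm{Gr}^{\ct}$, extracting all constructible-cohomology objects of the complex $\epsilon_!\for\,M$ inside $\mscr\perv(k)$ and bundling them into a formal complex with zero differential, whereas $\omega^{\betti}$ takes the hypercohomology $\mathbb{H}^i$ of a complex of sheaves on $\mathrm{Gr}_G^{\an}$. These agree because $\Spec k$ is a point: there the constructible $t$-structure on $\derivedcat^b_{\ct}(\operatorname{pt},\mathbb{Q})$ is the standard one, its heart is $\operatorname{Vect}_{\mathbb{Q}}$, and every object is (non-canonically, but canonically after the realization collapses the motivic content) the direct sum of its cohomology objects; so $\bigoplus_i\cthnor^i$ followed by the fiber functor computes precisely $\bigoplus_i\mathbb{H}^i$. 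One must also check that the Tate twists and degree conventions implicit in $\mathrm{Gr}^{\ct}$ (which, unlike $\hbf^*_{\textnormal{Nori}}$ on individual Tate objects, forgets twists) are the ones that make $\omega^{\betti}$ land in plain $\mathbb{Q}$-vector spaces rather than Hodge structures — but since $\omega^{\betti}$ in \cite{mirkovic+vilonen-2007} is by construction forgetful of weights, this is automatic. Finally, all constructions being natural and built from the universal/adjoint functoriality already established for $\rat$, $\epsilon_!$, the $t$-structures, and $\mathbf{IC}$-objects, the commutativity passes without incident from the semisimple generators $\mathbf{IC}_\lambda(L)$ to all of $\mscr\perv_{\lnor^+G}(\mathrm{Gr}_G,\lnor^+G)$ by exactness of every functor in sight.
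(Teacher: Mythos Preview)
Your proof is correct and follows essentially the same approach as the paper: both arguments reduce to observing that $\omega^{\betti}$ can be written as the composite $\for\to\epsilon_!\to\mathrm{Gr}^{\ct}$ on the analytic side, so that the commutativity follows from the compatibility of $\rat$ with each of these steps. The paper's proof is terser (it simply asserts that $\omega^{\betti}(M)=\bigoplus_n\cthnor^n(\mathrm{Gr}_G,M)$ has the requisite form and leaves the rest implicit), whereas you spell out the three compatibilities of $\rat$ explicitly; your extra paragraph about ``total cohomology'' conventions and the passage via generators is unnecessary but harmless.
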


\begin{proof}
  It suffices to show that $\omega^{\betti}$ can be expressed as the following composition 
  \begin{equation*}
  \omega^{\betti} \colon \perv_{\lnor^+G}(\mathrm{Gr}_G,\lnor^+G) \overset{\textnormal{forgetful}}{\longrightarrow} \perv(\mathrm{Gr}_G,\lnor^+G) \overset{\epsilon_!}{\longrightarrow} \derivedcat^b(\mathbb{Q}) \overset{\mathrm{Gr}^{\ct}}{\longrightarrow} \perv(k) = \mathrm{Vect}_{\mathbb{Q}}.
\end{equation*}
but this is obvious since the ordinary fiber functor is simply $\omega^{\betti}(M) = \bigoplus_{n \in \mathbb{Z}}\cthnor^n(\mathrm{Gr}_G,M)$ (here we use hypercohomology).
\end{proof}

\begin{theorem} \label{thm: fiber functor}
The category $\mscr\perv^{\scal}_{\lnor^+G} (\mathrm{Gr}_G,\lnor^+G)$ endowed with the fiber functor 
\begin{equation*} 
\omega^{\mot} = \omega^{\mot}_{\scal} \colon \mscr\perv^{\scal}_{\lnor^+G} (\mathrm{Gr}_G,\lnor^+G) \longrightarrow \mathrm{Vect}_{\mathbb{Q}}
\end{equation*}
is a neutral Tannakian category over $\mathbb{Q}$.
\end{theorem}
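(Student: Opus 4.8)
The plan is to verify the axioms of a neutral Tannakian category over $\mathbb{Q}$ in the sense of Deligne--Milne, namely that $\mscr\perv_{\lnor^+G}(\mathrm{Gr}_G,\lnor^+G)$ is a rigid abelian $\mathbb{Q}$-linear tensor category with $\operatorname{End}(\mathbf{1}) = \mathbb{Q}$, and that $\omega^{\mot}$ is an exact, faithful, $\mathbb{Q}$-linear tensor functor to $\operatorname{Vect}_{\mathbb{Q}}$. The strategy throughout is to reduce every assertion to the corresponding statement for the analytic Satake category $\perv_{\lnor^+G}(\mathrm{Gr}_G,\lnor^+G,\mathbb{Q})$ of \cite{mirkovic+vilonen-2007} via the faithful, exact realization functor $\rat$, using its conservativity and the compatibility $\omega^{\betti}\circ\rat \simeq \omega^{\mot}$ established in Lemma \ref{lem: compatibility of fiber functors}, together with the compatibility of convolution with realization recorded after Proposition \ref{prop: commutativity constraints}.

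First I would assemble the monoidal structure: by Proposition \ref{prop: convolution products restrict to subcategories of motives} the truncated convolution $M_1 \star^0 M_2 = \phnor^0(M_1\star M_2)$ is a bifunctor on $\mscr\perv_{\lnor^+G}(\mathrm{Gr}_G,\lnor^+G)$, Proposition \ref{prop: commutativity constraints} supplies the commutativity constraint, the associativity constraint comes from the derived associativity of $\star$ in Proposition \ref{prop: convolution products restrict to subcategories of motives} after applying $\phnor^0$ (the necessary higher vanishing $\phnor^n(M_1\star M_2)=0$ for $n>0$ is exactly the input needed to see that $\star$ is perverse $t$-exact on the heart in each variable), and Proposition \ref{prop: unit object} identifies $\inters_0(\mathds{1})$ as the unit object. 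The pentagon and hexagon axioms, and the compatibility of the unit, can be checked after applying $\rat$ and invoking conservativity, since all the structure morphisms are constructed canonically and hence commute with realization; likewise $\operatorname{End}(\inters_0(\mathds{1})) = \mathbb{Q}$ follows from $\rat$ being faithful together with the corresponding fact on the analytic side (where $\inters_0(\mathds{1})$ is the skyscraper $\mathbb{Q}$ at the base point, with endomorphism ring $\mathbb{Q}$). $\mathbb{Q}$-linearity and abelianness of the category are already known from the earlier sections.

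Next I would establish rigidity. For this I would show that Verdier duality provides duals: for $M \in \mscr\perv_{\lnor^+G}(\mathrm{Gr}_G,\lnor^+G)$ one sets $M^{\vee}$ to be (an appropriate Tate twist of) $\mathbb{D}(M)$, which lies in the category by Lemma \ref{lem: Verdier duality on stratified motives}; on simple objects $\inters_\lambda(L)$ Lemma \ref{lem: Verdier dualities exchange motivic local systems} gives the explicit form of the dual, and since the category is built from intersection motives and (after restricting to pure pieces) semisimple by Corollary \ref{cor: semisimplicity of perverse motives on aff grass}, the general case follows by taking weight filtrations and using weight-exactness of $\star$. The evaluation and coevaluation morphisms are defined canonically, so the triangle identities and the fact that this makes every object dualizable can again be verified after applying $\rat$ and using that the analytic Satake category is rigid (see \cite{mirkovic+vilonen-2007, baumann+riche-2018}). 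Finally, exactness and faithfulness of $\omega^{\mot}$ follow because each constituent functor in its definition is exact (the forgetful functor, $\epsilon_!$ on perverse sheaves which is exact here by the semismallness geometry, the total constructible cohomology $\mathrm{Gr}^{\ct}$, and the Nori fiber functor $\hbf^*_{\mathrm{Nori}}$), and faithfulness follows from Lemma \ref{lem: compatibility of fiber functors} since $\rat$ is faithful and $\omega^{\betti}$ is faithful; the tensor-functor property $\omega^{\mot}(M_1\star^0 M_2) \simeq \omega^{\mot}(M_1)\otimes\omega^{\mot}(M_2)$ is the motivic incarnation of the K\"unneth-type identity for hypercohomology under convolution, which I would deduce from the analytic statement by the realization compatibility. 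The main obstacle I expect is the tensor-functor property of $\omega^{\mot}$: one must know that the total cohomology functor is monoidal for $\star$, which classically rests on the identification of convolution with fusion (Proposition \ref{prop: commutativity constraints}) and a degeneration argument; here one should either transport the isomorphism along $\rat$ using conservativity, or reprove the fusion-product computation directly at the motivic level, and ensuring all the comparison isomorphisms are natural enough to be reflected by $\rat$ is the delicate point.
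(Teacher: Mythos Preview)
Your overall strategy of reducing to the analytic Satake category via $\rat$ matches the paper's, and most of the steps (monoidal structure, commutativity, unit, faithfulness) line up. Two points, however, diverge substantively.

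First, you overcomplicate the tensor-functor property of $\omega^{\mot}$. You call it the ``main obstacle'', but it is immediate from Lemma~\ref{lem: compatibility of fiber functors}: since $\omega^{\mot} \simeq \omega^{\betti}\circ\rat$, and both $\rat$ (which satisfies $\rat(M\star N)\simeq \rat(M)\star\rat(N)$ by construction) and $\omega^{\betti}$ (by the classical Satake theory) are monoidal, their composite is monoidal. The paper dispatches this in one line; no fusion argument or degeneration needs to be redone motivically.

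Second, and more seriously, your rigidity argument has a gap. You propose to take $M^{\vee}$ to be a Tate twist of $\mathbb{D}(M)$, but Verdier duality is \emph{not} the convolution dual: for a simple object one has $\mathbb{D}(\mathbf{IC}_{\lambda}(L))\simeq \mathbf{IC}_{\lambda}(L^{\vee})(\dim)$ by Lemma~\ref{lem: Verdier dualities exchange motivic local systems}, whereas the dual with respect to $\star$ must be supported on $\overline{\mathrm{Gr}_G^{-w_0\lambda}}$ (as one sees already on the analytic side, where the $\star$-dual of $\mathbf{IC}_\lambda$ is $\mathbf{IC}_{-w_0\lambda}$). One needs to compose with the involution of $\mathrm{Gr}_G$ induced by inversion on the loop group, which you do not mention; without it, the evaluation and coevaluation maps you hope to ``define canonically'' simply do not exist. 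The paper avoids this entirely by invoking the Deligne--Milne criterion: it suffices to show that every object $M$ with $\dim_{\mathbb{Q}}\omega^{\mot}(M)=1$ is invertible. Such an $M$ is necessarily $\mathbf{IC}_{\mu}(L)=\mathbf{IC}_0(L)\star\mathbf{IC}_{\mu}(\mathds{1})$ with $\mu$ orthogonal to all roots and $L$ an invertible motivic local system; then $\mathbf{IC}_{-\mu}(\mathds{1})\star\mathbf{IC}_0(L^{\vee})$ is the required inverse by Proposition~\ref{prop: unit object} and Lemma~\ref{lem: triangular decomposition of intersection motives}. This is both shorter and sidesteps the involution issue.
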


\begin{proof}
We divide the proof into several steps, following the definition of a neutral Tannakian category.
\begin{enumerate}
    \item (Monoidal Structure) By lemma \ref{lem: compatibility of fiber functors}, this is obvious since $\rat$ and $\omega^{\betti}$ are monoidal.
    \item ($\mathbb{Q}$-linearity, exactness, faithfulness) These all follow from lemma \ref{lem: compatibility of fiber functors}. 
    \item (Commutativity Constraints) This is the content of proposition \ref{prop: commutativity constraints}. 
    \item (Neutral Object) By proposition \ref{prop: unit object}, we know that \begin{equation*}
    \begin{split} 
    \Hom_{\mscr\perv^{\scal}_{\lnor^+G} (\mathrm{Gr}_G,\lnor^+G)}(\inters_0^{\lnor^+G}(\mathds{1}),\inters_0^{\lnor^+G}(\mathds{1})) & = \Hom_{\mscr\perv(\mathrm{Gr}_G,\lnor^+G)}(\inters_0(\mathds{1}),\inters_0(\mathds{1}))  \\ 
    & \subset \Hom_{\perv(\mathrm{Gr}_G,\lnor^+G)}(\inters_0(\mathds{1}),\inters_0(\mathds{1})) = \mathbb{Q} 
    \end{split} 
    \end{equation*} 
    and this group is nonzero and hence must equal $\mathbb{Q}$ as well.
    \item (Dual Objects) Let $M \in \mscr\perv^{\scal}_{\lnor^+G} (\mathrm{Gr}_G,\lnor^+G)$ be a motive such that $\dim_{\mathbb{Q}}(\omega^{\mot}_{\scal}(M))=1$, we claim that $M$ admits a strong dual. By the faithfulness, $M$ must be indecomposable and hence of the form $\mathbf{IC}_{\mu}^{\lnor^+G}(p_{\mu}^*(L)) = \mathbf{IC}_0^{\lnor^+G}(L) \star \mathbf{IC}_{\mu}^{\lnor^+G}(\mathds{1})$ with $\mu \in X_*(T)^+$ and $L \in \mscr\localsystem(k)$ an indecomposable motivic local system. We note that by proposition \ref{prop: unit object} and monoidality
    \begin{equation*}
    \begin{split} 
        \omega^{\mot}(\mathbf{IC}_{\mu}^{\lnor^+G}(L)) & = \omega^{\mot}(\mathbf{IC}_{0}^{\lnor^+G}(L)) \otimes \omega^{\mot}(\mathbf{IC}_{\mu}^{\lnor^+G}(\mathds{1})) \\ 
        & = \omega^{\betti}(\mathbf{IC}_0(\rat(L))) \otimes \omega^{\betti}(\mathbf{IC}_{\mu}(\mathds{1})) 
        \end{split} 
    \end{equation*}
    and hence
    \begin{equation*}
        \dim  \omega^{\betti}(\mathbf{IC}_0(\rat(L))) = \dim \omega^{\betti}(\mathbf{IC}_{\mu}(\mathds{1})) = 1.
    \end{equation*}
    We see that $L$ is $\otimes$-invertible, i.e., $L \otimes L^{\vee} \simeq \mathds{1}$. Indeed, $\rat(L) \in \localsystem(k)= \operatorname{Vect}_{\mathbb{Q}}$ and $L$ is indecomposable hence $\rat(L) = \mathbb{Q}$ so $\otimes$-invertible. Now by the classical Satake equivalence, one sees that $\rat(\mathbf{IC}_0(L))$ is $\otimes$-invertible, namely, 
    \begin{equation*}
        \mathbf{IC}_0(\rat(L)) \bullet \mathbf{IC}_0(\rat(L^{\vee})) \simeq \mathds{1}.
    \end{equation*}
    This implies that 
      \begin{equation*}
        \mathbf{IC}_0^{\lnor^+G}(L) \bullet \mathbf{IC}_0^{\lnor^+G}(L^{\vee}) \simeq \simeq \mathds{1}.
    \end{equation*}
   thanks to the conservativity of realization. Regarding $\mathbf{IC}_{\mu}^{\lnor^+G}(\mathds{1})$, one has $\dim \omega^{\mot}(\mathbf{IC}_{\mu}(\mathds{1})) = \dim \omega^{\betti}(\mathbf{IC}_{\mu}(\mathbb{Q})) = 1$ so according to \cite[Proposition 5.13]{baumann+riche-2018}, $\mu$ is orthorgonal to all roots in $X_*(T)$ and $\mathrm{Gr}^{\mu}_G$ is a singleton so that $\mathbf{IC}_{\mu}^{\lnor^+G}(\mathds{1}) \star \mathbf{IC}_{-\mu}^{\lnor^+G}(\mathds{1}) = \mathbf{IC}_0^{\lnor^+G}(\mathds{1})$ thanks to lemma \ref{lem: triangular decomposition of intersection motives}.
\end{enumerate}
\end{proof}
\subsection{The dual groups}
In the previous section, we know that the $\scal$-\textit{motivic Satake category} $\mscr\perv_{\lnor^+G}^{\scal}(\mathrm{Gr}_G,\lnor^+G)$ is a neutral Tannakian category. In ths section, we identity its dual group, the the $\scal$-\textit{motivic Langlands dual group}, denoted by $\gscr^{\mot}_{\lnor^+G}(G,\scal)^{\vee}$. Furthermore, we also study its dual groups when restricting to suitable subcategories of motives. Let $\scal \subset \mscr\localsystem(k)$ be a full abelian category containing $\mathds{1}$, closed under tensor product and taking duals. 

\begin{theorem}
Let $k$ be a subfield of $\mathbb{C}$ and $G$ be a reductive group over $k$. Then there are equivalences of Tannakian categories
\begin{equation*} 
(\operatorname{Sat}^{\scal}_{G,k},\star,\omega^{\mot}_{\scal}) \simeq (\operatorname{Rep}_{\mathbb{Q}}^{\textnormal{fd}}(G^{\vee}_{\mathbb{Q}} \times \gscr^{\mot}_{\scal}(k)), \otimes, \textnormal{forgetful}).
\end{equation*}
Moreover, let $\scal_1 \subset \scal_2$ be an inclusion of Tannakian subcategories of $\mscr\localsystem(k)$, there is a canonical commutative diagram 
\begin{equation*}
    \begin{tikzcd}[sep=large]
      (\operatorname{Sat}^{\scal_1}_{G,k},\star,\omega^{\mot}_{\scal}) \arrow[r] \arrow[d] &  \arrow[d] (\operatorname{Rep}_{\mathbb{Q}}^{\textnormal{fd}}(G^{\vee}_{\mathbb{Q}} \times \gscr^{\mot}_{\scal_1}(k)), \otimes, \textnormal{forgetful}) \\ 
      (\operatorname{Sat}^{\scal_2}_{G,k},\star,\omega^{\mot}_{\scal})  \arrow[r]  & (\operatorname{Rep}_{\mathbb{Q}}^{\textnormal{fd}}(G^{\vee}_{\mathbb{Q}} \times \gscr^{\mot}_{\scal_2}(k)), \otimes, \textnormal{forgetful}).
    \end{tikzcd}
\end{equation*}
If $e \colon \Spec(F) \longrightarrow \Spec(k)$ be an extension in $\mathbb{C}$, then there is a canonical commutative diagram 
\begin{equation*}
    \begin{tikzcd}[sep=large]
      (\operatorname{Sat}^{\scal}_{G_k,k},\star,\omega^{\mot}_{\scal}) \arrow[r] \arrow[d] &  \arrow[d] (\operatorname{Rep}_{\mathbb{Q}}^{\textnormal{fd}}(G^{\vee}_{\mathbb{Q}} \times \gscr^{\mot}_{\scal}(k)), \otimes, \textnormal{forgetful}) \\ 
      (\operatorname{Sat}^{\scal}_{G_F,F},\star,\omega^{\mot}_{e^*(\scal)})  \arrow[r]  & (\operatorname{Rep}_{\mathbb{Q}}^{\textnormal{fd}}(G^{\vee}_{\mathbb{Q}} \times \gscr^{\mot}_{e^*(\scal)}(F)), \otimes, \textnormal{forgetful}).
    \end{tikzcd}
\end{equation*}
\end{theorem}

\begin{proof}
Let $R$ be a $\mathbb{Q}$-algebra, then by definition
  \begin{equation*}
     \gscr^{\mot}_{\lnor^+G}(G,\scal)(R) =  \operatorname{Aut}^{\star}(\omega)(R) = \left \{\textnormal{$R$-linear automorphism} \ g_X \colon \omega^{\mot}_{\scal}(X) \otimes_{\mathbb{Q}} R \longrightarrow \omega^{\mot}_{\scal}(X) \otimes_{\mathbb{Q}} R + \textnormal{compatibilities} \right \}. 
  \end{equation*}
  The morphism $\gscr^{\mot}_{\lnor^+G}(G,\scal) \longrightarrow G_{\mathbb{Q}}^{\vee} \times \gscr^{\mot}_{\scal}(k)$ is defined as follows: we write $X = \bigoplus_{i=1}^n \mathbf{IC}^{\lnor^+G}_{\mu_i}(p_{\mu_i}^*(L_i))$ with $L_i \in \mscr\localsystem(k)$ indecomposable motivic local systems, then
  \begin{equation*}
      \begin{split}
          \gscr^{\circ}_{\mathbb{Q}}(R) & \longrightarrow G^{\vee}_{\mathbb{Q}}(R) \times \gscr^{\mot}(k)(R) \\ g_X & \longmapsto \bigoplus_{i=1}^n  (g_{\mathbf{IC}_{\mu}^{\lnor^+G}(\mathds{1})} \otimes g_{L_i}).
      \end{split}
  \end{equation*}
  We claim that this is an isomorphism of functors (hence of corresponding pro-algebraic groups). From the explicit description, this map is clearly injective. To show the surjectivity, it is enough to show that if
  \begin{equation*}
     \mathbf{IC}^{\lnor^+G}_{\mu}(p_{\mu}^*(L_{\mu})) =  \mathbf{IC}^{\lnor^+G}_{\lambda}(p_{\lambda}^*(L_{\lambda}))
  \end{equation*}
  then $\mu = \lambda$ and $L_{\mu} \simeq L_{\lambda}$. Indeed, $\mu=\mu'$ by support and $p_{\mu}^*(L_{\mu}) \simeq p_{\mu}^*(L_{\lambda})$ obtained by restricting both sides to $\mathrm{Gr}_G^{\mu}$ but since $p_{\mu}$ admits an equivariant section $\Spec(k) \longrightarrow \mathrm{Gr}^{\leq \mu}$, this implies $L_{\mu} = L_{\lambda}$. 
\end{proof}

\begin{ex}
As a consequence, we have different versions of the geometric Satake equivalence, depending on the choice we make.
\begin{enumerate}
    \item If $\scal = \mscr\localsystem(k)$, then the dual group becomes $G^{\vee}_{\mathbb{Q}} \times \gscr^{\mot}(k)$. 
    \item If $\scal$ is the category pure motives, the dual group becomes $G^{\vee}_{\mathbb{Q}} \times \gscr^{\pure}(k)$, where $\gscr^{\pure}(k)$ is the pure motivic Galois group, see for instance \cite[Chapter 10]{huber-book}. 
    \item If $\scal$ is the category of Tate motives, the dual group becomes  $G^{\vee}_{\mathbb{Q}} \times \gscr^{\textnormal{Tate}}(k)$, where $\gscr_{\textnormal{Tate}}(k)$ is the motivic Galois group of mixed Tate motives.  If we restrict to pure Tate motives, then it becomes $G^{\vee}_{\mathbb{Q}} \times \mathbb{G}_{m,\mathbb{Q}}$. 
    \item If $\scal$ is the category of Artin-Tate motives, the dual group becomes $G^{\vee}_{\mathbb{Q}} \times \Gal(\overline{k}/k) \times \gscr^{\textnormal{Tate}}(k)$. If we restrict to pure Artin-Tate motives, then it becomes $G^{\vee}_{\mathbb{Q}} \times \Gal(\overline{k}/k) \times \mathbb{G}_{m,\mathbb{Q}}$. 
    \item Let $X/k$ be a connected, smooth variety and $x \in X(k)$. Let $\pi_1^{\mot}(X,x)$ be the motivic fundamental group. Let $\scal$ be the smallest Tannakian closure containing the image of $\mscr\localsystem(X)$ in $\mscr\localsystem(k)$, then the dual group is $G^{\vee}_{\mathbb{Q}} \times \pi_1^{\mot}(X,x)$. 
\end{enumerate}
\end{ex}

\begin{cor}
Let $t$ be an intermediate, then there are isomorphisms of $\mathbb{Z}[t^{\pm 1/2}]$-algebras
\begin{align*}
    K_0(\operatorname{Sat}_{G,k}^{\mscr\operatorname{Tate}_p(k)}) &  \simeq \mathbb{Z}[t^{\pm 1/2}][\left \{ \mathbf{IC}_{\mu}^{\lnor^+G}(\mathds{1}) \mid \mu \in X_*(T)^+ \right \}] \\ 
    & \simeq K_0(\operatorname{Rep}(G^{\vee})) \\ 
    \end{align*} 
and 
\begin{align*} 
    K_0(\operatorname{Sat}_{G,k}^{\mscr\localsystem(k)}) & \simeq  K_0(\operatorname{Sat}_{G,k}^{\mscr\localsystem(k,\pure)}) \\ 
    & \simeq  \mathbb{Z}[t^{\pm 1/2}][\left \{ \mathbf{IC}_{\mu}^{\lnor^+G}(L) \mid \mu \in X_*(T)^+ \right \}] \\ 
    & \simeq K_0(\operatorname{Rep}(G^{\vee})) \times K_0(\operatorname{Rep}(\gscr^{\mot}(k)))
\end{align*}
In particular, one has that
\begin{equation*}
    K_0(\operatorname{Sat}_{1,k}^{\mscr\localsystem(k)}) \simeq \mathbb{Z} \times K_0(\operatorname{Rep}(\gscr^{\mot}(k))).
\end{equation*}
\end{cor}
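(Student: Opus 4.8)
The plan is to deduce the corollary formally from the three Tannakian equivalences of the preceding theorem by applying the Grothendieck-ring functor $K_0$. Three inputs will be used. First, an exact monoidal equivalence of $\mathbb{Q}$-linear abelian tensor categories induces a ring isomorphism on $K_0$, and $K_0$ is additive on short exact sequences. Second, the Grothendieck group of a semisimple $\mathbb{Q}$-linear abelian category is the free abelian group on the isomorphism classes of simple objects, with ring structure induced by the monoidal product (here, convolution). Third, for affine group schemes $H_1,H_2$ over $\mathbb{Q}$ with $H_1$ split reductive, the external product $\operatorname{Rep}^{\mathrm{fd}}_{\mathbb{Q}}(H_1)\boxtimes\operatorname{Rep}^{\mathrm{fd}}_{\mathbb{Q}}(H_2)\to\operatorname{Rep}^{\mathrm{fd}}_{\mathbb{Q}}(H_1\times H_2)$ is an equivalence, so that $K_0(\operatorname{Rep}^{\mathrm{fd}}_{\mathbb{Q}}(H_1\times H_2))\cong K_0(\operatorname{Rep}^{\mathrm{fd}}_{\mathbb{Q}}(H_1))\otimes_{\mathbb{Z}}K_0(\operatorname{Rep}^{\mathrm{fd}}_{\mathbb{Q}}(H_2))$.

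The first step is to pin down the $\mathbb{Z}[t^{\pm1/2}]$-algebra structure. The Tate twist $M\mapsto M(1)$ is an exact autoequivalence of each of $\operatorname{Sat}_{G,k}$, $\operatorname{Sat}^{\pure}_{G,k}$, $\operatorname{Sat}^{\mot}_{G,k}$; by Proposition~\ref{prop: unit object} it is convolution with the invertible object $\mathbf{IC}_0(\mathds{1}(1))$, so on $K_0$ it is multiplication by the unit $[\mathbf{IC}_0(\mathds{1}(1))]$, which I set equal to $t^{-1}$. The formal half-twist $t^{\pm1/2}$ enters only through the self-dual normalization of intersection motives $\mathbf{IC}_\mu(\mathds{1}(\langle\rho,\mu\rangle))$ with $\langle\rho,\mu\rangle\in\tfrac12\mathbb{Z}$ (compatible with Verdier duality by Lemma~\ref{lem: Verdier dualities exchange motivic local systems}): after extending scalars to $\mathbb{Z}[t^{\pm1/2}]$ these normalized classes form a $\mathbb{Z}[t^{\pm1/2}]$-basis, which is the sole use of the square root. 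The second step uses Corollary~\ref{cor: semisimplicity of perverse motives on aff grass} together with the classification of simple objects (Theorem~\ref{thm: simple objects of perverse motives on ind-schemes}, with the Schubert cells being simply connected so that their simple motivic local systems are pure): the categories $\operatorname{Sat}_{G,k}$ and $\operatorname{Sat}^{\pure}_{G,k}$ are semisimple, hence their Grothendieck rings are the free $\mathbb{Z}[t^{\pm1/2}]$-modules on $\{[\mathbf{IC}_\mu(\mathds{1})]\}_{\mu\in X_*(T)^+}$ and on $\{[\mathbf{IC}_\mu(L)]\}$ respectively, with multiplication computed by Lemma~\ref{lem: triangular decomposition of intersection motives}; this gives the middle isomorphisms. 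For $\operatorname{Sat}^{\mot}_{G,k}$, which is not semisimple, one invokes that every object carries a weight filtration with graded pieces in $\operatorname{Sat}^{\pure}_{G,k}$, so $K_0(\operatorname{Sat}^{\mot}_{G,k})=K_0(\operatorname{Sat}^{\pure}_{G,k})$.

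The third step applies $K_0$ and the third input to the equivalences $\operatorname{Sat}_{G,k}\simeq\operatorname{Rep}(G^{\vee}_{\mathbb{Q}})$, $\operatorname{Sat}^{\pure}_{G,k}\simeq\operatorname{Rep}(G^{\vee}_{\mathbb{Q}}\times\gscr^{\pure}_{\mot}(k))$ and $\operatorname{Sat}^{\mot}_{G,k}\simeq\operatorname{Rep}(G^{\vee}_{\mathbb{Q}}\times\gscr_{\mot}(k))$ from the preceding theorem, yielding the outer isomorphisms; here $t$ is identified with the pull-back of the standard generator of $R(\mathbb{G}_{m,\mathbb{Q}})$ along the cyclotomic quotient $\gscr_{\mot}(k)\twoheadrightarrow\mathbb{G}_{m,\mathbb{Q}}$, so the coefficient ring naturally sits inside the motivic factor. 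Specializing to $G=\{1\}$, where $G^{\vee}_{\mathbb{Q}}=\{1\}$ and $R(G^{\vee}_{\mathbb{Q}})=\mathbb{Z}$, and $\mathrm{Gr}_{\{1\}}=\Spec(k)$ with $\operatorname{Sat}^{\mot}_{1,k}=\mathbf{HM}(k)=\operatorname{Rep}(\gscr_{\mot}(k))$, gives the final displayed isomorphism. I expect the only step with genuine content to be the third input for a product in which the second factor ($\gscr_{\mot}(k)$, or $\gscr^{\pure}_{\mot}(k)$) is a pro-algebraic group that is \emph{not} reductive: one must verify that $\operatorname{Rep}(H_1)\boxtimes\operatorname{Rep}(H_2)\simeq\operatorname{Rep}(H_1\times H_2)$ for affine group schemes and that every object of $\operatorname{Rep}(H_1\times H_2)$ has finite length with simple subquotients of the form $V\boxtimes W$, which follows from the absolute simplicity (endomorphism ring $\mathbb{Q}$) of the irreducibles of the split reductive factor $G^{\vee}_{\mathbb{Q}}$. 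Everything else is the routine bookkeeping of Tate twists described above.
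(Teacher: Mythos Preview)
Your argument is correct and follows the same route as the paper, only in more detail: the paper's proof simply cites the classical Satake isomorphism for the first displayed line and says the remaining isomorphisms are ``obvious thanks to the existence of weight filtrations.'' Your unpacking of the $K_0$-decomposition for $\operatorname{Rep}(G^{\vee}_{\mathbb{Q}}\times\gscr_{\mot}(k))$ via absolute simplicity of the irreducibles of the split reductive factor is exactly the content hidden behind the paper's appeal to the preceding theorem, and your use of the weight filtration to pass from $\operatorname{Sat}^{\mot}$ to $\operatorname{Sat}^{\pure}$ matches the paper verbatim.
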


\begin{proof}
The first isomorphism follows from the classical Satake isomorphism (see for instance, \cite[Theorem 9.4.1]{achar-book}). The two next isomorphisms are obvious thanks to the existence of weight filtrations. 
\end{proof}

\bibliographystyle{alpha}
\bibliography{ref}

\end{document}